\newcommand{\field}[1]{\mathbb{#1}}
\newcommand{\mf}[1]{\mathfrak{#1}}
\newcommand{\mc}[1]{\ensuremath{\mathcal{#1}}}
\newcommand{\ul}[1]{\underline{#1}}
\newcommand{\wt}[1]{\widetilde{#1}}
\theoremstyle{definition}
\newtheorem{definition}{\ul{Definition}}[section]
\newtheorem{proposition}[definition]{\ul{Proposition}}
\newtheorem{lemma}[definition]{\ul{Lemma}}
\newtheorem*{definition*}{\ul{Definition}}
\newtheorem*{informal_definition}{\ul{Informal definition}}
\newtheorem*{corollary*}{\ul{Corollary}}
\newtheorem{corollary}[definition]{\ul{Corollary}}
\newtheorem{principle}[definition]{\ul{Principle}}
\newtheorem{example}[definition]{\ul{Example}}
\newtheorem{question}[definition]{\ul{Question}}
\newtheorem{construction}[definition]{\ul{Construction}}
\crefname{construction}{Construction}{Constructions}
\crefname{notation}{Notation}{Notations}
\newtheorem{theorem}[definition]{\ul{Theorem}}
\newtheorem{notation}[definition]{\ul{Notation}}
\newtheorem*{theorem*}{\ul{Theorem}}
\theoremstyle{remark}
\newtheorem{remark}[definition]{\ul{Remark}}
\newtheorem*{remark*}{\ul{Remark}}
\newtheorem*{notation*}{\ul{Notation}}
\def\ve{\varepsilon}
\def\Alg{\mc{Alg}}
\def\Loc{\mc{Loc}}
\def\Cc{\mc{C}}
\def\C{\field{C}}
\def\RJ{\R\textrm{J}}
\def\N{\field{N}}
\def\ss{\subset}
\def\sse{\subseteq}
\def\Tc{\mc{T}}
\def\cd{\cdot}
\def\inft{{\operatorname{inf}}}
\def\Uc{\mc{U}}
\def\Z{\mathbb{Z}}
\def\R{\field{R}}
\def\Rc{\mc{R}}
\def\la{\langle}
\def\ra{\rangle}
\def\wcom{{\wedge_{\Com}}}
\def\red{{\operatorname{red}}}
\newcommand{\ol}[1]{\overline{#1}}
\newcommand{\wh}[1]{\widehat{#1}}
\newcommand{\PreStk}{\mc{PreStk}}
\newcommand{\Stk}{\mc{Stk}}
\DeclareMathOperator{\dist}{dist}
\DeclareMathOperator{\Carr}{Carr}
\DeclareMathOperator{\JT}{J}
\DeclareMathOperator{\Zar}{Zar}
\DeclareMathOperator{\Supp}{Supp}
\DeclareMathOperator{\Comp}{Comp}
\DeclareMathOperator{\Shape}{Shape}
\DeclareMathOperator{\Shv}{Shv}
\DeclareMathOperator{\Betti}{Betti}
\DeclareMathOperator*{\Tot}{Tot}
\DeclareMathOperator{\GM}{GM}
\DeclareMathOperator{\Spec}{Spec}
\DeclareMathOperator{\Specm}{Specm}
\DeclareMathOperator{\Map}{Map}
\DeclareMathOperator{\gr}{gr}
\def\bt{\bullet}
\def\m{\mf{m}}
\def\deg{\operatorname{deg}}
\DeclareMathOperator{\pd}{\partial}
\DeclareMathOperator{\Nat}{Nat}
\DeclareMathOperator{\id}{id}
\def\Com{\mc{Com}}
\DeclareMathOperator*{\colim}{colim}
\DeclareMathOperator{\Hom}{Hom}
\DeclareMathOperator{\Mod}{Mod}
\DeclareMathOperator{\oblv}{oblv}
\DeclareMathOperator{\free}{free}
\def\op{{\operatorname{op}}}
\def\K{\ensuremath{\mathcal{K}}}
\def\T{\ensuremath{\mathcal{T}}}
\DeclareMathOperator{\Fun}{Fun}
\def\O{\ensuremath{\mathcal{O}}}
\def\LL{\mathbb{L}}
\def\I{\ensuremath{\mathcal{I}}}
\def\dMfld{\mc{dMfld}}
\def\Mfld{\mc{Mfld}}
\def\vp{\varphi}
\def\M{\ensuremath{\mathcal{M}}}
\def\Spc{\mc{Spc}}
\newcolumntype{L}{>{$}l<{$}}
\def\temp{&}
\def\AA{\mathbb{A}}
\def\inf1{(\infty,1)}
\newcommand*{\CI}{{\mc{C}^\infty}}
\def\Lc{\mc{L}}
\def\X{\mc{X}}
\def\Xc{\mc{X}}
\def\Y{\mc{Y}}
\def\Yc{\mc{Y}}
\def\dR{{\operatorname{dR}}}
\def\dRcom{{\dR_{\Com}}}
\def\dRcinf{{\dR_{\CI}}}
\def\infcom{{\inft_{\Com}}}
\def\redcom{{\red_{\Com}}}
\def\redcinf{{\red_{\CI}}}
\def\cdR{\wh{\dR}}
\def\Fil{\operatorname{Fil}}
\DeclareRobustCommand{\phalphnum}[1]{
	\IfEqCase{#1}{%
		{1}{{\textphnc{a}}}%
		{2}{{\textphnc{b}}}%
		{3}{\textphnc{g}}%
		{4}{\textphnc{d}}%
		{5}{\textphnc{h}}
		{6}{\textphnc{f}}
		{7}{\textphnc{w}}
		{8}{\textphnc{z}}
		{9}{\textphnc{H}}
		{10}{\textphnc{T}}
		{11}{\textphnc{y}}
		{12}{\textphnc{k}}
		{13}{\textphnc{l}}
		{14}{\textphnc{m}}
		{15}{\textphnc{n}}
		{16}{\textphnc{s}}
		{17}{\textphnc{o}}
		{18}{\textphnc{p}}
		{19}{\textphnc{x}}
		{20}{\textphnc{q}}
		{21}{\textphnc{r}}
		{22}{\textphnc{S}}
		{23}{\textphnc{t}}
	}[\PackageError{\phalphnum}{Undefined option to phalphnum: #1}{}]%
}
\setlist[enumerate,1]{label={(\roman*)}}
\def\arXiv#1{\href{https://arxiv.org/abs/#1}{arXiv:#1}}
\def\web#1{\href{#1}{web}}
\def\ldRa{\texttt{ala}\xspace}
\begin{document}
	\title[de Rham Theory]{de Rham theory in derived differential geometry}
	\author{Gregory Taroyan}
	\thanks{The author's work was supported by Vanier Canada Graduate Scholarship, funding number CGV --- 192668.}
	\begin{abstract}
		This paper addresses the question: What is the de Rham theory for general differentiable spaces? We identify two potential answers and study them. In the first part, we show that the de Rham cohomology calculated using (the completion of) the exterior algebra of the cotangent complex yields non-trivial local invariants for singular differentiable spaces. In particular, in some cases, it differs from the constant sheaf cohomology, which provides an obstruction for the de Rham comparison map to be an equivalence. Moreover, we provide conditions under which this local invariant trivializes, yielding a de Rham-type isomorphism. In the second part, we show that for a suitably defined de Rham stack, there is always an isomorphism between functions on it and constant sheaf cohomology of the underlying topological space. Consequently, there exists a version of the de Rham theorem for singular differentiable spaces which holds with almost no restrictions. Finally, we sketch a generalization of this result to other theories of smooth functions, such as holomorphic or analytic functions. The last part is thus related to analytic de Rham stacks of Rodríguez Camargo used by Scholze to geometrize the local Langlands correspondence. 
	\end{abstract}
	\maketitle
	\setcounter{tocdepth}{1}
	\tableofcontents
	\setcounter{tocdepth}{3}
	\section{Introduction}
		\subsection{What is this paper about?} Our main goal is to develop a version of de Rham theory for derived \(\CI\)-manifolds and derived \(\CI\)-stacks. These, in particular, include arbitrary zero loci of smooth functions, diffeological spaces, quasifolds, etc. The basic premise of our approach is to import methods used in derived algebraic geometry to treat singular algebraic varieties and apply them to the study of singular differentiable spaces. 
		Below, we quickly review the essence of this algebraic approach to the de Rham theorem and then discuss the paper's main results.
		\subsubsection{}
		In \cite{Grothendieck_deRham_of_AV} Grothendieck gave a proof of the following remarkable result proposed by Atiyah--Hodge in \cite{Atiyah_Hodge}.
		
		{\it Let \(X\) be a \ul{smooth} affine algebraic variety over \(\C\). Then there is an isomorphism between the \ul{algebraic} de Rham cohomology of \(X\) and the \emph{analytic} de Rham cohomology of the associated complex manifold \(X(\C)\).}
		
		Combined with the usual de Rham theorem for complex manifolds, the above result is equivalent to the following statement.

		{\it 
			Let \(X\) be a \ul{smooth} affine algebraic variety over \(\C\). Then there is an isomorphism between the \emph{algebraic} de Rham cohomology of \(X\) and the cohomology of the constant sheaf \(\C\) of the associated complex manifold \(X(\C)\).
		}

		The key ingredient in this result is that the variety \(X\) is smooth. The reason is that we have access to easily controllable local models in the smooth setting. An additional input of the first theorem is Hironaka's resolution of singularities, which allows one to reduce the general case to a calculation in the proper case. The use of the resolution of singularities makes the proof of the above theorem extremely dependent on the context of algebraic geometry.

		Of course, for ordinary smooth manifolds, there is a much easier way to see that the de Rham complex calculates cohomology of the constant sheaf. One just needs checks that such an equivalence is present on stalks, and this reduces to the usual Poincar\'{e} lemma in \(\R^n\). There is, however, a trade-off to this simplicity of the result in the smooth case. If one wants to consider non-smooth spaces with differentiable structures, such as zero loci of general smooth functions, we lose access to the local models (\(\R^n\)'s) that work so well for manifolds.

 		On the other hand, in the world of algebraic geometry, despite the original result being much harder to prove, the non-smooth case is also accessible. The basic idea due to Hartshorne \cite{Hartshorne} is to replace a non-smooth variety \(X\) with its \enquote{tubular neighbourhood}. Of course, there is no notion of a tubular neighbourhood in algebraic geometry, so the thing that plays its role is the formal completion of an ambient, smooth variety along the singular variety \(X\). This approach yields the following result \cite[Corollary IV.1.2]{Hartshorne}.

		{\it
			Let \(X\) be an affine algebraic variety over \(\C\) immersed into a smooth variety \(Y\). Then there is an isomorphism between the algebraic de Rham cohomology of \(Y\) \emph{completed at} \(X\) and the cohomology of the constant sheaf \(\C\) on the associated analytic space \(X(\C)\).
		}

		Based on the above observations, it is thus interesting to see if there is a way to combine these two approaches to get a version of de Rham theory suitable for general differentiable spaces. In this work, we show that this question has different answers based on the interpretation. We show that a derived manifold's derived de Rham cohomology yields a non-trivial local invariant. Hence, while it is not isomorphic to the cohomology of the constant sheaf, it carries interesting information about the local structure of a derived manifold. In particular, the non-triviality of this invariant is related to the failure of a tubular neighbourhood of a derived manifold to be homotopy equivalent to it. We also show that there exists a refinement of de Rham cohomology which is isomorphic to the cohomology of the constant sheaf. This version uses a suitable intrinsic notion of the de Rham stack. 
		\subsection{What is done in this paper?}
		Below, we sketch the most important results of the paper. They speak to the key issue this text addresses. That is, derived \(\CI\)-manifolds can have highly singular underlying topological spaces, hence their local structure can be much more intricate than that of algebraic or analytic spaces. As a result of this non-triviality, local de Rham cohomology calculated using algebraic constructions can differ from purely topological constant sheaf cohomology. However, if a suitable \(\CI\)-refinement of de Rham cohomology yields the de Rham theorem by trivializing local invariants. 
		\subsubsection{Derived de Rham cohomology}
		\begin{definition*}[{\ref{con:derived_de_Rham}}] Let \(\M\) be a derived manifold. 
			Its derived de Rham cohomology \(\cdR_\M\) is a completion of a certain derived filtered commutative algebra. Let \(\LL_\M\) be the (absolute) cotangent complex of \(\M\), i.e. the derived version of \(1\)-forms. Then the associated graded pieces of \(\cdR_\M\) are given by the (shifted) exterior powers of \(\LL_\M\) of the  form \(\wedge^k\LL_\M[-k]\).
		\end{definition*}
		The local non-triviality of derived de Rham cohomology is witnessed by the following example.
		\begin{example}[{\ref{ex:de_Rham_counterexample}}]
			Consider the following null sequence on the real line. \[\left\{\frac{1}{n}\right\}_{n=1}^\infty\cup \{0\} \sse \R\] 
			We view it as a zero set of a nowhere negative function flat at the origin. We show that derived de Rham cohomology drastically differs from constant sheaf cohomology for this derived manifold. This difference stems from the failure of the ``Poincar\'{e} lemma'' for derived de Rham cohomology at the limit point of the sequence. 
		\end{example}
		The following hypothesis, however, ensures local triviality of derived de Rham cohomology and thus yields a version of the de Rham theorem.
		\begin{definition*}[{\ref{def:ldRa}}]
			A derived manifold \(\M\) is said to be \emph{algebraically locally acyclic} (or \ldRa) if for every point \(x\in \M\) the stalk of the derived de Rham cohomology is equivalent to \(\R\).
		\end{definition*}
		By \Cref{ala_is_determined_by_classical}, the condition on a derived manifold to be \ldRa depends only on the classical structure. That is, it is a property of the \(0\)-th truncation of the derived \(\CI\)-algebra of functions on \(\M\). 
		Examples of derived manifolds satisfying this condition include zero sets of analytic equations (\Cref{thm:de_Rham_for_analytic}), zero sets of subanalytic functions (\Cref{thm:de_Rham_for_Lojasiewicz}). Other, maybe more surpising, examples are locally \(\CI\)-contractible derived manifolds such as the spectrum of \(\CI(\R)/(e^{-\frac{1}{x^2}})\) (see \Cref{example_contraction}). 

		In this direction, we have the following result.
		\begin{theorem*}[{\ref{st:local_to_global_de_Rham}}]
			Let \(\M\) be an \ldRa derived manifold. Then, the derived de Rham cohomology of \(\M\) is isomorphic to the cohomology of the constant sheaf.
		\end{theorem*}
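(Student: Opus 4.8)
The plan is to realize both sides as hypercohomology of sheaves on the underlying topological space $\ul{\M}$ of $\M$ and to compare them by a local-to-global argument. First I would sheafify the construction of \Cref{con:derived_de_Rham}: rather than taking global sections at the outset, I would produce a sheaf $\cdR_{\ul{\M}}$ of completed filtered commutative $\R$-algebras on $\ul{\M}$ whose value on an open $U\sse\ul{\M}$ is the derived de Rham cohomology of the corresponding open derived submanifold, arranged so that the global invariant $\cdR_\M$ is recovered as the derived global sections (hypercohomology) $\Gamma(\ul{\M},\cdR_{\ul{\M}})$. The unit of this sheaf of algebras, sending a locally constant function to the corresponding closed $0$-form, furnishes a comparison map of sheaves
\[
c\colon \ul{\R}\longrightarrow \cdR_{\ul{\M}},
\]
where $\ul{\R}$ is the constant sheaf. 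On global sections this is precisely the de Rham comparison map: its target computes $\cdR_\M$ and its source computes constant sheaf cohomology $H^\bt(\ul{\M};\R)$.

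The heart of the argument is to show that $c$ is a stalkwise equivalence. For each $x\in\ul{\M}$ the stalk $(\ul{\R})_x$ is $\R$ concentrated in degree $0$, while $(\cdR_{\ul{\M}})_x$ is the stalk of derived de Rham cohomology at $x$. By the \ldRa hypothesis (\Cref{def:ldRa}) the latter is equivalent to $\R$, and the content of that definition is that this equivalence is realized through the augmentation splitting the unit, so that $c_x\colon \R\to(\cdR_{\ul{\M}})_x$ is itself an equivalence: the reduced part of the stalk is contractible. This is the derived incarnation of the Poincaré lemma, now postulated rather than proved, and it is exactly what the hypothesis supplies. One must check here that sheafification and completion interact well enough that the stalk of $\cdR_{\ul{\M}}$ agrees with the stalk of derived de Rham cohomology appearing in \Cref{def:ldRa}.

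With a stalkwise equivalence in hand, I would invoke the fact that a map of sheaves of $\R$-modules on a topological space which is an equivalence on all stalks is an equivalence of sheaves. The underlying space of a derived manifold is paracompact and of finite covering dimension (it sits inside some $\R^n$), so the ambient $\infty$-topos is hypercomplete and stalks detect equivalences. Applying derived global sections to $c$ then yields
\[
H^\bt(\ul{\M};\R)\;=\;\Gamma(\ul{\M},\ul{\R})\;\xrightarrow{\ \sim\ }\;\Gamma(\ul{\M},\cdR_{\ul{\M}})\;=\;\cdR_\M,
\]
which is the desired isomorphism. I expect the main obstacle to lie in the first step rather than the last: one must make precise sense of the completed de Rham complex as a genuine sheaf with the expected stalks, and verify that passing to stalks commutes with the completion defining $\cdR$, so that the pointwise hypothesis of \Cref{def:ldRa} really controls the comparison map $c$. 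Once the completed de Rham complex is established as a sheaf with these stalks, the remaining steps are formal consequences of hyperdescent.
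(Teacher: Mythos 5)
Your proposal is correct and is essentially the paper's own argument written out in full: the paper's one-line proof simply observes that the underlying space of a derived manifold has finite covering dimension, which (via \Cref{st:hypercompleteness_of_dMfld}) makes the sheaf topos hypercomplete, so that the stalkwise equivalence supplied by the \ldRa hypothesis globalizes exactly as you describe. The only difference is one of exposition: the paper leaves the sheafification of \(\cdR\), the unit comparison map \(\ul{\R}\to\cdR_{\ul{\M}}\), and the passage to global sections implicit, whereas you spell these steps out (and rightly flag the compatibility of stalks with completion as the point needing care).
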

		\begin{remark}
			From an algebro-geometric perspective, the above theorem is somewhat surprising. Indeed, in the case of algebraic varieties, the derived de Rham cohomology is isomorphic to the cohomology of the constant sheaf, but only over the complex numbers. If we try to extend the above theorem to real algebraic varieties, we immediately get counterexamples even in the smooth case. For instance, degree zero algebraic de Rham cohomology of a hyperbola over the reals is one-dimensional, even though the constant sheaf cohomology is two-dimensional because there are two connected components. 
		\end{remark}

		\Cref{st:local_to_global_de_Rham} easily extends to the following more general class of stacks (see \S \ref{sec:geometric_stacks} for a formal definition), which includes quotients of derived manifolds by smooth group actions, as well as other ``derived Lie groupoids''.
		\begin{definition}[Informal]
			A \emph{geometric stack} is presentable by a (higher) groupoid internal to derived manifolds. 
		\end{definition}	
		We say that a geometric stack is \ldRa if it satisfies the following definition.
		\begin{definition}[{\ref{def:ala_for_geometric_stacks}}]
			Let \(\X\) be a geometric stack. Then we say that \(\X\) is \ldRa if there exists a smooth presentation of the form \(U\to \X\) with \(U\) being a coproduct of \ldRa derived manifolds.
		\end{definition}
		We note that by \Cref{prop:ala_is_presentation_independent} if a geometric stack is \ldRa, then it is \ldRa for any smooth presentation. The following theorem shows that the derived de Rham cohomology of a geometric stack is equivalent to the cohomology of the constant sheaf on it.
		\begin{theorem*}[{\ref{thm:de_Rham_for_geometric}}]
			Let \(\X\) be an \ldRa geometric derived differentiable stack. Then, the derived de Rham cohomology of \(\X\) is equivalent to the cohomology of the constant sheaf on \(\X\).
		\end{theorem*}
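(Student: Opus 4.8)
The plan is to reduce the stacky statement to the case of derived manifolds already settled in \Cref{st:local_to_global_de_Rham} by means of smooth descent along an atlas. Both invariants in play --- derived de Rham cohomology $\cdR_{(-)}$ and the cohomology of the constant sheaf $R\Gamma(-,\underline{\R})$ --- are functors on derived differentiable spaces related by a natural de Rham comparison map, and each is computed on a geometric stack by descent from a smooth presentation. So the strategy is: (i) choose a good presentation; (ii) show the comparison is a levelwise equivalence on the associated \v{C}ech nerve; (iii) pass to the limit.

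First I would fix a smooth presentation $p\colon U \to \X$ in which $U$ is a coproduct of \ldRa derived manifolds; such a presentation exists because $\X$ is \ldRa, and by \Cref{prop:ala_is_presentation_independent} the particular choice is immaterial. Form the \v{C}ech nerve $U_\bullet$, whose $n$-th term is $U_n = U\times_\X \cdots \times_\X U$ ($n+1$ factors) --- a derived manifold, since derived manifolds are closed under fiber products and $p$ is representable --- and whose geometric realization recovers $\X$. By the smooth descent enjoyed by each side, one has
\[
\cdR_\X \simeq \lim_{[n]\in\Delta} \cdR_{U_n}, \qquad R\Gamma(\X,\underline{\R}) \simeq \lim_{[n]\in\Delta} R\Gamma(U_n,\underline{\R}),
\]
and the global de Rham comparison map is the limit of the levelwise comparison maps.

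Next I would check that every term $U_n$ is again a coproduct of \ldRa derived manifolds. Each face map $U_n \to U_{n-1}$ is an iterated base change of the smooth atlas $p$, hence a submersion, so the composite $U_n \to U_0 = U$ is smooth and locally in the source a projection $V\times\R^k \to V$ onto an open of $U$. Since being \ldRa is a stalk-local condition determined by the classical truncation (\Cref{ala_is_determined_by_classical}), and since crossing with an affine space $\R^k$ leaves the derived de Rham stalk unchanged (the Poincar\'{e} lemma holds in the $\R^k$-directions), $U_n$ inherits the \ldRa property from $U$. Therefore \Cref{st:local_to_global_de_Rham} applies levelwise, giving an equivalence $\cdR_{U_n} \xrightarrow{\ \sim\ } R\Gamma(U_n,\underline{\R})$ for every $n$. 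Because the comparison map is natural in the derived manifold, these equivalences are compatible with all cosimplicial structure maps, hence assemble into an equivalence of cosimplicial objects; passing to the limit over $\Delta$ yields $\cdR_\X \simeq R\Gamma(\X,\underline{\R})$.

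The main obstacle is the descent step for $\cdR$. One must reconcile the intrinsic definition of derived de Rham cohomology of the stack --- built from the cotangent complex $\LL_\X$ and its Hodge-type completion --- with the totalization $\lim_\Delta \cdR_{U_\bullet}$ computed from the atlas, i.e. one must know that $\cdR$ satisfies smooth descent. The delicate point is interchanging the completion (a filtered limit over the Hodge filtration) with the totalization over $\Delta$ and with the exterior powers of the cotangent complex; while both operations are limits and thus commute formally, verifying that the completed, filtered object genuinely sheafifies along the smooth topology --- so that the two descent presentations of $\cdR_\X$ agree on the nose --- is where the real work lies. Once this compatibility is in hand, the theorem follows from the levelwise application of \Cref{st:local_to_global_de_Rham} together with naturality of the comparison map.
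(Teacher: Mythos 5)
Your strategy---\v{C}ech descent along an atlas, levelwise application of \Cref{st:local_to_global_de_Rham}, then passage to the limit---is sound for \(1\)-geometric stacks, but it has a genuine gap for \(n\ge 2\): the claim that each term \(U_n=U\times_\X\cdots\times_\X U\) of the \v{C}ech nerve is a derived manifold \enquote{since derived manifolds are closed under fiber products and \(p\) is representable} is false in general. By \Cref{def:n-geometric-stacks}, an atlas \(p:U\to \X\) of an \(n\)-geometric stack is representable only by \((n-1)\)-geometric stacks, not by derived manifolds; so even when \(U\) itself is a coproduct of derived manifolds, \(U\times_\X U\) is in general only an \((n-1)\)-geometric stack. (For instance, if \(\X=B^2G\) is the double classifying stack of an abelian Lie group with atlas \(*\to \X\), then \(*\times_\X *\simeq BG\), which is \(1\)-geometric and not a derived manifold.) Consequently your levelwise invocation of \Cref{st:local_to_global_de_Rham} does not apply to the terms of the nerve, and the argument as written proves only the \(n=1\) case. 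This is precisely why the paper argues by induction on the geometricity level: the case \(n=0\) is \Cref{st:local_to_global_de_Rham}; for \(n=1\) the base change \(\Uc\times_\Xc M\to M\) of the atlas is, by \Cref{implicit_function_theorem}, locally a projection \(\R^k\times M\to M\), which gives descent of both theories along \(\Uc\to\Xc\); and for general \(n\) one base-changes the atlas to points and applies the inductive hypothesis to the fibres, which are \((n-1)\)-geometric. Your proof can be repaired by the same device---treat the \(U_n\) as \ldRa \((n-1)\)-geometric stacks and apply an inductive hypothesis levelwise instead of \Cref{st:local_to_global_de_Rham}---but at that point it coincides with the paper's induction.

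A second, smaller defect: you explicitly defer the verification that \(\cdR\) satisfies smooth descent (\enquote{where the real work lies}), so even the \(n=1\) case is incomplete as written. The paper does not leave this open: descent of \(\cdR\) along smooth surjections of derived manifolds is \Cref{st:smooth_descent_for_de_Rham}, deduced from \Cref{implicit_function_theorem} together with the vanishing of relative de Rham cohomology for morphisms admitting a section (\Cref{section_means_trivial_de_Rham}), and the proof of \Cref{thm:de_Rham_for_geometric} uses exactly this local-projection structure to obtain descent of both sides along the atlas simultaneously.
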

		As a particular case, when the stack is non-derived, i.e. presented by smooth manifolds, we recover the de Rham theorem for Lie groupoids proved by Behrend \cite{Behrend_De_Rham}. 

		\subsubsection{\texorpdfstring{\(\CI\)-refinement of de Rham cohomology}{C-infinity refinement of de Rham cohomology}}
		We have the following description for ordinary derived de Rham cohomology.
		\begin{theorem*}[{\ref{thm:de_Rham_vs_functions_on_inf}}]
			Let \(\X\) be a derived differentiable stack. Let \(\X^{\dR}\) denote its ``algebraic'' de Rham stack (\Cref{con:de_Rham_stacks}). Then the derived de Rham cohomology is equivalent to the derived algebra \(\CI(\X^{\dR}).\) 
		\end{theorem*}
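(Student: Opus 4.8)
The plan is to adapt the derived-algebro-geometric fact that functions on the de Rham stack compute Hodge-completed derived de Rham cohomology (Simpson, To\"en--Vezzosi, Bhatt) to the present \(\CI\)-context. The comparison map arises from the canonical projection \(\pi\colon\X\to\X^\dR\): functions on \(\X^\dR\) form the totalization of the cosimplicial algebra associated to the \v{C}ech nerve of \(\pi\), while \(\cdR_\X\) is the classical de Rham model for this totalization, the comparison being the identification of de Rham with infinitesimal cohomology valid in characteristic zero (here, over \(\R\)). First I would reduce to the affine case: both \(\X\mapsto\cdR_\X\) and \(\X\mapsto\CI(\X^\dR)\) send colimits of derived manifolds to limits and satisfy descent, so it suffices to produce a natural equivalence for \(\X=\Spec A\) with \(A\) a derived \(\CI\)-ring, after which writing a general stack as a colimit of affines recovers the general statement.

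Next I would unwind \(\X^\dR\) from \Cref{con:de_Rham_stacks}. The projection \(\pi\) is an effective epimorphism whose \v{C}ech nerve is the infinitesimal groupoid of \(\X\), with \(n\)-th term the formal completion of \(\X^{\times(n+1)}\) along the total diagonal. Hence
\[
\CI(\X^\dR)\;\simeq\;\Tot\Big(\,[n]\mapsto\CI\big(\wh{\X^{\times(n+1)}}\big)\,\Big),
\]
the cobar construction on functions on the formal neighborhood of the diagonal in \(\X\times\X\). For \(\X=\Spec A\) this neighborhood has functions the completion of \(A\otimes A\) along the kernel \(I\) of the multiplication map.

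The heart of the argument is a \HKR-type comparison on associated graded. I would filter the left side by the Hodge filtration, so that \(\gr^k\cdR_\X\simeq\wedge^k\LL_\X[-k]\) by construction, and the right side by infinitesimal order (the \(I\)-adic filtration). The conormal identification \(I/I^2\simeq\LL_\X\) presents the associated graded of the cosimplicial algebra as a cosimplicial symmetric algebra on copies of \(\LL_\X\), whose totalization is computed by Koszul duality between \(\Sym\) and \(\wedge\), yielding \(\wedge^k\LL_\X[-k]\) in weight \(k\). Both filtrations are complete---this is exactly why the left side is the completed \(\cdR\), and on the right completeness holds by construction of the formal neighborhood---so the equivalence on associated graded lifts to an equivalence of filtered objects, hence of the underlying algebras; naturality in \(A\) then finishes the reduction of the first step.

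The main obstacle I anticipate is this \HKR computation in the \(\CI\)-setting. One must ensure that the cotangent complex appearing is precisely the one used in the definition of \(\cdR_\X\), that the \(\CI\)-completions and tensor products are compatible with the commutative-algebra filtration defining the \emph{algebraic} de Rham stack \(\X^\dR\), and that the Koszul-duality identification respects the cosimplicial differentials rather than holding only termwise. Controlling the convergence of the totalization against these filtrations is where the genuine work lies.
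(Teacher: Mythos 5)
Your overall architecture---reduce to affines, identify \(\CI(\X^{\dR_\Com})\) with the totalization of functions on the infinitesimal groupoid, compare that totalization with \(\cdR\) by a filtered/HKR-type argument, then globalize by Kan extension---matches the paper's, and your filtered comparison is in substance the paper's \Cref{thm:Adams_completion_vs_de_Rham} combined with \Cref{lem:de_Rham_vs_functions_on_inf}. The gap is in the step you treat as formal: the claim that \(\pi\colon \X\to\X^{\dR_\Com}\) is an effective epimorphism whose \v{C}ech nerve computes \(\X^{\dR_\Com}\), so that \(\CI(\X^{\dR_\Com})\simeq \Tot\,\CI(\check{C}^\bt(\pi))\). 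An \(R\)-point of \(\X^{\dR_\Com}\) is by definition an \(R^{\redcom}\)-point of \(\X\), and lifting it (even locally on \(\Spec R\)) to an \(R\)-point of \(\X\) is an infinitesimal lifting problem, solvable precisely when \(\X\) is formally smooth; this is why the paper's \Cref{st:de_Rham_stack_is_presented_by_the_infinitesimal_groupoid} carries a formal smoothness hypothesis. Derived manifolds are exactly the objects for which that hypothesis fails---a zero locus \(\Spec \CI(\R^n)/(f_1,\ldots,f_m)\) has no reason to admit infinitesimal lifts---so your identification breaks down at the only point where the theorem has content. (The \v{C}ech nerve of \(\pi\) is indeed the infinitesimal groupoid; what fails is that its colimit recovers \(\X^{\dR_\Com}\).)

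What is true, and what the paper actually proves, is the weaker function-level statement \(\CI(\X^{\infcom})\simeq\CI(\X^{\dR_\Com})\), and this requires an extra idea: embed the derived manifold \(X\) into a formally smooth \(Y\) (possible by the very definition of derived manifolds), replace \(Y\) by the formal completion \(Y^{\wcom}_X\), observe that \(Y^{\dR_\Com}\simeq X^{\dR_\Com}\) because the two have the same \(\Com\)-reduction, and then run descent along \(Y\to Y^{\dR_\Com}\)---legitimate since \(Y\) \emph{is} formally smooth---together with a bisimplicial comparison whose rows are controlled by the identification of Adams completion with derived completion (\Cref{prop:Adams_vs_Derived}). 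This is \Cref{thm:functions_on_inf_vs_functions_on_dR}, and it is the missing ingredient your sketch needs before your HKR step can even be applied to the correct cosimplicial object. Your closing worries (compatibility of filtrations, convergence of the totalization) are real but secondary; the formally-smooth-embedding argument is the one idea without which the proof does not close.
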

		This suggests that one can obtain a different notion of de Rham cohomology for a differently defined de Rham stack. We show that for a suitable definition of the de Rham stack, one can always get an isomorphism with the cohomology of the constant sheaf.
		\begin{definition*}[{\ref{con:CI-derived-de-Rham}}]
			Let \(\X\) be a derived differentiable stack. Let \(\X^{\dR_{\CI}}\) denote its \(\CI\)-de Rham stack (\Cref{con:de_Rham_stacks}). Then its \emph{\(\CI\)-de Rham cohomology} is defined to be the algebra of \(\CI\)-functions on its \emph{\(\CI\)-de Rham stack} (\Cref{def:CI-de_Rham-coh}).  
		\end{definition*}
		\begin{theorem*}[{\ref{CI-infinity-de-Rham=const-sheaf}}]
			Let \(\M\) be a derived manifold. Then, the \(\CI\)-derived de Rham cohomology of \(\M\) (as a sheaf on the underlying topological space \(\M(\R)\)) is equivalent to the cohomology of the constant sheaf on the space \(\M(\R)\).
		\end{theorem*}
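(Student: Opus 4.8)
The plan is to prove the statement sheaf-locally on \(\M(\R)\) and then pass to global sections. Both objects in question are sheaves of complexes of \(\R\)-modules on \(\M(\R)\): on one side the constant sheaf \(\ul{\R}\), and on the other the sheafification \(\Fc\) of the presheaf \(U \mapsto \cdR_{\CI}(\M|_U)\) sending an open \(U \sse \M(\R)\) to the \(\CI\)-de Rham cohomology \(\CI\big((\M|_U)^{\dRcinf}\big)\) of the corresponding open derived submanifold. The projection \(\M \to \ast\) together with the unit of the \(\CI\)-de Rham construction supplies a natural comparison map \(\ul{\R} \to \Fc\), and since a morphism of sheaves is an equivalence exactly when it induces an equivalence on all stalks, it suffices to show that for every \(x \in \M(\R)\) the induced map \(\R \to \Fc_x\) is an equivalence. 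Taking global sections of the resulting equivalence \(\ul{\R} \simeq \Fc\) then yields the desired identification of \(\CI\)-de Rham cohomology with the cohomology of the constant sheaf.

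First I would identify the stalk \(\Fc_x\) with a germ-level invariant. Because the \(\CI\)-de Rham stack and the formation of its \(\CI\)-functions are local on \(\M(\R)\) and commute with the filtered colimit over shrinking neighbourhoods of \(x\), one has \(\Fc_x \simeq \colim_{U \ni x} \cdR_{\CI}(\M|_U)\), which I think of as the \(\CI\)-de Rham cohomology of the germ of \(\M\) at \(x\). The whole theorem thereby reduces to a single local statement — a \(\CI\)-Poincar\'e lemma: for every point \(x\) of every derived manifold this colimit is \(\R\), placed in degree \(0\).

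The heart of the argument, and the step where the \(\CI\)-refinement is indispensable, is this germ-level acyclicity. I would exploit two structural inputs. First, the \(\CI\)-de Rham stack depends on its input only through the \(\CI\)-reduction \(\redcinf\) (in the spirit of \Cref{ala_is_determined_by_classical}), which already discards the infinitesimal and flat data obstructing the algebraic comparison. Second, \(\cdR_{\CI}\) is invariant under \(\CI\)-homotopy; I would establish this by checking that the two endpoint inclusions into a \(\CI\)-cylinder agree after applying \(\redcinf\) and hence induce the same map on \(\CI\)-de Rham cohomology. Writing \(\M\) near \(x\) as a derived zero locus of smooth functions inside some \(\R^n\) with \(x\) at the origin, I would then use the softness of \(\CI\)-rings — flat functions, Hadamard's lemma, and partitions of unity — to produce a contracting homotopy of the germ-level \(\CI\)-de Rham complex. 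Although no individual neighbourhood of \(x\) need be contractible, so that at finite stages \(\cdR_{\CI}(\M|_U)\) records more than the constants, the germ becomes \(\CI\)-acyclic once shrinking is allowed: flat functions connect \(x\) to the surrounding cluster and let the Euler/scaling homotopy be modified so as to respect the (possibly wildly singular) zero locus. This is precisely where the contrast with the algebraic theory lives, since the completed exterior algebra underlying \(\cdR\) fails the Poincar\'e lemma at the accumulation point of the null sequence of \Cref{ex:de_Rham_counterexample}. The upshot is that the germ-level cohomology is \(\R\).

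With the germ-level \(\CI\)-Poincar\'e lemma established, the comparison map \(\ul{\R} \to \Fc\) is an equivalence on every stalk, hence an equivalence of sheaves, and taking derived global sections over \(\M(\R)\) proves the theorem. I expect the germ-level acyclicity at wild points to be the principal obstacle. Since \(\M(\R)\) need not be locally contractible — the null sequence already fails this at its limit point — ordinary contractibility arguments are unavailable, and the entire weight of the proof falls on converting the softness of \(\CI\)-rings (flatness, Hadamard's lemma, abundance of bump functions) into an honest contraction of the reduced germ in a regime where no algebraic or formal contraction exists.
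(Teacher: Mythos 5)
Your global-to-local skeleton coincides with the paper's (reduce to stalks, identify the stalk with the germ-level invariant \(\colim_{U\ni x}\CI(U^{\dRcinf})\)), but the heart of your argument --- germ-level acyclicity via a smooth contracting homotopy --- is both unsubstantiated and provably impossible. First, your proposed mechanism for \(\CI\)-homotopy invariance is false: the endpoint inclusions of \(X\) into the cylinder \(X\times\R\) at \(t=a\) and \(t=b\) do \emph{not} agree after applying \((-)^{\redcinf}\), since \(\CI\)-reduction removes infinitesimal/flat data but does not identify points at finite distance; the two inclusions remain distinct maps of reduced algebras and hence induce distinct maps of \(\CI\)-de Rham stacks. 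Second, and more fundamentally, the contracting homotopy you want to manufacture from flat functions and bump functions cannot exist at exactly the points the theorem is designed for: a smooth contraction of the germ would in particular be a contraction for the \(\Com\)-theory, and homotopy invariance of \(\cdR\) would then force the stalk of the Hodge-completed derived de Rham cohomology to be \(\R\), contradicting \Cref{ex:de_Rham_counterexample} at the accumulation point of the null sequence. So the ``modified Euler/scaling homotopy'' you invoke does not exist; the tension you acknowledge is an actual obstruction to your route, not a technicality to be engineered around. (You also never justify that checking on stalks suffices; the paper needs hypercompleteness of the sheaf topos, obtained from finite covering dimension in \Cref{st:hypercompleteness_of_dMfld}.)

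The missing idea is that the germ-level statement is purely algebraic and involves no homotopy at all. By \Cref{st:open_means_open_on_deRham}, the stalk of the \(\CI\)-de Rham stack at \(x\) is the \(\CI\)-de Rham stack of the \(\CI\)-local ring \(A_x=\colim_{U\ni x}\CI(U)\), whose functor of points is \(R\mapsto \Hom_{\CI\Alg}(A_x,R^{\redcinf})\). The paper then invokes \Cref{st:CI-inf_radical=RJ-radical} and \Cref{cor:CI-red=RJ-red}: for the finitely presented test algebras \(R\), the \(\CI\)-radical coincides with the real Jacobson radical (\Cref{def:RJ}), so \(R^{\redcinf}=R^{\red_{\RJ}}\) embeds into a product of copies of \(\R\). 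Since \(A_x\) is a \(\CI\)-local ring with residue field \(\R\), every homomorphism \(A_x\to R^{\redcinf}\) kills the maximal ideal \(\m_x\) and factors through \(\R\). Hence the stalk of the \(\CI\)-de Rham stack is \((\Spec\R)^{\dRcinf}\), whose algebra of functions is \(\R\). This is where the \(\CI\)-refinement genuinely enters --- through the positivity/sum-of-squares argument behind \Cref{st:CI-inf_radical=RJ-radical} --- and it \emph{replaces}, rather than enables, the Poincar\'e-lemma-style contraction on which your proposal rests.
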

		The above result immediately implies the following a priori unexpected statement.
		\begin{corollary*}[{\ref{st:Hodge_filtration_on_the_constant_sheaf}}]
			A canonical (Hodge) filtration exists on the constant sheaf cohomology of a topological space underlying a derived manifold. This filtration is induced by the powers of the cotangent complex of the derived manifold. In particular, it is a derived version of the Hodge filtration on the constant sheaf cohomology of a smooth manifold.
		\end{corollary*}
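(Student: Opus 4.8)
The plan is to read the filtration off directly from the construction of $\CI$-de Rham cohomology and then transport it across the equivalence of \Cref{CI-infinity-de-Rham=const-sheaf}. By construction (\Cref{con:CI-derived-de-Rham}, in parallel with \Cref{con:derived_de_Rham}), the $\CI$-de Rham cohomology $\CI(\M^{\dRcinf})$ carries a natural (Hodge) filtration, coming from its presentation as the completion of a filtered derived commutative algebra; write $\Fil^\bullet$ for this filtration, so that the $k$-th associated graded piece is $\gr^k \simeq \wedge^k\LL_\M[-k]$, the shifted $k$-th exterior power of the cotangent complex. Completeness of the underlying object ensures that $\Fil^\bullet$ genuinely filters $\CI(\M^{\dRcinf})$ rather than merely decorating it.

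Given this, the argument is essentially formal. First I would record that \Cref{CI-infinity-de-Rham=const-sheaf} produces a canonical equivalence $\CI(\M^{\dRcinf}) \simeq R\Gamma(\M(\R);\ul{\R})$ of sheaves on $\M(\R)$ (equivalently, after global sections, of the associated cohomology objects). Since this equivalence is natural in $\M$, pushing $\Fil^\bullet$ forward along it equips $R\Gamma(\M(\R);\ul{\R})$ with a canonical filtration whose graded pieces are the images of $\wedge^k\LL_\M[-k]$. This is the asserted (Hodge) filtration, and its canonicity is inherited from the canonicity and naturality of the equivalence. It is worth stressing, as the statement does, that the filtration depends on the derived enhancement $\M$ and not on the bare topological space $\M(\R)$: it is the cotangent complex $\LL_\M$ that supplies the graded pieces.

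To justify calling this a \emph{derived version} of the classical Hodge filtration, I would specialize to the case where $\M$ is an ordinary smooth manifold. There $\LL_\M$ reduces to the cotangent bundle $\Omega^1_\M$ concentrated in degree $0$, so $\wedge^k\LL_\M[-k] \simeq \Omega^k_\M[-k]$, and $\Fil^\bullet$ coincides with the stupid (b\^ete) filtration $\Omega^{\geq p}_\M$ on the de Rham complex. The induced filtration on $R\Gamma(\M(\R);\ul{\R})$ is then exactly the classical Hodge filtration computed by the de Rham complex, confirming the final sentence of the statement.

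The only genuinely nontrivial point, and thus the main thing to be careful about, is that the equivalence of \Cref{CI-infinity-de-Rham=const-sheaf} be promoted to a \emph{filtered} statement. Mere existence of a filtration on $R\Gamma(\M(\R);\ul{\R})$ is automatic once one transports $\Fil^\bullet$, but to guarantee that its graded pieces are identified with $\wedge^k\LL_\M[-k]$ one needs the comparison map to respect the filtration by powers of the cotangent complex. I would therefore inspect the proof of \Cref{CI-infinity-de-Rham=const-sheaf} and check that the local contractibility / Poincar\'{e}-type argument underlying it can be run filtered-degree by filtered-degree, so that the equivalence refines to one of filtered objects; the corollary then follows immediately upon passing to associated gradeds.
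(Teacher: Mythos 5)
The gap is at the very first step, and it undermines the whole route. You begin by asserting that \(\CI(\M^{\dRcinf})\) carries a natural Hodge filtration with graded pieces \(\wedge^k\LL_\M[-k]\), \enquote{coming from its presentation as the completion of a filtered derived commutative algebra}, citing \Cref{con:CI-derived-de-Rham} in parallel with \Cref{con:derived_de_Rham}. No such presentation exists: \Cref{con:CI-derived-de-Rham} defines \(\CI\)-de Rham cohomology as the sheaf of functions on the \(\CI\)-de Rham stack, with no filtration anywhere in the construction, while the completion-of-a-filtered-algebra construction of \Cref{con:derived_de_Rham} computes functions on the \(\Com\)-de Rham stack (\Cref{thm:de_Rham_vs_functions_on_inf}), i.e. \(\cdR_\M\), not \(\CI(\M^{\dRcinf})\). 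This distinction is the central point of the paper, not a technicality: the natural complete filtered object with graded pieces \(\wedge^k\LL_\M[-k]\) that the paper provides is \(\cdR_\M\), and \Cref{ex:de_Rham_counterexample} shows that \(\cdR_\M\) is \emph{not} equivalent to constant sheaf cohomology in general (at the limit point of the null sequence the stalk of \(\cdR_X\) is infinite-dimensional while the stalk of the constant sheaf is \(\R\)). Consequently the \enquote{filtered-degree-by-filtered-degree} check you defer to the end cannot succeed: a filtered refinement of the equivalence of \Cref{CI-infinity-de-Rham=const-sheaf} with graded pieces \(\wedge^k\LL_\M[-k]\) would amount to the algebraic de Rham theorem for arbitrary derived manifolds, which the paper disproves. (Note also that the proof of \Cref{CI-infinity-de-Rham=const-sheaf} is not a Poincar\'{e}-lemma-type argument that could be run in filtered degrees; it is a statement about radicals and reductions of local \(\CI\)-rings and never sees a de Rham complex.)

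The paper's actual argument runs in the opposite direction, along a map that is generally \emph{not} an equivalence. By \Cref{prop:comparison_morphism_for_de_Rham_stacks} there is a comparison morphism \(\X^{\dRcom}\to\X^{\dRcinf}\) of de Rham stacks; applying \(\CI(-)\), identifying the functions on the source with constant sheaf cohomology via \Cref{CI-infinity-de-Rham=const-sheaf} and the functions on the target with \(\cdR_\X\) via \Cref{thm:de_Rham_vs_functions_on_inf}, one obtains a canonical morphism of sheaves \(\ul{\R}\to\cdR_\X\) (\Cref{prop:Com_de_Rham_of_stacks}). The Hodge filtration genuinely lives on \(\cdR_\X\), and the asserted filtration on constant sheaf cohomology is by definition its \emph{pullback} along this morphism; \enquote{induced by the powers of the cotangent complex} means induced from the filtration whose graded pieces are \(\wedge^k\LL_\M[-k]\), not that the resulting filtration has those graded pieces. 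Only when the comparison map is an equivalence---for smooth manifolds, or more generally \ldRa derived manifolds---does one recover the classical Hodge filtration on the nose; that part of your smooth-case discussion is correct, but it is a consequence of the paper's pullback construction rather than a consistency check validating yours.
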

		The corollary above shows that one has a canonical filtration on the constant sheaf cohomology for a large class of topological spaces. Hence, now it is easy to define variations of Hodge filtrations for derived manifolds. This could potentially provide interesting invariants of families.

		\Cref{CI-infinity-de-Rham=const-sheaf} immediately globalizes for arbitrary derived differentiable stacks, see \Cref{st:general_de_Rham_iso}. 
		These results suggest that a version of the Riemann--Hilbert correspondence holds for arbitrary derived differentiable stacks and an appropriate notion of \(D\)-modules. This closely aligns with the observations made by Scholze in their work on the geometrization of the local Langlands correspondence \cite{Scholze_Langlands} and the work of Rodríguez Camargo on analytic de Rham stacks \cite{Camargo}. In this direction, we observe that the above theorem is independent of a particular flavour of derived geometry and holds in the general setting of Fermat theories, see \Cref{sec:Fermatic}.
		\subsection{Why derived differential geometry?} Most of the results we prove are of interest just for closed subsets of \(\R^n\) equipped with a ring of smooth functions coming from the ambient space. So why bother to use the formalism of derived differential geometry, derived \(\CI\)-algebras and higher stacks? The answer is that we are naturally interested in the derived structure in the primary practical contexts, such as the study of moduli spaces of solutions to elliptic PDEs. For instance, in their recent work, Steffens \cite{Steffens_2024} shows how any such space of solutions is naturally a quasismooth derived manifold. 
		
		As a special case, we have the following fundamental problem arising in Floer's theory: we want to show that we have a moduli \emph{manifold} of solutions to the Floer equation (parametrizing flows between critical trajectories). This, in principle, requires a hard transversality result in the infinite-dimensional setting. However, suppose we are willing to work with \emph{derived manifolds}. In that case, we can reduce the problem to a much simpler Fredholm analysis question: is the moduli space of solutions finite-dimensional? Indeed, the derived structure on the non-transverse intersection captures most of the data we need to understand the moduli space. For instance, we can count the number of points in the moduli space by calculating the Euler characteristic of the corresponding derived algebra of functions. See \cite{Pardon}, \cite{Steffens_2024} for a discussion of this phenomenon in the language of derived manifolds, see \cite[Chapter 9]{Audin_Damian} for a textbook exposition of the classical approach. Moreover, to achieve similar results over an infinite-dimensional base or in equivariant settings, one is naturally drawn to using stacks as suitable global receptacles for this kind of locally derived data. This is one of the reasons why derived differential geometry is useful in practice.  

		Another reason is that most nice properties of de Rham cohomology remain true for singular spaces only if one passes to the appropriate derived versions. For example, contractibility invariance of derived de Rham cohomology is an essentially derived phenomenon, and we show that classical de Rham cohomology can be non-trivial even for locally contractible derived manifolds. This issue is discussed in \Cref{Appendix:Reiffen}, which provides a version of Reiffen's counterexample to the Poincaré lemma for singular hypersurfaces (\cite[\S 3]{Reiffen1967}) in the \(\CI\)-setting.
		\subsection{Contents} We now turn to a more systematic review of the text's contents. 

		\subsubsection{} \Cref{review_of_DDG} of the paper reviews a \enquote{light} version of the foundations of derived differential geometry. Over the past decade there was no shortage (\cite{spivak2010derived}, \cite{Borisov_Noel}, \cite{Carchedi_Steffens}, \cite{Nuiten}, \cite{BLX}, \cite{Steffens_thesis}, \cite{Carchedi_DG_manifolds}, \cite{Taroyan_23} to name some) of work on the subject of foundations, and we do not attempt to add another one to this list. Rather, we try to give all definitions as quickly as possible, somewhat sacrificing the theory's generality. In particular, we restrict our attention to derived manifolds in the sense of Spivak \cite{spivak2010derived} and Carchedi--Steffens \cite{Carchedi_Steffens}. However, importantly, the version of the theory that we introduce is as general as the original version of Spivak as is demonstrated in the foundational paper \cite{Carchedi_Steffens}, in particular, even though the definitions we use have a somewhat global appearence they are in fact fully local. We then define the category of derived stacks using the open cover topology on the category of derived manifolds. This version of the theory is justified not just by expository purposes but also by the fact that most of the constructions in higher algebra dealing with completions require a finite presentation condition. This condition is not satisfied by the general smooth loci. We return to this point in \Cref{questions_about_completions}. 
		
		We can not, however, avoid the general context of \(\CI\)-algebras because we need to consider non-finitely generated ideals in the rings of smooth functions. Such are, for example, ideals of functions vanishing at a general closed subset. Other examples of such non-finitely generated ideals are various radicals of ideals of smooth functions. These play an essential role in the technical part of our theory since the distinction between the \(\CI\)-refinement and commutative versions of the theory is based on the difference between the notion of \(\CI\) and the commutative radical of an ideal. We review these notions in \Cref{sec:reductions,sec:CI-algebras}.
        \subsubsection{} \Cref{completions_of_derived_CI_stacks} introduces the main higher algebra concepts we use in the text. There, we discuss several notions of completion for \(\CI\)-algebras. We show that these are equivalent under a suitable finiteness hypothesis. The results of this section are generally not surprising and are definitely known to the experts. The section exists mainly to alleviate psychological difficulties arising from considering completions of non-Noetherian rings and setting the notation.
		
		Importantly, in this section, we demonstrate (\Cref{prop:Adams_vs_Derived}) that the derived completion of a closed immersion of \(\CI\)-algebras can be calculated using a kind of (co)\v{C}ech resolution known as \emph{Adams completion}. This result is then used in the sequel to relate the derived de Rham cohomology with a Hartshorne-style cohomology of completion. The entirety of this and the following section is heavily inspired by the foundational work of Gaitsgory--Rozenblyum \cite{Gaitsgory_Rozenblyum_Ind} on derived algebraic geometry and essentially transports their results and ideas to the context of derived differential geometry.

		\subsubsection{} \Cref{derived_de_Rham} implements Bhatt's approach to derived de Rham cohomology \cite{Bhatt_Derived} in the setting of derived manifolds. There, we show that Hodge completed derived de Rham cohomology could be defined using Adams completion. We also show that the derived de Rham cohomology satisfies natural properties like homotopy invariance, which shows that for locally smoothly contractible derived manifolds (no matter how singular), the derived de Rham cohomology is equivalent to the cohomology of the constant sheaf. 
		\subsection{} \Cref{sec:de_Rham_theorems_for_cdR} investigates additional cases where the de Rham theorem based on derived de Rham cohomology holds or fails. In particular, we use the results of \Cref{completions_of_derived_CI_stacks} to show that the derived de Rham cohomology is not equivalent to the cohomology of the constant sheaf \(\R\) for some derived manifolds (\Cref{ex:de_Rham_counterexample}). On the other hand, we show that the de Rham theorem holds for analytic spaces and zero sets of subanalytic functions. This is a consequence of the highly regular local structure of these spaces. See the fundamental works of Bierstone--Milman \cite{Bierstone_Milman_Subanalytic,Bierstone_Milman} as well as the work of Brasselet--Pflaum \cite[\S 8]{Brasselet_Pflaum} for the kind of results on the local structure we are referring to.
		\subsubsection{} \Cref{sec:de_Rham_stacks} introduces de Rham stacks of Borisov--Kremnizer \cite{Borisov_Kremnizer} in the context of derived differentiable stacks. Importantly, we show (\Cref{thm:de_Rham_vs_functions_on_inf}) that the derived de Rham cohomology of \Cref{derived_de_Rham} is equivalent to the (derived) algebra of functions on the \enquote{algebraic} de Rham stack. Defining the \(\CI\) derived de Rham cohomology analogously as functions on the \(\CI\)-de Rham stack, we obtain a comparison morphism between the two theories. 
        \subsubsection{} \Cref{sec:de_Rham_theorems} contains the main abstract result of the paper: the \(\CI\)-derived de Rham cohomology is equivalent to the cohomology of the constant sheaf (\Cref{CI-infinity-de-Rham=const-sheaf}). We then put together the results of the previous sections to demonstrate that the failure of the \enquote{algebraic} de Rham theorem is a manifestation of the difference between the two kinds of de Rham stacks. When the derived manifold or stack is locally de Rham acyclic the functions on the two kinds of de Rham stack coincide. As shown by \Cref{ex:de_Rham_counterexample}, they could be different otherwise. We also globalize this result for derived \(n\)-geometric stacks in \Cref{thm:de_Rham_for_geometric}. 
		
		\subsubsection{} \Cref{sec:Fermatic} provides a vast generalization of the results in the paper to the setting of arbitrary Fermat theories. This includes holomorphic functions, polynomials, analytic functions, and others. We show that the dichotomy between the commutative and intrinsic notion of the de Rham stack persists in most of these contexts. However, in most known cases, the intrinsic notion of de Rham cohomology coincides with the commutative one. This, however, is highly non-formal and usually requires a type of \enquote{regularity} argument, such as Hironaka's resolution of singularities. Another point to make here is that, in principle, such a comparison should be possible not only between some Fermat theory and the theory of commutative algebras but between an arbitrary extension of Fermat theories, such as, for example, the theory of analytic and \(\CI\)-functions. We hope to address this idea in future work.
        
		One can interpret the results of \Cref{sec:Fermatic} in the following way. There is a different notion of de Rham stack for each kind (Fermat theory) of algebras of functions, and only this notion can a priori yield a type of the ``de Rham theorem'' for schemes of this type. Moreover, in the context of \(\CI\)-algebras, the \(\CI\)-de Rham theorem hints at the existence of a different notion of differential operators and \(D\)-modules, which is a proper place for the study of the derived de Rham cohomology. These differential operators morally correspond to the derivatives with respect to functions that can decay faster than polynomials. To our knowledge, the first proposal that this should be the case was made by Borisov--Kremnizer in \cite{Borisov_Kremnizer}. The study of these objects and their connections to the existing theory of analytic de Rham stacks of Rodríguez Camargo \cite{Camargo} and other approaches to analytic geometry \cite{Clausen_Scholze}, \cite{Ben_Bassat_Kremnizer} is a topic for future research.
		 
		\subsection{} In \Cref{Appendix:Reiffen} we discuss a classical counterexample of Reiffen to the holomorphic Poincaré lemma \cite[\S 3]{Reiffen1967}. We observe that the same example works for the \(\CI\)-de Rham cohomology. This shows that the de Rham theorem for derived manifolds is a genuinely derived phenomenon.
		\subsection{Algebraic apparatus and notational conventions} In this section, we review the basic instruments from category theory and higher algebra that we use. We also introduce some notational conventions present in the text.
        \subsubsection{} This paper is written using the language of higher categories (in the sense of Lurie \cite{Lurie_HTT}, \cite{Lurie_HA}). We try to formulate everything model-independently, but a favourite model, such as quasicategories or complete Segal spaces, can be used for concreteness. Also, all constructions can be made using model categories or even just explicit differential-graded algebras with special properties. We choose to work with higher categories because we believe the exposition is more streamlined in this language and, thus, more accessible to a broader audience. 
		
		We recognize that we sacrifice some transparency by adopting this approach. Still, we think that the explicit models existent in the literature (\cite{AKSZ}, \cite{Borisov_Noel}, \cite{Carchedi_Roytenberg}, \cite{BLX}, \cite{Carchedi_DG_manifolds}) can be easily used to translate our results into a more explicit language if need be.
        \subsubsection{} We use the formalism of derived \(\CI\)-algebras to describe derived manifolds. That is, we consider functors from the category \(\CI\) to the category \(\Spc\) of homotopy types, which are weakly product preserving. Thus, it is not exactly the same category of simplicial \(\CI\)-algebras; however, they are equivalent as higher categories, and we do not distinguish between them. Another small point is that we call \(\CI\)-algebras objects that are usually called \(\CI\)-rings to make a point that they are algebras over the algebraic theory \(\CI\), this coincides with the terminology which is usually used for commutative algebras. The choice to use derived \(\CI\)-algebras rather than more explicit models is primarily motivated by the simplicity of the exposition. One could do all the constructions using differential graded algebras (see \cite{Carchedi_DG_manifolds}) or curved \(L_\infty[1]\)-algebras (see \cite{BLX}), but we choose to work with derived \(\CI\)-algebras because most of the constructions are conceptually easier. 
        \subsubsection{} To capture the sheaf theory of topological spaces underlying derived manifolds and derived differentiable stacks, we use the notion of shape. We interpret it following Lurie's approach to geometric topology \cite[\S 7]{Lurie_HTT}. The primary motivation for this is that in many cases of interest, such as quotient stacks for group actions with open orbits, a derived differentiable stack has no ``nice'' underlying topological space. In addition, this approach enables us to perform all constructions in a nice categorical way and connects this work to the recent developments in the geometric topology of Ayala--Francis--Tanaka \cite{Ayala_Francis_Tanaka}, Volpe \cite{Volpe_Six} and others.
		
		\subsection{Derived \texorpdfstring{\(\CI\)}{C-infinity}-stacks and other formalisms for differentiable spaces} To globalize our results, we use the formalism of higher stacks. As with many other things in this text, this is done to streamline the exposition and cover all constructions of practical interest. Below, we quickly explain how this formalism compares to other common approaches to differentiable spaces.
        \subsubsection{Diffeology} This formalism develops an approach to differentiable spaces that may not have enough distinct points in the usual sense. Examples of these are quotients for some group actions, leaf spaces of foliations, etc. See the foundational text of Iglesias-Zemmour \cite{Diffeology_I-Z} for a textbook exposition. 
        
 		In the language of derived differentiable stacks, diffeological spaces are a particular kind of stack. In these terms, a diffeological space is a \(0\)-truncated concrete derived differentiable stack with trivial derived structure. See \cite{Pavlov_Diffeo} for a review of this connection in the language of differentiable stacks. Finally, an interesting connection between diffeology and non-commutative geometry, and thus our set-up is provided by the work of Iglesias-Zemmour--Prato \cite{IZ-Prato}.
        \subsubsection{Quasifolds} Quasifolds are another notion aimed at describing spaces that are not manifolds in the usual sense. They were introduced in \cite{Prato_Quasifolds} and describe differentiable spaces which can be glued from copies of \(\R^n\)'s using smooth transition functions, which are not necessarily diffeomorphisms. In the language of derived differentiable stacks, quasifolds are a particular kind of \(1\)-truncated stacks with trivial derived structure. 
		\subsubsection{Subcartesian spaces and Sikorski's differential spaces} 
		Subcartesian space is another approach to describing singular space with a \(\CI\)-structure. Recently, there has been a renewed interest in the subject in the works of Lerman (\cite{Lerman_DF_a,Lerman_Cartan_b}) and Karshon--Lerman (\cite{Karshon_Lerman}). These are special kinds of \(\CI\)-schemes, which are point determined (see \cite{Karshon_Lerman}). As a result, subcartesian spaces can be treated using derived manifolds and stacks on derived manifolds. 
		\subsubsection{Banach and Fréchet manifolds}
		Banach or Fréchet manifolds can also be presented by derived differentiable stacks. They can be viewed as stacks on finite-dimensional derived manifolds despite their infinite-dimensional nature. As such, they could be treated using the formalism of this text. See \cite[\S 3]{Steffens_2024} for a detailed comparison of the two approaches. 
		\subsection{Acknowledgements} I would like to thank Nick Rozenblyum and Dmitri Pavlov for their guidance and support while writing this paper. I would also like to thank Marco Volpe for explaining the key results of modern geometric topology to me. 
		I am also grateful to Joost Nuiten and Pelle Steffens for insightful discussions on the foundations of derived differential geometry. I would  like to thank Dennis Borisov, David Carchedi, and Owen Gwilliam for their interest in my work and their support. I would also like to thank the following people for providing extremely valuable comments for the first version of this text, listed here in alphabetic order: Aidan Lindberg, James Munday, Elisa Prato, Jon Pridham, and Arun Soor.
		Finally, I thank my partner Alisa Chistopolskaya for her constant moral support and encouragement. This research is supported by Vanier Canada Graduate Scholarship, funding number CGV --- 192668.
	\part{Derived de Rham cohomology}
	\section{A review of derived differential geometry}\label{review_of_DDG}
	In this section, we review basic ideas of derived differential geometry. The theory presented here is based on the framework of \(\CI\)-algebras. These provide an axiomatic version of the notion of a \enquote{ring of smooth functions}. A detailed exposition of the theory and its generalizations can be found in the work of Steffens \cite{Steffens_thesis}, \cite{Steffens_2023}, Nuiten \cite{Nuiten}, Pridham \cite{Pridham_DG}, and others. A more synthetic approach based on universal properties of derived manifolds is also possible, see the work of Carchedi--Steffens \cite{Carchedi_Steffens}. The gist of this approach is given by their \cite[Theorem 1.4]{Carchedi_Steffens}, we recount below (\Cref{thm:uni_property_CS}).
	
	We choose a somewhat simplified version of the geometric theory to make the exposition more accessible. In particular, on the geometric side, we ignore non-finitely presented \(\CI\)-algebras, thus avoiding the discussion of germ-determined and fair \(\CI\)-algebras.
	
	Non-finitely presented \(\CI\)-algebras are, of course, extremely important for the theory, and we will use them extensively on the algebraic side. They are, however, not necessary to set up a sufficiently robust theory of derived manifolds and stacks. We also ignore several other possible topologies one can put on the category of derived manifolds, see \cite[Appendix 2]{Moerdijk_Reyes_book}, \cite[\S 6.1]{Taroyan_23} for a list. In principle, the results obtained here should be independent of such a choice, although we do not investigate this matter in sufficient detail. 
	A more thorough discussion of arbitrary \(\CI\)-algebras is relegated to \Cref{sec:CI-algebras}.
	\subsection{Derived manifolds and derived \texorpdfstring{\(\CI\)}{C-infinity}-stacks}\label{sec:intro_to_der_mfld}
	\begin{definition}[{Category \(\CI\)}]\label{def:category_CI}
		Consider a category \(\CI\) with objects being the non-negative integers corresponding to real Cartesian spaces \(\R^n\). The morphisms in \(\CI\) are given by \(\CI\) maps between these spaces. That is, we have the following sets of morphisms \(\CI(m,n)=\CI(\R^m,\R^n)\).
	\end{definition}
	\begin{definition}
		The category \(\CI\Alg\) of derived \(\CI\)-algebras is the category of product-preserving functors from \(\CI\) to the category \(\Spc\) of spaces. 
	\end{definition}
	\begin{remark}
		The above definition can be placed in the larger framework of Fermat theories, which also includes the theory of holomorphic functions and polynomials. For more, detail see \Cref{sec:Fermat_theories_def}. Even more broadly, this definition is an instance of the ``animation'' procedure, which is a way to systematically construct derived versions of mathematical objects, see for instance \cite[\S 5.1.4]{Cesnavicius_Scholze}.
	\end{remark}
	\begin{definition}\label{def:der_mfld}
		We define the category of \emph{derived manifolds} \(\dMfld\) as the full subcategory of \(\CI\Alg^\op\) spanned by derived pullbacks of the following form, as well as their finite limits and retracts.
		\[\begin{tikzcd}
			\M && {\R^n} \\
			\\
			{*} && {\R^m}
			\arrow[from=1-1, to=1-3]
			\arrow[from=1-3, to=3-3]
			\arrow[from=1-1, to=3-1]
			\arrow["O"', from=3-1, to=3-3]
			\arrow["\lrcorner"{anchor=center, pos=0.125}, draw=none, from=1-1, to=3-3]
		\end{tikzcd}\]
	\end{definition}
	\begin{remark}
		Equivalently, one may say that the category of \(\dMfld\) is opposite to the full subcategory of categorically compact (see \cite[Definition A.1.1.1]{Lurie_HTT}) derived \(\CI\)-algebras (see \Cref{def:fg_and_fp_CI}).
	\end{remark}
	Remarkably, the above definition of derived manifolds is fully local, even though in its construction we use objects which are global fibres of smooth maps between Cartesian spaces. Let us briefly explain why, simultaneously showing how to present every smooth manifold as a derived manifold in the sense of the above definition.
	\begin{example}[Smooth manifold as a derived manifold]\label{ex:smooth_mfld_derived}
		Consider a smooth manifold \(M\). By Whitney's embedding theorem, we may regard it as a closed submanifold of \(\R^N\) for some large enough \(N.\) By the tubular neighbourhood theorem, there exists an open subset \(M\sse U\sse \R^N\) which retracts onto \(M.\) Every open subset of \(\R^N\) can be presented as a closed subset in \(\R^{N+1}=\R^N_x\times \R_t\) given by the fibre of a single smooth function. Indeed, let \(\chi_U\) be the characteristic smooth function of an open subset \(U\), then \(U\) is diffeomorphic to the following global fibre.
		\[
		\begin{tikzcd}
			U\ar[r]\ar[d]& \R^{N+1}\ar[d, "\chi(x)t-1"]\\
			0\ar[r] & \R
		\end{tikzcd}
		\]
		Thus \(U\) is a derived manifold by the above argument and then by the tubular neighbourhood theorem the manifold \(M\) is a retract of \(U\) and thus is also a derived manifold.
	\end{example}

	In the sequel, we will restrict our attention to the category of derived manifolds. There is a more general notion of a derived \(\CI\)-scheme which relaxes finite presentation conditions of \Cref{def:der_mfld}; however, we will not consider it here. See \cite{Joyce_CInf_Geom} and \cite{Moerdijk_Reyes_book} for more details on this more general approach. The main reason is that completions of non-Noetherian rings are hard to understand. There exists an extensive literature on the subject, including \cite{Porta_et_al}, \cite{Yekuteli_WPR}, \cite{Yekuteli_Derived_Complete}, and \cite{Positselski}, to name a few.
	This leads to similar difficulties in building a robust theory of derived de Rham cohomology for non-Noetherian rings. 
	
	The following result of Carchedi--Steffens justifies the definition of derived manifolds given above.

	\begin{theorem}[{\cite[Theorem 1.4, Corollary 1.5]{Carchedi_Steffens}}]\label{thm:uni_property_CS}
		Let \(\Cc\) be an \(\infty\)-category with finite limits. Denote by \(\Fun^\pitchfork(\Mfld,\Cc)\) the \(\infty\)-category of \(\infty\)-functors preserving transverse pullbacks in the category \(\Mfld\) of manifolds. Then there is a canonical equivalence of categories induced by the inclusion of \(\Mfld\) into \(\dMfld\) as follows.
		\[
			\Fun^{\operatorname{lex}} (\dMfld,\Cc)\xrightarrow{\sim} \Fun^\pitchfork(\Mfld,\Cc).
		\]
		Here \(\Fun^{\operatorname{lex}}\) denotes the category of left exact functors from the category of derived manifolds. 
	\end{theorem}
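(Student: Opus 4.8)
The plan is to reduce the statement to the universal property of the compact objects of a nonabelian derived category, and then to match the two resulting descriptions of the functor category using the geometric input of \Cref{ex:smooth_mfld_derived}. First I would check that the restriction functor is even well defined. By construction $\dMfld$ admits finite limits, so the only point is that the inclusion $\Mfld\hookrightarrow\dMfld$ carries transverse pullbacks to pullbacks. For a transverse span $M\to P\leftarrow N$ of manifolds the fibre product computed in $\dMfld$ is the derived pushout of the associated $\CI$-algebras of functions, and transversality is precisely the Tor-independence condition guaranteeing that this derived pushout carries no higher homotopy and hence agrees with the classical fibre product $\CI(M\times_P N)$. Thus restriction along $\Mfld\hookrightarrow\dMfld$ does land left-exact functors in transverse-pullback-preserving ones, and the task is to produce an inverse.

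The formal heart is the identification $\CI\Alg=\PS(\CI^\op)$: product-preserving functors $\CI\to\Spc$ are the nonabelian derived (animated) category of the finite-product category $\CI$, whose representables are the free algebras $\CI(\R^n)$. Under this identification $\dMfld=(\CI\Alg^{\mathrm{fp}})^\op$ is the opposite of the compact objects, i.e. the closure of $\{\CI(\R^n)\}$ under finite colimits and retracts, matching \Cref{def:der_mfld}. The universal property of the compact objects of $\PS$ then yields, for any finitely complete $\Cc$, a natural equivalence
\[
\Fun^{\operatorname{lex}}(\dMfld,\Cc)\xrightarrow{\sim}\Fun^{\times}(\CI,\Cc)
\]
implemented by restriction along the Cartesian spaces $\CI\hookrightarrow\dMfld$, together with a recipe extending any product-preserving $g\colon\CI\to\Cc$ to a left-exact $\tilde g\colon\dMfld\to\Cc$.

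It now suffices to show that restriction along $\CI\hookrightarrow\Mfld$ induces an equivalence $\Fun^{\pitchfork}(\Mfld,\Cc)\xrightarrow{\sim}\Fun^{\times}(\CI,\Cc)$, compatibly with the previous step, since the theorem's functor is then the composite of these two restrictions. The forward map lands in product-preserving functors because a finite product is the transverse pullback over the point $\R^0$. For the candidate inverse, I would send $g$ to $\tilde g|_{\Mfld}$, which preserves transverse pullbacks because $\tilde g$ is left-exact and $\Mfld\hookrightarrow\dMfld$ preserves them by the first paragraph. One composite is immediate from $\tilde g|_{\CI}=g$; the other reduces to proving $\tilde g|_{\Mfld}\simeq G$ for a given $G\in\Fun^{\pitchfork}(\Mfld,\Cc)$.

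The main obstacle is exactly this reconstruction, and this is where the geometry enters. Using \Cref{ex:smooth_mfld_derived}, every manifold $M$ is presented as a retract of an open $U\sse\R^N$, where $U$ is the transverse fibre of a single smooth function and hence a transverse pullback of Cartesian spaces; therefore $\Mfld$ is generated from $\CI$ under transverse pullbacks and retracts inside $\dMfld$. Evaluating $G$ and $\tilde g|_{\Mfld}$ on such a presentation, and using that $G$ preserves the transverse pullback (by hypothesis) and the retract (automatically, since functors preserve split idempotents), while $\tilde g$ preserves all finite limits, shows that both compute the same limit of $g$ over the presenting diagram. The genuine work lies in upgrading this objectwise agreement to a natural equivalence of functors on all of $\Mfld$, which I would extract from the universal property of the transverse-pullback-and-retract closure generated by $\CI$; the two essential analytic inputs are the Tor-independence of transverse intersections and the Whitney-embedding and tubular-neighbourhood presentation of \Cref{ex:smooth_mfld_derived}.
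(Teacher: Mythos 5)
First, a structural point: the paper does not prove \Cref{thm:uni_property_CS} at all --- it is imported as a black box from \cite[Theorem 1.4, Corollary 1.5]{Carchedi_Steffens} to justify \Cref{def:der_mfld}, so there is no in-paper proof to compare yours against; the relevant benchmark is the original argument of Carchedi--Steffens. Measured against that, your outline has the right architecture, and indeed roughly the same one: (a) transversality forces vanishing of the higher homotopy of the derived \(\CI\)-pushout, so \(\Mfld\hookrightarrow\dMfld\) preserves transverse pullbacks and restriction is well defined; (b) both functor categories are compared with product-preserving functors on Cartesian spaces; (c) Whitney embedding and tubular neighbourhoods, as in \Cref{ex:smooth_mfld_derived}, reduce manifolds to retracts of opens and opens to transverse zero loci.

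However, two of your steps are genuine gaps, not routine verifications. First, your ``formal heart'' --- the equivalence \(\Fun^{\operatorname{lex}}(\dMfld,\Cc)\simeq\Fun^{\times}(\CI,\Cc)\) for \emph{every} finitely complete \(\Cc\) --- is not a consequence of the universal property of compact objects of \(\PS(\CI^\op)\). The category \(\dMfld\) contains retracts by definition, and it genuinely needs them: a manifold whose tangent bundle is not stably trivial, such as \(\CP^2\), cannot be an iterated finite limit of Cartesian spaces, since its cotangent complex would then be presented by a bounded complex of finite free modules, forcing \(\Omega^1_{\CP^2}\) to be stably free. In the \(\infty\)-categorical setting, splitting a coherent idempotent is \emph{not} a finite limit, so extending a product-preserving functor from \(\CI\) across these retracts requires idempotent completeness of \(\Cc\) --- a hypothesis the theorem does not make. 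Carchedi--Steffens avoid this precisely because \(\Mfld\) (unlike \(\CI\)) already contains the needed splittings (images of smooth idempotents are submanifolds), so the comparison must be run against \(\Mfld\) directly rather than factored through \(\Fun^{\times}(\CI,\Cc)\); your route loses exactly this. Second, the step you defer --- upgrading the objectwise identification \(\tilde g|_{\Mfld}(M)\simeq G(M)\) to an equivalence of functors by appealing to ``the universal property of the transverse-pullback-and-retract closure generated by \(\CI\)'' --- is circular: that universal property \emph{is} (the classical half of) the statement being proved, and establishing it, with all naturality and coherence data, is where essentially the entire technical content of \cite{Carchedi_Steffens} lives. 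So your proposal is a reasonable reconstruction of the known proof's skeleton, but its two load-bearing steps are respectively unavailable in the stated generality and assumed rather than proved.
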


	\begin{definition}
		We call the finite limit defining \(\M\) calculated in the category of topological spaces the \emph{underlying topological space} of \(\M\). We denote it by \(\M(\R)\) in analogy with the algebro-geometric situation. 
	\end{definition}
	\begin{remark}
		In \Cref{con:spectrum_of_a_CI_algebra}, we explain how the above definition can be used to produce a locally ringed space corresponding to a derived manifold. In particular, one can think of a derived manifold as a closed subset of \(\R^n\) with a sheaf of derived \(\CI\)-algebras encoding the smooth structure and infinitesimal data.
	\end{remark}
	\begin{example}\label{ex:non-zero_div_in_CI}
		Consider a smooth function \(f\) on \(\R^n\) such that its zero set is nowhere dense. Then the derived manifold corresponding to its zero locus corresponds to the following derived \(\CI\)-algebra presented as a differential graded \(\CI\)-algebra.
		\[
		\CI(\R^n)[\xi],\quad |\xi|=-1,\quad \pd\xi=f.
		\]
		Note that the condition on the zero set is necessary for \(f\) to be a non-zero divisor in the ring \(\CI(\R^n)\). 
	\end{example}
	\begin{definition}[{Coverage on \(\dMfld\)}]
		We define the coverage on the category \(\dMfld\) by considering families of morphisms inducing \emph{jointly surjective open embeddings} on the underlying topological spaces and having compatible derived structure.
	\end{definition}
	\begin{remark}
		It might not be quite clear why the above definition makes sense, so we briefly sketch the main idea. In \Cref{ex:smooth_mfld_derived} we have shown how to present an arbitrary open subset of \(\R^N\) as a fibre of single smooth function, the same reasoning applies here, by adding one more coordinate keeping track of invertibility one recovers an arbitrary open subset the topological space underlying a derived manifold as a derived manifold. See \cite[\S 5]{Carchedi_Steffens} for more detail.
	\end{remark}
	\begin{construction}\label{con:derived_CI_stacks}
		We define the category \(\CI\Stk\) of \emph{derived \(\CI\)-stacks} as the full subcategory of \(\Spc\)-presheaves in \(\CI\PreStk\) on \(\dMfld\) satisfying the sheaf condition with respect to the coverage defined above.
	\end{construction}
	Clearly, any closed subset of \(\R^n\) can be given a structure of a derived manifold. The following example shows that, similarly, one can present any open subset of \(\R^n\) as a derived manifold.
	\begin{example}[{\cite[Corollary 2.2]{Moerdijk_Reyes_book}}]\label{ex:open_subsets_of_derived_manifolds}
		Let \(U\) be an open subset of \(\R^n\), let \(\chi_U\) be a characteristic function of \(U\). Then \(\CI(U)\simeq \CI(\R^n\times\R_y)/(y\cd \chi(x)-1).\)	This is similar to the way one introduces a structure of an affine variety on an open subset of an affine variety in algebraic geometry. By, for instance, \cite[Proposition 4.20]{Carchedi_Steffens} every open subset of the underlying topological space of a derived manifold can be presented using a similar construction. 

		Similar to the algebro-geometric case, the case of an open subset in \(\R^n\) generalizes to the case of an arbitrary derived manifold and an open subset of its underlying topological space. Thus, taking open subsets of derived manifolds preserves the finite presentation condition that defines the category of derived manifolds. 
	\end{example}
	\subsection{Reductions and localizations of \texorpdfstring{\(\CI\)}{C-infinity}-algebras}\label{sec:reductions}

	We start by recalling the classical notion of localization for \(\CI\)-algebras.  See the classical textbook \cite[\S I.1]{Moerdijk_Reyes_book} for the discussion in the case of non-derived \(\CI\)-algebras. See \cite[Chapter 4]{Steffens_thesis} for the derived version of the same theory.
	\begin{definition}[{\(\CI\)-localization}]\label{def:CI-localization}
		Let \(A\) be a derived \(\CI\)-algebra. Consider an element \(s\in\pi_0A\). Then the \(\CI\)-localization of \(A\) in \(s\) is the derived \(\CI\)-algebra \(A\{s^{-1}\}\) which is the derived \(\CI\)-algebra presented by the following (derived) pushout.
\[\begin{tikzcd}
	{\CI(\R)} && {\CI(\R\setminus 0)} \\
	\\
	A && {A\{s^{-1}\}}
	\arrow[hook, from=1-1, to=1-3]
	\arrow["{x\mapsto s}"', from=1-1, to=3-1]
	\arrow[from=3-1, to=3-3]
	\arrow[from=1-3, to=3-3]
	\arrow["\lrcorner"{anchor=center, pos=0.125, rotate=180}, draw=none, from=3-3, to=1-1]
\end{tikzcd}\]
		Here \(x\) is the \(\CI\)-generator of \(\CI(\R)\). 
		
		More conceptually, the \(\CI\)-localization of \(A\) in \(s\) is the derived \(\CI\)-algebra satisfying the classical universal property of localization \emph{in the category of derived \(\CI\)-algebras}. That it is the initial object in the category of derived \(\CI\)-algebras equipped with a morphism from \(A\) inverting \(s\).
		It is a classical result in \(\CI\)-geometry
		 that the two definitions of \(\CI\)-localizations coincide. See \cite[Proposition 1.6]{Moerdijk_Reyes_book} for the classical version and \cite[Proposition 4.1.3.13]{Steffens_thesis} for the derived version. 
	\end{definition}
	\begin{example}
		As shown by \Cref{def:CI-localization} a prototypical example of a \(\CI\)-localization is the localization of the \(\CI\)-algebra \(\CI(\R)\) in the generator \(x\). This is the \(\CI\)-algebra of smooth functions on \(\R\setminus 0\). In this case, we can already see that this is quite far from being the same as the algebraic localization of \(\CI(\R)\) in~\(x\). The algebraic localization in this case would be the ring \(\CI\{x\}[x^{-1}]\) of rational functions with \(\CI\)-numerator and polynomial denominator. As such, it is not even a \(\CI\)-algebra.  
	\end{example}
	\begin{remark}[{\cite[Proposition 1.6]{Moerdijk_Reyes_book}}]
		There is also the following more intuitive way to think about \(\CI\)-localizations. Let \(A\) be a derived \(\CI\)-algebra. Pick some free presentation \(F\to A\). Then any element \(s\in \pi_0A\) lifts to an element in \(\wt{s}\in F\). One can think of \(F\) as an algebra of \(\CI\)-functions on the affine space \(\R^S\) for some set \(S\). Then the \(\CI\)-localization of \(A\) in \(s\) is the algebra of \(\CI\)-functions on the intersection of \(\Spec A\) and the complement of the zero locus of \(\wt{s}\) in \(\R^N\). 
	\end{remark}
	We can now define the notion of \(\CI\)-nilpotent elements in derived \(\CI\)-algebras. The definition is essentially the same as the classical one (see \cite{Moerdijk_Reyes_Localizations}), with the only difference being that we consider the elements in \(\pi_0 A\) for some derived \(\CI\)-algebra \(A\).
	\begin{definition}[{Nilpotent elements in \(\CI\)-algebras}]\label{def:nilpotent_elements} Let \(A\) be a derived \(\CI\)-algebra. We have two natural notions of nilpotent elements in \(A\).
		\begin{itemize}
			\item[\(\CI\)] An element \(s\in \pi_0A\) is called \emph{\(\CI\)-nilpotent} if the \(\CI\)-localization of \(A\) in \(s\) is trivial.
			\item[\(\Com\)] An element \(s\in \pi_0A\) is called \emph{\(\Com\)-nilpotent} if the algebraic localization of \(A\) in \(s\) is trivial. This is, of course, the usual notion of nilpotent elements.
		\end{itemize}
	\end{definition}
	\begin{definition}\label{definition_of_support}
		Let \(f\in \CI(\R^n)\) be a smooth function. We define the \emph{carrier set} of \(f\) as the set of points in \(\R^n\) where \(f\) is non-zero. We denote it by \(\Carr(f)\). Accordingly, the \emph{support set} of \(f\) is the closure of \(\Carr(f)\) in \(\R^n\) and is denoted by \(\Supp(f)\). Similarly, the \emph{zero set} \(Z(f)\) of \(f\) is the set of points in \(\R^n\) where \(f\) vanishes.
	\end{definition}
	\begin{remark}
		In terms of \Cref{definition_of_support} the \(\CI\)-nilpotent elements in a derived \(\CI\)-algebra \(A\) are precisely the elements \(s\in \pi_0A\) such that their zero set coincides with the zero set of some function in the ideal defining \(A\). For a detailed proof and exposition, see \cite[\S 2]{Moerdijk_Reyes_Localizations}.
	\end{remark}
	One can give a more general version of \Cref{def:nilpotent_elements} for an arbitrary ideal \(I\) in \(A\) and not just the zero ideal. There are, in fact, many other natural notions of radicals of ideals in \(\CI\)-algebras. These are listed and explored in \cite[\S 2]{Borisov_Kremnizer}.
	\begin{definition}[{Radicals of ideals in \(\CI\)-algebras}]\label{def:radicals_of_ideals}
		Let \(A\) be a derived \(\CI\)-algebra. Let \(I\) be an ideal in \(\pi_0 A\). We have two natural notions of the radical of \(I\).
		\begin{itemize}
			\item[\(\CI\)] An element \(s\in \pi_0A\) belongs to \(\sqrt[\CI]{I}\) if the \(\CI\)-localization of \(A/I\) in \(s\) is trivial.
			\item[\(\Com\)] An element \(s\in \pi_0A\) belongs to \(\sqrt[\Com]{I}\) if the algebraic localization of \(A/I\) in \(s\) is trivial. This is, of course, the usual radical of an ideal.
		\end{itemize}
	\end{definition}
	\begin{definition}
		Let \(A\) be a derived \(\CI\)-algebra. We have two natural notions of it being reduced.
		\begin{itemize}
			\item[\(\CI\)] A derived \(\CI\)-algebra \(A\) is called \emph{\(\CI\)-reduced} if the \(\CI\)-localization of \(A\) in any non-zero element is non-trivial and \(A\) is discrete (as a derived algebra).
			\item[\(\Com\)] A derived \(\CI\)-algebra \(A\) is called \emph{\(\Com\)-reduced} if the algebraic localization of \(A\) in any non-zero element is non-trivial and \(A\) is discrete (as a derived algebra). This is, of course, the usual notion of reducedness.
		\end{itemize}
	\end{definition}
	We can now define two reductions of a derived \(\CI\)-algebra.
	\begin{construction}[{Reductions of \(\CI\)-algebras}]\label{con:reductions_of_CI_rings}
		Let \(A\) be a derived \(\CI\)-algebra. We have two natural reductions of \(A\).
		\begin{itemize}
			\item[\(\CI\)] \(A^{\red_{\CI}}\) is the derived \(\CI\)-algebra given by the homotopy quotient and truncation \(\tau^{\ge 0} A/\sqrt[\CI]{0}\).
			\item[\(\Com\)] \(A^{\red_{\Com}}\) is the derived \(\CI\)-algebra given by the homotopy quotient and truncation \(\tau^{\ge 0} A/\sqrt[\Com]{0}\).
		\end{itemize}
	\end{construction}
	\begin{proposition}
		Let \(A\) be a derived \(\CI\)-algebra. Then, the \(\CI\)-reduction (respectively \(\Com\)-reduction) of \(A\) is the universal \(\CI\)-reduced (respectively \(\Com\)-reduced) derived \(\CI\)-algebra equipped with a map from \(A\).
	\end{proposition}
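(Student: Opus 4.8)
The plan is to verify the stated universal property directly, by exhibiting, for every $\CI$-reduced target $B$, a natural chain of equivalences of mapping spaces
\[
\Map_{\CI\Alg}(A,B)\;\simeq\;\Map_{\CI\Alg}(\pi_0 A,B)\;\simeq\;\Map_{\CI\Alg}(A^{\red_{\CI}},B),
\]
induced by precomposition with the canonical maps $A\to \pi_0 A\to A^{\red_{\CI}}$; the $\Com$ case is formally identical, replacing $\CI$-localization by algebraic localization throughout. The first equivalence is purely formal: since a $\CI$-reduced algebra is by definition discrete, it is a $0$-truncated object, and the $0$-truncation (which in our cohomological grading is $\tau^{\ge 0}A=\pi_0 A$) is left adjoint to the inclusion of discrete $\CI$-algebras. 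Thus it only remains to analyze maps out of the discrete algebra $\pi_0 A$ into a reduced $B$, and to identify $A^{\red_{\CI}}$ with the ordinary $\CI$-quotient $\pi_0 A/\sqrt[\CI]{0}$ (the homotopy quotient followed by truncation has exactly this $\pi_0$).

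The heart of the argument is the observation that $\CI$-localization is a base change, hence carries $\CI$-nilpotents to $\CI$-nilpotents. Concretely, \Cref{def:CI-localization} presents $A\{s^{-1}\}$ as the pushout of $A\leftarrow \CI(\R)\hookrightarrow\CI(\R\setminus 0)$ along $x\mapsto s$; for any $\CI$-algebra map $\varphi\colon A\to B$ one then has $B\{\varphi(s)^{-1}\}\simeq B\otimes_A A\{s^{-1}\}$, the pushout of $A\{s^{-1}\}\leftarrow A\to B$ in $\CI\Alg$, by the pasting law for pushouts. In particular, if $s\in\sqrt[\CI]{0}$, so that $A\{s^{-1}\}\simeq 0$, then $B\{\varphi(s)^{-1}\}\simeq 0$, i.e. $\varphi(s)$ is $\CI$-nilpotent in $B$. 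When $B$ is $\CI$-reduced it has no nonzero $\CI$-nilpotents, so $\varphi$ annihilates $\sqrt[\CI]{0}$. (Passing to $\pi_0$ is harmless here because $\CI$-localization commutes with $\pi_0$.) Applying this to a map $\varphi\colon \pi_0 A\to B$, every map to a reduced $B$ kills $\sqrt[\CI]{0}$ and therefore factors through $\pi_0 A/\sqrt[\CI]{0}$; the universal property of the quotient $\CI$-algebra \cite{Moerdijk_Reyes_book} makes this factorization unique, giving the second equivalence.

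Finally I would confirm that $A^{\red_{\CI}}$ genuinely lies in the subcategory over which we are testing, i.e. that it is itself $\CI$-reduced. This is the idempotency of the $\CI$-radical: $\sqrt[\CI]{\sqrt[\CI]{0}}=\sqrt[\CI]{0}$, so the quotient $\pi_0 A/\sqrt[\CI]{0}$ has vanishing $\CI$-radical and is discrete, hence $\CI$-reduced \cite{Moerdijk_Reyes_Localizations, Borisov_Kremnizer}. Together with the two equivalences above, this exhibits $A\to A^{\red_{\CI}}$ as the initial arrow from $A$ into a $\CI$-reduced algebra, which is the assertion.

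The main obstacle is the base-change lemma of the second paragraph: everything reduces to knowing that $\CI$-localization behaves like an honest localization under pushout (so that triviality of $A\{s^{-1}\}$ propagates to $B$), and that $\sqrt[\CI]{0}$ is an idempotent ideal stable under $\CI$-algebra maps. These facts are established in the cited work of Moerdijk--Reyes and Borisov--Kremnizer; once they are in hand, the remaining steps are the formal truncation adjunction and the universal property of ring quotients.
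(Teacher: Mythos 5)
Your proof is correct, but it takes a genuinely more self-contained route than the paper's. The paper disposes of the statement in two citations: reducedness of \(A^{\red_{\CI}}\) follows from idempotency of the radical (\cite[Lemma 4]{Borisov_Kremnizer}), and the universal property is declared to follow from the definition of the reduction as an adjoint functor (\cite[Definition 3]{Borisov_Kremnizer}). You keep essentially the same idempotency citation for the reducedness step, but you actually prove the adjunction rather than cite it: the truncation adjunction reduces everything to the discrete case, and your key observation --- that \(\CI\)-localization, being a pushout along \(\CI(\R)\to\CI(\R\setminus 0)\), is stable under base change, so that \(\CI\)-nilpotents are carried to \(\CI\)-nilpotents and every map into a \(\CI\)-reduced target kills \(\sqrt[\CI]{0}\) --- supplies exactly the functoriality that the adjoint-functor definition in Borisov--Kremnizer leaves implicit. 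This buys transparency: your argument exhibits the reduction as a left adjoint from first principles and isolates the only non-formal inputs (pasting of pushouts, idempotency of the radical, and the universal property of quotients of \(\CI\)-algebras), whereas the paper's version is shorter but defers all content to the literature. One caveat: in the \(\Com\) case your phrase \enquote{formally identical} should be read with care, since the algebraic localization of a \(\CI\)-algebra is a pushout of commutative rings, not of \(\CI\)-algebras (the paper itself notes that \(\CI\{x\}[x^{-1}]\) is not a \(\CI\)-algebra); but there the classical characterization of nilpotence (\(s^n=0\) is preserved by any ring homomorphism) makes the propagation step trivial, so nothing breaks.
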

	\begin{proof}
		The fact that a \(\CI\) or \(\Com\) reduction is reduced immediately follows from the fact that taking the \(\CI\) or nilradical of an ideal is an idempotent operation by \cite[Lemma 4]{Borisov_Kremnizer}. The universal property of the reduction follows from the definition as an adjoint functor, as observed in \cite[Definition 3]{Borisov_Kremnizer}. 
	\end{proof}
	We believe that the following result was already known to experts, but we could not find it formulated separately. Despite its simplicity it is crucial for identifying different kinds of de Rham stacks in the context of derived manifolds.
	\begin{proposition}\label{st:CI-inf_radical=RJ-radical}
		Consider a finitely generated ideal \(I\) in a free ring \(\CI(\R^S)\). Then a function \(h\) belongs to the \(\CI\) radical of \(I\) if and only if it vanishes on the common zero set of the ideal \(I\).
	\end{proposition}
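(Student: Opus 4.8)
The plan is to unwind \Cref{def:radicals_of_ideals} and convert triviality of a $\CI$-localization into a geometric emptiness statement that finite generation makes algebraically tractable. Write $A=\CI(\R^S)$ and $I=(f_1,\dots,f_k)$, so that by definition $h\in\sqrt[\CI]{I}$ precisely when the $\CI$-localization $(A/I)\{h^{-1}\}$ is trivial. For the ``only if'' direction I would argue contrapositively: suppose $h(p)\neq 0$ for some $p\in Z(I)$. Evaluation $\ev_p\colon\CI(\R^S)\to\R$ annihilates each $f_i$ (since $f_i(p)=0$), hence factors through $A/I$, and it sends $h$ to the unit $h(p)\in\R$; as $\R$ is $\CI$-local, the universal property of localization makes $\ev_p$ factor further through a surjection $(A/I)\{h^{-1}\}\to\R$. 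Thus the source is nonzero and $h\notin\sqrt[\CI]{I}$. This half uses nothing but the universal property of \Cref{def:CI-localization}.

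For the ``if'' direction I would identify the localization geometrically. By the Moerdijk--Reyes description of $\CI$-localizations (the example following \Cref{def:CI-localization}), $A\{h^{-1}\}\simeq\CI(U)$ with $U=\Carr(h)=\{h\neq 0\}\subseteq\R^S$, the \emph{full} ring of smooth functions on the open set $U$. Since both $\CI$-localization and the finitely generated quotient are pushouts of $\CI$-algebras, they commute, giving $(A/I)\{h^{-1}\}\simeq\CI(U)/(f_1|_U,\dots,f_k|_U)$. Now if $h$ vanishes on $Z(I)$, then $U\cap Z(I)=\emptyset$, i.e.\ $f_1,\dots,f_k$ have no common zero on $U$; hence $\varphi:=\sum_{i=1}^k f_i^2$ is strictly positive on $U$ and therefore invertible in $\CI(U)$ (its reciprocal is smooth on the open set $U$). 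Since $\varphi=\sum_i f_i\cdot f_i$ lies in the ideal $(f_1|_U,\dots,f_k|_U)$, that ideal contains a unit, so the quotient vanishes, $(A/I)\{h^{-1}\}\simeq 0$, and $h\in\sqrt[\CI]{I}$.

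The step I expect to be the crux --- and the place where the hypothesis is genuinely needed --- is the sum-of-squares construction: with infinitely many generators, $\sum f_i^2$ need not be smooth (or even converge), the ideal need not contain a unit even when the common zero locus misses $U$, and the conclusion fails, so finite generation is essential precisely here. The remaining point is the bookkeeping when $S$ is infinite, which is mild rather than hard: each of $f_1,\dots,f_k,h$ depends on only finitely many coordinates, so all the computations descend to $\CI(\R^F)$ for a finite $F\subseteq S$, where $1/\varphi$ is manifestly smooth; moreover $Z(I)\subseteq\R^S$ is the cylinder over $Z_F(I)\subseteq\R^F$, so $h$ vanishes on the one exactly when it vanishes on the other, and no generality is lost.
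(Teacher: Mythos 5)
Your proof is correct, and it takes a genuinely more self-contained route than the paper's. The paper also pivots on the sum of squares \(g=\sum_i g_i^2\), but uses it for a different purpose: it first invokes \cite[Lemma 6]{Borisov_Kremnizer} to identify \(\sqrt[\CI]{I}\) with \(\sqrt[\CI]{(g)}\), reducing to a principal ideal, and then applies the Moerdijk--Reyes zero-set characterization of \(\CI\)-radical membership (namely \(h\in\sqrt[\CI]{(g)}\) if and only if \(Z(h)=Z(gt)\) for some \(t\)), after which both implications are one-line manipulations of zero sets. You instead argue directly from \Cref{def:radicals_of_ideals}: triviality of \((A/I)\{h^{-1}\}\) is established by identifying \(A\{h^{-1}\}\simeq\CI(\Carr(h))\) (the Moerdijk--Reyes description already recalled in \Cref{def:CI-localization}), commuting the localization pushout past the quotient pushout, and exhibiting \(\sum_i f_i^2\) as a unit of \(\CI(U)\) lying in the ideal; the reverse implication comes from an evaluation homomorphism \(\ev_p\) together with the universal property of \(\CI\)-localization, rather than from the zero-set criterion. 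What the paper's route buys is brevity, at the cost of importing two external results; what yours buys is that the only nontrivial input is the carrier-set description of \(\CI\)-localization, the exact place where finite generation is used is laid bare (the sum of squares must be a finite sum, matching the paper's remark that the statement fails for general ideals), and the bookkeeping for infinite \(S\) is made explicit rather than left implicit.
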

	\begin{proof}
		Consider a collection of generators \(g_1,\ldots,g_n\) of the ideal \(I\). Define the function \(g=\sum_{i=1}^n g_i^2\). Then the ideal \(\sqrt[\CI]{g}\) coincides with \(\sqrt[\CI]{(g_1,\ldots,g_n)}\) by \cite[Lemma 6]{Borisov_Kremnizer}. Now, consider \(h\in \sqrt[\CI]{(g)}\). Then \(h\) has the same zero set as \(gt\) for some \(t\in \CI(\R^S)\). This implies that \(h\) vanishes on the common zero set of the ideal \(I\). The converse is also easy, let \(h\) vanish on the common zero set of the ideal \(I\) then \(hg\) have the same zero set as \(h\) and thus \(h\in \sqrt[\CI]{(g)}\).
	\end{proof}
	\Cref{st:CI-inf_radical=RJ-radical} can be interpreted using the following auxiliary definition. See \cite[\S 2]{Borisov_Kremnizer} for a more detailed and systematic discussion.
	\begin{definition}\label{def:RJ}
		Let \(A\) be a derived \(\CI\)-algebra. We define the \emph{real Jacobson radical} of \(A\) as the following intersection.
		\[
		\sqrt[\RJ]{0}=\bigcap_{\vp\in \Hom_{\CI\Alg}(A,\R)}\ker\vp.
		\]
		Similarly, one defines the real Jacobson radical of an ideal \(I\) in \(A\) as the intersection of the kernels of all \(\CI\)-algebra morphisms from \(A\) to \(\R\) vanishing on \(I\).

		Consequently, one defines the \(\RJ\)-reduction of \(A\) as the homotopy quotient \(A/\sqrt[\RJ]{0}\).
	\end{definition}
	Then \Cref{st:CI-inf_radical=RJ-radical} can be restated as follows.
	\begin{corollary}\label{cor:CI-red=RJ-red}
		Let \(I\) be a finitely generated ideal in a free ring \(\CI(\R^S)\). Then the \(\CI\)-radical of \(I\) coincides with the real Jacobson radical of \(I\). Consequently, for a derived \(\CI\)-algebra \(A\) of functions on a derived manifold \(\M\) the \(\CI\)-reduction of \(A\) is the same as the real Jacobson reduction of \(A\).
	\end{corollary}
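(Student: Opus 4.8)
The plan is to reduce everything to the classical identification of $\CI$-algebra homomorphisms $\CI(\R^S)\to\R$ with points of $\R^S$; once that is in hand, the corollary is just a comparison of two descriptions of the same ideal. First I would record the point-evaluation fact: a morphism of $\CI$-algebras $\vp\colon\CI(\R^S)\to\R$ is necessarily evaluation at some $x\in\R^S$. The images of the coordinate projections assemble into a point $x$, and the $\CI$-ring axioms force $\vp(f)=f(x)$ for every smooth $f$; for infinite $S$ one writes $\CI(\R^S)$ as the filtered colimit of $\CI(\R^{S_0})$ over finite $S_0\sse S$ and passes to the compatible family of evaluation maps. Consequently $\ker\vp=\m_x:=\{f: f(x)=0\}$, and $\vp$ vanishes on $I$ exactly when $x$ lies in the common zero set $Z(I)$ of $I$.

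With this identification the real Jacobson radical of $I$ becomes $\bigcap_{x\in Z(I)}\m_x$, i.e.\ precisely the ideal of smooth functions vanishing on $Z(I)$. But \Cref{st:CI-inf_radical=RJ-radical} already identifies $\sqrt[\CI]{I}$ with that same ideal (using the finite generation of $I$ to replace the generators $g_1,\dots,g_n$ by the single function $g=\sum_i g_i^2$). Hence $\sqrt[\CI]{I}=\sqrt[\RJ]{I}$, which is the first assertion.

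For the ``consequently'' clause I would write $\pi_0 A=\CI(\R^n)/J$ with $J$ finitely generated, which is available because $\M$ is a finitely presented derived manifold, so $\pi_0 A$ is a finitely generated quotient of the free $\CI$-algebra $\CI(\R^n)$. The $\CI$-reduction of $A$ is $\tau^{\ge 0}A/\sqrt[\CI]{0}$, and $\sqrt[\CI]{0}$ in $\pi_0 A$ is the image of $\sqrt[\CI]{J}$ under the quotient $\CI(\R^n)\to\pi_0 A$; similarly the real Jacobson radical of $0$ in $\pi_0 A$ is the image of $\sqrt[\RJ]{J}$, since the $\R$-points of $A$ are exactly the points of $Z(J)=\M(\R)$. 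Applying the first part to the finitely generated ideal $J$ gives $\sqrt[\CI]{J}=\sqrt[\RJ]{J}$, so the two radicals of $0$ agree and the two reductions coincide.

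The only genuinely non-formal input is the point-evaluation description of $\CI$-algebra maps to $\R$; everything else is bookkeeping. I expect the main thing to be careful about is the infinite-$S$ case together with the compatibility of the radicals with the quotient map — that is, that taking the $\CI$- (resp.\ $\RJ$-) radical of the zero ideal in $\CI(\R^n)/J$ corresponds to taking the radical of $J$ upstairs. This compatibility uses the universal property of $\CI$-localization and the fact that $\Hom_{\CI\Alg}(-,\R)$ carries the quotient by $J$ onto the subset $Z(J)$ of evaluation points.
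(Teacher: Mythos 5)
Your proposal is correct and follows essentially the same route as the paper, which presents this corollary as a direct restatement of \Cref{st:CI-inf_radical=RJ-radical}: the real Jacobson radical is the ideal of functions vanishing on \(Z(I)\) because every \(\CI\)-algebra morphism \(\CI(\R^S)\to\R\) is a point evaluation, and the proposition identifies \(\sqrt[\CI]{I}\) with that same ideal. Your additional bookkeeping (the filtered-colimit treatment of infinite \(S\), and the compatibility of both radicals with the quotient \(\CI(\R^n)\to\pi_0 A\) for the ``consequently'' clause) just makes explicit what the paper leaves implicit, and is accurate.
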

	\begin{remark}
		\Cref{cor:CI-red=RJ-red} shows that since derived manifolds are given by spectra of finitely presented derived \(\CI\)-algebra there is no need to distinguish between the \(\CI\)-reduction and the real Jacobson reduction of a derived \(\CI\)-algebra. This result, which drastically fails in general (see \cite[After Lemma 3]{Borisov_Kremnizer}), will be essential in the proof of the de Rham theorem for the \(\CI\)-de Rham stack (\Cref{CI-infinity-de-Rham=const-sheaf}).
	\end{remark}
	\subsection{Cotangent complex}
	\begin{definition}[{\cite{dubuc19841}}]\label{def:CI-derivations}
		Let \(A\) be a (non-derived) \(\CI\)-algebra. Let \(M\) be a module over \(A\) regarded as a commutative algebra. Then a map \(d:A\to M\) is called a \emph{\(\CI\)-derivation} if it satisfies the Leibniz rule with respect to all \(\CI\)-operations. That is an \(\R\)-linear additive map \(d:A\to M\) satisfying the following property.
		\[
			d(\vp(a_1,\ldots,a_n))=\sum_{i=1}^n \frac{\pd \vp}{\pd x_i}(a_1,\ldots,a_n)d(a_i).
		\]
		Thus, the module of \(\CI\)-K\"{a}hler differentials for a morphism of \(\CI\)-algebras is a module over the target algebra that corepresents the functor of \(\CI\)-derivations relative to the source algebra.
	\end{definition}
	In the derived setting this notion is formalized using the following definition of Steffens, that follows from the general approach of Lurie \cite[\S 7.3]{Lurie_HA}.
	\begin{definition}[{\cite[Definition 5.1.0.2]{Steffens_thesis}}]
		For \(\CI\)-algebra, the tangent category is the presentable fibration 
		\[
		p\colon \Mod \to \CI\Alg.
		\]
		For a derived \(\CI\)-algebra \(A\), the \emph{cotangent complex} 
		\(
		\LL_A\in \Mod_A \simeq T\bigl(\CI\Alg\bigr)\times_{\CI\Alg}\{A\}
		\)
		is defined as the value of the cotangent complex functor at \(A\). 

		For a morphism \(f\colon A\to B\) of derived \(\CI\)-algebras, the relative cotangent complex 
		\(\LL_f\in \Mod_B\) often denoted \(\LL_{B/A}\)	is defined as the value of the relative cotangent complex functor at \(f\).
	\end{definition}
	The following example shows how the cotangent complex can be computed for a derived manifold.
	\begin{example}[{\cite[Example 2.2.19]{Nuiten}}]\label{ex:cotangent_complex_of_dMfld}
		Consider the ring \(\CI(\M)\) of smooth functions on a derived manifold \(\M\) defined by the pullback of \Cref{def:der_mfld} of the map \((f_1,\ldots,f_m)\). Then the ring \(\CI(\M)\) can be presented by the following dg \(\CI\)-algebra.
		\[
		\CI(\M)=\CI(\R^n)[\xi_1,\ldots,\xi_m]
		\] 
		Here \(\xi_i\) are odd variables of degree \(-1\). The differential \(\pd\) is given by \(\pd(\xi_i)=f_i\) for \(i\) from \(1\) to \(m\). The cotangent complex of \(\M\) is then given by a free \(\CI(\M)\)-module of the following form.
		\[
			\Omega^1(\R^n)[\xi_1,\ldots,\xi_m]\oplus\CI(\M)\la d_{\dR}\xi_i\ra_{i=1}^m,\quad \pd d_{\dR}\xi_i=d_{\dR}f_i\in \Omega^1(\R^n).
		\]
	\end{example}
	The following result is standard and follows from the general result of Lurie on cotangent complexes in presentable categories, see \cite[\S 7.3.3]{Lurie_HA}.
	\begin{proposition}[{\cite[Remark 5.1.0.4]{Steffens_thesis}}]\label{prop:functoriality_properties_of_LL}
		The cotangent complex of a morphism of derived \(\CI\)-algebras satisfies the following standard properties. For a morphism \(f:A \to B\) we denote by \(f_*\) the functor \(-\otimes_A B\) between categories of modules.

		\begin{enumerate}
			\item For a commuting triangle of derived $\CI$-algebras
			\[
			\begin{tikzcd}
			& B \arrow[dr,"f"] \\
			A \arrow[rr] \arrow[ur] &  & C
			\end{tikzcd}
			\]
			there is a cofibre sequence in $\mathrm{Mod}_C$:
			
			\[
			\begin{tikzcd}
				f_* \LL_{B/A} \arrow[r] \arrow[d] & \LL_{C/A} \arrow[d] \\
			  0 \arrow[r] & \LL_{C/B}
			\end{tikzcd}
			\]
			
			\item For a pushout square of derived $\CI$-algebras
			\[
			\begin{tikzcd}
			  A \arrow[r] \arrow[d] & B \arrow[d,"f"] \\
			  A' \arrow[r] & B'
			\end{tikzcd}
			\]
			there is an equivalence
			\[
			f_* \LL_{B/A} \xrightarrow{\simeq} \LL_{B'/A'}.
			\]
		\end{enumerate}
	\end{proposition}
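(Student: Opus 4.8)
The plan is to deduce both statements from Lurie's general theory of the cotangent complex in a presentable $\infty$-category \cite[\S 7.3.3]{Lurie_HA}, specialized to $\Cc = \CI\Alg$. The only input genuinely specific to derived $\CI$-algebras is the shape of the tangent bundle: by \cite[Definition 5.1.0.2]{Steffens_thesis} the tangent bundle $T(\CI\Alg)$ is the presentable fibration $p\colon \Mod \to \CI\Alg$, whose fibre over $A$ is the module category $\Mod_A$ and whose associated absolute cotangent complex functor $L\colon \CI\Alg \to T(\CI\Alg)$ produces $\LL_A \in \Mod_A$. First I would record that $\CI\Alg$ is presentable --- it is the $\infty$-category of product-preserving functors $\CI \to \Spc$, that is, the animation of ordinary $\CI$-rings --- so that Lurie's machinery applies verbatim, and that the relative cotangent complex is \emph{by definition} the cofibre $\LL_{B/A} = \cofib\bigl(f_*\LL_A \to \LL_B\bigr)$ in $\Mod_B$. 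Granting this, both properties become formal.

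For (1), I would begin from the defining cofibre sequences attached to the edges $A \to B$, $B \xrightarrow{f} C$ and the composite $A \to C$ of the triangle. Base-changing the sequence for $A \to B$ along $f$ (using that $-\otimes_B C$ is exact, being a colimit-preserving functor between stable module categories) exhibits $f_*\LL_{B/A}$ as the cofibre of $C\otimes_A\LL_A \to C\otimes_B\LL_B$ in $\Mod_C$. Applying the octahedral axiom to the composable pair $C\otimes_A\LL_A \to C\otimes_B\LL_B \to \LL_C$ then identifies the cofibre of $f_*\LL_{B/A} \to \LL_{C/A}$ with $\LL_{C/B}$, which is exactly the asserted cofibre square.

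For (2), the key is that the absolute cotangent complex functor $L$ is a left adjoint, hence preserves the derived pushout $B' \simeq B \otimes_A A'$ computed in $\CI\Alg$. Transporting the resulting pushout in $T(\CI\Alg)$ into the fibre $\Mod_{B'}$ by extension of scalars presents $\LL_{B'}$ as a pushout of the base-changed complexes $B'\otimes_B\LL_B$, $B'\otimes_A\LL_A$ and $B'\otimes_{A'}\LL_{A'}$; the standard stable-category identity equating the two cofibres of a pushout square then yields $\LL_{B'/A'} \simeq f_*\LL_{B/A}$. This is Lurie's base-change theorem specialized to $\CI\Alg$.

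The only genuinely $\CI$-specific subtlety, and the place where I expect to have to be most careful, is that $\LL_A$ here corepresents $\CI$-\emph{derivations} (\Cref{def:CI-derivations}) and is therefore \emph{not} the commutative cotangent complex of the underlying $E_\infty$-ring. Conceptually this is the main thing to keep straight, but it does not obstruct the argument: once the tangent bundle of $\CI\Alg$ has been identified as above, transitivity and base change hold for purely formal reasons insensitive to which algebraic theory produced the presentable category. In particular no flatness or finiteness hypothesis enters, since every tensor product and pushout above is the derived (respectively $\CI$-) one throughout.
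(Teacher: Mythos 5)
Your proposal is correct, and it is exactly the route the paper takes: the paper offers no proof of its own beyond citing Steffens \cite[Remark 5.1.0.4]{Steffens_thesis} and Lurie \cite[\S 7.3.3]{Lurie_HA}, and your argument (defining cofibre sequences plus the octahedral axiom for transitivity, left-adjointness of the cotangent complex functor plus computation of pushouts in the presentable fibration $\Mod\to\CI\Alg$ for base change) is precisely the standard argument those citations point to. You also correctly flag the only $\CI$-specific point, namely that $\LL_A$ corepresents $\CI$-derivations rather than $E_\infty$-derivations, which is consistent with the paper's setup via \Cref{def:CI-derivations}.
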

	The following result shows that the cotangent complex of an open embedding is trivial.
	\begin{proposition}[{\cite[Corollary 5.1.0.14]{Steffens_thesis}}]\label{cotangent_complex_of_localization_vanishes}
		Let \(A\) be a derived \(\CI\)-algebra. Let \(S\) be a finite collection of elements in \(\pi_0(A)\). Let \(A\to A\{S^{-1}\}\) be a \(\CI\)-localization of a derived \(\CI\)-algebra. Then the cotangent complex of the map \(A\to A\{S^{-1}\}\) is trivial.
	\end{proposition}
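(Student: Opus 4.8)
The plan is to deduce the vanishing from the single structural fact that a $\CI$-localization is an \emph{epimorphism} of $\CI$-algebras, together with the base-change property of the cotangent complex. Write $B := A\{S^{-1}\}$ and let $f\colon A\to B$ be the localization map. The key input I would isolate is that $f$ is \emph{idempotent}: the codiagonal $B\sqcup_A B\to B$ (the derived pushout of $B$ with itself over $A$, i.e.\ the coproduct in $\CI\Alg_{A/}$) is an equivalence. Granting this, the conclusion is purely formal, so no separate reduction to a single generator and no induction on the cardinality of $S$ is needed.

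First I would verify idempotence from the universal property of \Cref{def:CI-localization}. That property says $B$ is initial among derived $\CI$-algebras under $A$ in which every (image of an) element of $S$ is invertible. Since inverting an element is a \emph{property} of a morphism and not additional structure, for any derived $\CI$-algebra $C$ the mapping space $\Maps_{\CI\Alg_{A/}}(B,C)$ is $(-1)$-truncated: it is contractible when every $s\in S$ maps to a unit of $C$ (initiality in the relevant full subcategory) and empty otherwise (any such map would force the images of $S$ to be units). Consequently, for the coproduct under $A$ we get
\[
\Maps_{\CI\Alg_{A/}}(B\sqcup_A B,\,C)\;\simeq\;\Maps_{\CI\Alg_{A/}}(B,C)\times \Maps_{\CI\Alg_{A/}}(B,C)\;\simeq\;\Maps_{\CI\Alg_{A/}}(B,C),
\]
because a finite product of copies of a $(-1)$-truncated space is again equivalent to it. By the Yoneda lemma the codiagonal $B\sqcup_A B\to B$ is an equivalence, i.e.\ $f$ is an epimorphism of $\CI$-algebras.

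With idempotence established, I would run the base-change argument. Consider the pushout square both of whose legs out of $A$ are $f$:
\[
\begin{tikzcd}
A \arrow[r,"f"] \arrow[d,"f"'] & B \arrow[d,"g"] \\
B \arrow[r] & B\sqcup_A B.
\end{tikzcd}
\]
By \Cref{prop:functoriality_properties_of_LL}(2), base change along $g\colon B\to B\sqcup_A B$ yields $g_*\LL_{B/A}\simeq \LL_{(B\sqcup_A B)/B}$. Idempotence identifies $B\sqcup_A B\simeq B$ and $g$ with $\id_B$, so the right-hand side is $\LL_{B/B}\simeq 0$, while the left-hand side is $\LL_{B/A}\otimes_B(B\sqcup_A B)\simeq \LL_{B/A}\otimes_B B\simeq \LL_{B/A}$. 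Hence $\LL_{B/A}\simeq 0$, which is the assertion.

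The step I expect to be the main obstacle is the careful justification of idempotence in the \emph{derived} setting: namely that the universal property of \Cref{def:CI-localization} genuinely forces the mapping spaces $\Maps_{\CI\Alg_{A/}}(B,C)$ to be $(-1)$-truncated (contractible-or-empty) rather than merely nonempty-or-empty on $\pi_0$, and that the derived pushout $B\sqcup_A B$ really corepresents pairs of maps under $A$. Both follow from the fact that $\CI$-localization is a reflective (idempotent) localization onto the subcategory of $S$-inverting algebras, but this is the point where one must be attentive to higher homotopy. As a sanity check and alternative route, one could instead reduce via the transitivity sequence of \Cref{prop:functoriality_properties_of_LL}(1) and base change to the generating case $\CI(\R)\to\CI(\R\setminus 0)$ and compute directly that an $A$-linear $\CI$-derivation into any $B$-module must kill $s^{-1}$ (differentiating $s\cdot s^{-1}=1$ and using that $s$ acts invertibly on a $B$-module), but the epimorphism argument above handles all generators of $S$ uniformly and avoids any explicit resolution.
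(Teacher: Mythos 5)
Your proof is correct, but it follows a genuinely different route from the one in the paper. You prove that the localization \(f\colon A\to A\{S^{-1}\}\) is an \(\infty\)-categorical epimorphism (the codiagonal \(B\sqcup_A B\to B\) is an equivalence), deducing this from the \((-1)\)-truncatedness of the mapping spaces \(\Maps_{\CI\Alg_{A/}}(B,C)\), and then conclude by base change applied to the self-pushout square; the crux you correctly identify --- that the universal property forces contractible-or-empty mapping spaces --- is exactly what \Cref{def:CI-localization} provides, since it characterizes \(A\{s^{-1}\}\) as an \emph{initial object} (in the \(\infty\)-categorical sense) of the full subcategory of \(S\)-inverting algebras under \(A\), with emptiness for non-inverting targets following because invertibility in \(\pi_0\) is preserved by any map under \(A\). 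The paper instead uses the \emph{pushout} presentation of the localization from \Cref{def:CI-localization} together with base change (part (2) of \Cref{prop:functoriality_properties_of_LL}) to reduce to the single generating case \(\CI(\R)\to\CI(\R\setminus\{0\})\), and then kills \(\LL_{\CI(\R\setminus\{0\})/\CI(\R)}\) by a direct computation with the transitivity triangle, using that both \(\LL_{\CI(\R)/\R}\) and \(\LL_{\CI(\R\setminus\{0\})/\R}\) are free of rank one (via \Cref{ex:cotangent_complex_of_dMfld}) and that the comparison map between them is an equivalence. What your argument buys is uniformity and generality: it handles all elements of \(S\) at once with no reduction or induction, never needs the explicit geometric model of \(\CI(\R\setminus\{0\})\), and in fact proves the stronger statement that \emph{any} epimorphism of derived \(\CI\)-algebras (indeed any reflective localization in a setting with a base-change-compatible cotangent complex) has vanishing cotangent complex; the cost is that it leans entirely on the universal-property characterization, whose derived validity the paper outsources to Steffens. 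The paper's computation is more elementary and concrete --- it only needs the pushout definition and knowledge of K\"ahler forms on Cartesian spaces --- and keeps the differential-geometric content (restriction of \(1\)-forms along an open embedding) visible.
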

	For completeness, we provide a slightly simplified proof of this result following Steffens \cite[\S 5.1]{Steffens_thesis}
	\begin{proof}
		Observe that by \Cref{def:CI-localization} and part (ii) of \Cref{prop:functoriality_properties_of_LL} it is enough to show that the cotangent complex of the morphism \(\CI(\R)\to \CI(\R\setminus \{0\})\) is trivial. This is a direct calculation using the exact triangle for the cotangent complex, i.e. (i) of \Cref{prop:functoriality_properties_of_LL} and \Cref{ex:cotangent_complex_of_dMfld}. Indeed, consider the following triangle of morphisms between (derived) \(\CI\)-algebras.
		\[
			\begin{tikzcd}
			& \CI(\R) \arrow[dr,"f"] \\
			\R \arrow[rr] \arrow[ur] &  & \CI(\R\setminus \{0\})
			\end{tikzcd}
		\]
		Then we observe, that the pushforward \(f_*\LL_{\CI(\R)/\R}\) is isomorphic to the free module of rank \(1\) over \(\CI(\R\setminus \{0\})\). This is clearly the same as the cotangent complex \(\LL_{\CI(\R\setminus \{0\})/\R}\). This together with the exact triangle \(f_* \LL_{\CI(\R)/\R}\to \LL_{\CI(\R\setminus \{0\})/\R}\to \LL_{\CI(\R\setminus \{0\})/\CI(\R)}\) shows that the cotangent complex of the morphism \(\CI(\R)\to \CI(\R\setminus \{0\})\) is trivial.
	\end{proof}
	\begin{definition}\label{def:smooth_morphism}
		A (finitely presented) morphism of derived \(\CI\)-algebras \(A\to B\) is called \emph{smooth} if the cotangent complex of the map is \(0\)-truncated and projective.
		
		Similarly, a smooth and surjective morphism of derived \(\CI\)-algebras is called a \emph{submersion}. 
	\end{definition}
	By \cite[Proposition 5.1.3.32]{Steffens_thesis} we have the following implicit function theorem for derived manifolds.
	\begin{proposition}[{\cite[Proposition 5.1.3.32]{Steffens_thesis}}]\label{implicit_function_theorem}
		If a (finitely presented) morphism of derived \(\CI\)-algebras \(A\to B\) is submersive in the sense of \Cref{def:smooth_morphism} then it is locally (on \(\Spec B\)) equivalent to a map of the form \(A\to A\otimes^\infty\CI(\R^n)\) for some \(n\).  
	\end{proposition}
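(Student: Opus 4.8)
The plan is to turn the hypothesis on the cotangent complex into explicit local coordinates and then verify, via the transitivity and base-change properties of \Cref{prop:functoriality_properties_of_LL}, that the resulting comparison map has vanishing relative cotangent complex. First I would reduce to a statement local on \(\Spec B\): since \(A\to B\) is smooth in the sense of \Cref{def:smooth_morphism}, the module \(\LL_{B/A}\) is projective and \(0\)-truncated, hence locally free of some finite rank \(n\). Fixing a point \(x\in\Spec B\) and shrinking \(\Spec B\) to a neighbourhood of \(x\), I may assume \(\LL_{B/A}\) is free of rank \(n\) and choose elements \(b_1,\dots,b_n\in\pi_0 B\) whose de Rham differentials \(d_{\dR}b_1,\dots,d_{\dR}b_n\) form a basis; a basis at \(x\) extends to a basis on a neighbourhood by projectivity together with finite presentation.

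Next, the \(b_i\) assemble into a \(\CI\)-algebra map \(\CI(\R^n)\to B\), \(x_i\mapsto b_i\), and hence, together with the structure map from \(A\), into a map over \(A\)
\[
\phi\colon C\coloneqq A\otimes^\infty\CI(\R^n)\longrightarrow B.
\]
I would then compute \(\LL_{B/C}\) from the transitivity triangle of part (i) of \Cref{prop:functoriality_properties_of_LL} applied to \(A\to C\xrightarrow{\phi}B\), namely the cofibre sequence \(\phi_*\LL_{C/A}\to\LL_{B/A}\to\LL_{B/C}\). Base change (part (ii) of the same proposition) applied to the pushout square defining \(C\) identifies \(\LL_{C/A}\simeq C\otimes_{\CI(\R^n)}\LL_{\CI(\R^n)/\R}\), a free module of rank \(n\) on generators \(d_{\dR}x_i\) (as in \Cref{ex:cotangent_complex_of_dMfld}). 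Under \(\phi_*\) the leftmost map sends \(d_{\dR}x_i\mapsto d_{\dR}b_i\), which is precisely the chosen basis of \(\LL_{B/A}\); thus \(\phi_*\LL_{C/A}\to\LL_{B/A}\) is an equivalence and its cofibre \(\LL_{B/C}\) vanishes on the chosen neighbourhood.

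Finally, a finitely presented morphism with trivial relative cotangent complex is \'{e}tale, and in the \(\CI\)-setting \'{e}tale morphisms are local diffeomorphisms, i.e.\ \(\CI\)-localizations — this is exactly the input behind \Cref{cotangent_complex_of_localization_vanishes}, read in reverse. Hence \(\phi\) exhibits \(B\), locally on \(\Spec B\), as an open subset of \(\Spec C=\Spec A\times\R^n\); shrinking to a product neighbourhood \(U\times V\) with \(V\cong\R^n\) identifies this open subset with \(A'\otimes^\infty\CI(\R^n)\) for a localization \(A'\) of \(A\), which is the asserted local form, while surjectivity of the submersion guarantees that such models cover all of \(\Spec B\). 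The main obstacle is this last implication — that vanishing of \(\LL_{B/C}\) forces \(\phi\) to be an actual local equivalence rather than merely formally \'{e}tale — which is where one must invoke the characterization of \'{e}tale maps of derived \(\CI\)-algebras as localizations, together with the absence of higher derived structure guaranteed by \(0\)-truncatedness of the cotangent complex; the lifting of a pointwise basis of \(\LL_{B/A}\) to genuine functions \(b_i\) on a neighbourhood is a secondary technical point resolved by projectivity.
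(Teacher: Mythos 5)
The paper itself contains no proof of this proposition: it is imported verbatim from Steffens' thesis, so your proposal can only be measured against the argument in that source — which, in outline, it reproduces. The formal half of your proof is correct: smoothness in the sense of \Cref{def:smooth_morphism} plus finite presentation makes \(\LL_{B/A}\) a finitely generated projective \(0\)-truncated module, so after shrinking \(\Spec B\) one may choose \(b_1,\dots,b_n\in\pi_0 B\) whose differentials \(d_{\dR}b_i\) form a basis (Nakayama at the point, then spreading out); the induced comparison map \(\phi\colon C=A\otimes^\infty\CI(\R^n)\to B\) then has \(\LL_{B/C}\simeq 0\) by the transitivity and base-change triangles of \Cref{prop:functoriality_properties_of_LL}, since \(\phi_*\LL_{C/A}\to\LL_{B/A}\) carries the basis \(d_{\dR}x_i\) to the basis \(d_{\dR}b_i\).

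The step that needs repair is the final one, and you have located it but justified it incorrectly. The implication \enquote{finitely presented with \(\LL_{B/C}\simeq 0\) implies locally a \(\CI\)-localization} is \emph{not} \Cref{cotangent_complex_of_localization_vanishes} \enquote{read in reverse}: that proposition is the easy direction, and its converse is a substantive theorem — it is exactly where the classical inverse function theorem enters derived \(\CI\)-geometry, and it is proved in \cite{Carchedi_Steffens} and \cite{Steffens_thesis} as part of the characterization of \'{e}tale morphisms of derived \(\CI\)-algebras. Nothing in the present paper supplies it, and it is comparable in depth to the proposition you are proving; so your argument is a valid reduction of the implicit function theorem to that characterization only if the latter is invoked as an explicit external input, not presented as a formal reversal. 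Two smaller points. First, your conclusion necessarily localizes \(A\) as well (your \(A'\)), and this is the correct reading of the statement: with \(A\) held fixed the proposition is literally false — an \'{e}tale surjection from a disjoint union of two proper open subsets of \(\R\) onto \(\R\) is a submersion with \(n=0\), yet no point of its source has a neighbourhood mapping isomorphically onto all of \(\Spec A\). Second, surjectivity plays no role in the local structure statement: a merely smooth map admits the same local product models at every point of \(\Spec B\), so your closing remark that surjectivity is what makes the models \enquote{cover all of \(\Spec B\)} is misplaced; surjectivity matters only in the applications, where one needs the images of these models to cover \(\Spec A\).
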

	
	\subsection{General \texorpdfstring{\(\CI\)}{C-infinity}-algebras and schemes}\label{sec:CI-algebras}
	We now turn to a more general discussion of \(\CI\)-algebras. In particular, we will consider several classes of non-finitely presented algebras naturally arising in the theory of derived manifolds. 
	\begin{definition}\label{def:fg_and_fp_CI}
		Let \(A\) be a derived \(\CI\)-algebra. We say that \(A\) is \emph{finitely generated} if the following conditions hold
		\begin{enumerate}
			\item \(\pi_0A\) is a finitely generated \(\CI\)-algebra, i.e. a quotient of the \(\CI\)-algebra \(\CI(\R^n)\) for some finite \(n\).
			\item \(\pi_n A\) is finitely generated over \(\pi_0A\) for all \(n\).
		\end{enumerate}
		We say that \(A\) \emph{finitely presented} if it is finitely generated and \(\pi_0\) can be presented as a quotient with respect ot a finitely generated ideal in \(\CI(\R^n)\).
	\end{definition}
	\begin{definition}
		The category \(\CI\Alg_{fg}\) of finitely generated derived \(\CI\)-algebras is the full subcategory of \(\CI\Alg\) spanned by finitely generated derived \(\CI\)-algebras. The opposite category \(\CI\Lc=\CI\Alg_{fg}^\op\) is the category of \emph{derived \(\CI\)-loci}.
	\end{definition}
	\begin{remark}
		The category \(\CI\Lc\) should be thought of as a category of \enquote{formal spaces}\footnote{Here formal is used in its plain meaning without any relation to what is usually called formal geometry.}. In particular, as we will see, a derived \(\CI\)-locus does not necessarily arise from some locally ringed topological space. See \Cref{ex:non_fair} for an example of a derived \(\CI\)-algebra which does not arise from any \(\CI\)-ringed space. 
	\end{remark}
	\begin{definition}
		An ideal \(I\triangleleft \CI(\R^n)\) is {\it germ determined} if for any \(f\in \CI(\R^n)\setminus I\) there exists \(p\in \R^n\) such that in the localization we have \(f_p\notin I_p.\) A finitely generated classical \(\CI\)-algebra is \emph{germ-determined} if it is a quotient by a germ-determined ideal in \(\CI(\R^n)\).
	\end{definition}
	\begin{definition}
		An ideal \(I\triangleleft \CI(\R^n)\) is {\it point determined} if it has the form of \(\m_Z\) for some closed subset \(Z\) of \(\R^n\).
	\end{definition}
	\begin{example}
		Let \(Z\) be a closed subset of \(\R^n\). Then the ideal \(\m_Z\) of functions vanishing on \(Z\) is germ-determined. Similarly, the ideal \(\m_Z^\infty=\cap_{k=1}^\infty \m_Z^k\) is also germ-determined. This ideal consists of functions vanishing to infinite order on \(Z\) at each point of \(Z\). See \cite[\S I.4]{Moerdijk_Reyes_book} for a detailed discussion of this. Even more details on the structure of ideals of smooth functions can be found in monographs \cite{Malgrange} and \cite{Tougeron}.		
	\end{example}
	\begin{construction}\label{con:spectrum_of_a_CI_algebra}
		Let \(A\) be a finitely generated derived \(\CI\)-algebra. We define the \emph{spectrum} of \(A\) to be the following topological space with a sheaf of derived \(\CI\)-algebras.
		\begin{enumerate}
			\item As a set \(\Spec A\) is the set of all homomorphisms \(A\to \R\). The topology on \(\Spec A\) is induced from the classical topology on \(\R\) via the Zariski topology construction and has the following basis.
			\[
				U_f=\{p\in \Spec A\mid p(f)\neq 0\},\quad f\in \pi_0(A).
			\]
			\item The sheaf of derived \(\CI\)-algebras \(\CI_{\Spec A}\) is obtained from \(A\) via \(\CI\)-localizations. That is, for any basis open set \(U_f\subset \Spec A\) we have \(\O_{\Spec A}(U_f)=A\{f^{-1}\}\). 
		\end{enumerate}
	\end{construction}
	The following definition characterizes \(\CI\)-algebras with nice categorical properties. Note that this condition is vacuous in the case of commutative algebras, i.e., in derived algebraic geometry.
	\begin{definition}[{\cite[Definition 2.16]{Joyce_CInf_Geom}}]
		A \(\CI\)-algebra is \emph{fair} if it is equivalent to the global sections of the structure sheaf on its spectrum. A derived \(\CI\)-locus dual to a fair \(\CI\)-algebra is called an \emph{affine derived \(\CI\)-scheme}.
	\end{definition}
	\begin{example}\label{ex:non_fair}
		The following example is standard, and we include it for completeness. Let \(\CI_c(\R^n)\) be the \(\CI\)-algebra of compactly supported smooth functions on \(\R^n\). Then the spectrum of \(\CI_c(\R^n)\) is locally isomorphic to \(\R^n\) and hence the algebra of functions on it is \(\CI(\R^n)\).
	\end{example}
	\section{Completions of derived differentiable stacks}\label{completions_of_derived_CI_stacks}
	In this section, we consider various notions of completions for derived \(\CI\)-stacks and how they compare to each other.
	\subsection{Completions of derived \texorpdfstring{\(\CI\)}{C-infinity}-stacks}
	As we have seen in \Cref{sec:reductions}, there are two natural reductions of a derived \(\CI\)-algebra. Now we define the completions of derived \(\CI\)-stacks in two natural ways based on these reductions. This definition follows the general approach of \cite[\S 6]{Gaitsgory_Rozenblyum_Ind} to de Rham stacks in algebraic geometry. A version of the \(\Com\)-completion for \(\CI\)-derived stacks was previously given in \cite[\S 7.1]{Nuiten}. 
	\begin{definition}\label{def:formal_completion}
		If \(\X\to \Y\) is a morphism of derived \(\CI\)-stacks, then the \emph{derived \(\CI\)-completion} of \(\X\) over \(\Y\) is the derived \(\CI\)-stack \(\X^{\wedge_{\CI}}_{\Y}\) which is defined as follows on a derived \(\CI\)-algebra \(R\).
		\[\X^{\wedge_{\CI}}_{\Y}(R)=\left\{\begin{tikzcd}
			{\Spec R^{\red_{\CI}}} && \Y \\
			\\
			{\Spec R} && \X
			\arrow[from=1-1, to=1-3]
			\arrow[from=1-3, to=3-3]
			\arrow[from=1-1, to=3-1]
			\arrow[from=3-1, to=3-3]
		\end{tikzcd}\right\}\]
		More formally, the value \(\X^{\wedge_{\CI}}_{\Y}(R)\) is the homotopy fibre product of the following form.
		\[
			\X^{\wedge_{\CI}}_{\Y}(R)=\Y(R^{\red_{\CI}})\times_{\X(R^{\red_{\CI}})} \X(R)
		\]
		Similarly, we define the \emph{derived \(\Com\)-completion} of \(\X\) over \(\Y\). It is the derived \(\CI\)-stack \(\X^{\wedge_{\Com}}_{\Y}\) which is defined as follows on a derived \(\CI\)-algebra \(R\).
		\[\X^{\wedge_{\Com}}_{\Y}=\left\{\begin{tikzcd}
			{\Spec R^{\red_{\Com}}} && \Y \\
			\\
			{\Spec R} && \X
			\arrow[from=1-1, to=1-3]
			\arrow[from=1-3, to=3-3]
			\arrow[from=1-1, to=3-1]
			\arrow[from=3-1, to=3-3]
		\end{tikzcd}\right\}\]
		More formally, the value \(\X^{\wedge_{\Com}}_{\Y}(R)\) is the homotopy fibre product of the following form.
		\begin{equation}\label{eq:Com_completion}
			\X^{\wedge_{\Com}}_{\Y}(R)=\Y(R^{\red_{\Com}})\times_{\X(R^{\red_{\Com}})} \X(R)
		\end{equation}
	\end{definition}
	While the above definition is great for formal purposes, it is not sufficiently hands-on. To deal with this, we will relate the abstract completion defined above first with the sequential or Greenlees--May completion in \Cref{sec:sequential_and_GM} and then to the Adams completion of a morphism of derived \(\CI\)-algebras in \Cref{sec:Adams_completion}.
	
	\subsection{Sequential completion and Greenlees-May completion}\label{sec:sequential_and_GM} In this section, we review two algebraic constructions of derived completions for derived \(\CI\)-algebras. Then we show that they are equivalent and yield the derived \(\CI\)-completion of affine derived \(\CI\)-schemes in the sense of \Cref{def:formal_completion}. 

	The first construction we consider is a \emph{sequential completion}. See, for example, \cite{Yekuteli_WPR} for a discussion of various types of completions for classical non-Noetherian rings, as well as \cite[\S 4.1]{Lurie_DAGXII} and \cite[\S 6.7.3]{Gaitsgory_Rozenblyum_Ind} for a discussion in the derived setting.

	The idea is to construct a sequence of derived \(\CI\)-algebras that approximate the completion of a given morphism of derived \(\CI\)-algebras.
	\begin{construction}\label{con:sequence_presenting_completion}
		Let \(\vp:A\to B\) be a morphism of derived \(\CI\)-algebras corresponding to a closed immersion of derived \(\CI\)-schemes \(Y\to X\). Assume that the kernel \(I\) of \(\vp\) is generated by a finite set \(S=\{f_1,\ldots,f_m\}\). Then we define the following sequence of derived connective \(\CI\)-algebras.

		\[
			\K_{n,\Com}(A,S)=A[t_{i,n}],\quad \deg t_{i,n}=-1,\quad \pd t_{i,n}=f_i^n.
		\]
		
		Algebras \(A_n\) assemble into a pro-object in the category of derived \(\CI\)-algebras using the following maps.
		\[
			\K_{N,\Com}(A,S)\to \K_{n,\Com}(A,S),\quad t_{i,N}\mapsto f_i^{N-n}\cd t_{i,n},\quad N\ge n.
		\]
		This pro-object is the \emph{sequential completion} of \(A\) in \(S\).
	\end{construction}
	The following result relating the sequential completion of \(\CI\)-algebras to the derived completion of \(\CI\)-stacks (see \Cref{def:formal_completion}) is due to Nuiten. A similar construction in the setting of Noetherian algebraic geometry is given in \cite[\S 6.7.3]{Gaitsgory_Rozenblyum_Ind}.
	\begin{proposition}[{\cite[Proposition 7.1.10]{Nuiten}}]\label{prop:sequence_presents_completion}
		Let \(\vp:A\to B\) be a morphism of derived \(\CI\)-algebras corresponding to a closed immersion of derived \(\CI\)-schemes \(Y\to X\). Assume that the kernel \(I\) of \(\vp\) is generated by a finite set \(S=\{f_1,\ldots,f_m\}\). Then, the derived completion of stacks of \(X\) in \(Y\) is equivalent to the colimit of spectra of the algebras \(\K_{n,\Com}(A,S)\) in the category of prestacks. 
	\end{proposition}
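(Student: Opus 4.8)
The plan is to compare the two prestacks by evaluating both on an arbitrary test algebra $R\in\CI\Alg$ and producing a natural equivalence of spaces, using that colimits of prestacks are computed objectwise. Write $X=\Spec A$ and $Y=\Spec B$, so $X(R)=\Map_{\CI\Alg}(A,R)$. First I would unwind the target. By \Cref{def:formal_completion}, $X^{\wedge_{\Com}}_Y(R)=Y(R^{\red_{\Com}})\times_{X(R^{\red_{\Com}})}X(R)$. Since $R^{\red_{\Com}}$ is discrete, mapping into this $0$-truncated object factors through $\pi_0$, so $\Map_{\CI\Alg}(A,R^{\red_{\Com}})\simeq\Hom(\pi_0 A,R^{\red_{\Com}})$ is a discrete set, and likewise for $B$; moreover $\pi_0 B=\pi_0 A/(f_1,\dots,f_m)$ because $I$ is the kernel generated by $S$. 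Hence $Y(R^{\red_{\Com}})\to X(R^{\red_{\Com}})$ is a monomorphism of sets, and the fibre product is exactly the union of those connected components of $X(R)$ on which the reduction $A\to R^{\red_{\Com}}$ kills each $f_i$. By the description of the $\Com$-reduction (\Cref{con:reductions_of_CI_rings}) this is precisely the union of components of $\Map_{\CI\Alg}(A,R)$ on which every $a(f_i)\in\pi_0 R$ is nilpotent.

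Next I would compute the left-hand side fibrewise over $X(R)$. The algebra $\K_{n,\Com}(A,S)$ of \Cref{con:sequence_presenting_completion} presents the derived fibre of $(f_1^n,\dots,f_m^n)\colon X\to\R^m$ over the origin, so $\Spec\K_{n,\Com}(A,S)(R)\simeq\Map(A,R)\times_{(\Omega^\infty R)^m}\{0\}$, a point being a map $a\colon A\to R$ together with a nullhomotopy $h_i$ of each $a(f_i)^n$ in the underlying space $\Omega^\infty R$ (note $a(f_i^n)=a(f_i)^n$, as $\CI$-maps preserve the power operation). Each term maps to $X(R)$ by forgetting the $h_i$, compatibly with the transition maps, which send $h_i\mapsto a(f_i)^{N-n}\cdot h_i$. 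Since $\Spc$ is an $\infty$-topos, filtered colimits are left exact, so the fibre of $\colim_n\Spec\K_{n,\Com}(A,S)(R)$ over a fixed $a\in X(R)$ is the sequential colimit of the fibres $F_n=\prod_{i=1}^m\mathrm{Path}_{\Omega^\infty R}(a(f_i)^n,0)$.

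The fibrewise comparison is then the heart of the argument. If some $a(f_i)$ is not nilpotent, then $a(f_i)^n\neq 0$ in $\pi_0 R$ for all $n$, so each $\mathrm{Path}(a(f_i)^n,0)$, hence every $F_n$, is empty, and the colimit fibre is empty. If instead every $a(f_i)$ is nilpotent, then for $n$ large $F_n$ is nonempty with $\pi_k(\mathrm{Path}(a(f_i)^n,0))\cong\pi_{k+1}R$, and the transition map acts as multiplication by the class of $a(f_i)^{N-n}$ in $\pi_0 R$; once $N-n$ exceeds the nilpotency order this class vanishes, so all transition maps on all homotopy groups are eventually zero. Therefore $\colim_n F_n$ is contractible, and the natural comparison map $\colim_n\Spec\K_{n,\Com}(A,S)\to X^{\wedge_{\Com}}_Y$, which sends a point $(a,\{h_i\})$ to the underlying $a$ (whose coordinates $a(f_i)$ are then nilpotent), is an equivalence over exactly the components identified in the first step. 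Assembling the fibrewise statements yields the claimed equivalence of prestacks.

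I expect the main obstacle to be the contractibility of $\colim_n F_n$ in the nilpotent case: one must argue carefully that multiplication by a nilpotent element of $\pi_0 R$ eventually annihilates each $\pi_{k+1}R$, including at the level of $\pi_0$ of the path space, where the $\pi_1 R$-torsor structure is collapsed, so that a sequential colimit of path spaces with eventually-null transition maps is contractible. The remaining inputs — discreteness of maps into $R^{\red_{\Com}}$, the identification $\pi_0 B=\pi_0 A/(f_i)$, and left exactness of filtered colimits in $\Spc$ — are standard and serve only to reduce everything to this single homotopical computation.
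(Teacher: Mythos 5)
The paper does not prove this proposition at all: it is quoted directly from Nuiten's thesis \cite[Proposition 7.1.10]{Nuiten}, with a pointer to the analogous algebraic statement in \cite[\S 6.7.3]{Gaitsgory_Rozenblyum_Ind}. So there is no in-paper argument to compare yours against; judged on its own, your proof is correct and essentially reconstructs the standard argument behind those citations. All the reductions are sound: colimits of prestacks are computed objectwise; maps into the discrete algebra \(R^{\red_{\Com}}\) factor through \(\pi_0\), so \(X^{\wedge_{\Com}}_Y(R)\) from \Cref{def:formal_completion} is the union of those components of \(\Map_{\CI\Alg}(A,R)\) on which every \(a(f_i)\) is nilpotent in \(\pi_0 R\) (the projection to \(X(R)\) is a base change of a monomorphism of discrete sets, hence an inclusion of components); and your description of \(\Spec\K_{n,\Com}(A,S)(R)\) as pairs \((a,\{h_i\})\) with \(h_i\) a nullhomotopy of \(a(f_i)^n\) is exactly the statement that the Koszul dg-algebra computes the derived \(\CI\)-pushout \(A\otimes^\infty_{\CI(\R^m)}\R\); this is \Cref{unframifiedness}, which you use silently and should cite explicitly. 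Left exactness of filtered colimits in \(\Spc\) then reduces everything to the fibrewise statement, as you say.

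On the crux, your computation is right, but the basepoint and torsor bookkeeping you flag can be bypassed entirely. Once \(N-n\) exceeds the nilpotency order of \(a(f_i)\), the point \(a(f_i)^{N-n}\) lies in the component of \(0\), and multiplication by any point in the component of \(0\) is homotopic to the constant map at \(0\): choose a path to \(0\) and apply the multiplication of the ring space, noting that multiplication by the strict zero point \emph{is} the constant map because \(R\) is a product-preserving functor applied to the \(\CI\)-map \(v\mapsto 0\cdot v\). Hence the transition map \(\mathrm{Path}(a(f_i)^n,0)\to\mathrm{Path}(a(f_i)^N,0)\) is homotopic to a \emph{constant} map. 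A sequential colimit of nonempty spaces whose transition maps are cofinally homotopic to constant maps is contractible: spheres are compact objects of \(\Spc\), so any map from \(S^j\) factors through a finite stage and becomes nullhomotopic at the next. This single observation replaces the discussion of \(\pi_{k+1}R\), the multiplication action on homotopy groups, and the collapse of the \(\pi_1 R\)-torsor on \(\pi_0\), and closes the one gap you were worried about.
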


	The second construction we introduce is due to Greenlees and May \cite{Greenlees_May}. We give two versions of it for derived \(\CI\)-algebras. These two versions again arise from two notions of localization for derived \(\CI\)-algebras. The \(\CI\)-version will not be used in the sequel, but we include it here for the sake of conceptual symmetry.
	\begin{construction}\label{con:GM}
		Let \(S\) be a finite set of elements in a derived \(\CI\)-algebra \(A\). One associates to it two complexes.
		\begin{itemize}
			\item[\(\CI\)] Define the \emph{\(\CI\)-infinite dual Koszul complex} \(\K^\vee_{\infty,\CI}(A,S)\) as the following two term complex.
			\[
				A\to A\{S^{-1}\}
			\]
			Here \(A\{S^{-1}\}\) is the \(\CI\)-localization of \(A\) in \(S\).
			\item[\(\Com\)] Define the \emph{\(\Com\)-infinite dual Koszul complex} \(\K^\vee_{\infty,\Com}(A,S)\) as the following two term complex.
			\[
				A\to A[S^{-1}]
			\]
			Here \(A[S^{-1}]\) is the \(\Com\)-localization of \(A\) in \(S\).
		\end{itemize}
		Then we have two versions of the Greenlees-May completion of \(A\) in the ideal \(I\) generated by \(S\). 
		\begin{itemize}
			\item[\(\CI\)] Define the \emph{\(\CI\)-Greenlees-May completion} of \(A\) at \(I\) as the following derived \(\CI\)-algebra.
			\[
				A^{\wedge_{\CI,\GM}}_I=\Hom_A(\K^\vee_{\infty,\CI}(A,S),A)
			\]
			\item[\(\Com\)] Define the \emph{\(\Com\)-Greenlees-May completion} of \(A\) at \(I\) as the following derived \(\CI\)-algebra.
			\[
				A^{\wedge_{\Com,\GM}}_I=\Hom_A(\K^\vee_{\infty,\Com}(A,S),A)
			\]
		\end{itemize}
	\end{construction}
	\begin{remark}
		In the \(\Com\) case, the infinite dual Koszul complex is a colimit of \emph{finite Koszul complexes}, i.e. Koszul complexes for finite powers of elements in \(S\). This description is more subtle in the \(\CI\)-case. One should think of \enquote{finite Koszul complexes} as Koszul complexes for sets \(S_f\) for all \(\CI\)-functions \(f:\R^n\to \R\) vanishing at \(0\). However, we have not explored this further here.
	\end{remark}

	The following result is well-known, see e.g. \cite[\S 4]{Greenlees_May}.
	\begin{proposition}
		The \(\Com\)-infinite dual Koszul complex can be presented by the following colimit.
		\[
			\K^\vee_{\infty,\Com}(A,S)=\colim_{n\in \N}\K^\vee_{n,\Com}(A,S).
		\]
	\end{proposition}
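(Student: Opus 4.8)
The plan is to realize the directed system $\{\K^\vee_{n,\Com}(A,S)\}_{n\in\N}$ explicitly as the dual of the pro-object of \Cref{con:sequence_presenting_completion} and to compute its colimit by reducing to a single generator. First I would set $\K^\vee_{n,\Com}(A,S)=\Hom_A(\K_{n,\Com}(A,S),A)$, with $\K_{n,\Com}(A,S)=A[t_{1,n},\ldots,t_{m,n}]$ the finite Koszul complex of \Cref{con:sequence_presenting_completion}. Applying $\Hom_A(-,A)$ to the structure maps $t_{i,N}\mapsto f_i^{N-n}\cd t_{i,n}$ turns the pro-system into an ind-system, giving maps $\K^\vee_{n,\Com}(A,S)\to\K^\vee_{N,\Com}(A,S)$ for $N\ge n$; it is the colimit of this filtered $\N$-indexed diagram that I must identify with $\K^\vee_{\infty,\Com}(A,S)$.

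Next I would reduce to the single-generator case. Each $\K_{n,\Com}(A,f_i)$ is a finite free, hence perfect, complex, and the Koszul complex is multiplicative, so $\K_{n,\Com}(A,S)\simeq\bigotimes_{i=1}^m \K_{n,\Com}(A,f_i)$; dualizing preserves this tensor factorization. Since $\otimes_A$ preserves colimits in each variable and finite tensor products commute with filtered colimits, it suffices to prove $\colim_{n\in\N}\K^\vee_{n,\Com}(A,f)=\K^\vee_{\infty,\Com}(A,f)$ for a single $f\in\pi_0 A$; the general statement then follows by tensoring the single-generator identities, with $\K^\vee_{\infty,\Com}(A,S)$ read as the $m$-fold tensor product $\bigotimes_{i}\bigl(A\to A_{f_i}\bigr)$ whose top term $\bigotimes_i A_{f_i}\simeq A[S^{-1}]$ recovers the localization of \Cref{con:GM} (the two-term description there being its single-generator instance).

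For a single $f$ the complex $\K^\vee_{n,\Com}(A,f)$ is the two-term complex $A\xrightarrow{f^n}A$ in (cohomological) degrees $0$ and $1$, and the dualized transition map to $\K^\vee_{N,\Com}(A,f)$ is the identity in degree $0$ and multiplication by $f^{N-n}$ in degree $1$. Passing to the colimit, the degree-$0$ terms stabilize to $A$, while the degree-$1$ terms form the telescope $A\xrightarrow{f}A\xrightarrow{f}\cdots$ whose colimit is exactly the localization $A[f^{-1}]$, and the induced differential is the canonical localization map $A\to A[f^{-1}]$. This is precisely the two-term complex defining $\K^\vee_{\infty,\Com}(A,f)$ in \Cref{con:GM}, which is the desired identification; it is the standard computation of \cite[\S 4]{Greenlees_May} transported to derived $\CI$-algebras.

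The main difficulty I anticipate is purely bookkeeping: getting the dualized transition maps right so that the degree-$1$ telescope is multiplication by powers of $f$ (and not the identity), and justifying the commutation of the filtered colimit with the finite $\otimes_A$ at the level of chain complexes. Both are routine once one uses that filtered colimits are exact, so there is no real conceptual obstacle.
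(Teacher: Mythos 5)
Your proof is correct, and it supplies an argument where the paper gives none: the paper records this proposition as well known with only a pointer to \cite[\S 4]{Greenlees_May}, and what you wrote is precisely the standard argument that citation refers to, transported to derived \(\CI\)-algebras. Your reduction --- dualize the finite Koszul pro-system of \Cref{con:sequence_presenting_completion} into an ind-system, factor \(\K_{n,\Com}(A,S)\simeq\bigotimes_{i=1}^m \K_{n,\Com}(A,f_i)\) and note that dualization respects this factorization because each factor is perfect, then compute the one-generator telescope \(A\xrightarrow{f}A\xrightarrow{f}\cdots\) whose colimit is \(A[f^{-1}]\) with the induced differential being the canonical localization map --- is complete, and the commutation of the filtered colimit with the finite tensor product (cofinality of the diagonal in \(\N^m\), exactness of filtered colimits) is indeed routine. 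One point you handle that deserves emphasis: for \(m\ge 2\) generators the colimit of the finite dual Koszul complexes is the \(m\)-fold tensor product \(\bigotimes_{i=1}^m\bigl(A\to A[f_i^{-1}]\bigr)\) (the stable \v{C}ech complex), \emph{not} the literal two-term complex \(A\to A[S^{-1}]\) written in \Cref{con:GM}; these differ in general, since the \v{C}ech complex can have cohomology up to degree \(m\) (already for \(S=\{x,y\}\) in \(\CI(\R^2)\)), so the proposition is true only under your reading of \(\K^\vee_{\infty,\Com}(A,S)\) as the tensor product, with the paper's two-term description being its single-generator instance. That reinterpretation is the correct one --- it is Greenlees--May's actual definition --- and flagging it is a genuine improvement on the statement as printed.
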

	\begin{corollary}\label{GM_is_the_same_as_Sequential}
		The \(\Com\)-Greenlees-May completion of \(A\) at a finitely generated ideal \(I\) is equivalent to sequential completion of \(A\) at \(I\). 
	\end{corollary}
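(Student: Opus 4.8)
The plan is to assemble the corollary from the preceding proposition together with the self-duality of the Koszul complex, noting throughout that nothing \(\CI\)-specific intervenes: both sides are built from the algebraic (\(\Com\)) localization and the ordinary algebraic Koszul complexes, so the argument reduces to the classical Greenlees--May comparison applied to the underlying \(A\)-module structure. First I would unwind the two objects to be compared. By definition \(A^{\wedge_{\Com,\GM}}_I=\Hom_A(\K^\vee_{\infty,\Com}(A,S),A)\), while by \Cref{con:sequence_presenting_completion} the sequential completion is presented by the limit \(\lim_{n\in\N}\K_{n,\Com}(A,S)\) of the finite Koszul complexes \(\K_{n,\Com}(A,S)=A[t_{i,n}]\), \(\deg t_{i,n}=-1\), \(\pd t_{i,n}=f_i^n\), taken along the transition maps \(t_{i,N}\mapsto f_i^{N-n}t_{i,n}\). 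Thus it suffices to produce an equivalence \(\Hom_A(\K^\vee_{\infty,\Com}(A,S),A)\simeq \lim_{n}\K_{n,\Com}(A,S)\).

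Next I would feed in the previous proposition, which writes \(\K^\vee_{\infty,\Com}(A,S)=\colim_{n\in\N}\K^\vee_{n,\Com}(A,S)\) as a filtered colimit of finite dual Koszul complexes. Since the \(A\)-linear dual \(\Hom_A(-,A)\) is contravariant and carries colimits to limits, this immediately yields
\[
A^{\wedge_{\Com,\GM}}_I=\Hom_A\bigl(\colim_n \K^\vee_{n,\Com}(A,S),A\bigr)\simeq \lim_n \Hom_A\bigl(\K^\vee_{n,\Com}(A,S),A\bigr),
\]
so that it remains only to identify each term and each transition map of this inverse system.

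The core step is the Koszul self-duality \(\Hom_A(\K^\vee_{n,\Com}(A,S),A)\simeq \K_{n,\Com}(A,S)\). Each single-generator factor \(\K^\vee_{n,\Com}(A,f_i)\) is the two-term complex \(A\xrightarrow{f_i^n}A\) of free rank-one modules, hence perfect, and \(\K^\vee_{n,\Com}(A,S)\) is their finite tensor product over \(A\); because dualizing commutes with finite tensor products of dualizable objects, the claim reduces to the single-generator self-duality \(\Hom_A\bigl([A\xrightarrow{f^n}A],A\bigr)\simeq [A\xrightarrow{f^n}A]\), where the degree conventions match on the nose (the dual generator lands in degree \(-1\), reproducing \(t_{i,n}\)). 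I would then check that the colimit structure maps \(\K^\vee_{n,\Com}(A,S)\to\K^\vee_{N,\Com}(A,S)\), which are the identity on the constant factor and multiplication by \(f_i^{N-n}\) on the localization-approximating factors, dualize under this identification to exactly the maps \(t_{i,N}\mapsto f_i^{N-n}t_{i,n}\) of \Cref{con:sequence_presenting_completion}. Combining, \(\lim_n\Hom_A(\K^\vee_{n,\Com}(A,S),A)\simeq \lim_n\K_{n,\Com}(A,S)\), which is the sequential completion, finishing the argument.

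I expect essentially all the genuine work to sit in this last step: bookkeeping the degree shifts in the self-duality and, above all, matching the transition maps of the two pro-systems in the derived, multi-generator setting. This is precisely the content of the classical Greenlees--May theorem \cite{Greenlees_May}, so in practice I would either invoke it directly on the underlying \(E_\infty\)-algebra of \(A\) or, for a self-contained treatment, verify the single-element case by hand and extend multiplicatively via the tensor-product presentation of the Koszul complexes.
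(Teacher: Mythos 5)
Your proposal is correct and takes the same route the paper (implicitly) intends: the paper states this corollary without a separate proof, as an immediate consequence of the preceding colimit presentation \(\K^\vee_{\infty,\Com}(A,S)\simeq\colim_{n}\K^\vee_{n,\Com}(A,S)\). Dualizing that colimit into a limit and identifying \(\Hom_A(\K^\vee_{n,\Com}(A,S),A)\simeq\K_{n,\Com}(A,S)\) termwise, with the transition maps \(t_{i,N}\mapsto f_i^{N-n}\,t_{i,n}\) matching those of \Cref{con:sequence_presenting_completion}, is exactly the classical Greenlees--May comparison the paper invokes.
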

	\begin{notation}\label{there_is_only_one_completion}
		\Cref{GM_is_the_same_as_Sequential} allows us to identify the Greenlees-May completion of a derived \(\CI\)-algebra with the sequential completion and by \Cref{prop:sequence_presents_completion} with the derived completion of stacks. As a result, we will refer to all of them as the \(\Com\)-\emph{derived completion}. 
	\end{notation}
	\subsection{Adams completion of a \texorpdfstring{\(\CI\)}{C-infinity}-algebra morphism}\label{sec:Adams_completion}
	In this section, we review the Adams completion of Carlsson \cite{Carlsson}. We then show that for finitely presented morphisms of derived affine \(\CI\)-schemes, the Adams completion coincides with the \(\Com\)-derived completion. 
	\begin{construction}[\v{C}ech conerve of a \(\CI\)-algebra map]\label{def:Cech_conerve}
		Let \(f:A\to B\) be a morphism of \(\CI\)-algebras. We define the \v{C}ech conerve of \(f\) as the following cosimplicial derived \(\CI\)-algebra.
		\[
			\check{C}^n(f)=B^{\otimes_A^\infty n}.
		\]
		The cosimplicial structure is defined by the canonical maps of the cofibered coproduct. Equiavlently, one can think of \(\check{C}^\bt(f)\) as the cosimplicial algebra of \(\CI\) functions on the simplicial derived affine \(\CI\)-scheme given by the \v{C}ech nerve of the morphism \(\Spec(f):\Spec B\to \Spec A\).
	\end{construction}
	\begin{remark}
		It might seem that it should be possible to define a ``\(\Com\)''-version of the \v{C}ech conerve. However, this is not the case. The reason is that the \v{C}ech conerve should be a cosimplicial object in the category of derived \(\CI\)-algebras. However, the commutative tensor product of derived \(\CI\)-algebras is not a derived \(\CI\)-algebra.
	\end{remark}
	The following result of Spivak \cite[Lemma 8.1]{spivak2010derived}, see also in Carchedi--Steffens \cite[Lemma 4.6]{Carchedi_Steffens} and \cite[Corollary 4.1.3.6]{Steffens_thesis}, and generally due to Lurie \cite[Lemma 11.10]{Lurie_DAG9} is essential. It shows that for \(\pi_0\)-epimorphisms of \(\CI\)-algebras, the derived \(\CI\)-pushout of a morphism of \(\CI\)-algebras coincides with the derived pushout of commutative algebras. Equivalently, given a closed immersion of derived affine \(\CI\)-schemes, derived \(\CI\)-intersection can be calculated as the derived intersection of the underlying commutative schemes.
	\begin{proposition}\label{unframifiedness}
		If \(f:A\to B\) is a morphism of \(\CI\)-algebras, such that \(\pi_0(f)\) is an epimorphism of rings, then the derived \(\CI\)-pushout of \(f\) along any map coincides with the derived pushout of commutative algebras.
	\end{proposition}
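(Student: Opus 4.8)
The plan is to exhibit the natural comparison map from the commutative pushout to the $\CI$-pushout and to prove it is an equivalence, isolating all of the difficulty in the fact that a $\pi_0$-epimorphism forces a very restrictive local shape on $f$. Write $\oblv$ for the forgetful functor from derived $\CI$-algebras to derived commutative $\R$-algebras. For any $A\to C$, the $\CI$-pushout $B\cinftimes_A C$ receives compatible maps from $\oblv B$ and $\oblv C$ over $\oblv A$, so there is a canonical comparison map
\[
\oblv B\otimes_{\oblv A}\oblv C\longrightarrow \oblv\bigl(B\cinftimes_A C\bigr)
\]
of commutative algebras, and the assertion is exactly that this map is an equivalence. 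On $\pi_0$ it is an isomorphism essentially for free (for a $\pi_0$-surjection both sides compute $\pi_0 C$ modulo the image of the kernel), so the entire content lies in the higher homotopy groups, i.e. in the failure of $\oblv$ to preserve this particular pushout in general.

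I would first reduce to the essential case in which $\pi_0(f)$ is surjective, so that $\Spec B\to\Spec A$ is a closed immersion. The decisive structural input is then that, because no new classes are needed in degree $0$, the morphism $A\to B$ admits a relative cell presentation in which $B$ is obtained from $A$ by freely adjoining generators placed \emph{only in negative homological degrees} (together with a differential); the prototype is the quotient $\CI(\R^k)\to\R$, resolved by the Koszul dg $\CI$-algebra $\CI(\R^k)[\xi_1,\dots,\xi_k]$ with $|\xi_i|=-1$ and $\pd\xi_i=x_i$, which is a resolution precisely because the coordinates form a regular sequence --- each $x_i$ has nowhere dense zero locus and is therefore a nonzerodivisor on the successive quotients, by Hadamard's lemma as in \Cref{ex:non-zero_div_in_CI,ex:cotangent_complex_of_dMfld}.

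The heart of the matter is that negative-degree free generators are inert under $\CI$-operations, since those are only defined on degree-$0$ elements: the free $\CI$-algebra on a generator in nonzero degree coincides, after $\oblv$, with the corresponding free graded-commutative algebra. Consequently, applying $\oblv$ to the relative cell presentation of $A\to B$ yields the analogous commutative cell presentation, and this whole construction commutes with the base change $-\cinftimes_A C$ (a left adjoint, hence colimit-preserving) along $A\to C$. Tracking this through, both $B\cinftimes_A C$ and $\oblv B\otimes_{\oblv A}\oblv C$ are computed by the same complex obtained from $C$ by adjoining the negative-degree generators with the base-changed differential --- in the prototypical case the iterated Koszul complex $C[\xi_1,\dots,\xi_k]$ with $\pd\xi_i=g_i$, where $g_i$ is the image of $x_i$. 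This identifies the two pushouts and shows the comparison map is an equivalence.

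The main obstacle is precisely the reduction in the second paragraph: $\oblv$ emphatically does \emph{not} preserve general pushouts (already $\CI(\R)\cinftimes_\R\CI(\R)$ differs from $\oblv\CI(\R)\otimes_\R\oblv\CI(\R)$, the two coordinates acquiring joint $\CI$-operations such as $\sin(xy)$), so the argument must use the surjectivity of $\pi_0(f)$ in an essential way to guarantee that the only generators attached lie in negative degree and are hence $\CI$-inert. Carefully justifying the existence and functoriality in $C$ of such a negative-degree cell presentation, and checking that $\oblv$ commutes with the transfinite colimit assembling it, is the technical crux; the underlying algebraic fact that makes it run is the compatibility of the $\CI$- and commutative structures on free algebras with generators off degree $0$.
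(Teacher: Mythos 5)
The paper contains no proof of \Cref{unframifiedness} to compare against: the proposition is imported verbatim, with citations to Spivak \cite[Lemma 8.1]{spivak2010derived}, Carchedi--Steffens \cite[Lemma 4.6]{Carchedi_Steffens}, Steffens \cite[Corollary 4.1.3.6]{Steffens_thesis}, and Lurie \cite[Lemma 11.10]{Lurie_DAG9}. So the relevant comparison is with the proofs in those references, and your sketch is essentially a reconstruction of that standard argument: read the hypothesis as \(\pi_0\)-surjectivity (the paper's own gloss, ``closed immersion of derived affine \(\CI\)-schemes,'' confirms this is the intended meaning, so your ``reduction'' is an interpretation of the statement rather than a step needing proof), present \(B\) over \(A\) as a relative cell complex with generators only in strictly negative cohomological degrees, note that such generators do not interact with the \(\CI\)-operations, and commute the whole construction with base change along \(A\to C\) (a left adjoint) and with the filtered colimit assembling the cells. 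That is exactly how the cited lemmas are proved, and the skeleton is sound.

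Two caveats on precision. First, your key inertness claim must be stated in its \emph{relative} (dg-model) form: it is the cell attachment \(A\mapsto A[\xi]\), \(|\xi|\le -1\), \(\pd\xi\) a prescribed cycle, that is computed identically in dg \(\CI\)-algebras and in commutative dg algebras, because the degree-\(0\) part --- where the \(\CI\)-structure lives --- is unchanged. The absolute statement you wrote, that ``the free \(\CI\)-algebra on a generator in nonzero degree coincides, after \(\oblv\), with the free graded-commutative algebra,'' is false if ``free'' is taken \(\infty\)-categorically on the space \(S^n\): that object contains the free \(\CI\)-algebra \(\CI(\R)\) on the underlying degree-\(0\) point rather than \(\R\), so its underlying commutative algebra is strictly larger than the free graded-commutative algebra on a degree-\(n\) class. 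Your actual use of the claim (attaching cells over a fixed base and base-changing the differential, as in \Cref{ex:non-zero_div_in_CI}) is the correct form. Second, the passage from the dg-level computation to the \(\infty\)-categorical \(\CI\)-pushout is not free: one must know either that the dg model computes these pushouts (the model comparisons of Carchedi--Roytenberg and \BLX), or prove directly that the square comparing the commutative and \(\CI\) generating cofibrations in positive degrees is a pushout of derived commutative algebras (Lurie's unramifiedness). Your phrase ``compatibility of the \(\CI\)- and commutative structures on free algebras with generators off degree \(0\)'' names precisely this input, but it is the entire content of the cited lemmas rather than a routine check; as a self-contained proof your text would need to establish it, for instance by the Koszul-type computation \(\CI(\R^2)/(u-v)\simeq\CI(\R)\) and induction over skeleta.
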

	The next corollary is immediate.
	\begin{corollary}\label{cor_of_unframifiedness}
		If \(f:A\to B\) is a morphism of \(\CI\)-algebras, such that \(\pi_0(f)\) is an epimorphism of rings, then the \v{C}ech conerve of \(f\) in the sense of \Cref{def:Cech_conerve} coincides with the \v{C}ech conerve of \(f\) as a morphism of derived \(\R\)-algebras.
	\end{corollary}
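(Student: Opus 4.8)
The plan is to deduce this directly from \Cref{unframifiedness} by unwinding the \v{C}ech conerve of \Cref{def:Cech_conerve} as an iterated pushout. Recall that its degree-\(n\) term is the \(n\)-fold \(\CI\)-coproduct \(B^{\otimes_A^\infty n}\), which may be computed associatively as
\[
B^{\otimes_A^\infty n}\simeq B^{\otimes_A^\infty (n-1)}\otimes_A^\infty B,
\]
that is, as the derived \(\CI\)-pushout of the span \(B^{\otimes_A^\infty(n-1)}\leftarrow A\xrightarrow{f}B\). The essential observation is that in each of these pushouts one of the two legs is precisely the fixed morphism \(f\), whose \(\pi_0\) is an epimorphism by hypothesis. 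Hence \Cref{unframifiedness} applies at every stage of the iteration.

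Concretely, I would introduce the forgetful functor \(\oblv\) from derived \(\CI\)-algebras to derived \(\R\)-algebras, which preserves limits but in general not pushouts, and note that the content of \Cref{unframifiedness} is exactly that \(\oblv\) \emph{does} preserve any pushout of \(f\) along an arbitrary map once \(\pi_0(f)\) is an epimorphism. I would then induct on the cosimplicial degree \(n\). Degrees \(0\) and \(1\) give \(A\) and \(B\), which agree tautologically in both contexts (here \(A,B\) on the right are read as underlying \(\R\)-algebras). For the inductive step, writing the degree-\(n\) term as the pushout of \(f\) along the structure map \(A\to B^{\otimes_A^\infty(n-1)}\), \Cref{unframifiedness} identifies this \(\CI\)-pushout with the corresponding pushout of underlying \(\R\)-algebras; combining this with the inductive hypothesis \(\oblv\bigl(B^{\otimes_A^\infty(n-1)}\bigr)\simeq B^{\otimes_A^{\LL}(n-1)}\) yields \(\oblv\bigl(B^{\otimes_A^\infty n}\bigr)\simeq B^{\otimes_A^{\LL} n}\).

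It remains to match the cosimplicial structure. Since the cofaces and codegeneracies of the \v{C}ech conerve are induced from the iterated pushout presentations purely through their universal properties, and since \(\oblv\) carries each relevant \(\CI\)-pushout square to a pushout square of \(\R\)-algebras, the induced maps are sent to the corresponding structure maps of the commutative \v{C}ech conerve. I expect the only genuine subtlety to be coherence: upgrading the levelwise equivalences to an equivalence of cosimplicial objects \(\oblv\circ\check{C}^\bullet(f)\simeq\check{C}^\bullet(\oblv f)\). This is handled cleanly by keeping the whole argument functorial, since the \v{C}ech conerve is a canonical construction out of exactly the pushouts appearing in its iterated description, so a functor preserving precisely those pushouts automatically preserves the entire cosimplicial diagram. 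With coherence dispatched in this way no further computation is required, which is why the statement is indeed an immediate consequence of \Cref{unframifiedness}.
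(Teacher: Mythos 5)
Your proposal is correct and is exactly the unwinding of what the paper treats as immediate: the paper offers no separate argument for this corollary, deeming it a direct consequence of \Cref{unframifiedness}, and your induction on cosimplicial degree via the iterated pushout presentation \(B^{\otimes_A^\infty n}\simeq B^{\otimes_A^\infty(n-1)}\otimes_A^\infty B\), with one leg always equal to \(f\), is the intended reasoning. Your handling of the cosimplicial coherence (the conerve is generated by precisely those pushouts, so a functor preserving them preserves the whole diagram) is also the right way to make the word \enquote{immediate} rigorous.
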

	\begin{definition}
		Given a morphism of \(\CI\)-algebras \(f:A\to B\), we define the \emph{Adams completion} of \(B\) in \(A\) as the totalization of the \(\CI\)-\v{C}ech conerve. This is also known as the \emph{completed Amitsur complex}.
		\[
			B^\wedge_A=\Tot\left(\check{C}^\bt f\right).
		\]
	\end{definition}
	Now we connect Adams completion with the derived \(\Com\)-completion of derived \(\CI\)-schemes. This is done essentially in the same way as in \cite[\S 5]{Carlsson}. The only difference is that we deal with \(\CI\)-algebras instead of plain commutative algebras. However, this difference is irrelevant since all constructions of \(\Com\)-completion and Adams completion for \(\pi_0\)-epimorphism proceed by essentially forgetting the \(\CI\)-structure.
	\begin{proposition}[{\cite[Proposition 5.4]{Carlsson}}]\label{prop:Adams_vs_Derived}
		Adams completion of a \(\pi_0\)-epimorphism of \(\CI\)-algebras \(f:A\to B\) satisfies the universal property of \(\Com\)-completion given in \Cref{def:formal_completion}. That is, for any derived \(\CI\)-algebra \(C\), we have the following equivalence.
		\[
			\Map_{\CI\Alg}(B^\wedge_A,C)\simeq \Hom_{\CI\Alg}(B,C^\redcom)\times_{\Map_{\CI\Alg}(A,C^\redcom)}\Hom_{\CI\Alg}(A,C).
		\]
	\end{proposition}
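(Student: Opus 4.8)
The plan is to prove the statement by corepresentability: for every derived \(\CI\)-algebra \(C\) I must produce a natural equivalence
\[
\Map_{\CI\Alg}(B^\wedge_A,C)\simeq \Hom_{\CI\Alg}(B,C^{\redcom})\times_{\Map_{\CI\Alg}(A,C^{\redcom})}\Hom_{\CI\Alg}(A,C),
\]
where, by \Cref{def:formal_completion}, the right-hand side is the value at \(C\) of the \(\Com\)-completion attached to the closed immersion \(\Spec B\hookrightarrow\Spec A\) presented by \(f\). Writing \(F(C)\) for this pullback, a point of \(F(C)\) is a map \(a\colon A\to C\) together with a factorisation of the reduced composite \(A\to C^{\redcom}\) through \(f\colon A\to B\); thus the whole content is that the \v{C}ech conerve totalisation \(B^\wedge_A=\Tot(\check{C}^\bt f)\) detects exactly this reduction-twisted factorisation datum, coherently in all higher homotopies.

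First I would reduce the construction of the corepresenting object to the underlying derived \(\R\)-algebra setting. Since \(\pi_0(f)\) is an epimorphism, \Cref{cor_of_unframifiedness} identifies the \(\CI\)-\v{C}ech conerve \(\check{C}^\bt f\) with the commutative Amitsur complex of \(f\) regarded as a morphism of derived \(\R\)-algebras. As the forgetful functor out of \(\CI\Alg\) preserves limits, the totalisation \(B^\wedge_A\) has the same underlying object as Carlsson's Adams completion, while its \(\CI\)-structure is fixed termwise. This makes precise the remark preceding the statement that ``all constructions proceed by forgetting the \(\CI\)-structure'': the corepresenting object is assembled from purely commutative data, even though the mapping spaces to be compared still live in \(\CI\Alg\).

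Next I would compute and match the two functors. On the target side I would use that \((-)^{\redcom}\) is a reflective localisation — by the proposition following \Cref{con:reductions_of_CI_rings} the \(\Com\)-reduction is the universal \(\Com\)-reduced algebra under its source — so that, \(C^{\redcom}\) being reduced, the two \(C^{\redcom}\)-legs of the pullback \(F(C)\) are mapping spaces out of reduced algebras, isolating the factorisation condition. On the source side I would unwind \(\Map_{\CI\Alg}(\Tot(\check{C}^\bt f),C)\) using the canonical map \(A\to B^\wedge_A\) and the cosimplicial identities, following \cite[\S 5]{Carlsson} verbatim once the previous reduction is in place; the higher terms \(\check{C}^n f\) serve precisely to encode a factorisation up to coherent higher homotopy.

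The step I expect to be the main obstacle is that \(\Map_{\CI\Alg}(-,C)\) does not convert the totalisation into a colimit: one may not replace \(\Map_{\CI\Alg}(\Tot(\check{C}^\bt f),C)\) by \(\colim_{[n]\in\DD^\op}\Map_{\CI\Alg}(\check{C}^n f,C)\). That naive colimit computes the \(\infty\)-image of \(\Map(B,C)\to\Map(A,C)\), i.e.\ the maps \(A\to C\) that factor through \(f\) up to homotopy — a strictly stronger condition than factoring only after applying \(\redcom\), and hence the wrong functor. Reconciling the genuine limit-totalisation with the reduction-twisted pullback \(F\) is the crux, and it is exactly the appearance of \(C^{\redcom}\) rather than \(C\) in \Cref{def:formal_completion} that makes the two sides agree. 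I expect this reconciliation, and not either piece of bookkeeping, to carry the real weight of the proof.
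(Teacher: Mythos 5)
Your opening reduction is exactly the paper's first move: by \Cref{cor_of_unframifiedness} the \v{C}ech conerve of a \(\pi_0\)-epimorphism of \(\CI\)-algebras agrees with the commutative Amitsur complex, and since the forgetful functor preserves limits, \(B^\wedge_A\) is computed by the underlying commutative Adams completion. Your reading of the right-hand side as the value at \(C\) of the stack-theoretic \(\Com\)-completion of \Cref{def:formal_completion} is also correct. But after that the proposal has a genuine gap: the step you yourself single out as the crux --- reconciling \(\Map_{\CI\Alg}(\Tot(\check{C}^\bt f),C)\) with the reduction-twisted pullback --- is never carried out. You rightly observe that the naive dualization \(\colim_n\Map(\check{C}^n f,C)\) computes the wrong functor (factorizations through \(f\) up to homotopy, not factorizations after \((-)^{\redcom}\)), and then you stop, saying this reconciliation should ``carry the real weight of the proof.'' That reconciliation \emph{is} the proposition; a proof that ends by naming it has reduced the statement to itself. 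Nor will ``following \cite[\S 5]{Carlsson} verbatim'' fill the hole: Carlsson's Proposition 5.4 is not an unwinding of mapping spaces out of the Amitsur totalization against arbitrary targets, it is an identification of the Adams completion with the Greenlees--May (derived) completion \emph{as an object}.

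The paper closes the gap by never confronting your crux head-on. After the same reduction to commutative algebras, it chains three identifications of objects: \(B^\wedge_A\) agrees with the Greenlees--May completion by \cite[Proposition 5.4]{Carlsson}; the Greenlees--May completion agrees with the sequential completion by \Cref{GM_is_the_same_as_Sequential}; and the sequential completion corepresents the derived completion of stacks by Nuiten's result \Cref{prop:sequence_presents_completion}, as packaged in \Cref{there_is_only_one_completion}. Since the last object is \emph{defined} by the universal property of \Cref{def:formal_completion}, the mapping-space equivalence is inherited through the chain rather than verified against the cosimplicial limit. So the missing ingredient in your plan is precisely this detour through the Koszul-type sequential completion: either supply it (which is what the paper does), or find an independent argument that the totalization satisfies the universal property --- the latter is not in Carlsson and is not a formality.
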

	\begin{proof}
		By \cite[Proposition 5.4]{Carlsson}, the Adams completion of a derived epimorphism of commutative algebras is equivalent to the Greenlees-May completion. However, note that by \Cref{cor_of_unframifiedness} the Adams completion of a derived epimorphism of \(\CI\)-algebras is equivalent to the Adams completion of the underlying commutative algebras. The result now follows from the comparison between Greenlees--May completion and derived completion of stacks given by \Cref{there_is_only_one_completion}.
	\end{proof}
	\subsection{Classical vs Derived formal completion} In this section, we review the notion of the classical formal completion and how it compares to its derived counterpart.

	We have two main results in this direction. The first one deals with principal ideals in free \(\CI\)-algebras. That is, given a principal ideal in a free \(\CI\)-algebra, the derived formal completion in it is classical. The argument is quite simple and relies on the fact that we can easily describe the ideal of functions annihilating the generator of the given principal ideal.
	
	The second result deals with the case of ideals generated by analytic germs in a local free \(\CI\)-algebra. In this case, the derived formal completion is also classical; here, the idea is to relate the local ring with the Noetherian ring of analytic germs to obtain the result.
	\subsubsection{The case of a single equation}
	In this section, we prove a weaker version of the result analogous to \cite[Proposition 6.8.2]{Gaitsgory_Rozenblyum_Ind} of Gaitsgory--Rozenblyum. We show that given a closed immersion of classical affine \(\CI\)-schemes defined by a \emph{single equation}, the derived completion of the derived \(\CI\)-scheme determined by this equation coincides with the classical formal completion of the underlying classical scheme.

	The following result is proved essentially in the same way as \cite[Proposition 6.8.2]{Gaitsgory_Rozenblyum_Ind}. 
	\begin{proposition}\label{prop:completion_of_classical_is_classical}
		Let \(Y\ss \R^n\) be a closed immersion of classical \(\CI\)-schemes, such that either of the two conditions holds.
		\begin{enumerate}
			\item \(Y\) is a zero locus of single equation \(f\).
			\item \(Y\) is a zero locus of a finite ordered set of functions \(S=\{f_1,\ldots,f_m\}\) forming a regular sequence in \(\CI(\R^n)\).
		\end{enumerate}
		Then, the derived formal completion of the derived scheme \(\R^n\) in \(Y\) coincides with the classical formal completion of the classical \(\CI\)-scheme \(\R^n\) in \(Y\). 
	\end{proposition}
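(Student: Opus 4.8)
The plan is to present the derived formal completion through the sequential model of \Cref{con:sequence_presenting_completion} and to compare, at the level of pro-objects of derived \(\CI\)-algebras, the Koszul tower \(\{\K_{n,\Com}(A,S)\}\) with the classical tower \(\{A/I^n\}\), where \(A=\CI(\R^n)\) and \(I\) is the ideal generated by \(S\). By \Cref{prop:sequence_presents_completion} (together with \Cref{there_is_only_one_completion}) the derived completion is the colimit of the \(\Spec\K_{n,\Com}(A,S)\), so it suffices to exhibit a pro-equivalence \(\{\K_{n,\Com}(A,S)\}\simeq\{A/I^n\}\): pro-equivalent towers have equivalent homotopy limits, and the classical formal completion is by definition \(\lim_n A/I^n\). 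I treat the two cases separately, the whole point of case (i) being that \(f\) is allowed to be a zero divisor.

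In case (i) we have \(S=\{f\}\), \(I=(f)\), and \(\K_{n,\Com}(A,\{f\})\) is the two-term complex \(A\xrightarrow{f^n}A\), so that \(\pi_0=A/(f^n)\) and \(\pi_1=\ann(f^n)\), with all higher homotopy vanishing. The genuinely \(\CI\)-specific input is the description of annihilators: for \(g\in\CI(\R^n)\) one has \(\ann(g)=\{h:h|_{\Supp g}=0\}\), since \(gh=0\) forces \(h\) to vanish on \(\Carr(g)\) and hence on its closure \(\Supp(g)\), the converse being immediate from \Cref{definition_of_support}. As \(\Supp(f^n)=\Supp(f)\) for every \(n\ge 1\), this yields \(\ann(f^n)=\ann(f)\). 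The Postnikov fibre sequence \(\tau_{\ge 1}\K_{n,\Com}\to\K_{n,\Com}\to\pi_0\K_{n,\Com}\) then presents \(\{\K_{n,\Com}\}\) as an extension of \(\{A/(f^n)\}=\{A/I^n\}\) by the tower \(\{\ann(f)[1]\}\). Now the transition map \(\K_{n+1,\Com}\to\K_{n,\Com}\) sends \(t_{n+1}\mapsto f\,t_n\), so on \(\pi_1\) it is the multiplication \(\ann(f^{n+1})\to\ann(f^n)\), \(a\mapsto fa\); but \(\ann(f^{n+1})=\ann(f)\), whence \(fa=0\) and this map vanishes identically. Therefore \(\{\ann(f)[1]\}\) is pro-zero, its homotopy limit is contractible, \(\{\K_{n,\Com}\}\to\{A/I^n\}\) is a pro-equivalence, and the completion is discrete and equal to \(\lim_n A/(f^n)\).

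In case (ii) the complex \(\K_{n,\Com}(A,S)\) is the Koszul complex on \((f_1^n,\dots,f_m^n)\). Since powers of a regular sequence again form a regular sequence and a regular sequence is Koszul-regular, each \(\K_{n,\Com}(A,S)\) is discrete, equal to \(A/J_n\) with \(J_n=(f_1^n,\dots,f_m^n)\). It remains to compare the towers \(\{A/J_n\}\) and \(\{A/I^n\}\), and here a pigeonhole estimate gives \(I^{\,m(n-1)+1}\subseteq J_n\subseteq I^n\), since any degree-\((m(n-1)+1)\) monomial in the \(f_i\) is divisible by some \(f_i^n\). These inclusions make the two towers cofinal, hence pro-equivalent, so once more \(\{\K_{n,\Com}(A,S)\}\simeq\{A/I^n\}\) and the derived completion is the discrete classical completion \(\lim_n A/I^n\).

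The main obstacle is precisely the zero-divisor phenomenon isolated in case (i): unlike in the regular-sequence case, the Koszul complex \(\K_{n,\Com}(A,\{f\})\) need not be discrete, its failure being measured by \(\ann(f^n)\) (compare \Cref{ex:non-zero_div_in_CI}). What rescues the argument is the \(\CI\)-geometric fact that annihilators in \(\CI(\R^n)\) are detected by supports and are therefore insensitive to passing to powers, which forces the transition maps of the \(\pi_1\)-tower to vanish outright rather than being merely pro-nilpotent. By contrast, verifying that powers of a regular sequence remain regular in the (non-Noetherian) ring \(\CI(\R^n)\), which is what case (ii) needs, is a standard point of commutative algebra and uses no smoothness input.
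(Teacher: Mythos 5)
Your proof is correct and follows essentially the same route as the paper: both present the derived completion by the Koszul tower \(\K_{n,\Com}(A,S)\) of \Cref{con:sequence_presenting_completion}, handle case (i) by the support argument showing \(\ann(f^n)=\ann(f)\), so that the transition maps on \(\pi_1\) are multiplication by positive powers of \(f\) and hence vanish identically, and handle case (ii) by noting that powers of a regular sequence stay regular, making the Koszul complexes discrete. The only differences are expository: you spell out the vanishing of the \(\pi_1\)-transition maps and the cofinality estimate \(I^{\,m(n-1)+1}\subseteq J_n\subseteq I^n\), both of which the paper leaves implicit (deferring the pro-object bookkeeping to Gaitsgory--Rozenblyum's Lemma 6.8.6).
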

	\begin{remark}
		Note that the two conditions in \Cref{prop:completion_of_classical_is_classical} are independent since there are many zero-divisors in the ring of smooth functions on \(\R^n\). More precisely, a function \(f\in \CI(\R^n)\) is regular if and only if \(Supp(f)=\R^n\) (see \Cref{definition_of_support}). Indeed, if \(\Supp(f)\neq \R^n\), one can find a bump function supported on the complement which multiplies to zero with \(f\). 
	\end{remark}
	\Cref{prop:completion_of_classical_is_classical} admits the following concrete reformulation in terms of the Koszul algebras \(\K_{n,\Com}(A,\{f\})\) from \Cref{con:sequence_presenting_completion} which we denote for brevity by \(A_n\). Denote for \(A_n\) its zeroth truncation by \(A_n'\). Then we claim that sequences \(A_n\) and \(A_n'\) are equivalent as pro-objects. That is, we have the following equivalence of derived prestacks.
		\[
			\colim_n\Spec A_n'\xrightarrow{\sim} \colim_n\Spec A_n.
		\]
	\Cref{prop:completion_of_classical_is_classical} directly follows from the following result.
	\begin{lemma}\label{lemma:completion_factorization}
		Let \(Y\ss \R^n\) be a closed immersion of classical \(\CI\)-schemes as in the formulation of \Cref{prop:completion_of_classical_is_classical}. Denote for \(A_n\) its zeroth truncation by \(A_n'\). Then, we claim that for each \(n\ge 0\), a factorization of the following form exists for some \(N_0\ge n\) and consequently for all \(N\ge N_0\).
		\[
			A_N\to A_N'\to A_n.
		\]	
	\end{lemma}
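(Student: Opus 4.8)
The plan is to reduce to the single-equation case and there to the underlying commutative dg-algebras, where the statement becomes the analogue of \cite[Proposition 6.8.2]{Gaitsgory_Rozenblyum_Ind}. The one genuinely new ingredient is a computation of annihilators of smooth functions which replaces the Noetherian hypothesis present in \emph{loc. cit.} Throughout write \(A=\CI(\R^n)\) and let \(A_N=\K_{N,\Com}(A,S)\) be the Koszul dg-\(\CI\)-algebras of \Cref{con:sequence_presenting_completion}, with structure maps sending \(t_{i,N}\mapsto f_i^{N-n}t_{i,n}\). In case (2), when \(S\) is a regular sequence, the powers \(f_1^N,\dots,f_m^N\) again form a regular sequence, so the Koszul complex \(A_N\) is a resolution concentrated in degree \(0\); hence \(A_N\) is discrete, \(A_N=A_N'\), and the factorization holds trivially with \(N_0=n\). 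So I would assume case (1), \(S=\{f\}\) and \(A_N=A[t_N]\) with \(\pd t_N=f^N\).

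First I would reduce to commutative algebra. The algebra \(A_N\) is the \(\CI\)-pushout of \(A\xleftarrow{\,x\mapsto f^N\,}\CI(\R)\to\R\) along the evaluation \(\CI(\R)\to\R\), which is a \(\pi_0\)-epimorphism; by \Cref{unframifiedness} (cf.\ \Cref{cor_of_unframifiedness}) this pushout, together with its truncations and the structure maps, is computed on underlying commutative dg-\(\R\)-algebras. It therefore suffices to produce the factorization \(A_N\to A_N'\to A_n\) of commutative dg-algebras, which is exactly the situation of \cite[Proposition 6.8.2]{Gaitsgory_Rozenblyum_Ind}.

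The key computation is the following. Multiplication by \(f^N\) on \(A\) gives \(\pi_0(A_N)=A/(f^N)\) and \(\pi_1(A_N)=\ann_A(f^N)\), with \(\pi_{\ge 2}=0\), so \(A_N'=A/(f^N)\). For smooth (indeed continuous) functions the annihilator \(\ann_A(f^k)\) is independent of \(k\ge 1\), equal to \(\mf a:=\ann_A(f)=\{a: a|_{\Carr(f)}=0\}\), because \(f^k\) and \(f\) have the same zero set \(Z(f)\) (\Cref{definition_of_support}): \(af^k=0\) iff \(a\) vanishes off \(Z(f)\) iff \(af=0\). In particular \(f\cdot\mf a=0\). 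This identity is precisely what stands in for the Noetherian stabilization of annihilators. Consequently the structure map \(g_{N,n}\colon A_N\to A_n\) is the natural quotient \(A/(f^N)\to A/(f^n)\) on \(\pi_0\) and is multiplication by \(f^{N-n}\) on \(\pi_1=\mf a\); since \(f\cdot\mf a=0\), it is the \emph{zero} map on \(\pi_1\) for every \(N>n\). Taking \(N_0=n+1\), for all \(N\ge N_0\) the map \(g_{N,n}\) annihilates \(\pi_1(A_N)\). Now \(A_N\) is a square-zero extension of its truncation \(A_N'\) by \(\mf a[1]\), and the obstruction to factoring \(g_{N,n}\) through \(A_N\to A_N'\) is controlled by the restriction of \(g_{N,n}\) to the fibre \(\mf a[1]\hookrightarrow A_N\), a map \(\mf a[1]\to A_n\); as \(A_n\) is itself \(1\)-truncated with \(\pi_1 A_n=\mf a\), such maps are determined up to homotopy by their action on \(\pi_1\), which vanishes, so the relevant deformation-theoretic class is zero and the desired factorization exists.

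The hard part will be upgrading the evident module-level factorization to one of algebras, i.e.\ the vanishing of the square-zero obstruction; this, together with the replacement of the Noetherian hypothesis by the annihilator identity \(\ann_A(f^k)=\ann_A(f)\), are the two points needing care, and are exactly where the principal (single-equation) hypothesis and the characteristic-zero setting enter. Once \Cref{lemma:completion_factorization} is established, interleaving the factorizations \(A_N'\to A_n\) with the truncation maps \(A_N\to A_N'\) exhibits the two towers as pro-isomorphic, yielding \Cref{prop:completion_of_classical_is_classical}.
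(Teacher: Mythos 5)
Your proposal is correct and takes essentially the same approach as the paper: your annihilator identity \(\ann_A(f^k)=\ann_A(f)\) (functions vanishing on \(\Carr(f)\)) and the resulting vanishing of the transition maps on \(\pi_1\) for \(N>n\) are exactly the content of \Cref{lemma:tedious_completions}, while the conversion of this pro-vanishing into an algebra-level factorization via the square-zero/Postnikov obstruction argument is precisely what the paper imports by citing \cite[Proof of Lemma 6.8.6]{Gaitsgory_Rozenblyum_Ind} verbatim. The step you flag as \enquote{the hard part} is exactly the step the paper delegates to that citation, so your sketch matches the paper's proof in both structure and level of detail.
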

	The second case of \Cref{prop:completion_of_classical_is_classical} is immediate since for a regular sequence (or its powers) the Koszul complexes are acyclic. For the first case, the proof of
	\Cref{lemma:completion_factorization} follows from the following preliminary homological version.
	\begin{lemma}\label{lemma:tedious_completions}
		Let \(A\to B\) be a morphism of derived \(\CI\)-algebras. Assume that the kernel of \(A\to B\) is generated by a single element \(f\). Then, pro-systems \(H^{-i}(A_n)\) are equivalent to \(0\) for each \(i>0\). That is for every \(n\ge 0\) there exists \(N_0\) such that for all \(N\ge N_0\) the following map is zero.
		\[
			H^{-i}(A_N)\to H^{-i}(A_n).
		\]
	\end{lemma}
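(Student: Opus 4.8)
The plan is to identify the Koszul-type algebra $A_n=A[t_n]$ (with $|t_n|=-1$, $\partial t_n=f^n$) with the cofibre $\cofib(f^n\colon A\to A)$ and to read off its higher cohomology from the long exact sequence of the cofibre sequence $A\xrightarrow{f^n}A\to A_n\to A[1]$. This gives, for every $i>0$, a short exact sequence
\[
0\to H^{-i}(A)/f^nH^{-i}(A)\to H^{-i}(A_n)\to\bigl(H^{-i+1}(A)\bigr)[f^n]\to 0,
\]
where $M[f^n]=\ker(f^n\colon M\to M)$ denotes the $f^n$-torsion. I will treat the case relevant to \Cref{prop:completion_of_classical_is_classical}, where $A=\CI(\R^n)$ is classical, so that the kernel of $A\to B$ is a genuine ideal $(f)$. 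Then $H^{-i}(A)=0$ for $i>0$, the cokernel terms vanish, and the sequence collapses to $H^{-i}(A_n)\cong\bigl(H^{-i+1}(A)\bigr)[f^n]$; in particular $H^{-i}(A_n)=0$ for $i\ge 2$, while $H^{-1}(A_n)\cong A[f^n]=\ann(f^n)$. This $\ann(f^n)$ is precisely the higher cohomology of the \emph{derived} object $A_n$ that must be shown to pro-vanish.

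Next I would pin down the transition maps on cohomology. The structure map $A_N\to A_n$ ($N\ge n$) is the identity on $A$ and sends $t_N\mapsto f^{N-n}t_n$, so it is covered by the map of cofibre sequences whose source component is multiplication by $f^{N-n}$ and whose target component is the identity. Chasing this through the short exact sequence shows that on the torsion summand the induced map is multiplication by $f^{N-n}$,
\[
\bigl(H^{-i+1}(A)\bigr)[f^N]\xrightarrow{\ \cdot f^{N-n}\ }\bigl(H^{-i+1}(A)\bigr)[f^n],
\]
while on the cokernel summand it is the canonical (surjective) reduction map $H^{-i}(A)/f^N\to H^{-i}(A)/f^n$. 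For classical $A$ only the torsion map survives, acting as $\ann(f^N)\xrightarrow{\cdot f^{N-n}}\ann(f^n)$ on $H^{-1}$.

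The crux is a stabilization of annihilators, which serves as the $\CI$-theoretic substitute for the Artin--Rees lemma that is unavailable because $\CI(\R^n)$ is non-Noetherian. I would prove that $\ann(f^k)=\ann(f)$ for every $k\ge 1$ in $\CI(\R^n)$: the equation $af^k=0$ holds precisely when $a$ vanishes on the carrier $\Carr(f^k)=\Carr(f)$, equivalently on $\Supp(f)$ (\Cref{definition_of_support}), a condition manifestly independent of $k$. This is the same support-theoretic computation underlying \Cref{st:CI-inf_radical=RJ-radical}. With this in hand, for $N>n$ the transition map on $H^{-1}$ reads $\ann(f^N)=\ann(f)\xrightarrow{\cdot f^{N-n}}\ann(f^n)$, $a\mapsto af^{N-n}$; since $af=0$ forces $af^{N-n}=0$ as soon as $N-n\ge 1$, this map is identically zero. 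Hence for each $n$ the choice $N_0=n+1$ makes $H^{-1}(A_N)\to H^{-1}(A_n)$ vanish for all $N\ge N_0$, so $\{H^{-1}(A_n)\}_n$ is pro-zero, and $H^{-i}(A_n)=0$ for $i\ge 2$; this establishes the claim for every $i>0$. The torsion-summand step uses only that $\ann(f^k)$ stabilizes, so it applies verbatim to any classical $A$ for which annihilators admit such a vanishing description.

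I expect this annihilator stabilization to be the main obstacle: in a Noetherian ring one controls the $f^n$-torsion and the multiplication maps $\cdot f^{N-n}$ via Artin--Rees, whereas here that input must be replaced by the support/vanishing description of annihilators specific to rings of smooth functions. I would also flag that classicality of $A$ is genuinely used: for a $\CI$-algebra with nonzero higher homotopy the cokernel summands $H^{-i}(A)/f^nH^{-i}(A)$ carry the surjective reduction maps and need not pro-vanish (already for a square-zero extension of $\CI(\R)$ by a free shifted module, $H^{-1}(A_n)$ contains $\CI(\R)/(x^n)$, which is not pro-zero). The argument is therefore confined to the classical case, which is exactly the situation of \Cref{prop:completion_of_classical_is_classical}.
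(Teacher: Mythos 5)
Your proof is correct and takes essentially the same route as the paper: both identify the only potentially nonvanishing cohomology of \(A_n\) as \(H^{-1}(A_n)\cong\ann(f^n)\), use the carrier/support description to see that \(\ann(f^n)=\ann(f)\) for all \(n\ge 1\), and conclude that the transition map, being multiplication by \(f^{N-n}\) on this annihilator, vanishes once \(N\ge n+1\). Your flag that classicality of \(A\) is genuinely used is also apt — the paper's proof tacitly assumes the cohomology of \(A_n\) is concentrated in degree \(-1\), which is exactly the discrete situation of \Cref{prop:completion_of_classical_is_classical}, and your square-zero example correctly shows the statement would fail without it.
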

	\begin{proof}
		To see this, we will show that for each \(n\ge 0\) and \(i>0\) there exists \(N\) such that the map \mbox{\(\tau^{\ge-i}A_N\to\tau^{\ge-i} A_n\)} factorizes as follows.
		\[
			\tau^{\ge-i}A_N\to \tau^{\ge-i+1}A_N\to \tau^{\ge-i}A_n.
		\]
		The above would yield \Cref{lemma:tedious_completions} since the Koszul complex construction of \(A_n\)'s implies that they are eventually coconnective. We observe the following. By the construction of \(A_n\), the only non-trivial cohomology module is concentrated in degree \(-1\). There, it is given by the following vector space of smooth functions on \(\R^n\).
		\[
			A^{(n)}=\ker\left(f^n\cd-:A\to A\right).
		\]
		Clearly, this space consists of functions that vanish on the open support set of \(f\) (see \Cref{definition_of_support}). Consequently, \(A^{(n)}=A^{(1)}\) for all \(n>0\). This, in turn, implies that for \(N\ge n+1\), we have the requisite factorization.
	\end{proof}
	\begin{proof}[Proof of \Cref{lemma:completion_factorization}]
		The argument proceeds verbatim as \cite[Proof of Lemma 6.8.6]{Gaitsgory_Rozenblyum_Ind}.
	\end{proof}
	\subsubsection{Completions in analytically generated ideals}
	We consider another case where it is relatively easy to establish a version of \cite[Proposition 6.8.2]{Gaitsgory_Rozenblyum_Ind}. Namely, we consider the case of an ideal generated by a finite set of functions that are \emph{real analytic}. To establish the result, we will use a result of Malgrange. Note that since the question is local in nature, we freely replace the ring of functions on \(\R^n\) with the ring of germs of functions at a point. 
	\begin{theorem}[{\cite[Corollary 1.12]{Malgrange}, \cite[Chapitre VI, Corollaire 1.3]{Tougeron}}]\label{thm:Tougeron's_flatness}
		The module of germs of \(\CI\)-functions over the ring of germs of real analytic functions is flat.
	\end{theorem}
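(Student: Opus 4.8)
The plan is to treat the statement as a purely local-algebraic flatness assertion and attack it through the ideal-theoretic criterion for flatness, using the regularity of the ring of analytic germs together with a comparison with formal power series. Fix a point, say the origin, and write \(\O_n=\R\{x_1,\dots,x_n\}\) for the Noetherian local ring of convergent power series (analytic germs) and \(\Ec_n\) for the \(\O_n\)-module of germs of \(\CI\)-functions; I want \(\Ec_n\) flat over \(\O_n\). Since \(\O_n\) is Noetherian, flatness is equivalent to \(\Tor_1^{\O_n}(\O_n/\mf{a},\Ec_n)=0\) for every finitely generated ideal \(\mf{a}\), and because \(\O_n\) is a regular local ring of dimension \(n\) each \(\O_n/\mf{a}\) admits a finite free resolution \(F_\bt\to\O_n/\mf{a}\). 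Thus the theorem reduces to the assertion \((\star)\): every acyclic complex of finite free \(\O_n\)-modules stays acyclic after applying \(-\otimes_{\O_n}\Ec_n\).

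Before the general case I would warm up on \(\mf{a}=\m=(x_1,\dots,x_n)\), where the Koszul complex \(K_\bt(x_1,\dots,x_n)\) resolves the residue field \(\R\). Here \(\Tor^{\O_n}_*(\R,\Ec_n)\) is the Koszul homology of \((x_1,\dots,x_n)\) acting on \(\Ec_n\), and this vanishes in positive degrees precisely because \(x_1,\dots,x_n\) is a regular sequence on \(\Ec_n\): the germ \(x_1\) is a non-zero-divisor (its zero set is nowhere dense), and Hadamard's lemma gives \(\Ec_n/(x_1)\simeq\Ec_{n-1}\), so an induction on \(n\) closes the loop. This already shows the residue-field \(\Tor\) vanishes and exhibits the mechanism I want to globalize.

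For the general statement \((\star)\) I would compare \(\Ec_n\) with the formal power series ring \(\R[[x_1,\dots,x_n]]\), which is \emph{faithfully flat} over \(\O_n\) since it is the \(\m\)-adic completion of a Noetherian local ring; consequently \(F_\bt\otimes_{\O_n}\R[[x]]\) is acyclic for every acyclic \(F_\bt\). By Borel's theorem the Taylor expansion \(T\colon\Ec_n\twoheadrightarrow\R[[x]]\) is surjective, giving a short exact sequence \(0\to\mf{F}\to\Ec_n\xrightarrow{T}\R[[x]]\to 0\) of \(\O_n\)-modules, where \(\mf{F}=\bigcap_k\m^k\Ec_n\) is the ideal of germs flat to infinite order. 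Tensoring the acyclic \(F_\bt\) with this short exact sequence and passing to the long exact homology sequence, acyclicity of \(F_\bt\otimes\Ec_n\) follows once I know that \(F_\bt\otimes_{\O_n}\mf{F}\) is acyclic.

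The main obstacle is exactly this last point: the flat germs \(\mf{F}\) destroy \(\m\)-adic separatedness of \(\Ec_n\), so no naive completion argument applies to them directly, and controlling them is where the genuine analytic input enters. Here I would invoke the Malgrange preparation and division theorems \cite{Malgrange,Tougeron}, which let one divide a \(\CI\)-germ by a Weierstrass-regular analytic germ with a remainder controlled in the remaining variables. Phrased through the equational criterion for flatness, this says that every linear relation over \(\O_n\) among \(\CI\)-germs is an \(\Ec_n\)-combination of the (finitely many, by Noetherianity) analytic relations, and the same division applied to flat germs shows that the flat part contributes no new syzygies. This is the hard, non-formal heart of the argument, and it is precisely the content isolated by Malgrange; the surrounding homological algebra above is what reduces the theorem to it.
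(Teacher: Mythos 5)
The paper contains no proof of this statement: it is imported as a black box from \cite[Corollary 1.12]{Malgrange} and \cite[Chapitre VI, Corollaire 1.3]{Tougeron}, and its only role is to feed the flatness argument in \Cref{prop:hypo_Noetherian_completion}. So the relevant comparison is between your attempt and the actual proofs in those references, and against that standard your proposal has a genuine gap. Your formal scaffolding is fine: the reduction to \(\Tor_1^{\O_n}(\O_n/\mf{a},\Ec_n)=0\), the Koszul--Hadamard computation showing \(x_1,\dots,x_n\) is a regular sequence on \(\Ec_n\), faithful flatness of \(\R[[x]]\) over \(\O_n\), and Borel's sequence \(0\to\mf{F}\to\Ec_n\to\R[[x]]\to 0\) with \(\mf{F}=\bigcap_k\m^k\Ec_n\). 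But note that the last reduction buys nothing: since \(\R[[x]]\) is flat, the same exact sequence gives \(\Tor_i^{\O_n}(M,\mf{F})\cong\Tor_i^{\O_n}(M,\Ec_n)\) for every \(M\) and every \(i\ge 1\), so ``the flat germs contribute no new syzygies'' is not a smaller statement than the theorem --- it \emph{is} the theorem.

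The gap is therefore concentrated in your final paragraph, which is where all the content has been pushed. You assert that the Malgrange division theorem, ``phrased through the equational criterion,'' says that every \(\O_n\)-linear relation among \(\CI\)-germs is an \(\Ec_n\)-combination of the analytic relations. The division theorem says no such thing, and deducing this from it is exactly the chapter-length work in Malgrange and Tougeron; it is not a routine implication. Concretely, smooth Weierstrass division by an analytic germ has no uniqueness, and this is fatal to any naive division-and-induct-on-\(n\) argument: for \(P=x_1^2+x_2^2\) (Weierstrass of degree \(2\) in \(x_2\)) the function \(q=e^{-1/x_1^2}/(x_1^2+x_2^2)\) extends to a smooth germ at the origin, so the nonzero ``remainder'' \(e^{-1/x_1^2}\) lies in \(P\Ec_2\); hence \(\Ec_2/P\Ec_2\) is a \emph{proper} quotient of \(\Ec_1\oplus\Ec_1x_2\), and the change-of-rings identification \(\Ec_n/P\Ec_n\cong\O_n/(P)\otimes_{\O_{n-1}}\Ec_{n-1}\) that such an induction would need fails whenever \(P\) has non-real roots. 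The published proofs get around this by first establishing that analytically generated submodules \(M\Ec_n\ss\Ec_n^p\) are closed and are characterized by Taylor conditions at all nearby points (Whitney's spectral theorem plus Łojasiewicz-type estimates), and only then invoking flatness of the formal completion. Since your write-up neither carries out this step nor cites the flatness theorem itself (only the division theorem, which is strictly weaker), it does not establish the result; read charitably, it is an expanded paraphrase of the citation the paper already makes, with the hard analytic core named but not proved.
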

	\begin{proposition}\label{prop:hypo_Noetherian_completion}
		Let \(Y\ss \R^n\) be a closed immersion of classical \(\CI\) schemes, such that \(Y\) is determined in a local ring of a point by an ideal generated by analytic functions. Then, the derived completion of the derived scheme \(\R^n\) in \(Y\) coincides with the classical formal completion of the classical \(\CI\)-scheme \(\R^n\) in \(Y\). 
	\end{proposition}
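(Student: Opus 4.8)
The plan is to reduce to the Noetherian \emph{analytic} situation via the flatness theorem of Malgrange and Tougeron (\Cref{thm:Tougeron's_flatness}), and then transport the completion comparison from the analytic ground ring to the smooth one. Since completion is a local notion and the hypothesis is phrased in the local ring at a point, I would first pass to germs: fix $p\in Y$, write $A=\CI_p$ for the ring of germs of smooth functions at $p$ and $A^{\mathrm{an}}$ for the ring of germs of real analytic functions at $p$, and let $S=\{f_1,\ldots,f_m\}$ be the analytic generators of the ideal cutting out $Y$ near $p$, so that $f_i\in A^{\mathrm{an}}\subset A$. Both the derived and the classical formal completion are determined by their stalks, so it suffices to establish the comparison after this localization.

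Following the pattern of \Cref{prop:completion_of_classical_is_classical}, the statement reduces, via the pro-object reformulation underlying \Cref{lemma:completion_factorization}, to showing that the higher cohomology pro-systems $\{H^{-i}(\K_{n,\Com}(A,S))\}_n$ vanish for every $i>0$, with transition maps as in \Cref{con:sequence_presenting_completion}. The crucial observation is that, because the generators $f_i$ are analytic, each Koszul-type algebra is extended from the analytic ground ring: as complexes of $A^{\mathrm{an}}$-modules one has
\[
\K_{n,\Com}(A,S)\simeq A\otimes_{A^{\mathrm{an}}}\K_{n,\Com}(A^{\mathrm{an}},S),
\]
functorially in $n$. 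By \Cref{thm:Tougeron's_flatness} the module $A$ is flat over $A^{\mathrm{an}}$, so homology commutes with this base change and
\[
H^{-i}(\K_{n,\Com}(A,S))\simeq A\otimes_{A^{\mathrm{an}}}H^{-i}(\K_{n,\Com}(A^{\mathrm{an}},S)).
\]

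Now I would invoke the Noetherian case. The ring $A^{\mathrm{an}}$ of convergent power series is a regular Noetherian local ring, so \cite[Proposition 6.8.2]{Gaitsgory_Rozenblyum_Ind} applies to the closed subscheme of $\Spec A^{\mathrm{an}}$ cut out by $S$: the derived completion agrees with the classical one, which is precisely the assertion that $\{H^{-i}(\K_{n,\Com}(A^{\mathrm{an}},S))\}_n$ is pro-zero for $i>0$ (the standard Artin--Rees vanishing of higher Koszul homology along the inverse system of powers). Applying the exact functor $A\otimes_{A^{\mathrm{an}}}(-)$ preserves pro-zero systems, so the smooth pro-systems $\{H^{-i}(\K_{n,\Com}(A,S))\}_n$ vanish as well, and the comparison follows exactly as in the proof of \Cref{lemma:completion_factorization}. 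Finally, since the local statements glue and both completions are local over $\R^n$, one obtains the global comparison.

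The hard part is not any single step but the passage of the Noetherian vanishing across the non-Noetherian boundary: the whole argument hinges on the flatness of smooth over analytic germs, which is what allows the Artin--Rees control available only in $A^{\mathrm{an}}$ to be imported into the non-Noetherian ring $\CI_p$. One subtlety to verify carefully is the base-change identification of the Koszul systems, including compatibility of the transition maps $t_{i,N}\mapsto f_i^{N-n}t_{i,n}$ with extension of scalars; this is immediate since those maps are defined entirely in terms of the $f_i\in A^{\mathrm{an}}$.
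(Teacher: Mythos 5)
Your proof is correct and follows essentially the same route as the paper's own argument: reduce to the Noetherian analytic local ring, where \cite[Proposition 6.8.2]{Gaitsgory_Rozenblyum_Ind} gives the comparison, and then transfer it to the smooth germ ring using the Malgrange--Tougeron flatness theorem (\Cref{thm:Tougeron's_flatness}). Your write-up additionally makes explicit the base-change identification of the Koszul pro-systems and the fact that flat base change preserves pro-zero systems, which are exactly the details the paper's two-line proof leaves implicit.
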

	\begin{proof}
		The proof follows from \Cref{thm:Tougeron's_flatness}. Using the fact that the ring of germs of analytic functions is Noetherian and \cite[Proposition 6.8.2]{Gaitsgory_Rozenblyum_Ind}, we see that the statement is true in the analytic case. Now it remains to use the flatness of the module of germs of \(\CI\)-functions over the ring of germs of real analytic functions to deduce the result in the \(\CI\)-case.
	\end{proof}
	\subsection{Some questions about completions of \texorpdfstring{\(\CI\)}{C-infinity}-algebras}\label{questions_about_completions}
	\begin{question}
		Is there a statement analogous to \Cref{prop:completion_of_classical_is_classical} for general closed immersions of classical \(\CI\)-schemes?
	\end{question}
	As we have seen in the proof of \Cref{lemma:tedious_completions}, our arguments rely essentially on the fact that we are considering a single function on the affine space. In particular, we could characterize the homology groups of the Koszul complex for this function in terms of the geometry of the zero locus of \(f\). It would be interesting to find a similar characterization for several functions; however, the situation is already quite complicated even in the case of two functions. 

	In general, questions of this nature for abstract commutative rings are quite hard. For example, even the case of a single element is non-trivial, see \cite[\S 3]{Yekuteli_WPR} and references there for a discussion of these properties. Essentially, for a single element \(f\) in a ring, one needs the boundedness of the torsion associated with this element. That is, the ascending chain of ideals annihilating \(f^n\)'s stabilizes. Below, we give an example where this fails for a non-free \(\CI\)-algebra. It seems that it could be possible to use the formalism of liquid modules in the sense of Clausen--Scholze (see \cite{Clausen_Scholze}) to study this question.
	\begin{example}
		Consider the function \(h=e^{-\frac{1}{e^{-\frac{1}{x^2}}}}\) on \(\R\). This function is smooth and vanishes to infinite order at \(0\). Consider the ideal \(I\) generated by \(h\) in \(\CI(\R)\). Now consider the element in \(\CI(\R)/h\) represented by the function \(g=e^{-\frac{1}{x^2}}\). This function is not divisible by \(h\) in \(\CI(\R)\). However, it is easy to see that \(h\) is divisible by \(g^n\) for all \(n>0\). Consequently, the sequence of ideals annihilating the function \(g^n\) coincides with the sequence of principal ideals \(\left(\frac{h}{g^n}\right)\) and thus does not stabilize.

		Of course, it is completely inessential that we consider this precise pair of functions, the only feature we used here is that \(h\) decays to zero at \(0\) faster than any polynomial of \(g.\)
	\end{example}
	\begin{question}
		Is there a version of the sequential completion of \Cref{con:sequence_presenting_completion} that yields the \(\CI\)-completion rather than the \(\Com\)-completion?
	\end{question}
	Intuitively, it seems that this construction should proceed over the poset of higher-order infinitesimals rather than just polynomials. 
	\begin{question}
		Is there a version of \Cref{GM_is_the_same_as_Sequential} identifying \(\CI\)-Greenlees-May completion with the \(\CI\)-derived completion of stacks?
	\end{question}
	This seems like the most straightforward aspect of the theory one could generalize, and we hope to address it in future work.
	\section{Hodge completed derived de Rham cohomology}\label{derived_de_Rham}
	\subsection{Construction}
	In this section, we will define the derived de Rham cohomology of a derived affine \(\CI\)-scheme following the approach of Bhatt~\cite{Bhatt_Derived}. 
	\begin{construction}\label{con:derived_de_Rham}
		Let \(A\to B\) be a morphism of derived \(\CI\)-algebras. Let \(F\to B\) be a free \(\CI\)-resolution of \(B\) over \(A\). We define the \emph{Hodge completed derived de Rham cohomology} of \(B\) over \(A\) as the completion of \(\left|\Omega^\bt_{F/A}\right|\) for its Hodge filtration. Concretely, the complete filtered derived commutative algebra \(\cdR_{B/A}\) is the limit of the following directed system.
		\[
			\cdR_{B/A}/\Fil_H^k=\Tot(\sigma^{\leq k}\Omega^\bt_{F/A}).
		\]
		Here \(\sigma^{\leq k}\) denotes the truncation in cohomological degrees \(\leq k\). 
	\end{construction}
	
	\begin{proposition}
		Let \(A\to B\) be a morphism of derived \(\CI\)-algebras. Consider the complete filtered derived commutative algebra \(\cdR_{B/A}\) as in \Cref{con:derived_de_Rham}.
		The associated graded algebra of the induced filtration on~\(\cdR_{B/A}\) is isomorphic to the exterior algebra on the cotangent complex of the map \(A\to B\). Namely, we have the following isomorphism.
		\begin{equation}\label{eq:Cartier_isomorphism}
			\gr^k(\cdR_{B/A})\cong\Lambda^k\LL_{B/A}[-k],\quad k\ge 0.
		\end{equation}
	\end{proposition}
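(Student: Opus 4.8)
The plan is to compute the graded pieces termwise on a free \(\CI\)-resolution and then identify the naive exterior powers of the K\"ahler differentials of a free \(\CI\)-algebra with the derived exterior powers of the cotangent complex, transporting Bhatt's argument \cite{Bhatt_Derived} (ultimately going back to Illusie) to the \(\CI\)-setting.

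First I would note that the Hodge completion is irrelevant at the level of the associated graded. By \Cref{con:derived_de_Rham} the filtration \(\Fil_H^\bullet\) is complete, and each graded piece depends only on two consecutive finite stages. Concretely, writing \(F\to B\) for the chosen free \(\CI\)-resolution, the \(k\)-th graded piece is the fibre of the natural map between Hodge stages,
\[
\gr^k(\cdR_{B/A})\simeq \fib\!\left(\cdR_{B/A}/\Fil_H^{k}\to \cdR_{B/A}/\Fil_H^{k-1}\right)\simeq \fib\!\left(\Tot(\sigma^{\le k}\Omega^\bullet_{F/A})\to \Tot(\sigma^{\le k-1}\Omega^\bullet_{F/A})\right).
\]
Since the fibre of \(\sigma^{\le k}\Omega^\bullet_{F/A}\to \sigma^{\le k-1}\Omega^\bullet_{F/A}\) is the single term \(\Omega^k_{F/A}\) placed in cohomological degree \(k\), realizing the simplicial direction gives \(\gr^k(\cdR_{B/A})\simeq \left|\Omega^k_{F/A}\right|[-k]\), where \(\left|\,\cdot\,\right|\) denotes the realization of the simplicial \(A\)-module \(\Omega^k_{F_\bullet/A}\). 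This reduces the proposition to the identification \(\left|\Omega^k_{F/A}\right|\simeq \Lambda^k\LL_{B/A}\).

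To establish this, I would use that each term \(F_n\) of the resolution is a free \(\CI\)-algebra \(\CI(\R^{S_n})\), so that \(\Omega^1_{F_n/A}\) is a free \(F_n\)-module on the \(\CI\)-K\"ahler differentials (\Cref{def:CI-derivations}, cf. \Cref{ex:cotangent_complex_of_dMfld}); hence \(\Omega^k_{F_n/A}=\Lambda^k_{F_n}\Omega^1_{F_n/A}\) is projective, and in particular flat, over \(F_n\). By construction the derived exterior power of the cotangent complex is computed on the same resolution: since exterior powers commute with base change, \(\Lambda^k\LL_{B/A}\simeq \left|\Lambda^k_{B}(\Omega^1_{F_\bullet/A}\otimes_{F_\bullet}B)\right|\simeq \left|\Omega^k_{F_\bullet/A}\otimes_{F_\bullet}B\right|\). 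It therefore remains to produce an equivalence \(\left|\Omega^k_{F_\bullet/A}\right|\simeq \left|\Omega^k_{F_\bullet/A}\otimes_{F_\bullet}B\right|\). For this I would use the equivalence \(B\simeq \left|F_\bullet\right|\) together with an Eilenberg--Zilber (diagonal) argument: the bisimplicial \(A\)-module \((m,n)\mapsto \Omega^k_{F_m/A}\otimes_{F_m}F_n\) realizes along \(n\) to \(\Omega^k_{F_\bullet/A}\otimes_{F_\bullet}B\) and restricts along the diagonal to \(\Omega^k_{F_\bullet/A}\), while flatness of \(\Omega^k_{F_m/A}\) over \(F_m\) ensures that the tensor products are underived, so that both iterated realizations agree with the realization of the diagonal.

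The main obstacle is precisely this last identification together with the assertion that the naive construction \(P\mapsto \Lambda^k_P\Omega^1_{P/A}\) computes the derived exterior power of \(\LL_{B/A}\) upon evaluation on a free resolution; over ordinary commutative rings this is classical (Illusie, \cite{Bhatt_Derived}), and in the \(\CI\)-setting the only extra input needed is that free \(\CI\)-algebras have free modules of \(\CI\)-K\"ahler differentials, so that the relevant base changes are flat and the remaining arguments are formal consequences of the behaviour of homotopy colimits. One should also record that the construction is independent of the chosen free resolution \(F\to B\), so that \(\gr^k\) is well defined; this again follows from the left-derived-functor interpretation. Assembling these steps yields the Cartier-type isomorphism \(\gr^k(\cdR_{B/A})\cong \Lambda^k\LL_{B/A}[-k]\) of \eqref{eq:Cartier_isomorphism}.
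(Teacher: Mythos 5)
Your proposal is correct and follows essentially the same route as the paper's proof: both discard the completion (the associated graded only sees finite filtration stages), identify \(\gr^k\) as the fibre of \(\Tot(\sigma^{\le k}\Omega^\bt_{F/A})\to\Tot(\sigma^{\le k-1}\Omega^\bt_{F/A})\), i.e. the realization \(\left|\Omega^k_{F/A}\right|[-k]\), and then identify this with \(\Lambda^k\LL_{B/A}[-k]\); the only real difference is that the paper treats the last identification as holding \enquote{by definition} of the derived exterior power, whereas you unwind it into the standard Quillen--Illusie argument (termwise freeness of \(\Omega^1_{F_n/A}\), base change for exterior powers, and flat base change along \(F_\bullet\to B\)). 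One small repair: the bisimplicial object \((m,n)\mapsto \Omega^k_{F_m/A}\otimes_{F_m}F_n\) does not type-check as written (there is no map \(F_m\to F_n\) for general \(m,n\)), so the diagonal argument should instead be phrased via a correctly formed bisimplicial object such as the bar construction, or simply via the fact that termwise flatness makes \(\Omega^k_{F_\bullet/A}\otimes_{F_\bullet}B\) compute the derived tensor, whence \(\left|\Omega^k_{F_\bullet/A}\otimes_{F_\bullet}B\right|\simeq \left|\Omega^k_{F_\bullet/A}\right|\otimes^L_{B}B\); also note the proposition asserts a multiplicative (algebra) isomorphism, which, like the paper, you should record follows from functoriality of relative differential forms.
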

	\begin{proof}
		This result follows from the fact that the associated graded algebras are preserved by completion. Consequently, it is enough to prove isomorphism \eqref{eq:Cartier_isomorphism} for the non-completed case. Concretely, we need to see that there is the following isomorphism.
		\[
			\Tot(\sigma^{\leq k}\Omega^\bt_{F/A})/\Tot(\sigma^{\leq k-1}\Omega^\bt_{F/A})\cong\Lambda^k\LL_{B/A}.
		\]
		This, however, follows from the fact that totalization of a truncated cosimplicial object commutes with taking quotients. Now it remains to be observed that by definition, we have the following equivalence:
		\[
			\Lambda^k\LL_{B/A}\cong\Tot(\Omega^k_{F/A}).
		\]
		The compatibility of the multiplicative structure follows from the properties of the relative differential forms. 
	\end{proof}
	\subsection{Quillen's convergence and comparison with Adams completion}
	In this section, we describe the relationship between Adams completion and derived de Rham cohomology following Bhatt \mbox{\cite[\S 4.2, 4.3]{Bhatt_Derived}}. We will show that the Adams completion of a closed immersion is equivalent to the derived de Rham cohomology. This result will be crucial for identifying Hodge completed derived de Rham cohomology with functions on the \(\Com\)-de Rham stack in \Cref{thm:de_Rham_vs_functions_on_inf}.
	\begin{proposition}
		Let \(A\) be a derived \(\CI\)-algebra. Let \(A\to B\) be a \(\pi_0\)-isomorphism. Then, the completion of \(A\) at \(B\) is equivalent to \(A\).
	\end{proposition}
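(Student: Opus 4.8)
The plan is to compute the completion through its functor of points, using the explicit description from \Cref{def:formal_completion}, and to exploit the fact that both reductions of any test algebra are discrete. Write $\X = \Spec A$ and $\Y = \Spec B$, so that the morphism $f\colon A \to B$ corresponds to a map $\Y \to \X$ and the completion of $A$ at $B$ is the formal completion $\X^{\wcom}_{\Y}$, whose algebra of functions is the Adams completion $B^\wedge_A$ by \Cref{prop:Adams_vs_Derived}. By \Cref{def:formal_completion}, for every derived $\CI$-algebra $R$ one has
\[
    \X^{\wcom}_{\Y}(R) = \Map_{\CI\Alg}(B, R^{\redcom}) \times_{\Map_{\CI\Alg}(A, R^{\redcom})} \Map_{\CI\Alg}(A, R).
\]

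First I would record that $R^{\redcom}$ is $0$-truncated by \Cref{con:reductions_of_CI_rings}, so that mapping spaces into it depend only on $\pi_0$ of the source: since formation of $\pi_0$ is left adjoint to the inclusion of discrete $\CI$-algebras, one has $\Map_{\CI\Alg}(A, C) \simeq \Hom(\pi_0 A, C)$ naturally for discrete $C$, and likewise for $B$. Applying this with $C = R^{\redcom}$, the left leg of the pullback is identified with the map $\Hom(\pi_0 B, R^{\redcom}) \to \Hom(\pi_0 A, R^{\redcom})$ induced by $\pi_0(f)$. The key step is then immediate: by hypothesis $\pi_0(f)$ is an isomorphism, so this leg is an equivalence for every $R$. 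A pullback one of whose legs is an equivalence is equivalent to the opposite factor, whence $\X^{\wcom}_{\Y}(R) \simeq \Map_{\CI\Alg}(A, R) = \X(R)$, naturally in $R$. By the Yoneda lemma this identifies $\X^{\wcom}_{\Y}$ with $\X = \Spec A$, that is, $B^\wedge_A \simeq A$ as claimed.

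I expect no serious obstacle here. The same computation reads off directly from the universal property in \Cref{prop:Adams_vs_Derived}, which applies because a $\pi_0$-isomorphism is in particular a $\pi_0$-epimorphism; and since the $\CI$-reduction $R^{\redcom}$ is likewise discrete, the identical argument covers the $\CI$-completion as well, so the statement is insensitive to which flavour of completion is meant. The only point requiring a word of care is the reduction of mapping spaces into discrete algebras to $\pi_0$, which is the standard truncation adjunction, together with the stability of pullbacks under equivalence of one leg; both are formal.
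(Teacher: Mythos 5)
Your proof is correct, but it takes a genuinely different route from the paper. The paper's own proof is not a functor-of-points argument: it imports, verbatim, Bhatt's proof of \cite[Proposition 4.11]{Bhatt_Derived}, which is a Quillen-style convergence argument carried out at the level of the \v{C}ech conerve --- one shows that the fibre of \(A\to \Tot_n\check{C}^\bt(f)\) becomes increasingly connective as \(n\) grows (using that the cofibre of a \(\pi_0\)-isomorphism is \(1\)-connective and that its tensor powers, which control the conormalization of \(\check{C}^\bt(f)\), gain connectivity), so that the canonical map \(A\to B^\wedge_A=\Tot\check{C}^\bt(f)\) is an equivalence. You instead work entirely through universal properties: you compute the stack-level completion \(\X^{\wcom}_\Y\) of \Cref{def:formal_completion} by observing that \(R^{\redcom}\) is discrete (\Cref{con:reductions_of_CI_rings}), so that the leg \(\Map_{\CI\Alg}(B,R^{\redcom})\to\Map_{\CI\Alg}(A,R^{\redcom})\) is an equivalence whenever \(\pi_0(f)\) is an isomorphism, and you then transport the conclusion to the Adams completion via \Cref{prop:Adams_vs_Derived} --- legitimately, since a \(\pi_0\)-isomorphism is a \(\pi_0\)-epimorphism and \Cref{prop:Adams_vs_Derived} is established in the completions section independently of the de Rham material, so there is no circularity. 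Each approach buys something. Yours is shorter, purely formal, applies uniformly to the \(\Com\)- and \(\CI\)-flavours of the stacky completion, and makes transparent exactly where the hypothesis enters (discreteness of the reduction); but it concentrates all the real content in \Cref{prop:Adams_vs_Derived} (i.e.\ in Carlsson's comparison), and it produces an equivalence \(B^\wedge_A\simeq A\) by Yoneda, leaving the (routine, but necessary for the later applications in \Cref{thm:Adams_completion_vs_de_Rham}) check that this equivalence is induced by the canonical map \(A\to B^\wedge_A\). Bhatt's argument is self-contained at the cosimplicial level, proves directly that the canonical map is an equivalence, and yields quantitative convergence information about the partial totalizations that a pure representability argument cannot see.
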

	\begin{proof}
		The proof is a verbatim repetition of the proof given by Bhatt \cite[Proof of Proposition 4.11]{Bhatt_Derived}.
	\end{proof}
	\begin{corollary}\label{cor:de_Rham_equiv_for_quotients}
		Let \(A\) be a derived \(\CI\)-algebra. Let \(A\to B\) be a \(\pi_0\)-isomorphism. Then, the following filtered equivalence holds for the relative de Rham cohomology of the quotient morphism \(A\to B\).
		\[
			\Big\{\cdR_{B/A},\Fil_H^\bt\Big\}\simeq \Big\{A,\Fil_H^\bt\Big\}
		\]	
		Consequently, \(A\simeq \cdR_{B/A}\).
	\end{corollary}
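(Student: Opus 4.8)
The plan is to read the corollary as a filtered reformulation of the immediately preceding Proposition. By \Cref{con:derived_de_Rham} the object $\cdR_{B/A}$ is the Hodge completion of the de Rham complex $\left|\Omega^\bt_{F/A}\right|$, so it is a complete filtered derived algebra whose associated graded pieces are $\gr^k(\cdR_{B/A})\cong\Lambda^k\LL_{B/A}[-k]$ by \eqref{eq:Cartier_isomorphism}. The entire content is therefore to identify the underlying object of this completion with $A$ and to verify that this identification is filtered.

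First I would invoke the comparison that is the running theme of this subsection, following Bhatt \cite[\S 4.2, 4.3]{Bhatt_Derived}: for a closed immersion, i.e.\ a $\pi_0$-epimorphism $A\to B$, the Hodge-completed de Rham complex agrees, as a complete filtered algebra, with the derived completion of $A$ along $B$, the Hodge filtration corresponding to the completion filtration. Concretely, the totalizations $\Tot(\sigma^{\le k}\Omega^\bt_{F/A})$ assemble into the tower computing the $\Com$-completion, which by \Cref{prop:Adams_vs_Derived} is the Adams completion $B^\wedge_A$. Since a $\pi_0$-isomorphism is in particular a $\pi_0$-epimorphism, this applies in our situation. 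The preceding Proposition then states precisely that this completion is equivalent to $A$, so transporting through the filtered comparison yields $\{\cdR_{B/A},\Fil_H^\bt\}\simeq\{A,\Fil_H^\bt\}$; forgetting the filtration gives $A\simeq\cdR_{B/A}$.

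The step I expect to be delicate is the bookkeeping of the filtration rather than the equivalence of underlying objects. Note that $\gr^0(\cdR_{B/A})=B$, which need not coincide with $A$ when $A\to B$ is a $\pi_0$-isomorphism that is not an equivalence, since the higher homotopy may differ; the point is that the higher Hodge graded pieces $\Lambda^k\LL_{B/A}[-k]$ for $k\ge 1$ are exactly what reassembles $A$ out of $B$, because for a $\pi_0$-isomorphism the relative cotangent complex $\LL_{B/A}$ is sufficiently connective (its conormal contributions live in high degree). Making this convergence precise — that the tower of truncated de Rham complexes converges to the completion, and that its Hodge filtration matches the filtration on the completion appearing in the preceding Proposition — is Quillen's convergence and is the only nonformal ingredient; everything else is transport of structure along \Cref{prop:Adams_vs_Derived}.
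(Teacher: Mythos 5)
Your proposal has a genuine circularity. The comparison you invoke as your main step --- that for a \(\pi_0\)-epimorphism \(A\to B\) the Hodge-completed de Rham complex agrees, as a complete filtered algebra, with the completion of \(A\) along \(B\) --- is \Cref{thm:Adams_completion_vs_de_Rham}, the \(\CI\)-analogue of \cite[Proposition 4.16]{Bhatt_Derived}. In this paper that theorem comes \emph{after} the present corollary and is proved \emph{by means of it}: its proof applies \Cref{cor:de_Rham_equiv_for_quotients} to the multiplication maps \(B^{\otimes_A^\infty m}\to B\) of the \v{C}ech conerve (which are \(\pi_0\)-isomorphisms precisely because \(f\) is a \(\pi_0\)-epimorphism), and only then totalizes. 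So you are deriving the corollary from a downstream result whose proof requires the corollary. Citing Bhatt's Proposition 4.16 as an external black box does not repair this: Bhatt works with simplicial commutative rings, whereas \(\cdR_{B/A}\) here is built from free \(\CI\)-resolutions and the \(\CI\)-cotangent complex, which are genuinely different objects (e.g.\ \(\LL_{\CI(\R^n)/\R}\) is finite free, unlike its commutative counterpart); transporting Bhatt's comparison to \(\CI\)-algebras is exactly what \Cref{thm:Adams_completion_vs_de_Rham} accomplishes, again via this corollary. A smaller misattribution compounds the problem: \Cref{prop:Adams_vs_Derived} identifies the Adams completion with the derived \(\Com\)-completion of \Cref{def:formal_completion}; it says nothing about the Hodge tower \(\Tot(\sigma^{\leq k}\Omega^\bt_{F/A})\), so it cannot be used to ``assemble'' that tower into a completion tower.

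The paper's own derivation is different and non-circular: the corollary is simply the filtered record of the proposition immediately preceding it (``the completion of \(A\) at \(B\) is equivalent to \(A\)'' for a \(\pi_0\)-isomorphism), whose proof is imported verbatim from \cite[Proof of Proposition 4.11]{Bhatt_Derived} --- Quillen's convergence argument, carried out directly for the \(\pi_0\)-isomorphism \(A\to B\) and independent of any Adams-completion comparison. Note also that your heuristic for the step you defer is too optimistic: for a \(\pi_0\)-isomorphism, \(\LL_{B/A}\) is in general only \(1\)-connective (take \(B\) to be a trivial square-zero extension of \(A\) by \(A[1]\)), so the Hodge graded pieces \(\Lambda^k\LL_{B/A}[-k]\) are merely connective rather than increasingly connective, and the identification of \(\cdR_{B/A}\) with \(A\) arises from cancellations across the filtration extensions, not from a degreewise connectivity estimate. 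Since that convergence statement \emph{is}, in substance, the corollary itself, deferring it as ``the only nonformal ingredient'' leaves the statement unproved; the fix is to discard the detour through the Adams comparison and instead prove, or quote as the paper does via the preceding proposition, Quillen's convergence for \(\pi_0\)-isomorphisms.
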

	We have the following version of Bhatt's theorem \cite[Proposition 4.16]{Bhatt_Derived} that connects the Adams completion of a morphism of derived \(\CI\)-algebras (see \Cref{sec:Adams_completion}) with the derived de Rham complex.
	\begin{theorem}\label{thm:Adams_completion_vs_de_Rham}
		Let \(f:A\to B\) be a \(\pi_0\)-epimorphic morphism of derived \(\CI\)-algebras. Then, we have a filtered equivalence of complete filtered derived commutative algebras.
		\[
			\left\{\cdR_{B/A},\Fil_H^\bt\right\}\simeq \Big\{\Comp_A(A,f),\Fil_H^\bt\Big\}
		\]
		Consequently, the Adams completion of \(f\) is equivalent to the derived de Rham cohomology of \(B\) over \(A\).
		\[
			\cdR_{B/A}\simeq \Comp_A(A,f).
		\]
	\end{theorem}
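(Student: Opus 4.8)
The plan is to transport Bhatt's filtered comparison \cite[Proposition 4.16]{Bhatt_Derived} to the \(\CI\)-setting, the key point being that both constructions entering the statement are insensitive to the difference between the \(\CI\)- and the commutative worlds once we restrict to \(\pi_0\)-epimorphisms. Both sides carry the Hodge filtration \(\Fil_H^\bt\) and are complete for it: the left-hand side by the very definition of \(\cdR_{B/A}\) (\Cref{con:derived_de_Rham}), and the right-hand side by the corresponding filtration on the totalized \v{C}ech conerve \(\Comp_A(A,f)=\Tot(\check{C}^\bt f)\), induced by the powers of the augmentation ideals of its terms. Since a filtered map between complete filtered objects is an equivalence as soon as it is an equivalence on associated graded pieces, the whole statement reduces to a computation of \(\gr^k\) on both sides, together with the construction of a natural filtered comparison map.

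First I would reduce the two graded computations to the commutative setting. By \Cref{cor_of_unframifiedness}, the \v{C}ech conerve \(\check{C}^\bt f\) agrees with the \v{C}ech conerve of \(f\) regarded as a morphism of derived \(\R\)-algebras, so \(\Comp_A(A,f)\) coincides with the commutative Adams completion, whose graded pieces are exactly those computed by Bhatt. For the de Rham side, the relevant object is the \emph{relative} cotangent complex \(\LL_{B/A}\); although \emph{absolute} \(\CI\)- and commutative cotangent complexes differ wildly, \Cref{unframifiedness} shows that for the \(\pi_0\)-epimorphism \(f\) the derived \(\CI\)-pushouts computing \(\LL_{B/A}\) agree with the commutative ones, so \(\LL_{B/A}\) is the same in both theories. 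Combined with the Cartier-type isomorphism \(\gr^k(\cdR_{B/A})\cong\Lambda^k\LL_{B/A}[-k]\) established above, this identifies the graded pieces of the left-hand side with those appearing in Bhatt's argument.

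With both gradeds identified, I would construct the natural filtered comparison map exactly as Bhatt does---from the de Rham complex of a free resolution into the (completed) totalized \v{C}ech conerve---and check that on \(\gr^k\) it becomes Bhatt's comparison \(\Lambda^k\LL_{B/A}[-k]\to\gr^k\Comp_A(A,f)\). By \Cref{unframifiedness,cor_of_unframifiedness} this graded map is literally the one Bhatt proves to be an equivalence, so filtered completeness upgrades it to the asserted filtered equivalence, whence \(\cdR_{B/A}\simeq\Comp_A(A,f)\). The main obstacle is not formal: it is Bhatt's non-formal input (``Quillen's convergence''), namely that the associated graded of the completion is indeed \(\Lambda^k\LL_{B/A}[-k]\) and that the comparison map realizes the Cartier isomorphism on gradeds. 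Verifying that this survives the passage to \(\CI\)-algebras is precisely where \Cref{unframifiedness,cor_of_unframifiedness} do the work, ensuring that every step of Bhatt's computation takes place on a subcategory where the \(\CI\)- and commutative structures are indistinguishable; the remaining completeness and convergence bookkeeping is then routine.
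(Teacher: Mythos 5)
Your reduction of the Adams-completion side is fine: \Cref{cor_of_unframifiedness} does identify the \(\CI\)-\v{C}ech conerve with the commutative one, totalizations agree because the forgetful functor preserves limits, and this is exactly how the paper itself exploits unramifiedness (in \Cref{prop:Adams_vs_Derived}). The genuine gap is on the de Rham side, at the sentence claiming that \(\LL_{B/A}\) ``is the same in both theories'' because \Cref{unframifiedness} shows that ``the derived \(\CI\)-pushouts computing \(\LL_{B/A}\) agree with the commutative ones.'' The cotangent complex is not computed by pushouts in any standard sense: in each theory it is computed from K\"ahler differentials of \emph{free resolutions}, and the two kinds of free objects have nothing to do with one another (already \(\Omega^1_{\Com}\) of \(\CI(\R)\) over \(\R\) is not finitely generated, while the \(\CI\)-module \(\Omega^1_{\CI(\R)/\R}\) is free of rank one --- this is precisely the point of Dubuc--Kock). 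So the statement you need --- for a \(\pi_0\)-epimorphism, the relative commutative cotangent complex agrees with the relative \(\CI\)-cotangent complex --- is a theorem requiring its own proof, not a rewording of \Cref{unframifiedness}. The only route I see from \Cref{unframifiedness} to it is via the suspension-spectrum description \(\LL_{B/A}\simeq \Sigma^\infty(B\otimes_A B)\) computed in augmented \(B\)-algebras: there every suspension is a pushout along a split (hence \(\pi_0\)-epimorphic) augmentation, so \Cref{unframifiedness} applies termwise, and one must additionally invoke the Basterra-type equivalence \(\Sp(\CI\Alg_{B//B})\simeq\Mod_B\) in both theories together with the \(\colim_n\Omega^n\Sigma^n\) formula and check all compatibilities under the forgetful functor. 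None of this is in your write-up, and it cannot be waved at: for discrete rings in characteristic \(p\), the Andr\'e--Quillen cotangent complex of \(\Z\to\Z/p\) differs from its \(E_\infty\)-analogue even though the relevant pushouts have the same underlying modules, so the inference pattern ``pushouts are theory-independent, hence relative cotangent complexes of \(\pi_0\)-epimorphisms are'' is exactly the kind of step that fails when the algebra-level unramifiedness is not available in full strength. Note also that the comparison you are assuming is essentially a \emph{corollary} of the theorem under discussion (combine it with Bhatt's commutative statement and \Cref{cor_of_unframifiedness}, then pass to \(\gr^1\)), so taking it as an input is circular-adjacent.

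Even granting that lemma, more unstated work remains: you need a natural \emph{filtered} identification of the \(\CI\)-object \(\cdR_{B/A}\) of \Cref{con:derived_de_Rham} with Bhatt's commutative Hodge-completed de Rham algebra (equal graded pieces alone do not produce a map), and you must check that under all these identifications your comparison map into \(\Comp_A(A,f)\) matches Bhatt's, which again passes through comparing free \(\CI\)-resolutions with free commutative resolutions. The paper avoids every one of these comparisons by running Bhatt's argument internally to \(\CI\)-algebras: since each \(\check{C}^n(f)\to B\) is a \(\pi_0\)-isomorphism, the \(\CI\)-form of Quillen convergence (\Cref{cor:de_Rham_equiv_for_quotients}) gives termwise filtered equivalences \(\check{C}^n(f)\simeq \cdR_{B/\check{C}^n(f)}\); totalizing identifies \(\Comp_A(A,f)\) with \(\Tot\,\cdR_{B/\check{C}^\bt f}\), and the graded pieces are then matched with \(\Lambda^k\LL_{B/A}[-k]\) by cosimplicial descent for the \(\CI\)-cotangent complex, all within one theory. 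To salvage your reduction-to-Bhatt route, you would have to state and prove the cotangent-complex comparison for \(\pi_0\)-epimorphisms as a standalone lemma; as written, the proposal assumes precisely the point where the \(\CI\)-specific difficulty is concentrated.
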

	The proof of this result is essentially the same as the proof of \cite[Proposition 4.16]{Bhatt_Derived}. We provide a detailed argument here both for the sake of completeness and to show that the case of \(\CI\)-algebras is essentially the same as the case of commutative algebras. 
	\begin{proof}
		Since \(f\) is a \(\pi_0\)-epimorphism, we have the following isomorphism.
		\[
			\pi_0(B\otimes_A^\infty \ldots \otimes_A^\infty B)\cong \pi_0(B).
		\]
		Thus, by \Cref{cor:de_Rham_equiv_for_quotients} there is a complete \(\N^\op\)-indexed filtration on \(B^{\otimes_A^\infty m}\) together with a filtered equivalence as follows.
		\[
			\Big\{B^{\otimes_A^\infty m}, \Fil_H^\bt\Big\}\simeq \Big\{\cdR_{B^{\otimes_A^\infty m}/B},\Fil_H^\bt\Big\}
		\]
		Consequently, by functoriality, there is a filtered equivalence as follows.
		\[
			\Big\{\check{C}(f),\Fil_H^\bt\Big\}\simeq \Big\{\cdR_{B/\check{C}(f)},\Fil_H^\bt\Big\}
		\]
		By passing to limits, we obtain a filtered equivalence as follows.
		\[
			\Big\{\Comp_A(A,f),\Fil_H^\bt\Big\}\simeq \Big\{\Tot\cdR_{B/\check{C}f},\Fil_H^\bt\Big\}
		\]
		Finally, we must compare the right-hand side with \(\cdR_{B/A}\). The result now follows from the following equivalence of the associated graded algebras.
		\[
			\Lambda^k\LL_{B/A}[-k]\cong\gr^k(\cdR_{B/A})\cong\gr^k\left(\Tot\cdR_{B/\check{C}f}\right)\cong\Tot\Lambda^k\LL_{B/\check{C}f}[-k].
		\]
	\end{proof}
	
	\subsection{Properties of derived de Rham cohomology}
	\begin{proposition}\label{prop:de_Rham_of_localization}
		Let \(A\to B\) be a \(\CI\)-localization of derived \(\CI\)-algebras. Then, the derived de Rham cohomology of \(B\) over \(A\) is trivial.
	\end{proposition}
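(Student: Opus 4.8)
The plan is to reduce everything to the vanishing of the relative cotangent complex, which is exactly the input supplied by \Cref{cotangent_complex_of_localization_vanishes}, and then to exploit Hodge-completeness to upgrade a statement about associated graded pieces into a statement about \(\cdR_{B/A}\) itself.

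First I would invoke the identification of the associated graded of the Hodge filtration established just above, namely \(\gr^k(\cdR_{B/A})\simeq \Lambda^k\LL_{B/A}[-k]\) for \(k\ge 0\); crucially this holds for an arbitrary morphism \(A\to B\) and requires no epimorphism hypothesis. Since \(A\to B\) is a \(\CI\)-localization, \Cref{cotangent_complex_of_localization_vanishes} gives \(\LL_{B/A}\simeq 0\). Hence \(\Lambda^k\LL_{B/A}\simeq 0\) for every \(k\ge 1\), while \(\gr^0(\cdR_{B/A})\simeq \Lambda^0\LL_{B/A}=B\). Thus the only nonzero graded piece sits in filtration weight \(0\).

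Next I would feed this into the completeness of \(\cdR_{B/A}\). By \Cref{con:derived_de_Rham} the object is complete for its Hodge filtration, so it is recovered as \(\lim_k \cdR_{B/A}/\Fil_H^k\). The fibre of the transition map \(\cdR_{B/A}/\Fil_H^{k+1}\to \cdR_{B/A}/\Fil_H^{k}\) is \(\gr^k(\cdR_{B/A})\), which vanishes for \(k\ge 1\); therefore the tower is constant above weight \(0\), with value \(\cdR_{B/A}/\Fil_H^{1}\simeq \gr^0\simeq B\). Passing to the limit, the canonical projection \(\cdR_{B/A}\to B\) is an equivalence, which is the asserted triviality of the relative de Rham cohomology.

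The argument is short, and the only point that deserves care is the last step: vanishing of the positive-weight graded pieces alone does not force a filtered object to collapse onto \(\gr^0\), so one genuinely needs completeness, which is built into the construction. I would therefore emphasize that it is exactly the Hodge-completed version of derived de Rham cohomology for which this clean statement holds. I note in passing that one could instead try to route through the Adams-completion description of \Cref{thm:Adams_completion_vs_de_Rham}, using that a \(\CI\)-localization is a \(\CI\)-epimorphism and hence has essentially constant \v{C}ech conerve; however, that comparison is stated for \(\pi_0\)-epimorphisms, so verifying its hypotheses for a \(\CI\)-localization would be an extra step that the cotangent-complex route avoids entirely.
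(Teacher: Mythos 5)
Your proof is correct and follows the same route as the paper, which deduces the result from the Cartier-type identification \(\gr^k(\cdR_{B/A})\simeq \Lambda^k\LL_{B/A}[-k]\) together with the vanishing of \(\LL_{B/A}\) for a \(\CI\)-localization (\Cref{cotangent_complex_of_localization_vanishes}). The only difference is that you spell out the completeness step — passing from vanishing of the positive-weight graded pieces to the collapse \(\cdR_{B/A}\simeq B\) — which the paper leaves implicit; that added care is warranted and correct.
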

	\begin{proof}
		The result follows from isomorphism \eqref{eq:Cartier_isomorphism} and the fact that by \Cref{cotangent_complex_of_localization_vanishes} the cotangent complex of a localization morphism is trivial.
	\end{proof}
	The following result is due to Bhatt and can be translated verbatim to the \(\CI\)-setting.
	\begin{lemma}[{\cite[Example 4.17]{Bhatt_Derived}}]\label{section_means_trivial_de_Rham}
		Let \(A\to B\) be a morphism of derived \(\CI\)-algebras. Assume that it admits a section. Then \(\cdR_{A/B}\) is trivial.
	\end{lemma}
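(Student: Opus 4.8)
The plan is to exploit the section purely formally and then reduce the content to the homotopy invariance of $\cdR$ established in this section. Write $s\colon B\to A$ for the section, so that $s$ is exactly the structure map exhibiting $A$ as an augmented $B$-algebra with augmentation $f\colon A\to B$ and $f\circ s=\id_B$. Applying the functor $\cdR_{-/B}$ of \Cref{con:derived_de_Rham}, valued in complete filtered commutative $B$-algebras, to the composite $B\xrightarrow{s}A\xrightarrow{f}B$ produces maps $B=\cdR_{B/B}\xrightarrow{s_*}\cdR_{A/B}\xrightarrow{f_*}\cdR_{B/B}=B$ whose composite is the identity. Thus $B$ is a retract of $\cdR_{A/B}$, and it remains only to prove that the complementary idempotent $s_*\circ f_*=(s\circ f)_*$ is the identity of $\cdR_{A/B}$, i.e.\ that $s_*$ is an equivalence. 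I emphasize that this cannot be checked on associated graded pieces: by \eqref{eq:Cartier_isomorphism} one has $\gr^0\cdR_{A/B}\cong A$, which differs from $B$, so the asserted equivalence $\cdR_{A/B}\simeq B$ is one of underlying objects only and genuinely involves the de Rham differential rather than the Hodge-graded data.

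To produce this equivalence I would first settle the case of a free augmented algebra $P=B\cinftimes\CI(\R^m)$, with augmentation sending the coordinate generators to $0$. Here the rescaling $\CI$-homotopy $x_i\mapsto t\,x_i$ connects $\id_P$ with $s\circ f$, and homotopy invariance of derived de Rham cohomology (the $\CI$ Poincar\'e lemma) forces $(s\circ f)_*=\id$; combined with $f_*\circ s_*=\id_B$ this makes $s_*\colon B\to \cdR_{P/B}$ an equivalence. The general case then follows by left Kan extension: every augmented $B$-algebra admits a canonical bar resolution $P_\bullet\to A$ by free augmented $B$-algebras, and since $\cdR_{-/B}$ is left Kan extended from free $\CI$-algebras it carries this resolution to the geometric realization of the simplicial object $n\mapsto \cdR_{P_n/B}$. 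By the free case each term is identified with $B$ compatibly with the augmentations, so the simplicial object is constant on $B$ and its realization is again $B$.

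The main obstacle is the interaction of the Hodge completion with this realization: completion is a limit over the filtration quotients, whereas the bar resolution is computed by a colimit, and these need not commute for a general non-Noetherian $\CI$-algebra. I would handle this exactly as in Bhatt's treatment \cite[Example 4.17]{Bhatt_Derived}, applying the homotopy-invariance statement already proved for the completed theory $\cdR$ levelwise to the resolution, so that the levelwise equivalences $\cdR_{P_n/B}\simeq B$ remain compatible with the completed filtration before passing to the realization. Granting this compatibility, the idempotent $(s\circ f)_*$ is the identity and $\cdR_{A/B}\simeq B$, as claimed; this is the de Rham analogue of \Cref{prop:de_Rham_of_localization}, where triviality instead issued from the vanishing of the relative cotangent complex.
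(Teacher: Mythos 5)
Your free case (the rescaling contraction of $P=B\cinftimes\CI(\R^m)$ combined with homotopy invariance) is correct, and it is the genuine mathematical content here; note that the paper itself offers no argument for this lemma beyond the citation to \cite[Example 4.17]{Bhatt_Derived} and the assertion that it translates verbatim, so there is no detailed proof to compare against. The gap is in your reduction of the general case to the free case. The functor that is left Kan extended from free $\CI$-algebras is the \emph{non-completed} derived de Rham complex; the Hodge-completed $\cdR_{-/B}$ of \Cref{con:derived_de_Rham} is the limit of the quotients $\cdR/\Fil_H^k$, and this limit does not commute with the geometric realization of your bar resolution. Your closing paragraph concedes this and proposes to repair it by applying homotopy invariance ``levelwise,'' but that is not responsive: the levelwise identifications $\cdR_{P_n/B}\simeq B$ are already compatible with all simplicial structure, and the problem is simply that $\cdR_{A/B}$ is not the realization of $n\mapsto \cdR_{P_n/B}$. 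What your Kan-extension argument does prove is triviality of the \emph{non-completed} theory, which is the standard characteristic-zero degeneration that motivates Hodge completion in the first place.

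This gap cannot be closed, because in the stated generality the conclusion is false for the completed theory. Take $B=\R$ and $A=\CI(\R\setminus\{0\})$ (a derived manifold by \Cref{ex:open_subsets_of_derived_manifolds}), with $f$ evaluation at $1$ and $s$ the unit: then $\cdR_{A/\R}$ is the ordinary two-term (hence already complete) de Rham complex of the manifold $\R\setminus\{0\}$, whose $H^0$ is two-dimensional. The paper's own \Cref{ex:de_Rham_counterexample} gives a more dramatic counterexample: that algebra is augmented over $\R$ by evaluation at $0$, with the unit as section, yet its $\cdR$ is far from trivial. So any proof in this generality must break, and yours breaks exactly at the completion-versus-realization step. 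The lemma is applied in the paper only in two situations where extra structure rescues it: in \Cref{st:smooth_descent_for_de_Rham} the relevant algebras are free over $B$, which is precisely your free case, where the section extends to a smooth contraction; and in \Cref{ex:de_Rham_coh_of_Weyl} the augmentation ideal is nilpotent, so triviality follows from the invariance of $\cdR$ under nilpotent extensions supplied by \Cref{thm:de_Rham_vs_functions_on_inf}. The honest repair is to state and prove exactly those two special cases. Your own observation that the claim cannot be detected on the associated graded of \eqref{eq:Cartier_isomorphism} was the warning sign: no purely formal argument from the bare existence of a section can suffice for the completed theory.
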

	\begin{example}\label{ex:de_Rham_coh_of_Weyl}
		A \emph{Weil algebra} \(W\) is a local \(\CI\)-algebra which is finite-dimensional as an \(\R\)-vector space, and such that \(W\) can be presented as a sum of the following form.
		\[
			W=\R\oplus \m.
		\]
		Here, \(\m\) is the maximal ideal. Note that the spectrum of a Weil algebra has a single closed point. By \cite[Theorem 3.17]{Moerdijk_Reyes_book} any Weil algebra is canonically a \(\CI\)-algebra. 
		
		We have a canonical section of the unit homomorphism \(\R\to W\) given by taking the quotient with respect to \(\m\). As a result, by \Cref{section_means_trivial_de_Rham} de Rham cohomology of \(W\) vanishes.
	\end{example}
	\begin{proposition}\label{prop:de_Rham_depends_on_pi0}
		Let \(f:A\to B\) be a \(\pi_0\)-equivalence of derived \(\CI\)-algebras. Then, the derived de Rham cohomology of \(B\) over \(A\) is trivial, and there is an equivalence as follows.
		\[
			\cdR_{A}\simeq \cdR_B.
		\]
	\end{proposition}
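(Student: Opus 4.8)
The plan is to treat the two assertions in turn: the triviality of the relative theory is essentially already in hand, and the absolute comparison \(\cdR_A\simeq\cdR_B\) will be deduced from it by a transitivity (base-change) argument for the tower \(\R\to A\xrightarrow{f}B\).

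First I would dispose of the relative statement. A \(\pi_0\)-equivalence is in particular a \(\pi_0\)-isomorphism, so \Cref{cor:de_Rham_equiv_for_quotients} applies verbatim and gives \(\cdR_{B/A}\simeq A\); concretely, by \Cref{thm:Adams_completion_vs_de_Rham} the relative de Rham cohomology is the Adams completion \(\Comp_A(A,f)\), i.e.\ the completion of \(A\) along the ideal \(\ker(\pi_0 A\to\pi_0 B)=0\), which is \(A\). I would moreover record that under this identification the unit \(A\to\cdR_{B/A}\) is an equivalence of \(A\)-algebras, since completing along the zero ideal is the identity; this point is needed below.

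For the equivalence \(\cdR_A\simeq\cdR_B\) I would then establish the base-change formula of complete filtered commutative algebras
\[
 \cdR_{B/\R}\;\simeq\;\cdR_{A/\R}\,\widehat{\otimes}_A\,\cdR_{B/A}.
\]
Since both sides are Hodge-complete it suffices to check the natural comparison map on associated graded pieces; by the Cartier isomorphism \eqref{eq:Cartier_isomorphism} this reduces to identifying the graded pieces of \(\Lambda^n\LL_{B/\R}\) for the filtration induced by the cotangent transitivity cofibre sequence \(B\otimes_A\LL_{A/\R}\to\LL_{B/\R}\to\LL_{B/A}\) of \Cref{prop:functoriality_properties_of_LL}(i), namely
\[
 \gr\,\Lambda^n\LL_{B/\R}\;\simeq\;\bigoplus_{i+j=n}\Lambda^i\bigl(B\otimes_A\LL_{A/\R}\bigr)\otimes_B\Lambda^j\LL_{B/A}.
\]
Feeding the relative triviality into this formula, the unit \(A\to\cdR_{B/A}\) induces
\[
 \cdR_{A/\R}\;\simeq\;\cdR_{A/\R}\,\widehat{\otimes}_A\,A\longrightarrow\cdR_{A/\R}\,\widehat{\otimes}_A\,\cdR_{B/A}\;\simeq\;\cdR_{B/\R},
\]
and it remains to see that this composite is an equivalence of underlying objects.

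The hard part will be exactly this last step. The equivalence \(A\simeq\cdR_{B/A}\) of the second paragraph is not a filtered equivalence — on associated graded the right-hand side carries the nonzero Hodge classes \(\Lambda^j\LL_{B/A}[-j]\) — so one cannot argue graded piece by graded piece, and the comparison map just displayed is itself not a filtered equivalence. The triviality is instead a completion phenomenon: because \(f\) is a \(\pi_0\)-isomorphism the entire discrepancy between \(\cdR_{B/A}\) and \(A\) is carried in strictly positive Hodge weight, where the Hodge filtration is complete, and one must show that applying \(\cdR_{A/\R}\,\widehat{\otimes}_A(-)\) and passing to the inverse limit over the Hodge filtration still collapses it on underlying objects. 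This is a pro-system and completeness argument of precisely the flavour of \Cref{lemma:tedious_completions}, and the exact analogue of Bhatt's treatment in \cite[\S 4]{Bhatt_Derived}; granting it, the displayed map is an underlying equivalence and \(\cdR_A\simeq\cdR_B\) follows.
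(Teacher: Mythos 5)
Your treatment of the first assertion is correct: a \(\pi_0\)-equivalence is in particular a \(\pi_0\)-isomorphism, so \Cref{cor:de_Rham_equiv_for_quotients} gives \(\cdR_{B/A}\simeq A\), in agreement with the Adams-completion picture of \Cref{thm:Adams_completion_vs_de_Rham}. The problems are all in the second half. The base-change formula \(\cdR_{B/\R}\simeq \cdR_{A/\R}\,\widehat{\otimes}_A\,\cdR_{B/A}\) on which you rest the comparison is not available, and is false in general. First, it does not even typecheck: \(\cdR_{A/\R}\) is not naturally an \(A\)-algebra, because the de Rham differential is not \(A\)-linear; the natural algebra map is the projection \(\cdR_{A/\R}\to A\) onto \(\gr^0\), not a map in the other direction, so the completed tensor product over \(A\) is undefined without further interpretation. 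Second, under any reasonable interpretation the formula fails for an ordinary smooth fibre bundle with non-trivial monodromy (e.g.\ the Klein bottle as a circle bundle over \(S^1\)): the de Rham cohomology of the total space is not a tensor product of the de Rham cohomology of the base with the relative one; the transitivity sequence only produces a Katz--Oda-type filtration whose differentials are exactly the Gauss--Manin connection. Your verification \enquote{on associated graded pieces} does not repair this: the cofibre sequence of \Cref{prop:functoriality_properties_of_LL} exhibits \(\Lambda^n\LL_{B/\R}\) as an iterated, generally non-split, extension with graded pieces \(\bigoplus_{i+j=n}\Lambda^i\bigl(B\otimes_A\LL_{A/\R}\bigr)\otimes_B\Lambda^j\LL_{B/A}\), which is weaker than the direct-sum identification your comparison of associated gradeds would require; and in any case there is no natural comparison map to check, since the functorial maps run \(\cdR_{A/\R}\to\cdR_{B/\R}\to\cdR_{B/A}\) and assemble into an arrow in neither direction of your displayed equivalence.

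Moreover, you concede the decisive step yourself: the claim that applying \(\cdR_{A/\R}\,\widehat{\otimes}_A(-)\) and completing collapses the non-filtered equivalence \(A\simeq\cdR_{B/A}\) is exactly where the content of the proposition would have to live, and \enquote{granting it} leaves the proof conditional even if the base-change formula were true. The paper's own proof bypasses all of this filtered algebra: by \Cref{thm:de_Rham_vs_functions_on_inf}, \(\cdR_A\) is the algebra of functions on the de Rham stack \((\Spec A)^{\dR_{\Com}}\) of \Cref{con:de_Rham_stacks}, and that stack depends only on \(\pi_0(A)\), since its value on a test algebra \(R\) is \(\Map_{\CI\Alg}(A,R^{\red_{\Com}})\) and \(R^{\red_{\Com}}\) is \(0\)-truncated, so every such map factors uniquely through \(\pi_0(A)\). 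A \(\pi_0\)-equivalence therefore induces an equivalence \((\Spec A)^{\dR_{\Com}}\simeq(\Spec B)^{\dR_{\Com}}\), hence \(\cdR_A\simeq\cdR_B\) on functions. If you want to salvage your outline, replace the K\"unneth-type formula by this functor-of-points argument; it, and not a tensor decomposition, is the mechanism that makes \(\cdR\) insensitive to the derived structure.
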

	\begin{proof}
		The result is immediate from \Cref{thm:de_Rham_vs_functions_on_inf}.
	\end{proof}
	\begin{definition}
		Let \(f,g:X\to Y\) be a morphism of derived \(\CI\)-stacks. We say that \(f\) is smoothly homotopic to \(g\) if there exists a morphism of derived \(\CI\)-stacks \(H:\R\times X\to Y\) such that the following diagram commutes. 
\[\begin{tikzcd}
	X \\
	\\
	{\R\times X} && Y \\
	\\
	X
	\arrow["{\{a\}\times\id_X}"', hook, from=1-1, to=3-1]
	\arrow["f", from=1-1, to=3-3]
	\arrow["H", from=3-1, to=3-3]
	\arrow["{\{b\}\times\id_X}", hook, from=5-1, to=3-1]
	\arrow["g"', from=5-1, to=3-3]
\end{tikzcd}\]
		For some real numbers \(a<b.\)
	\end{definition}
	\begin{proposition}[Homotopy invariance]
		Let \(X\) be a derived manifold. Assume that there exists a smooth homotopy contracting \(X\). Then, the derived de Rham cohomology of \(X\) is trivial. 
	\end{proposition}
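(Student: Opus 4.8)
The plan is to first establish homotopy invariance of $\cdR$ at the level of maps, and then feed the contracting homotopy into it to conclude $\cdR_X\simeq\R$ (so that ``trivial'' here means equivalence with the de Rham cohomology of a point). Throughout write $A=\CI(X)$ and note that $\CI(\R\times X)\simeq A\otimes^\infty\CI(\R)$, so the projection $\pi\colon\R\times X\to X$ corresponds to the base change of the unit map $\R\to\CI(\R)$ along $\R\to A$, with the slices $i_a=\{a\}\times\id_X$ serving as sections of $\pi$.

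The crucial input is a Poincar\'e lemma for the line. I would compute $\cdR_{\CI(\R)/\R}$ directly: by \Cref{ex:cotangent_complex_of_dMfld} the cotangent complex $\LL_{\CI(\R)/\R}$ is the free module $\Omega^1(\R)$ of rank one in degree $0$, so by the Cartier isomorphism \eqref{eq:Cartier_isomorphism} the Hodge filtration on $\cdR_{\CI(\R)/\R}$ has $\gr^0=\CI(\R)$, $\gr^1=\Omega^1(\R)[-1]$ and $\gr^{\ge 2}=0$. Hence the filtration is two-step, the completion is vacuous, and $\cdR_{\CI(\R)/\R}$ is computed by the two-term complex $\bigl[\CI(\R)\xrightarrow{d}\Omega^1(\R)\bigr]$. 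Since every smooth one-form $g\,dx$ on the line is exact, this complex is quasi-isomorphic to the constants, giving a filtered equivalence $\cdR_{\CI(\R)/\R}\simeq\R$.

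Next I would promote this to the absolute statement via a K\"unneth (base-change) formula for Hodge-completed derived de Rham cohomology. Because $\CI(\R\times X)\simeq A\otimes^\infty\CI(\R)$, the cotangent triangle of \Cref{prop:functoriality_properties_of_LL} together with part (ii) applied to both base-change squares splits $\LL_{\CI(\R\times X)/\R}$ as the direct sum of the two base-changed cotangent complexes; taking exterior powers and totalizing the Hodge filtration yields a filtered equivalence
\[
	\cdR_{\R\times X}\;\simeq\;\cdR_X\,\widehat{\otimes}_\R\,\cdR_{\CI(\R)/\R}\;\simeq\;\cdR_X\otimes_\R\R\;\simeq\;\cdR_X ,
\]
where the middle step uses the computation above, and where the finiteness of the line's Hodge filtration makes the completed tensor product harmless, since each graded piece is the finite sum $\gr^n\cong\bigoplus_{i+j=n}\gr^i\cdR_X\otimes\gr^j\cdR_{\CI(\R)/\R}$ with only $j\in\{0,1\}$ contributing. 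This equivalence is induced by $\pi^\ast$, so the sections $i_a,i_b$ of $\pi$ satisfy $i_a^\ast=i_b^\ast=(\pi^\ast)^{-1}$ on $\cdR$. That is precisely homotopy invariance: smoothly homotopic maps $f=H\circ i_a$ and $g=H\circ i_b$ induce the same map $f^\ast=i_a^\ast H^\ast=i_b^\ast H^\ast=g^\ast$ on derived de Rham cohomology.

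Finally I would run the contraction argument. The contracting homotopy $H\colon\R\times X\to X$ satisfies $H\circ i_a=\id_X$ and $H\circ i_b=c$, where $c\colon X\to X$ is the constant map at a basepoint, factoring as $X\xrightarrow{t_X}\Spec\R\xrightarrow{\iota_p}X$. Applying $\cdR$ and using $i_a^\ast=i_b^\ast$ gives $\id_{\cdR_X}=\id_X^\ast=c^\ast=t_X^\ast\circ\iota_p^\ast$, while $t_X\circ\iota_p=\id_{\Spec\R}$ gives $\iota_p^\ast\circ t_X^\ast=\id_\R$. Hence $t_X^\ast\colon\R\to\cdR_X$ and $\iota_p^\ast\colon\cdR_X\to\R$ are mutually inverse, so $\cdR_X\simeq\R$. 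The main obstacle I anticipate is the K\"unneth/base-change step: making precise that the Hodge-completed de Rham cohomology of the $\CI$-coproduct $A\otimes^\infty\CI(\R)$ splits as the completed tensor product and that completion commutes with it. This is where the $\CI$-structure (as opposed to ordinary commutative algebra) must be handled with care, though it is eased considerably by the fact that the line's Hodge filtration terminates at step one, so no genuine completion issue arises on that factor.
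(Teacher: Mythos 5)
Your proposal is correct and follows essentially the same route as the paper: both arguments hinge on identifying \(\cdR_{\R}\) with the explicit two-term de Rham complex of the line and on the K\"unneth equivalence \(\cdR_{X\times\R}\simeq\cdR_X\widehat{\otimes}\,\cdR_{\R}\), which is unproblematic precisely because the line's Hodge filtration terminates. The only real difference is cosmetic and lies in the endgame: the paper concludes by invoking the classical chain homotopy between \(f^*\) and \(g^*\), whereas you invert \(\pi^*\) via the Poincar\'e lemma on the line and then run the retraction argument, making explicit the final step (\(\id^*=c^*\) together with \(\iota_p^*\circ t_X^*=\id_\R\) forces \(\cdR_X\simeq\R\)) that the paper leaves implicit.
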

	\begin{proof}
		First, we consider the case of affine \(X\) and \(Y\). We have the following morphisms at the level of rings.

\[\begin{tikzcd}
	{\CI(X)} \\
	\\
	{\CI(X)\{t\}} && {\CI(Y)} \\
	\\
	{\CI(X)}
	\arrow["{t\mapsto a}", from=3-1, to=1-1]
	\arrow["{t\mapsto b}"', from=3-1, to=5-1]
	\arrow["{f^*}"', from=3-3, to=1-1]
	\arrow["{H^*}"', from=3-3, to=3-1]
	\arrow["{g^*}", from=3-3, to=5-1]
\end{tikzcd}\]
		We now examine the effect on the derived de Rham cohomology. We have the following map
		\[
			\cdR_{Y}\xrightarrow{H^*} \cdR_{X\times \R}
		\]
		Since \(\CI(\R)\) is free, we have a completely explicit description of \(\cdR_{\R}\). Namely, this is just the usual de Rham complex of \(\R\) concentrated in degrees \(0\) and \(1\). It now remains to observe that because \(\cdR_{\R}\) has a finite filtration, we have an equivalence as follows. 
		\[
			\cdR_{X\times \R}\simeq \cdR_X\otimes^\infty \Lambda_{\CI\{t\}}[dt]
		\]
		Here \(\otimes^\infty\) is the usual \(\CI\)-tensor product between \(\CI(X)\) and \(\CI(\R)\) and the commutative tensor product for other terms of the filtration. 
		Hence, we have a chain homotopy between the maps \(f^*\) and \(g^*\) in the usual sense.
	\end{proof}
	\begin{remark}
		Note that the condition of being smoothly contractible is subtle. In particular, a priori it does not coincide with contractibility of the underlying topological space since the homotopy should not only preserve the set of points, but also the \emph{\(\CI\)-structure}. The following example gives an instance where such a local contraction exists even though the \(\CI\)-structure is non-trivial. 
	\end{remark}
	\begin{example}\label{example_contraction}
		Consider the following derived affine \(\CI\)-scheme. 
		\[
			Z(e^{-\frac{1}{x^2}})\sse \R.
		\]
		This scheme has a single closed point \(0\in \R\), still its \(\CI\)-structure is non-trivial. However, a smooth contracting homotopy exists. Denote by \(\psi(t)\) a \(\CI\)-function of \(t\) which takes values between \(0\) and \(1\), is flat at \(0\) and \(1\), plateaus at \(0\) for \(t\le 0\), and at \(1\) for \(t\ge 1\).
		\begin{figure}[H]
			\includegraphics[width=0.5\textwidth]{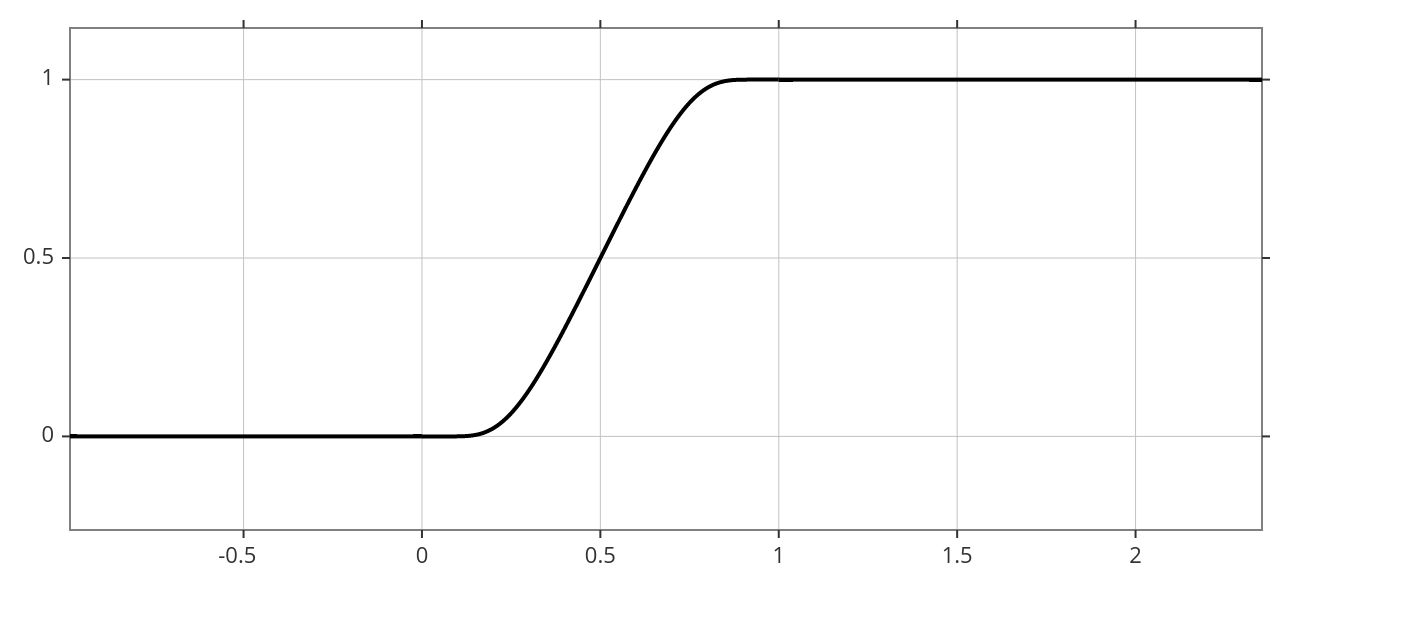}
			\caption{A possible choice of \(\psi(t)\).}
		\end{figure}
		Consider the following map.
		\[
			H^*:\CI\{x\}=\CI(\R)\to \CI(\R)\otimes^\infty\CI(\R)=\CI\{x,t\},\quad x\mapsto \psi(t)\cd x.
		\]
		This map clearly is a morphism of \(\CI\)-algebras.

		Now, we need to check that this map descends to a morphism \(Z(e^{-\frac{1}{x^2}})\times \R\to Z(e^{-\frac{1}{x^2}})\). This is equivalent to checking that we have the following identity.
		\[
			H^*(e^{-\frac{1}{x^2}})=e^{-\frac{1}{x^2}}\cd \vp(x,t),\quad \vp \in \CI\{x,t\}.
		\]
		We thus need to \enquote{divide} \(e^{-\frac{1}{\psi^2(t) x^2}}\) by \(e^{-\frac{1}{x^2}}\). Thus, it is enough to see that \(e^{-\frac{1-\psi(t)^2}{\psi(t)^2}}\) is a regular function. This is immediate for \(0<t\le 1\). At \(0\) this is also clear since in \(\frac{1-\psi(t)^2}{\psi(t)^2}\) the numerator is strictly positive and the denominator is flat and non-negative. Thus, we showed that the map \(H^*\) descends to a morphism of derived \(\CI\)-schemes and gives the desired contracting homotopy. 
	\end{example}
	\begin{remark}
		As we saw in \Cref{example_contraction}, the existence of a smooth contracting homotopy relied on an explicit form of the functions used to define the scheme. This condition is thus more reminiscent of \(\AA^1\)-contractibility in algebraic geometry (see \cite{Morel} for a textbook exposition) than the usual topological contractibility for, say, manifolds where specific homotopies and equations play virtually no role.
	\end{remark}
	Recall the definition of a smooth morphism given in \Cref{def:smooth_morphism}. 	
	\begin{proposition}[Smooth descent for derived de Rham cohomology]\label{st:smooth_descent_for_de_Rham}
		Let \(f:U\to X\) be a smooth surjective morphism of derived manifolds. Then the derived de Rham cohomology of \(X\) can be calculated by descent as follows.
		\[
			\cdR_X\simeq \Tot(\cdR_{\check{C}^\bt(f)}).
		\]
		Here \(\check{C}^\bt(f)\) is the \v{C}ech groupoid of the morphism \(f\).
	\end{proposition}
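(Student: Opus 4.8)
The plan is to compare the two complete filtered objects and then reduce the statement to one about the cotangent complex. Since every derived manifold here is affine, write \(X=\Spec\CI(X)\), \(U=\Spec\CI(U)\), and note that \(\check{C}^\bullet(f)\) is the cosimplicial \(\CI(X)\)-algebra \(\CI(U)^{\otimes_{\CI(X)}^\infty\bullet}\). Applying \(\cdR\) to the augmented conerve \(\CI(X)\to\check{C}^\bullet(f)\) produces the comparison map \(\cdR_X\to\Tot(\cdR_{\check{C}^\bullet(f)})\), which is a map of complete filtered derived commutative algebras for the Hodge filtration. Both sides are complete: the source by \Cref{con:derived_de_Rham}, and the target because a totalization of complete filtered objects is again complete. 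Hence it suffices to prove the map is an equivalence after passing to associated graded pieces.

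On \(\gr^k\) the left-hand side is \(\Lambda^k\LL_{X/\R}[-k]\) by \eqref{eq:Cartier_isomorphism}. For the right-hand side I would exchange \(\gr^k\) with \(\Tot\), exactly as in the proof of \Cref{thm:Adams_completion_vs_de_Rham}, obtaining \(\Tot_n\big(\Lambda^k\LL_{\check{C}^n(f)/\R}[-k]\big)\). Thus the whole statement reduces to descent for the exterior powers of the \emph{absolute} cotangent complex,
\[
\Lambda^k\LL_{X/\R}\xrightarrow{\ \sim\ }\Tot_n\,\Lambda^k\LL_{\check{C}^n(f)/\R},\qquad k\ge 0.
\]
To attack this, apply the transitivity cofibre sequence of \Cref{prop:functoriality_properties_of_LL}(i) to \(\R\to\CI(X)\to\check{C}^n(f)\), writing \(p_n\colon\check{C}^n(f)\to\CI(X)\) for the structure map, to get \(p_n^*\LL_{X/\R}\to\LL_{\check{C}^n(f)/\R}\to\LL_{\check{C}^n(f)/X}\). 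The exterior power \(\Lambda^k\) of such an extension carries a finite filtration whose graded pieces are \(p_n^*\Lambda^i\LL_{X/\R}\otimes\Lambda^{k-i}\LL_{\check{C}^n(f)/X}\) for \(0\le i\le k\), compatibly in \(n\).

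The top piece \(i=k\) is \(p_n^*\Lambda^k\LL_{X/\R}\simeq\Lambda^k\LL_{X/\R}\otimes_{\CI(X)}^\infty\CI(U)^{\otimes_{\CI(X)}^\infty n}\). Here the local structure theorem for submersions (\Cref{implicit_function_theorem}) shows \(\CI(X)\to\CI(U)\) is flat, and surjectivity of \(f\) makes it faithfully flat; therefore the Amitsur complex is a limit diagram and \(\Tot_n\) of this piece recovers \(\Lambda^k\LL_{X/\R}\), which is exactly the desired answer. It remains to show that the pieces with \(i<k\), all of which involve \(\Lambda^{\ge1}\LL_{\check{C}^n(f)/X}\), totalize to zero. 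By base change (\Cref{prop:functoriality_properties_of_LL}(ii)) the relative cotangent complex of the \(n\)-fold fibre product splits as a sum of pullbacks of \(\LL_{U/X}\), so these pieces are precisely the positive-weight part of the relative de Rham cohomology \(\cdR_{\check{C}^\bullet(f)/X}\). Locally on \(X\) the submersion \(f\) becomes a projection off an affine factor (again \Cref{implicit_function_theorem}) and so admits a section, whence the relative de Rham cohomology \(\cdR_{\check{C}^n(f)/X}\) is trivial by \Cref{section_means_trivial_de_Rham}.

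I expect this last step to be the main obstacle: promoting the local triviality of the relative de Rham complex, where a section exists, to the vanishing of the totalization of the positive-weight relative terms without a \emph{global} section. The honest argument is a relative Poincaré lemma for the \v{C}ech nerve of a submersion, carried out through the explicit splitting of \(\LL_{\check{C}^\bullet(f)/X}\) and the extra codegeneracies it induces; the faithful flatness secured above is what guarantees that this descent is effective. Everything else is formal manipulation of complete Hodge filtrations together with the standard cofibre and base-change properties of the cotangent complex recalled in \Cref{prop:functoriality_properties_of_LL} (and is closely parallel to the homotopy-invariance computation and \Cref{prop:de_Rham_of_localization} already established).
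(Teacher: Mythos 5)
Your reduction to the associated graded, and from there to descent for the exterior powers of the absolute cotangent complex, is a legitimate plan (it is the standard route in algebraic geometry), but as written it has two genuine gaps, the second of which you flag yourself. First, the faithful-flatness claim used for the top graded piece is unsupported: \Cref{implicit_function_theorem} gives the local product structure \(U\simeq X\times\R^n\), but flatness of \(\CI(X)\to\CI(U)\) (equivalently of \(\CI(X)\to\CI(X)\otimes^\infty\CI(\R^n)\), together with flatness of the \(\CI\)-localizations needed to glue local statements) is not a formal consequence of that local structure. Flatness assertions for \(\CI\)-rings are delicate analytic statements — the only flatness input available in the paper is Malgrange's theorem (\Cref{thm:Tougeron's_flatness}), which concerns the analytic-to-smooth extension and does not help here — so the Amitsur-complex argument for the piece \(i=k\) does not go through as stated. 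Second, and decisively, the vanishing of \(\Tot_n\) of the positive-weight pieces \(i<k\) is exactly the mathematical content of the proposition, and you leave it open: knowing that the relative de Rham cohomology trivializes locally on \(X\) (wherever a section exists) does not yield the vanishing of the totalization, because \(\Tot\) is an infinite limit and does not commute with the localizations or stalks needed to pass from local to global. A proof that stops at this point has not proved descent; moreover your sketched fix for this step again invokes the unsupported flatness.

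For comparison, the paper's proof never decomposes anything: by \Cref{implicit_function_theorem} the surjective submersion \(f\) admits sections locally on the base; over any open where a section exists, the augmented cosimplicial object \(\cdR_X\to\cdR_{\check{C}^\bt(f)}\) is split — equivalently, the relative de Rham cohomology is trivial by \Cref{section_means_trivial_de_Rham} — hence a limit diagram; and the statement is local on \(X\) since both sides are sheaves on the finite-dimensional space \(X(\R)\). Note the irony: the one step you could not fill is precisely where the paper's entire argument lives, and once you invoke local sections and the splitting they induce, you may as well apply them to the whole complete filtered algebra \(\cdR\) at once — at which point the transitivity filtration, the \(\gr\)-by-\(\gr\) comparison, and the flatness claim all become unnecessary.
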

	\begin{proof}
		By \Cref{implicit_function_theorem} locally on \(U\) we can present the morphism \(f\) as the projection \(X\times \R^n\to X\). Consequently, a local section of the projection exists. The statement now follows by \Cref{section_means_trivial_de_Rham}.
	\end{proof}
	\subsection{De Rham cohomology of derived stacks}
	\begin{construction}\label{con:derived_de_Rham_of_stacks}
		Let \(\X\) be a derived \(\CI\)-stack. Then we have the following construction of the derived de Rham cohomology algebra of \(\X\) by a Kan extension from the affine case.
		\[
			\cdR_{\X}:=\lim_{\Spec A\to \X} \cdR_{\Spec A}.
		\]
		Note that the Kan extension on the right is taken in the category of complete filtered commutative algebras.
	\end{construction}
	\section{de Rham theorems for Hodge completed derived de Rham cohomology}\label{sec:de_Rham_theorems_for_cdR}
	\subsection{A general result}
	The following result can be considered a refinement of the Poincar\'{e} lemma for a class of local \(\CI\)-algebras. In particular, it shows that while the naive form of the de Rham theorem does not hold for general \(\CI\)-algebras, there is still a way to express the cohomology of the constant sheaf using differential forms. 
	
	\begin{proposition}\label{prop:de_Rham_of_completion}
		Let \(A\) be a finitely generated \(0\)-truncated derived \(\CI\)-algebra. Consider a presentation of \(A\) as a quotient of a free derived \(\CI\)-algebra.
		\[
			I\to E\to A
		\]
		Assume that the classical completion of the morphism \(E\to A\) coincides with the derived completion of the morphism \(E\to A\). Then, there is a fibre sequence as follows.
		\[
			I^\infty\Omega^\bullet_E\to \Omega^\bullet_E\to \cdR_{A}
		\]
		Here \(I^\infty\) is the intersection ideal \(\bigcap_{n=1}^\infty I^n\). 
	\end{proposition}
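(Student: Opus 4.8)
The plan is to prove the slightly sharper statement that $\cdR_A\simeq \Omega^\bullet_E/I^\infty\Omega^\bullet_E$, after which the asserted fibre sequence is formal. Since $A$ is finitely generated, $E=\CI(\R^n)$ and each $\Omega^k_E$ is finite free. The first thing I would record is that $I^\infty\Omega^\bullet_E$ is genuinely a subcomplex: if $h\in I^m$ then the Leibniz rule gives $d_\dR h\in I^{m-1}\Omega^1_E$, so $h\in I^\infty=\bigcap_m I^m$ forces $d_\dR h\in \bigcap_m I^{m-1}\Omega^1_E=I^\infty\Omega^1_E$, using that $\Omega^1_E$ is free so that the intersection of powers commutes with the free module. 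Hence $I^\infty\Omega^\bullet_E\hookrightarrow\Omega^\bullet_E$ is a map of complexes whose cofibre is precisely the quotient complex, and the whole problem reduces to identifying that quotient with $\cdR_A$.

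Next I would compute $\cdR_A=\cdR_{A/\R}$ as a completion of the de Rham complex of the smooth thickening $E$. Because $E$ is free, its cotangent complex is $\Omega^1_E$ in degree $0$, so $\cdR_E\simeq\Omega^\bullet_E$ with Hodge-graded pieces $\Omega^k_E[-k]$. Now \Cref{thm:Adams_completion_vs_de_Rham} already identifies the \emph{relative} theory $\cdR_{A/E}$ with the $\Com$-completion $\Comp_E(E,f)$ of the \emph{algebra} $E$ along $I$. The genuine work is to upgrade this to the \emph{absolute} $\cdR_{A/\R}$, which must complete the full de Rham complex rather than only functions. I would carry this out using the base-change and transitivity of Hodge-completed derived de Rham cohomology along the smooth map $\R\to E$: since $\cdR_{E/\R}=\Omega^\bullet_E$ serves as the coefficient system, and since on associated graded $\gr^k\cdR_{A/\R}\simeq\Lambda^k\LL_{A/\R}[-k]$ by \eqref{eq:Cartier_isomorphism} with the cotangent triangle of \Cref{prop:functoriality_properties_of_LL}(i) for $\R\to E\to A$ controlling $\Lambda^k\LL_{A/\R}$, one matches $\cdR_{A/\R}$ filtration-by-filtration with the derived $I$-adic completion of $\Omega^\bullet_E$. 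At this point the standing hypothesis that classical and derived completion of $E\to A$ agree replaces the derived completion by the classical one $\lim_n\Omega^\bullet_E/I^n\Omega^\bullet_E$.

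It then remains to identify this classical completion with $\Omega^\bullet_E/I^\infty\Omega^\bullet_E$. As every $\Omega^k_E$ is finite free, this reduces degreewise to the claim that the natural map $E/I^\infty\to\lim_n E/I^n$ is an equivalence. Injectivity is automatic, since $\bigcap_n\ker(E\to E/I^n)=I^\infty$. Surjectivity is the genuinely $\CI$-theoretic input: every compatible system of jets along the zero locus of $I$ is realized by a single global smooth function (Borel--Whitney realization), so that the $\CI$ completion is an honest quotient of $E$ — in sharp contrast with the Noetherian algebraic setting, where $E\to\lim_n E/I^n$ is typically not surjective. This surjectivity is exactly the mechanism already visible in \Cref{lemma:tedious_completions}, and it is citable from the Whitney-extension literature (\cite{Malgrange}, \cite{Tougeron}). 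Combining the three steps yields $\cdR_A\simeq\Omega^\bullet_E/I^\infty\Omega^\bullet_E$ and hence the stated fibre sequence $I^\infty\Omega^\bullet_E\to\Omega^\bullet_E\to\cdR_A$.

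The hard part will be the middle step: bridging from the relative statement of \Cref{thm:Adams_completion_vs_de_Rham}, which only completes the algebra $E$, to the absolute de Rham cohomology, which must complete every exterior power $\Omega^k_E$ \emph{compatibly with the horizontal differential}. The associated-graded comparison is clean in each fixed degree, but verifying that these completions assemble into a filtered equivalence of complexes — so that $d_\dR$ survives completion and one really obtains $\Omega^\bullet_E$ completed as a \emph{complex} rather than termwise — is where I expect the real care to be needed; the $\CI$-specific surjectivity in the final step is delicate but can be quoted rather than reproved.
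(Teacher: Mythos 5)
Your route is, in outline, the same as the paper's: both arguments come down to showing \(\cdR_A\simeq \Omega^\bullet_E/I^\infty\Omega^\bullet_E\) by identifying \(\cdR_A\) with the \(I\)-adic completion of \(\Omega^\bullet_E\), using the hypothesis to replace the derived completion by the classical one, and then identifying that classical completion with the quotient by \(I^\infty\). The paper compresses this into two sentences --- the hypothesis is said to yield the presentation \(I^\infty\to E\to E^\wcom_I\), and the conclusion is drawn from the formal smoothness of \(E\) and \(E^\wcom_I\) (whose naive de Rham complex is exactly \(\Omega^\bullet_E/I^\infty\Omega^\bullet_E\), since \(dI^\infty\ss I^\infty\Omega^1_E\)) --- whereas you unfold it via a Bhatt-style filtered comparison upgrading the relative statement of \Cref{thm:Adams_completion_vs_de_Rham} to the absolute \(\cdR_{A/\R}\). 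That middle step is correctly identified as the place where filtered-complex care is needed, and your plan for it (map \(\Omega^\bullet_E\to\cdR_A\), check on associated graded via \eqref{eq:Cartier_isomorphism} and the transitivity triangle) is the right one.

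The step I cannot accept as written is the last one: surjectivity of \(E\to\lim_n E/I^n\) justified by \enquote{Borel--Whitney realization of compatible systems of jets along \(Z(I)\)}, to be \enquote{quoted rather than reproved}. This conflates the \(I\)-adic tower \(\{E/I^n\}\) with the jet tower along \(Z(I)\), and the two differ precisely in the situations this proposition exists for: when a generator of \(I\) is flat on \(Z(I)\), as in \Cref{ex:de_Rham_counterexample}, membership in \(I^n\) is a divisibility condition strictly stronger than any condition on Taylor expansions. Concretely, \(e^{-1/(2x^2)}\) is flat at \(0\) but is not divisible by \(e^{-1/x^2}\); so prescribing Whitney jets on \(Z(I)\) (which is what Whitney--Borel extension does, i.e.\ surjectivity onto the tower \(E/\m_{Z}^n\)) says nothing about prescribing residues modulo \((\vp^n)\). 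Likewise, \Cref{lemma:tedious_completions} concerns pro-vanishing of the Koszul torsion \(\ann(f^n)\), not surjectivity onto an inverse limit, so it is not the mechanism either. The statement you need is in fact true for finitely generated ideals, but it requires a Borel-type summation adapted to the \(I\)-adic filtration: writing a compatible system as partial sums of \(\theta_n\in I^n\), set \(g=\sum_n\theta_n\chi_n\) with cutoffs \(\chi_n\equiv 1\) near \(Z(I)\) and rapidly shrinking supports; the error terms \(\theta_n(\chi_n-1)\) lie in \(I^\infty\) because a function vanishing identically near \(Z(I)\) lies in every \(I^N\) (the sum-of-squares trick of \Cref{st:CI-inf_radical=RJ-radical}), and the tails can be made to converge inside \(I^N\) because elements of \(I^n\) vanish to Taylor order \(n\) on \(Z(I)\) (and flatness of the generators only helps the estimates). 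This argument must be supplied, not cited. In fairness, the paper's own proof asserts the exact sequence \(I^\infty\to E\to E^\wcom_I\) with the same lacuna; but since your write-up isolates this step explicitly and the whole fibre sequence is equivalent to it (its failure contributes exactly a \({\lim}^1_n I^n\) term to the fibre), your proof is incomplete without it.
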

	\begin{proof}
		Since the derived and classical completions coincide by assumption, we have the following presentation of the derived completion of \(E\to A\).
		\[
			\bigcap_{n\ge 0}I^n\to E\to E^\wcom_I
		\]
		The leftmost term is, of course, equal to \(I^\infty\). The result now follows from the fact that both \(E\) and \(E^\wcom_I\) are formally smooth. 
	\end{proof}
	\begin{corollary}
		The term \(I^\infty\Omega^\bt_E\) in the fibre sequence of \Cref{prop:de_Rham_of_completion} is independent of the choice of the presentation of \(A\).
	\end{corollary}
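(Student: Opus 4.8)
The plan is to identify $I^\infty\Omega^\bullet_E$ with an object manufactured only out of $A$ and the intrinsic invariant $\cdR_A$. By \Cref{prop:de_Rham_of_completion} the term sits in a fibre sequence $I^\infty\Omega^\bullet_E \to \Omega^\bullet_E \to \cdR_A$, so it is canonically equivalent to $\fib\!\left(\Omega^\bullet_E \to \cdR_A\right)$. This sequence is natural in the presentation $E\to A$. Since $E$ is free and hence formally smooth, $\Omega^\bullet_E \simeq \cdR_E$, and under this identification the structure map is the functorial morphism $\cdR_E \to \cdR_A$ induced by $E\to A$. As $\cdR_A$ is manifestly intrinsic to $A$ (see \Cref{con:derived_de_Rham}), it therefore suffices to control the source $\cdR_E$ and the map out of it as the presentation varies.

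The key step is that for a free derived $\CI$-algebra $E=\CI(\R^n)$ --- which we may take with $n$ finite, as $A$ is finitely generated --- the underlying complex of $\cdR_E\simeq\Omega^\bullet_E$ is equivalent to $\R$. Indeed $\R^n$ is smoothly contractible, so this is an instance of homotopy invariance of derived de Rham cohomology (equivalently, the ordinary Poincaré lemma on $\R^n$), and the identification is one of commutative algebras: the de Rham product on $H^0=\R$ is just the $\R$-algebra structure. Now $\cdR_E\to\cdR_A$ is a morphism of (filtered) commutative algebras whose source has underlying algebra $\R$, the monoidal unit; such a morphism is forced to be the unit $\R\to\cdR_A$. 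I therefore obtain a canonical equivalence
\[ I^\infty\Omega^\bullet_E \simeq \fib\!\left(\R \to \cdR_A\right), \]
whose right-hand side depends only on $A$. Since every free resolution of $A$ yields this same intrinsic object, the term $I^\infty\Omega^\bullet_E$ is independent of the chosen presentation. To upgrade ``abstractly independent'' to ``canonically independent'', I would compare two presentations $E\to A$ and $E'\to A$ through their common refinement $E\otimes^\infty E'\to A$, which is again free, so that the two contractibility computations are linked by equivalences of fibre sequences.

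The main point requiring care is the claim that the structure map is the unit after the contractibility identification; I have reduced this to a robust algebraic observation (a map from the monoidal unit is the unit map), which avoids having to trace the map through the completion $E^{\wcom}_I$ explicitly. A secondary, genuinely more delicate issue arises if one wants independence as a \emph{filtered} object rather than merely on underlying complexes: the equivalence $\cdR_E\simeq\R$ does \emph{not} hold filtered, since $\gr^k\cdR_E\cong\Lambda^k\LL_E[-k]$ is nonzero for $k>0$. In that refinement I would instead verify the statement on associated graded pieces, where by the Cartier-type isomorphism \eqref{eq:Cartier_isomorphism} the relevant terms are governed by $\Lambda^k\LL_A$ and $A$ themselves, both intrinsic to $A$. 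I expect this filtered bookkeeping to be the only mildly subtle part of the argument; the underlying statement is immediate from the fibre sequence together with the contractibility of $\Omega^\bullet_E$.
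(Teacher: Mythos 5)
Your proof is correct and takes essentially the same route as the paper's: both identify \(I^\infty\Omega^\bullet_E\) as the fibre of the canonical map \(\Omega^\bullet_E\to \cdR_A\), invoke \(\Omega^\bullet_E\simeq \R\) (Poincaré lemma on the free algebra), and conclude from the fact that \(\cdR_A\) is intrinsic to \(A\). Your extra points — that the map \(\R\to\cdR_A\) is forced to be the unit, the common-refinement comparison of presentations, and the caveat that the identification fails filtered — merely make explicit what the paper's terse proof leaves implicit.
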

	\begin{proof}
		By \Cref{prop:de_Rham_of_completion} the term \(I^\infty\Omega^\bt_E\) is a fibre of the canonical map \(\Omega^\bt_E\to \cdR_{A}\). Since the derived completion of \(E\to A\) is independent of the choice of the presentation and \(\Omega^\bt_E\simeq \R\), the result follows.
	\end{proof}
	We give the following definition to summarize the properties a derived manifold should satisfy to admit a de Rham theorem.
	\begin{definition}\label{def:ldRa}
		A derived manifold \(\M\) is said to be \emph{algebraically locally acyclic} (or \ldRa) if for every closed point \(x\in \M\) the stalk of the de Rham complex \(\cdR_{\M,x}\) is equivalent to \(\R\).
	\end{definition}
	\begin{theorem}\label{st:local_to_global_de_Rham}
		Let \(\M\) be an \ldRa derived manifold. Then, the de Rham cohomology of \(\M\) is isomorphic to the cohomology of the constant sheaf.
	\end{theorem}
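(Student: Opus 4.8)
The plan is to upgrade $\cdR$ to a sheaf on the underlying space $\M(\R)$ and then to identify it with the constant sheaf by a stalkwise comparison.

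First I would check that $U \mapsto \cdR_{\M|_U}$, for $U$ ranging over open subsets of $\M(\R)$ (with $\M|_U$ the open sub-derived-manifold of \Cref{ex:open_subsets_of_derived_manifolds}), satisfies descent. Any open cover $\{U_i\}$ assembles into a surjective map $f\colon \coprod_i \M|_{U_i}\to \M$ which is smooth in the sense of \Cref{def:smooth_morphism} --- indeed its cotangent complex vanishes by \Cref{cotangent_complex_of_localization_vanishes} --- so \Cref{st:smooth_descent_for_de_Rham} gives $\cdR_{\M}\simeq \Tot(\cdR_{\check{C}^\bt(f)})$, i.e.\ \v{C}ech descent. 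Since $\M(\R)$ is a closed subset of some $\R^n$ it has finite covering dimension, so $\Shv(\M(\R))$ is hypercomplete and has enough points; hence the \v{C}ech-descent datum promotes to a hypersheaf $\underline{\cdR}_{\M}$ whose global sections recover $\cdR_{\M}$.

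Next I would build the comparison morphism. The unit maps $\R\to \cdR_{\M|_U}$ are compatible with restriction, so they sheafify to a map of (hyper)sheaves $\underline{\R}\to \underline{\cdR}_{\M}$ out of the constant sheaf. To see this is an equivalence I would compute stalks: for $x\in \M(\R)$ one has $(\underline{\cdR}_{\M})_x=\colim_{U\ni x}\cdR_{\M|_U}=\cdR_{\M,x}$, which is equivalent to $\R$ via the unit precisely by the \ldRa hypothesis (\Cref{def:ldRa}). Thus $\underline{\R}\to\underline{\cdR}_{\M}$ is a stalkwise equivalence, and since $\Shv(\M(\R))$ has enough points it is an equivalence. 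Passing to global sections yields
\[
    R\Gamma\bigl(\M(\R),\underline{\R}\bigr)\simeq R\Gamma\bigl(\M(\R),\underline{\cdR}_{\M}\bigr)\simeq \cdR_{\M},
\]
whose left-hand side is by definition the cohomology of the constant sheaf.

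The main obstacle I expect is the interaction between the completion defining $\cdR$ and the filtered colimit computing a stalk: one must justify that $(\underline{\cdR}_{\M})_x$ is genuinely the completed de Rham object $\cdR_{\M,x}$ featuring in \Cref{def:ldRa}, i.e.\ that taking the stalk commutes with Hodge completion, and relatedly that the \v{C}ech descent of \Cref{st:smooth_descent_for_de_Rham} strengthens to honest hyperdescent so that the stalkwise criterion is applicable. Both reduce to the finite-covering-dimension and enough-points properties of $\M(\R)\subseteq\R^n$; once these are secured the remainder of the argument is formal.
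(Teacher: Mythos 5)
Your proposal is correct and takes essentially the same route as the paper: the paper's entire proof is the observation that \(\M(\R)\subseteq\R^n\) has finite covering dimension, hence (by \Cref{st:hypercompleteness_of_dMfld}) the sheaf topos is hypercomplete and the comparison map \(\ul{\R}\to\cdR_{\M}\) may be checked to be an equivalence on stalks, where the \ldRa hypothesis applies directly. Your write-up merely fills in the sheafification, smooth-descent, and stalk-computation steps that the paper compresses into the word \enquote{immediate}.
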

	\begin{proof}
		The result is immediate because the topological space underlying any derived manifold has (by definition) a covering dimension bounded by the covering dimension of the ambient affine space.
	\end{proof}
	\subsection{de Rham theorem for analytic spaces}
	The following result of Brasselet--Pflaum \cite{Brasselet_Pflaum} proves that the term \(I^\infty \Omega_{\R^n}^\bt\) is acyclic for an analytic subspace of \(\R^n\).
	\begin{theorem}[{\cite[Theorem 7.1]{Brasselet_Pflaum}}]\label{thm:Brasselet_Pflaum_acyclicity}
		Consider a subanalytic subset \(X\) of \(\R^n\). Denote by \(I^\infty_X\) of functions flat at \(X\). Then, the complex \(I^\infty_X\Omega^\bt_{\R^n}\) of differential forms on \(\R^n\) with coefficient from the ideal \(I^\infty_X\) is acyclic.
	\end{theorem}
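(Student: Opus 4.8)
The plan is to deduce the statement from a local \emph{flat Poincaré lemma}, reducing the global assertion to exactness of the associated complex of sheaves. Each term $I^\infty_X\Omega^k_{\R^n}$ is a module over the sheaf $\CI_{\R^n}$, since multiplying a function flat at $X$ by an arbitrary smooth function preserves flatness; hence $I^\infty_X\Omega^\bt_{\R^n}$ is a complex of fine sheaves, admitting partitions of unity, and its hypercohomology is computed by global sections. It therefore suffices to prove that the complex of sheaves is exact, i.e. that every germ at a point $x_0$ of a closed flat form of positive degree is the differential of a germ of a flat form.

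Away from $X$ this is immediate: if $x_0\notin X$ then, $X$ being closed, flatness at $X$ is vacuous on a small ball about $x_0$, and the ordinary Poincaré lemma applies. The entire content is thus concentrated at points $x_0\in X$, and in particular at the singular strata, where the naive radial homotopy fails to preserve flatness, because integrating a flat form along a ray emanating from $x_0$ passes through points far from $X$ on which the form need not be small.

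First I would replace the radial contraction by one adapted to the geometry of $X$. Using subanalyticity, fix a Whitney stratification of $X$ and the associated Thom--Mather control data; these furnish, on a neighbourhood $U$ of $x_0$, a \emph{subanalytic} deformation retraction $r_t\colon U\to U$ that is smooth off the lower strata, with $r_0=\id$, $r_1$ a retraction onto $X\cap U$, and $r_t|_{X}=\id$ for all $t$. Writing $V$ for the associated time-dependent vector field, set
\[
	h\omega=\int_0^1 r_t^*(\iota_V\omega)\,dt .
\]
The standard computation yields $dh\omega+h\,d\omega=r_1^*\omega-\omega$; since $\omega$ vanishes on $X$ to infinite order while $r_1$ maps into $X$, the pullback $r_1^*\omega$ vanishes identically, so $h$ is a contracting homotopy for the flat complex provided $h\omega$ is again flat at $X$.

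The hard part is exactly this last verification, that $h$ preserves $I^\infty_X$. Because $r_t$ fixes $X$ pointwise, each $y\in X$ is a fixed point of the flow, and one must bound all ambient derivatives of the integrand as $y$ is approached. This is where subanalyticity is indispensable: choosing the control data subanalytic, the Łojasiewicz inequality bounds the derivatives of $r_t$ and of $V$ by a negative power of $\dist(\,\cdot\,,X)$ near the singular strata, while flatness of $\omega$ forces $D^\alpha\omega$ to decay faster than every power of $\dist(\,\cdot\,,X)$. Differentiating under the integral sign and combining the two estimates shows that all derivatives of $h\omega$ vanish to infinite order on $X$, so $h\omega\in I^\infty_X\Omega^{k-1}_{\R^n}$; in particular the a priori only continuous operator outputs a genuinely smooth flat form. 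The step I expect to be most delicate is making these Łojasiewicz bounds uniform across the incidence of strata, so that the estimates survive the transition between the open stratum and its boundary; the homotopy formalism and the fine-sheaf reduction are, by contrast, formal.
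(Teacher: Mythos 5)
First, a point of order: the paper never proves this statement --- it is imported wholesale from Brasselet--Pflaum \cite[Theorem 7.1]{Brasselet_Pflaum}, where it is the culmination of a long technical development (Łojasiewicz inequalities, regularly situated sets, tempered constructions adapted to subanalytic geometry). So your proposal can only be measured against that argument, not against anything in this text. Note also that the statement must be read stalkwise at points of \(X\) (as it is in Brasselet--Pflaum, and as the present paper uses it): the global complex \(I^\infty_X\Omega^\bt_{\R^n}\) computes the relative cohomology \(H^*(\R^n,X;\R)\), which is nonzero already for \(X\) a pair of points in \(\R\) --- a flat bump form \(f\,dx\) with \(\int_0^1 f=1\) has no primitive flat at both points. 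Your fine-sheaf globalization, taken literally (``sheaf exactness implies acyclicity of global sections''), would therefore prove a false statement; but since your local target --- exactness of germs in positive degrees, concentrated at points of \(X\) --- is the correct reading, this is a presentational wrinkle rather than the main problem.

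The genuine gaps are in the local step, and they sit exactly under your word ``provided''. (1) The retraction you postulate cannot be smooth: if \(r_1\) is smooth, has image in \(X\) and fixes \(X\) pointwise, then at a crossing point (take \(X=\{xy=0\}\subset\R^2\)) the differential of \(r_1\) restricts to the identity on each branch, hence equals the identity, so \(r_1\) would be a local diffeomorphism with image in the nowhere dense set \(X\) --- absurd. Thus \(r_t\) is at best smooth off the lower strata, the field \(V\) blows up there, and Cartan's formula \(dh\omega+h\,d\omega=r_1^*\omega-\omega\), on which the whole proposal rests, is not an identity of smooth forms; constructing \(h\) on the smooth locus and proving that \(h\omega\) extends smoothly and flatly across the strata is not a final verification, it \emph{is} the theorem, and it requires an induction over the depth of the stratification (or a passage to jets/Whitney fields) that you do not sketch. (2) The input you invoke --- control data that are simultaneously Thom--Mather, subanalytic, and with derivatives tempered by powers of \(\dist(\cdot,X)^{-1}\) --- does not exist off the shelf: Mather's data are produced by smooth partitions of unity and are neither subanalytic nor tempered, while subanalytic retractions (from triangulations or the local conic structure) are merely Lipschitz homeomorphisms. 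Manufacturing data with both properties is precisely the hard core of Brasselet--Pflaum and of Pflaum's ``curvature moderate'' machinery; your proof takes it as a black box. (3) Even granting such data, your estimate is incomplete: the chain rule evaluates derivatives of \(\omega\) at \(r_t(x)\), not at \(x\), so besides temperedness of \(V\) you need a polynomial comparison \(\dist(r_t(x),X)\le C\,\dist(x,X)^\theta\) uniform in \(t\), together with tempered bounds on all higher derivatives of the flow itself. In short: your diagnosis (the radial homotopy fails; flatness must beat Łojasiewicz-tempered blow-up) is exactly the mechanism that drives the real proof, but every load-bearing ingredient is assumed rather than established, so what you have is a plan for a proof, not a proof.
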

	\begin{theorem}\label{thm:de_Rham_for_analytic}
		Given a derived manifold \(X\) defined by analytic equations. Then it is an \ldRa derived manifold (\Cref{def:ldRa}). Consequently, the cohomology of the constant sheaf on \(X\) is isomorphic to the derived de Rham cohomology of \(X\).
	\end{theorem}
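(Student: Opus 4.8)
The plan is to verify the \ldRa condition of \Cref{def:ldRa} directly and then feed it into \Cref{st:local_to_global_de_Rham}. Since derived de Rham cohomology depends only on the \(0\)-truncation by \Cref{prop:de_Rham_depends_on_pi0}, I would first replace \(X\) by the classical \(\CI\)-scheme with algebra \(A=\pi_0\CI(X)=\CI(\R^n)/I\), where \(I=(f_1,\ldots,f_m)\) is the ideal generated by the defining analytic functions and \(X=Z(I)\). The acyclicity condition is checked stalkwise at each closed point \(x\in X\), and because forming \(\cdR\) is compatible with \(\CI\)-localization (\Cref{prop:de_Rham_of_localization}), I may pass to germs at \(x\) and work in the local ring there.

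Next I would check the hypothesis of \Cref{prop:de_Rham_of_completion}, namely that the classical completion of \(E=\CI(\R^n)\to A\) agrees with its derived completion. This is exactly \Cref{prop:hypo_Noetherian_completion}, which applies precisely because \(I\) is generated by analytic functions in the local ring of \(x\) (it rests on Noetherianity of the analytic germ ring together with the flatness of \Cref{thm:Tougeron's_flatness}). With the hypothesis in force, \Cref{prop:de_Rham_of_completion} yields the fibre sequence
\[
I^\infty\Omega^\bullet_E \to \Omega^\bullet_E \to \cdR_A,\qquad I^\infty=\bigcap_{n\ge 1}I^n.
\]
Since \(E\) is free, \(\Omega^\bullet_E\simeq \R\) by the Poincaré lemma, so at the level of germs \(\cdR_{A,x}\) will be equivalent to \(\R\) as soon as the stalk of \(I^\infty\Omega^\bullet_E\) is acyclic. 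Acyclicity of the de Rham complex with coefficients in the ideal of functions flat on \(X\) is precisely Brasselet--Pflaum (\Cref{thm:Brasselet_Pflaum_acyclicity}) for the ideal \(I^\infty_X\).

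The hard part, and the one genuine place where analyticity enters beyond the completion statement, is the identification \(I^\infty=\bigcap_n I^n = I^\infty_X\): the intersection of the powers of an \emph{analytic} ideal must coincide with the ideal of smooth functions flat on its zero set. The inclusion \(I^\infty\subseteq I^\infty_X\) is formal since \(I\subseteq \m_X\). For the reverse inclusion I would use a Łojasiewicz inequality for analytic functions: writing \(f=\sum_i f_i^2\), analyticity gives a lower bound \(\dist(x,X)^N\le C\,|f(x)|\) locally, while flatness of \(g\) on \(X\) gives bounds \(|D^\alpha g(x)|\le C_{\alpha,k}\,\dist(x,X)^k\) for all \(k\). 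Together these show that each quotient \(g/f^n\) extends to a smooth function; since \(f\in I^2\), one gets \(g=f^n\cdot(g/f^n)\in I^{2n}\subseteq I^n\) for every \(n\), whence \(g\in I^\infty\). This is the step most dependent on the regular local structure of analytic sets (in the sense of the Bierstone--Milman and Brasselet--Pflaum results cited in the introduction), and it is exactly what fails for the flat non-analytic function of \Cref{ex:de_Rham_counterexample}.

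Finally, with \(I^\infty=I^\infty_X\) established, \Cref{thm:Brasselet_Pflaum_acyclicity} makes \(I^\infty\Omega^\bullet_E\) acyclic, so the fibre sequence forces \(\cdR_{X,x}\simeq \R\) at every closed point; hence \(X\) is \ldRa. Invoking \Cref{st:local_to_global_de_Rham} then gives the desired isomorphism between the derived de Rham cohomology of \(X\) and the cohomology of the constant sheaf, completing the argument.
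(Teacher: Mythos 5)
Your proposal is correct and follows essentially the same route as the paper: the fibre sequence supplied by \Cref{prop:hypo_Noetherian_completion} together with \Cref{prop:de_Rham_of_completion}, the identification of \(I^\infty=\bigcap_n I^n\) with the ideal \(\m^\infty_{Z(I)}\) of functions flat on \(X\), acyclicity of \(I^\infty\Omega^\bullet\) via \Cref{thm:Brasselet_Pflaum_acyclicity}, and the passage from local to global via \Cref{st:local_to_global_de_Rham}. Your Łojasiewicz-inequality argument for the inclusion \(\m^\infty_{Z(I)}\subseteq I^\infty\) is simply a more explicit rendering of the paper's one-line claim that a function flat on \(X\) is divisible by any analytic function vanishing on \(X\), so the two proofs agree in substance.
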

	\begin{proof}
		First, we observe that we have the following fibre sequence by \Cref{prop:hypo_Noetherian_completion}.
		\[
			I^\infty \Omega^\bt\to \Omega^\bt_{\R^n}\simeq\ul{\R}\to \Omega_{E^\wedge_I}^\bt
		\]
		Second, we claim that the ideal \(I^\infty\) in the case when \(I\) is generated by real analytic functions coincides with the ideal of functions which are flat on the set of points of \(X\). Indeed, it consists of functions which vanish on \(X\). Moreover, if a function is flat on \(X\), it is divisible by any analytic function vanishing on \(X\); hence, it is in the ideal \(I^\infty\). Conversely, any function in \(I^\infty\) vanishes on \(X\) up to any order.
		
		Finally, because the ideal \(I^\infty\) consists of functions which are flat on \(X\), the complex \(I^\infty \Omega^\bullet\) is acyclic by \Cref{thm:Brasselet_Pflaum_acyclicity} and the first part of the result follows. The second part of the statement follows from \Cref{st:local_to_global_de_Rham}. 
	\end{proof}
	\begin{remark}
		A more elegant proof of the above result can be given using the notion of a \emph{multiplier}, a kind of meromorphic function in the \(\CI\)-setting. However, we are not pursuing this direction here. See \cite[\S IV.4]{Tougeron} for more detail on multipliers.
	\end{remark}
	\subsection{de Rham theorem and the Łojasiewicz condition}
	Using \Cref{thm:Brasselet_Pflaum_acyclicity} of Brasselet--Pflaum, one also easily shows the de Rham theorem for derived de Rham cohomology on derived manifolds defined by \emph{principal Łojasiewicz} ideals with subanalytic zero set. 
	\begin{definition}
		A finitely generated ideal \(I=(f_1,\ldots,f_k)\in \CI(\R^n)\) is \emph{Łojasiewicz} if either of the two equivalent conditions are satisfied.
		\begin{enumerate}
			\item \(\m^\infty_{Z(I)}\sse I\);
			\item The function \(f_1^2+\ldots+f_k^2\) is Łojasiewicz, that is, the following inequality holds for every compact subset \(K\sse \R^n\) for some real constants \(C,\lambda\ge 0\):
			\[
				f_1^2(x)+\ldots+f_k^2(x)\ge C \dist(x,K)^\lambda.
			\]
		\end{enumerate}
	\end{definition}
	The following result is due to Tougeron.
	\begin{proposition}[{\cite[Proposition 4.3]{Tougeron}}]\label{prop:Tougeron_Lojasiewicz}
		Let \(I\) be a finitely generated Łojasiewicz ideal. Then there is an inclusion of ideals \(\m^\infty_{Z(I)}\sse I\).
	\end{proposition}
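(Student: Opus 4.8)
The content of this proposition is the nontrivial implication in the definition of a Łojasiewicz ideal, namely that condition (ii) forces condition (i): assuming the Łojasiewicz lower bound for $g := f_1^2 + \cdots + f_k^2$, every function flat on $Z := Z(I)$ already lies in $I = (f_1,\ldots,f_k)$. The plan is to produce an \emph{explicit division}, writing such a flat function as a smooth combination of the generators.

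First I would set $g = \sum_{i=1}^k f_i^2$, a nonnegative smooth function with $Z(g) = Z$, and for a given $h \in \m^\infty_{Z}$ define candidate coefficients
\[
a_i =
\begin{cases}
h f_i / g, & x \in \R^n \setminus Z,\\
0, & x \in Z.
\end{cases}
\]
On the complement of $Z$ one has, tautologically, $\sum_i a_i f_i = \tfrac{h}{g}\sum_i f_i^2 = h$, while on $Z$ both sides vanish (each $f_i$ vanishes there, and $h$ is flat). Hence $h = \sum_i a_i f_i$ holds on all of $\R^n$, and the entire problem reduces to proving $a_i \in \CI(\R^n)$.

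The heart of the argument, which I expect to be the main obstacle, is the smoothness of $a_i$ across $Z$. Away from $Z$ this is clear, so the work is local near points of $Z$, where I would estimate every partial derivative $\partial^\alpha a_i$ and show it extends continuously by $0$. Three inputs combine. (i) The Łojasiewicz inequality gives $g(x) \ge C\,\dist(x,Z)^\lambda$ on a fixed compact neighbourhood. (ii) An induction on $|\alpha|$, using only the boundedness of the derivatives $\partial^\delta g$ on that neighbourhood, yields $|\partial^\alpha(1/g)| \le C_\alpha\, g^{-(1+|\alpha|)}$ near $Z$; combined with (i) this is $\le C_\alpha'\,\dist(x,Z)^{-\lambda(1+|\alpha|)}$ — note that only the Łojasiewicz inequality for $g$ itself is needed, not a gradient version. (iii) Flatness of $h$ gives $|\partial^\beta h(x)| \le C_{N}\,\dist(x,Z)^N$ for every $N$. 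Expanding $\partial^\alpha(h f_i g^{-1})$ by the Leibniz rule, each summand is bounded by a constant times $\dist(x,Z)^{\,N - \lambda(1+|\alpha|)}$, with $|\beta''| \le |\alpha|$ in the worst term; choosing $N > \lambda(1+|\alpha|)$ forces $\partial^\alpha a_i \to 0$ as $x \to Z$. A standard extension lemma (each formal derivative computed off $Z$ extends continuously, so the function is genuinely $C^\infty$ with that derivative, cf. the Whitney–Hadamard criterion) then shows $a_i \in \CI(\R^n)$, in fact with $a_i$ itself flat on $Z$.

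With smoothness established, the identity $h = \sum_i a_i f_i$ exhibits $h \in I$, giving the desired inclusion $\m^\infty_{Z(I)} \sse I$. The only remaining delicacy is bookkeeping in the Leibniz expansion and the observation that the constants $C, C_\alpha$ and the exponent $\lambda$ may be taken uniform on each compact set; since smoothness is a local property and the $a_i$ are defined globally, the local conclusion assembles into a global one, completing the argument.
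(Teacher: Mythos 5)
Your proof is correct, but note that the paper itself offers no proof to compare against: the proposition is quoted verbatim from \cite[Proposition 4.3]{Tougeron}, and within the paper's own logic it is just the nontrivial half of the equivalence asserted (without proof) in the paper's definition of a Łojasiewicz ideal. What you have written out is essentially the classical division argument behind Tougeron's result. One small streamlining is available: since \(g=\sum_{i}f_i^2\) already lies in \(I\), it is enough to prove the single statement that \(u=h/g\), extended by zero across \(Z(I)\), is smooth; then \(h=u\cdot g\in (g)\sse I\), and your coefficients \(a_i=uf_i\) merely redistribute this one factorization over the generators. The technical core of your argument is sound as stated: the inductive bound \(|\partial^\alpha(1/g)|\le C_\alpha\, g^{-(1+|\alpha|)}\) (valid on a neighbourhood of \(Z(I)\) where \(g\) is also bounded above, so that the lower powers of \(1/g\) appearing in the induction are absorbed into the top one), the Taylor-theorem estimate \(|\partial^\beta h(x)|\le C_N \dist(x,Z(I))^N\) expressing flatness, the Leibniz expansion with \(N\) chosen larger than \(\lambda(1+|\alpha|)\), and the Whitney--Hadamard extension criterion, whose mean-value/difference-quotient induction is exactly what your rate estimates enable (at a point \(z\in Z(I)\) one has \(|\partial^\alpha a_i(x)|\le C|x-z|^{2}\), so each formal derivative is differentiable at \(z\) with zero gradient). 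Your observation that only the Łojasiewicz inequality for \(g\) itself is needed, and not a gradient version, is also correct, precisely because \(\partial^\delta(1/g)\) is controlled by powers of \(1/g\) together with bounds on derivatives of \(g\); and since smoothness is local while the \(a_i\) are globally defined, the compact-dependence of the constants \(C\), \(\lambda\), \(C_\alpha\) is harmless.
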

	The following corollary is immediate.
	\begin{corollary}
		Let \(I\) be a finitely generated Łojasiewicz ideal. Then the ideal \(I^\infty=\cap_{m=1}^\infty I^m\) coincides with the ideal \(\m^\infty_{Z(I)}\).
	\end{corollary}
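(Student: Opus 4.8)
The plan is to prove the two inclusions $I^\infty \sse \m^\infty_{Z(I)}$ and $\m^\infty_{Z(I)} \sse I^\infty$ separately. The first holds for any finitely generated ideal and requires no hypothesis, whereas the second is where the Łojasiewicz condition enters, via an application of Tougeron's \Cref{prop:Tougeron_Lojasiewicz} to the powers $I^m$.

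For the inclusion $I^\infty \sse \m^\infty_{Z(I)}$, I would argue pointwise. Fix $p \in Z(I)$. Since $Z(I)$ is the common zero set of the generators $f_1,\ldots,f_k$, each $f_i$ vanishes at $p$, so its Taylor expansion at $p$ begins in degree $\ge 1$. Any element of $I^m$ is a finite sum of terms $c\cd f_{i_1}\cdots f_{i_m}$ with $c\in \CI(\R^n)$, and the order of vanishing of a product is the sum of the orders of its factors (the smooth coefficient $c$ cannot lower it), so each such term, and hence every $h\in I^m$, vanishes to order $\ge m$ at $p$. If $h\in I^\infty=\bigcap_m I^m$ this holds for all $m$, forcing every Taylor coefficient of $h$ at every $p\in Z(I)$ to vanish; thus $h$ is flat on $Z(I)$, i.e. $h\in\m^\infty_{Z(I)}$.

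For the reverse inclusion, the key step is to promote the Łojasiewicz property from $I$ to each power $I^m$. Writing $f^\alpha=f_1^{\alpha_1}\cdots f_k^{\alpha_k}$, the generators of $I^m$ are the monomials $f^\alpha$ with $|\alpha|=m$, and the associated sum of squares is $g_m:=\sum_{|\alpha|=m}(f^\alpha)^2$. By the multinomial theorem,
\[
\left(f_1^2+\cdots+f_k^2\right)^m=\sum_{|\alpha|=m}\binom{m}{\alpha}(f^\alpha)^2,
\]
so with $g:=f_1^2+\cdots+f_k^2$ one has $g_m\le g^m\le\bigl(\max_\alpha\binom{m}{\alpha}\bigr)\,g_m$, i.e. $g_m\asymp g^m$. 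Since $g$ obeys a Łojasiewicz inequality $g(x)\ge C\,\dist(x,Z(I))^\lambda$ on compacta, raising to the $m$-th power gives $g^m(x)\ge C^m\,\dist(x,Z(I))^{m\lambda}$, and the comparison transfers this lower bound to $g_m$. As $Z(I^m)=Z(I)$ (vanishing of all $f^\alpha$ forces each $f_i^m$, hence $f_i$, to vanish), the ideal $I^m$ is again Łojasiewicz, so $\m^\infty_{Z(I^m)}=\m^\infty_{Z(I)}$.

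Applying \Cref{prop:Tougeron_Lojasiewicz} to the Łojasiewicz ideal $I^m$ then yields $\m^\infty_{Z(I)}=\m^\infty_{Z(I^m)}\sse I^m$ for every $m$, and intersecting over $m$ gives $\m^\infty_{Z(I)}\sse I^\infty$. Together with the first inclusion this proves $I^\infty=\m^\infty_{Z(I)}$. I expect the main obstacle to be the second inclusion, and within it the verification that each power $I^m$ remains Łojasiewicz; once the comparison $g_m\asymp g^m$ and the identity $Z(I^m)=Z(I)$ are in place, the conclusion follows formally from Tougeron's result.
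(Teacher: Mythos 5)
Your proof is correct, and it follows the same two-inclusion strategy as the paper: the containment \(I^\infty \sse \m^\infty_{Z(I)}\) by counting orders of vanishing, and the reverse containment via \Cref{prop:Tougeron_Lojasiewicz}. The difference is one of completeness, and it is in your favour. The paper's entire treatment of the hard inclusion is the sentence that it \enquote{follows from \Cref{prop:Tougeron_Lojasiewicz}}; but that proposition, as stated, only yields \(\m^\infty_{Z(I)} \sse I\), i.e.\ containment in the first power, while the corollary needs \(\m^\infty_{Z(I)} \sse I^m\) for every \(m\). Your verification that each power \(I^m\) is again a Łojasiewicz ideal --- via the multinomial comparison \(g_m \le g^m \le \bigl(\max_\alpha \binom{m}{\alpha}\bigr) g_m\) and the observation \(Z(I^m)=Z(I)\) --- is precisely the bridge that legitimizes applying Tougeron's proposition to \(I^m\) for all \(m\), and it is left implicit in the paper. (An alternative way to close the same gap, possibly what the author had in mind, is Tougeron's idempotency theorem for flat ideals, \((\m^\infty_Z)^2=\m^\infty_Z\), which gives \(\m^\infty_{Z(I)}=(\m^\infty_{Z(I)})^m \sse I^m\) from the case \(m=1\) alone; your route has the advantage of needing nothing beyond the proposition already quoted in the paper.) Your pointwise justification of the easy inclusion is likewise a fleshed-out version of the paper's one-line observation, correctly using that a smooth coefficient cannot lower the order of vanishing of a product.
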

	\begin{proof}
		Observe, that if \(h\in I^\infty\) then \(h\in I^m\) for all \(m\). Thus, \(h\) vanishes on \(Z(I)\) up to any order. Consequently, \(h\in \m^\infty_{Z(I)}\). The other inclusion follows from \Cref{prop:Tougeron_Lojasiewicz}.
	\end{proof}
	The following result is due to Bierstone--Milman.
	\begin{proposition}[{\cite[Theorem 6.4, Remark 6.5]{Bierstone_Milman_Subanalytic}}]
		Let \(f\) be a subanalytic function. Then the ideal \((f)\) is Łojasiewicz. 
	\end{proposition}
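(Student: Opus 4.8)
The plan is to deduce the statement directly from the Łojasiewicz inequalities in the subanalytic category. By the definition of a Łojasiewicz ideal it suffices to verify condition (ii) for the single generator, namely that \(f^2\) satisfies a Łojasiewicz inequality on compact sets; condition (i), the containment \(\m^\infty_{Z(f)}\sse (f)\), will then follow from \Cref{prop:Tougeron_Lojasiewicz}, so that the two formulations agree. Thus the entire content reduces to producing, on each compact \(K\sse \R^n\), constants \(C,\lambda>0\) with \(f^2(x)\ge C\,\dist(x,Z(f))^\lambda\) for every \(x\in K\).

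First I would compare \(f^2\) with the distance function to its own zero set, using two closure properties of the subanalytic category. If \(f\) is a continuous subanalytic function, then \(f^2\) is again continuous subanalytic, the zero set \(Z(f)\) is a closed subanalytic subset of \(\R^n\), and consequently \(x\mapsto \dist(x,Z(f))\) is continuous subanalytic as well. Since \(Z(f)\) is closed, this distance vanishes exactly on \(Z(f)\), so \(f^2\) and \(\dist(\cdot,Z(f))\) share the same zero locus.

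Next I would invoke the Łojasiewicz comparison inequality for subanalytic functions (in the form due to Hironaka and to Bierstone--Milman): given continuous subanalytic \(\vp,\psi\) on a compact \(K\) with \(\psi^{-1}(0)\sse \vp^{-1}(0)\), there exist \(N,C>0\) with \(|\vp|^N\le C|\psi|\) on \(K\). Taking \(\vp(x)=\dist(x,Z(f))\) and \(\psi=f^2\), whose zero sets coincide, this gives \(\dist(x,Z(f))^N\le C\,f^2(x)\) on \(K\), which is exactly condition (ii) with \(\lambda=N\). Hence \((f)\) is a Łojasiewicz ideal.

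The hard part is not the deduction, which is formal, but the two analytic inputs it rests on: the subanalytic Łojasiewicz inequality itself and the subanalyticity of \(f^2\) and of \(\dist(\cdot,Z(f))\). These lie in the foundational theory of subanalytic sets and are precisely what the cited \cite[Theorem 6.4, Remark 6.5]{Bierstone_Milman_Subanalytic} supplies; once granted, condition (i) is obtained from condition (ii) purely formally via \Cref{prop:Tougeron_Lojasiewicz}.
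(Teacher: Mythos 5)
Your argument is correct and matches the paper's approach: the paper offers no independent proof, citing exactly \cite[Theorem 6.4, Remark 6.5]{Bierstone_Milman_Subanalytic}, and your deduction (subanalyticity of \(f^2\), of \(Z(f)\), and of \(\dist(\cdot,Z(f))\), then the Bierstone--Milman comparison inequality applied to \(\vp=\dist(\cdot,Z(f))\), \(\psi=f^2\), with condition (i) recovered via \Cref{prop:Tougeron_Lojasiewicz}) is precisely what that citation packages. Note also that you silently, and correctly, read the exponent in the paper's condition (ii) as \(\dist(x,Z(I))^\lambda\) rather than the misprinted \(\dist(x,K)^\lambda\).
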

	\begin{theorem}\label{thm:de_Rham_for_Lojasiewicz}
		Let either of the two conditions be satisfied.
		\begin{enumerate}
			\item Let \(I=(f)\) be a principal Łojasiewicz ideal. Assume additionally that \(Z(I)\) is a subanalytic set in \(\R^n\). Both conditions are satisfied, for instance, when \(f\) itself is subanalytic.
			\item Let \(I=(f_1,\ldots,f_k)\) be a Łojasiewicz ideal such that \(\{f_1,\ldots,f_k\}\) form a regular sequence. Assume additionally that \(Z(I)\) is a subanalytic set in \(\R^n\).
		\end{enumerate}
		Then \(Z(I)\) with the \(\CI\)-structure given by \(\CI(\R^n)/I\) is an \ldRa derived manifold (\Cref{def:ldRa}). Consequently, the derived de Rham cohomology of \(Z(f)\) is isomorphic to the cohomology of the constant sheaf.
	\end{theorem}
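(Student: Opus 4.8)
The plan is to mirror the proof of \Cref{thm:de_Rham_for_analytic}, assembling three ingredients: the fibre sequence of \Cref{prop:de_Rham_of_completion}, the identification of $I^\infty$ with the flat ideal $\m^\infty_{Z(I)}$, and the Brasselet--Pflaum acyclicity of \Cref{thm:Brasselet_Pflaum_acyclicity}. Since being \ldRa is a condition on stalks (\Cref{def:ldRa}) and the Łojasiewicz and subanalyticity hypotheses are local in nature, I would argue at the level of germs at an arbitrary point $x\in Z(I)$, freely replacing $\CI(\R^n)$ by the local ring of germs exactly as in the discussion preceding \Cref{thm:Tougeron's_flatness}.

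First I would verify the hypothesis of \Cref{prop:de_Rham_of_completion}, namely that the classical completion of $E=\CI(\R^n)\to A=\CI(\R^n)/I$ agrees with its derived completion. This is precisely where the two cases separate: in case (i) the ideal is principal, so the coincidence is the single-equation statement \Cref{prop:completion_of_classical_is_classical}(1), while in case (ii) the generators form a regular sequence, so it is \Cref{prop:completion_of_classical_is_classical}(2). Since $A$ is finitely generated and $0$-truncated, \Cref{prop:de_Rham_of_completion} then supplies the fibre sequence
\[
    I^\infty\Omega^\bullet_{\R^n}\to \Omega^\bullet_{\R^n}\to \cdR_{A},
\]
with $I^\infty=\bigcap_{m\ge 1}I^m$.

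Next I would identify the fibre term. Because $I$ is Łojasiewicz, the corollary to \Cref{prop:Tougeron_Lojasiewicz} gives $I^\infty=\m^\infty_{Z(I)}$, the ideal of functions flat on $Z(I)$; in case (i) with $f$ subanalytic, the Łojasiewicz property is furnished by the Bierstone--Milman proposition and the subanalyticity of $Z(f)$ is automatic. Since $Z(I)$ is subanalytic by hypothesis, \Cref{thm:Brasselet_Pflaum_acyclicity} shows that $\m^\infty_{Z(I)}\Omega^\bullet_{\R^n}=I^\infty\Omega^\bullet_{\R^n}$ is acyclic. Combined with the germ-level Poincaré lemma $\Omega^\bullet_{\R^n}\simeq\R$, the fibre sequence forces $\cdR_{A,x}\simeq\R$ at every $x$, which is exactly the \ldRa condition. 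The final clause of the theorem is then immediate from \Cref{st:local_to_global_de_Rham}.

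I expect the only genuine obstacle to be the completion-coincidence step, which is the reason the statement must be split into the principal and regular-sequence cases: for a general finitely generated ideal the Koszul pro-system computing the derived completion need not degenerate to its classical counterpart (cf.\ \Cref{lemma:tedious_completions} and the surrounding discussion), so the fibre sequence of \Cref{prop:de_Rham_of_completion} would be unavailable. A secondary point requiring care is that Brasselet--Pflaum acyclicity is invoked at the level of germs rather than global sections; this is harmless since the relevant $\CI$-modules $I^\infty\Omega^k_{\R^n}$ are fine and the vanishing of cohomology is a local assertion.
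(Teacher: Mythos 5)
Your proposal is correct and follows essentially the same route as the paper's proof: both combine \Cref{prop:completion_of_classical_is_classical} (case (1) for the principal ideal, case (2) for the regular sequence) with the Łojasiewicz identification \(I^\infty=\m^\infty_{Z(I)}\), the Brasselet--Pflaum acyclicity of \Cref{thm:Brasselet_Pflaum_acyclicity}, and \Cref{st:local_to_global_de_Rham} for the global statement. Your write-up is in fact somewhat more explicit than the paper's (which leaves the fibre sequence of \Cref{prop:de_Rham_of_completion} and the case split implicit), but there is no substantive difference in the argument.
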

	\begin{proof}
		By \Cref{thm:Brasselet_Pflaum_acyclicity} we know that the complex \(\m_{Z(I)}^\infty\Omega^\bt_{\CI(\R^n)}\) is acyclic. It remains to note that because \(I\) is a Łojasiewicz ideal the ideal \(\cap_{m=1}^\infty I^m\) coincides with \(\m_{Z(I)}^\infty\) and the derived completion of \(\CI(\R^n)\) in \(I\) coincides with the classical completion by \Cref{prop:completion_of_classical_is_classical}. The last part of the statement follows from \Cref{st:local_to_global_de_Rham}.
	\end{proof}
	\subsection{Counterexample to de Rham theorem for derived manifolds}
	In the following example, we demonstrate that the term \(I^\infty\cdR_A\) in the fibre sequence of \Cref{prop:de_Rham_of_completion} is generally non-trivial.
	\begin{example}\label{ex:de_Rham_counterexample}
		First, we provide a counterexample to the local de Rham theorem, i.e. the Poincar\'{e} lemma.

		Consider a function on \(\R\), which is flat at the origin, non-flat at other points, non-negative, and whose set of zeroes is a null sequence, together with the origin. Such a function is not hard to construct. For example, the following explicit formula (with appropriate extensions by zero) will do.
		\[
			\vp(x)=\sin^2\left(\frac{1}{x}\right)
			\cd e^{-\frac{1}{x^2}}.
		\]

		We consider a derived \(\CI\)-scheme \(X\) defined as the derived fibre of this map \(\R\xrightarrow{\vp}\R\). Denote by \(E\) the free \(\CI\)-algebra \(\CI(\R)\). Then we have a presentation of \(\CI(X)\) as follows. 
		\[
			0\to I=(\vp)\to E\to \CI(X)\to 0.
		\]
		Observe, that since \(\vp\) is a non-zero divisor by \Cref{ex:non-zero_div_in_CI} hence the derived \(\CI\)-algebra \(\CI(X)\) is \(0\)-truncated. Moreover, by \Cref{prop:completion_of_classical_is_classical} and \Cref{prop:de_Rham_of_completion}, the derived de Rham cohomology of \(X\) has the following presentation. 
		\[
			I^\infty\Omega^\bullet_E\to \Omega^\bullet_E\to \cdR_{X}
		\]

		Consider the function \(\tau=e^{-\frac{1}{\vp}}\). Then, the function \(\tau\) belongs to \(I^\infty\). However, the form \(\tau dx\) is not exact in the complex \(I^\infty\cdR_X\). Indeed, the only possible antiderivative of \(\tau dx\) that could belong to the ideal \(I^\infty\) is the function \(T\) equal to \(0\) at the origin. However, since \(\tau\) is non-negative on \(\R\) and strictly positive outside a countable set, its integral is strictly positive outside \(0\). Thus, the form \(\tau dx\) is not exact. This shows that at the point \(0\in\R\) the stalk of \(\cdR_X\) is not quasi-isomorphic to \(\R\). More precisely, we get the following long exact sequence in cohomology.

		\[\begin{tikzcd}
					{H^0\left(\left(I^\infty\Omega_\R^\bt\right)_0\right)=0} && {H^0\left(\left(\Omega_\R^\bt\right)_0\right)=\R} && {H^0\left(\left(\cdR_X\right)_0\right)} \\
					\\
					{H^1\left(\left(I^\infty\Omega_\R^\bt\right)_0\right)=\R\oplus\ldots} && {H^1\left(\left(\Omega_\R^\bt\right)_0\right)=0}
					\arrow[from=1-1, to=1-3]
					\arrow[from=1-3, to=1-5]
					\arrow[from=1-5, to=3-1]
					\arrow[from=3-1, to=3-3]
		\end{tikzcd}\]
		Now, we see that the stalk of \(\cdR_X\) at \(0\) has at least two-dimensional degree \(0\) cohomology. In fact, one can similarly construct infinitely many non-homologous non-exact closed forms by iterating the application of \(e^{-\frac{1}{-}}\). Thus, we see that the dimension of degree \(0\) cohomology of the stalk is, in fact, infinite. 

		We turn now to showing that this example can be extended to a counterexample to the global de Rham theorem for derived \(\CI\)-schemes. For this, we must compare (derived) global sections of the constant sheaf and the Hodge-completed de Rham cohomology sheaf. 

		Note that as a topological space, the zero set of \(\vp\) is homeomorphic to \(\N\cup\{0\}\) with the standard topology of the one-point compactification. We consider the following diagram of inclusions.
		\[
		\{0\}\xhookrightarrow{i} \N\cup\{0\}\xhookleftarrow{j} \N.
		\]
		Thus, we get the following pair of fibre sequences and a map between them. For a general six functor formalism in a topological context, we use here see \cite{Volpe_Six}.
		\[\begin{tikzcd}
			{j_!j^! \ul{\R}} && {\ul{\R}} && {i_*i^*\ul{\R}} \\
			{j_!j^!\widehat{\operatorname{dR}}_X} && {\widehat{\operatorname{dR}}_X} && { i_*i^*\widehat{\operatorname{dR}}_X}
			\arrow[from=1-1, to=1-3]
			\arrow[from=1-1, to=2-1]
			\arrow[from=1-3, to=1-5]
			\arrow[from=1-3, to=2-3]
			\arrow[from=1-5, to=2-5]
			\arrow[from=2-1, to=2-3]
			\arrow[from=2-3, to=2-5]
		\end{tikzcd}\]
		We now observe that the left-most vertical arrow is an equivalence by the standard de Rham theorem for manifolds, and the right-most vertical arrow is not an equivalence based on the previous discussion. Hence, the middle arrow is not an equivalence. 
	\end{example}
	\subsection{de Rham theorem implies conical o-minimality}
	\begin{definition}
		A smooth curve \(\gamma:(-\ve,\ve)\to \R\) in \(\R^n\) is called \emph{non-degenerate} at a point \(p=\gamma(0)\in \R^n\) if the derivative of \(\gamma\) at \(0\) is non-zero.
	\end{definition}
	\begin{definition}
		A subset of \(\R^n\) is called \emph{conically o-minimal} at a point \(p\) if its intersection with any non-degenerate curve through \(p\) is an o-minimal subset of \(\R\). In particular, for closed subsets of \(\R^n\), this means that the intersection of the subset with any non-degenerate curve is a finite union of closed intervals.
	\end{definition}
	\begin{proposition}
		Let \(X\) be a derived manifold defined by an ideal \(I=(\vp).\)	The de Rham theorem for \(\cdR\) at a point \(p\) implies conical o-minimality of the underlying closed subset of \(X\) in \(\R^n\) at \(p\).
	\end{proposition}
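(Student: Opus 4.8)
The plan is to argue by contraposition: assuming the closed set \(Z(\vp)\) fails to be conically o-minimal at \(p\), I will produce a nonzero cohomology class in the stalk \(\cdR_{X,p}\), contradicting the de Rham theorem there. Since \(I=(\vp)\) is principal, \Cref{prop:completion_of_classical_is_classical} lets me apply the fibre sequence of \Cref{prop:de_Rham_of_completion}, namely \(I^\infty\Omega^\bt_E\to\Omega^\bt_E\to\cdR_{X}\) with \(E=\CI(\R^n)\) and \(I^\infty=\bigcap_k(\vp)^k\). As \((\Omega^\bt_E)_p\simeq\R\) and the degree-zero germs of flat functions vanish, the associated long exact sequence shows that the de Rham theorem holds at \(p\) if and only if the stalk \((I^\infty\Omega^\bt_E)_p\) is acyclic. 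It therefore suffices to exhibit a closed form in \(I^\infty\Omega^1\), defined near \(p\), that is not equal to \(df\) for any germ \(f\in I^\infty\) of a function flat on \(Z(\vp)\).

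Failure of conical o-minimality supplies a non-degenerate smooth curve \(\gamma\) with \(\gamma(0)=p\) such that \(Z(\psi)\), for \(\psi:=\vp\circ\gamma\), is not a finite union of intervals near \(0\); I fix a sequence of zeros \(t_k\to 0\) of \(\psi\), so that \(\gamma(t_k)\in Z(\vp)\). The guiding model is the one-dimensional computation of \Cref{ex:de_Rham_counterexample}: the flat function \(e^{-1/\psi^2}\) is non-negative and strictly positive off \(Z(\psi)\), so its primitive \(A(t)=\int_0^t e^{-1/\psi(s)^2}\,ds\) is strictly increasing and flat at \(0\), whence the values \(A(t_k)\) are distinct and \(A\) admits no flat primitive. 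The whole task is to transport this obstruction from the line to \(\R^n\).

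The detection mechanism is integration along \(\gamma\). If a closed form \(\omega\in I^\infty\Omega^1\) were exact in the complex, say \(\omega=df\) with \(f\in I^\infty\), then for each \(k\) the arc integral \(\int_{\gamma|_{[0,t_k]}}\omega=f(\gamma(t_k))-f(p)\) would vanish, because \(f\) is flat, hence zero, on \(Z(\vp)\). Thus it is enough to build a single closed \(\omega\in I^\infty\Omega^1\) near \(p\) with \(\int_{\gamma|_{[0,t_k]}}\omega\neq 0\) for some \(k\). Writing \(\omega=dF\) with \(F\) smooth via the Poincaré lemma for \(\R^n\), this reduces to constructing a smooth germ \(F\) at \(p\) whose gradient is flat on \(Z(\vp)\) (so that \(dF\in I^\infty\Omega^1\)) but which takes different values at \(p\) and at some \(\gamma(t_k)\). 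I would produce \(F\) by prescribing, along \(Z(\vp)\), the Whitney field whose value is \(A\circ\pi\) — where \(\pi\) is the projection of a flow-box chart straightening \(\gamma\) — with all higher jets equal to zero, and then invoking Whitney's extension theorem.

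The hard part will be this final extension step: verifying that the prescribed jet is a genuine Whitney field on \(Z(\vp)\), i.e. that the increasing values \(A\) forces along the accumulating zeros can be interpolated by a function all of whose derivatives are flat on the \emph{whole} of \(Z(\vp)\), not merely along \(\gamma\). This is exactly the point at which the hypothesis must be used: conical o-minimality should be read as the condition that the accumulating branches \(\gamma(t_k)\) are separated inside \(Z(\vp)\) finely enough, relative to the flatness of \(A\) at \(0\), for the Whitney compatibility estimates to close up; the analytic input is the same flatness phenomenon that drives \Cref{thm:Brasselet_Pflaum_acyclicity}. I expect the conceptual skeleton to be routine and the verification of the Whitney estimates — controlling the interaction between the geometry of \(Z(\vp)\) near \(p\) and the flat weight \(e^{-1/\psi^2}\) — to be the genuine technical core, with the one-dimensional case of \Cref{ex:de_Rham_counterexample} serving as the model throughout.
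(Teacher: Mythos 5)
Your global strategy---contraposition, the reduction via \Cref{prop:completion_of_classical_is_classical} and \Cref{prop:de_Rham_of_completion} to non-acyclicity of the stalk \((I^\infty\Omega^\bt_E)_p\), and detection of non-exactness by integrating along the rectified curve, modelled on \Cref{ex:de_Rham_counterexample}---is exactly the paper's. But the construction of the witness form has a genuine gap, at precisely the step you describe as routine: flatness on \(Z(\vp)\) is \emph{not} membership in \(I^\infty\). You need \(\partial_i F\in I^\infty=\bigcap_k(\vp)^k\), whereas a Whitney extension can only ever deliver \(\partial_i F\in\m^\infty_{Z(\vp)}\) (flatness on the zero set), and the inclusion \(I^\infty\sse\m^\infty_{Z(\vp)}\) is strict in exactly the non-Łojasiewicz situations this proposition is about. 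For instance, for \(\vp=\sin^2(1/x)\,e^{-1/x^2}\), a sum of bumps of height \(e^{-n}\) supported in the \(n\)-th gap between consecutive zeros is flat on \(Z(\vp)\) but not even divisible by \(\vp\), since \(\vp\) is of size \(e^{-\pi^2n^2}\) there; and when the two ideals do coincide (the Łojasiewicz case, \Cref{thm:de_Rham_for_Lojasiewicz}) the complex is acyclic and there is nothing to detect. So even a successful Whitney extension need not produce an element of the complex \(I^\infty\Omega^\bt\) at all, and your class may simply not live in the relevant cohomology.

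The extension step itself is also not salvageable as stated. Failure of conical o-minimality only controls \(Z(\vp)\cap\gamma\); off the curve, \(Z(\vp)\) is an arbitrary closed set, and as soon as it contains a connected rectifiable arc along which \(A\circ\pi\) is non-constant (which nothing forbids), the prescribed field---varying values, identically zero derivatives---violates the Whitney compatibility \(|F(z)-F(z')|=o(|z-z'|)\) by a mean-value argument, so Whitney's theorem does not apply. Your proposed repair, reading ``conical o-minimality'' as the hypothesis that makes the estimates close up, is logically unavailable: you are arguing by contraposition, so that hypothesis has been negated, and invoking it would be circular. The paper sidesteps both problems at once: instead of a closed \(1\)-form it takes the \emph{automatically closed top-degree} form \(\omega=f\,dx\wedge dy\) (in coordinates rectifying \(\gamma\)) with \(f\in I^\infty\) chosen explicitly as a composite in \(\vp\), e.g. \(f=e^{-1/\vp^2}=\vp^k\cdot h_k(\vp)\) with \(h_k(t)=e^{-1/t^2}t^{-k}\) smooth, so that membership in every power \((\vp^k)\) is immediate and no extension theorem is needed; non-exactness then follows by fibre integration along the flow of the rectified coordinate, which reduces the question to the one-dimensional computation of \Cref{ex:de_Rham_counterexample}. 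If you want to keep your degree-one detection, you must replace the Whitney step by an equally explicit construction of a closed form with coefficients in \(I^\infty\), which is the actual content of the proof.
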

	\begin{proof}
		The result follows from \Cref{ex:de_Rham_counterexample}. Indeed, in \(1\)-dimensional case, one easily constructs a non-exact form on a closed subset of \(\R\) which is not conically o-minimal. In general we apply the following integration argument. Consider an arbitrary non-degenerate curve \(\gamma\) through the given point \(p.\) Since it is non-degenerate we can rectify it to be just the first coordinate axis.

		Consider the case of dimension \(n=2\), the general case is very similar. Consider a form \(\omega=f dx\wedge dy\) for some non-negative \(f\in I^\infty.\) It is automatically closed. We show that it is non-exact as follows. Integrate \(\omega\) along the flow of the constant vector field corresponding to the first coordinate. Then for the same reason as in \Cref{ex:de_Rham_counterexample} the resulting \(1\)-form on \(\R\) can't belong to \(I^\infty\Omega^{1}_{\R^2}\). Thus, the original form can not be exact in \(I^\infty\Omega^\bt_{\R^2}\).
	\end{proof}
	\begin{remark}
		If instead of derived de Rham cohomology we consider the de Rham--Whitney cohomology of \cite{Brasselet_Pflaum}, then the above result holds with the same proof for any closed subset of \(\R^n\).
	\end{remark}
	\part{General de Rham theory}
	\section{de Rham stacks in derived differential geometry}\label{sec:de_Rham_stacks}
	In this section, we define and compare two forms of the de Rham stack of a derived \(\CI\)-stack. This discussion mostly follows the approach of Borisov--Kremnizer \cite{Borisov_Kremnizer} with the main difference being that we work with derived \(\CI\)-stacks built out of derived manifolds instead of classical \(\CI\)-schemes.	
	
	First, we introduce de Rham stacks associated to a derived \(\CI\)-stack. The idea of this definition goes back to the work of Simpson \cite{Simpson} and Simpson--Teleman \cite{Simpson_Teleman} and in a more sophisticated set-up of derived algebraic geometry to Gaitsgory--Rozenblyum \cite{Gaitsgory_Rozenblyum_Crys}. 
	
	There is however, a crucial subtlety here which was realized by Borisov and Kremnizer in \cite{Borisov_Kremnizer}. Namely, \(\CI\)-algebras carry two natural notions of \enquote{reducible elements}. The first one is the usual notion of reducible (or \emph{nilpotent}) elements for the underlying commutative ring. The second one is the notion of reducible elements for the \(\CI\)-algebra. These were first emphasized by Moerdijk--Reyes \cite{Moerdijk_Reyes_Localizations} in the context of their categorical properties.
	
	\subsection{de Rham stacks and infinitesimal groupoids} 
	
	The following construction is due to Borisov--Kremnizer \cite[\S 3]{Borisov_Kremnizer}.
	\begin{construction}\label{con:de_Rham_stacks}
		Let \(\M\) be a derived manifold. Then we have two natural \enquote{de Rham} stacks associated to it.
		\[
			\M^{\dR_{\CI}}(A):=\M(A^{\red_\CI}),\quad \M^{\dR_\Com}(A):=\M(A^{\red_\Com}).
		\]
		Here, we consider the derived manifold \(\M\) as a spectrum of a derived \(\CI\)-algebra, and \(\M(A^{\red})\) is calculated as the mapping space in the \(\infty\)-category of \(\CI\)-algebras. This remark is important, since \(A^{\red_\CI}\) and \(A^{\red_\Com}\) are usually not \emph{finitely presented} derived \(\CI\)-algebras.

		For an arbitrary derived \(\CI\)-stack \(\X\) we define the \emph{derived de Rham stack} as the left Kan extension of the above construction.
	\end{construction}
	\begin{proposition}
		Let \(\X\) be a derived \(\CI\)-stack. Then, the derived \(\CI\)(resp. \(\Com\))-de Rham stack is the same as the derived \(\CI\) (resp. \(\Com\))-completion of the canonical morphism \(\X\to *\) in the sense of \Cref{def:formal_completion}.
	\end{proposition}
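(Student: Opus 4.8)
The plan is to compare the two constructions on their functors of points and then transport the comparison from affines to all stacks through the Kan extension that defines the de Rham stack. First I would dispose of the affine case. Let \(\M\) be a derived manifold and apply \Cref{def:formal_completion} to the canonical morphism \(\M\to *\), regarding the terminal stack \(*\) as the \emph{ambient} object and \(\M\) as the \emph{locus} along which one completes. For any derived \(\CI\)-algebra \(R\) the defining homotopy fibre product then reads
\[
    (*)^{\wedge_{\CI}}_{\M}(R)=\M(R^{\red_{\CI}})\times_{*(R^{\red_{\CI}})}*(R).
\]
Since \(*\) is terminal, both \(*(R^{\red_{\CI}})\) and \(*(R)\) are contractible, so the fibre product collapses to \(\M(R^{\red_{\CI}})\). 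By \Cref{con:de_Rham_stacks} this is exactly \(\M^{\dR_{\CI}}(R)\), and the identical computation with \((-)^{\red_{\Com}}\) in place of \((-)^{\red_{\CI}}\) settles the \(\Com\)-variant. Thus the two stacks agree on affines, compatibly with all restriction maps.

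For a general derived \(\CI\)-stack \(\X\) I would write \(\X\simeq\colim_{\Spec A\to\X}\Spec A\) as a colimit of representables and argue that both sides preserve this colimit. The de Rham stack does so by construction, being the left Kan extension of \(\M\mapsto\M^{\dR_{\CI}}\) along the Yoneda embedding. The completion of \(\X\) over \(*\), on the other hand, has functor of points \(R\mapsto\X(R^{\red_{\CI}})\); that is, it is the precomposition of \(\X\) with the reduction endofunctor \((-)^{\red_{\CI}}\) on \(\CI\Alg\). Precomposition preserves colimits at the level of presheaves, so the only remaining point is that it also commutes with the sheafification separating \(\CI\PreStk\) from \(\CI\Stk\).

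This last compatibility is the main obstacle, and it is where the geometry enters: concretely, one needs \((-)^{\red_{\CI}}\) to be a morphism of sites for the open-cover coverage of \Cref{con:derived_CI_stacks}. Here \Cref{cor:CI-red=RJ-red} is decisive, since it identifies the \(\CI\)-reduction with the real Jacobson reduction; the counit \(\Spec R^{\red_{\CI}}\to\Spec R\) is then a bijection on \(\R\)-points and a homeomorphism of underlying topological spaces, so jointly surjective families of open embeddings are preserved and reflected by reduction. Consequently precomposition with \((-)^{\red_{\CI}}\) carries covering sieves to covering sieves, commutes with sheafification, and hence preserves the colimit presenting \(\X\). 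Combined with the affine computation of the first paragraph, this yields the desired equivalence \(\X^{\dR_{\CI}}\simeq(*)^{\wedge_{\CI}}_{\X}\); the \(\Com\)-case is identical, using that \(\Spec R^{\red_{\Com}}\to\Spec R\) is likewise a homeomorphism on underlying spaces (every \(\R\)-point annihilates nilpotents).
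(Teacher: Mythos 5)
Your proposal is correct, and it is in fact more detailed than the paper itself, which states this proposition with no proof at all, treating it as an immediate unwinding of \Cref{def:formal_completion} and \Cref{con:de_Rham_stacks}. Your first paragraph is exactly that unwinding: you correctly resolve the ambiguity in \Cref{def:formal_completion} (whose prose puts the arrow as \(\X\to\Y\), backwards relative to its own diagram and formula) by taking \(*\) as the ambient object and \(\X\) as the locus, after which the fibre product over the terminal stack collapses to \(\M(R^{\red_{\CI}})\), which is the defining formula of the de Rham stack. For the global statement, note that the paper's own conventions make your third paragraph strictly unnecessary: in the proof of \Cref{thm:de_Rham_vs_functions_on_inf} the left Kan extension defining \(\X^{\dR}\) is computed objectwise in \(\CI\PreStk\), so the density argument of your second paragraph already closes the proof --- both constructions preserve colimits in \(\X\) (precomposition with reduction preserves objectwise colimits) and agree on affines. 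What your third paragraph actually establishes is a stronger claim that the paper tacitly assumes without justification, namely that \(R\mapsto\X(R^{\red_{\CI}})\) satisfies descent, i.e.\ that the ``completion'' and the ``de Rham stack'' deserve to be called stacks at all. Your geometric input for this (reduction induces a homeomorphism of spectra, via \Cref{cor:CI-red=RJ-red} in the \(\CI\)-case) is the right one, though for full rigor one should also check that reduction commutes with \(\CI\)-localization, so that the Čech nerves of covers, and not merely the covers themselves, are preserved. These are worthwhile additions rather than gaps.
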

	\begin{definition}[{Infinitesimal groupoids}] 
		Let \(\X\) be a derived \(\CI\)-stack. Recall the notion of the formal and \(\CI\)-completion of a morphism of derived \(\CI\)-prestacks (see \Cref{def:formal_completion}).
		We define its \emph{\(\CI\)-infinitesimal groupoid} \(\X^{\inft_{\CI}}\) as follows.
\[\begin{tikzcd}
	\vdots \\
	{(\X^3)^{\wedge_{\CI}}_{\Delta_3}} \\
	{(\X^2)^{\wedge_{\CI}}_{\Delta_2}} \\
	\X
	\arrow[shift right=3, from=3-1, to=4-1]
	\arrow[shift left=3, from=3-1, to=4-1]
	\arrow[shift right=4, from=2-1, to=3-1]
	\arrow[shift left=4, from=2-1, to=3-1]
	\arrow[from=2-1, to=3-1]
	\arrow[shift right=5, from=1-1, to=2-1]
	\arrow[shift left=4, from=1-1, to=2-1]
	\arrow[shift right=2, from=1-1, to=2-1]
	\arrow[shift left, from=1-1, to=2-1]
\end{tikzcd}\]

		We define its \(\Com\)-\emph{infinitesimal groupoid} \(\X^{\inft_{\Com}}\) as follows.
\[\begin{tikzcd}
	\vdots \\
	{(\X^3)^{\wedge_{\Com}}_{\Delta_3}} \\
	{(\X^2)^{\wedge_{\Com}}_{\Delta_2}} \\
	\X
	\arrow[shift right=3, from=3-1, to=4-1]
	\arrow[shift left=3, from=3-1, to=4-1]
	\arrow[shift right=4, from=2-1, to=3-1]
	\arrow[shift left=4, from=2-1, to=3-1]
	\arrow[from=2-1, to=3-1]
	\arrow[shift right=5, from=1-1, to=2-1]
	\arrow[shift left=4, from=1-1, to=2-1]
	\arrow[shift right=2, from=1-1, to=2-1]
	\arrow[shift left, from=1-1, to=2-1]
\end{tikzcd}\]
	\end{definition}
	\begin{proposition}\label{st:de_Rham_stack_is_presented_by_the_infinitesimal_groupoid}
		Under the assumption that \(\X\) is a formally smooth derived \(\CI\)-stack, the derived \(\Com\)-de Rham stack of \(\X\) is equivalent to the quotient of the \(\Com\)-infinitesimal groupoid of \(\X\).
	\end{proposition}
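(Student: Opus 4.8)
The plan is to realize the canonical map \(q\colon\X\to\X^{\dR_\Com}\) as an effective epimorphism of derived \(\CI\)-stacks and to identify its \v{C}ech nerve \(\check{C}^\bullet(q)\) with the \(\Com\)-infinitesimal groupoid \(\X^{\inft_\Com}\). Granting both, the conclusion is formal: in an \(\infty\)-topos a map is an effective epimorphism exactly when its target is the geometric realization of its \v{C}ech nerve (\cite[\S 6.2]{Lurie_HTT}), so \(\X^{\dR_\Com}\simeq\bigl|\check{C}^\bullet(q)\bigr|\simeq\bigl|\X^{\inft_\Com}\bigr|\), which is precisely the quotient of the \(\Com\)-infinitesimal groupoid.

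The identification of the nerve is purely formal and needs no hypothesis on \(\X\). Recall from \Cref{con:de_Rham_stacks} and the following proposition that \(\X^{\dR_\Com}(R)=\X(R^{\red_\Com})\), with \(q\) induced on \(R\)-points by the reduction \(R\to R^{\red_\Com}\). Hence the degree-\(n\) term of the \v{C}ech nerve is
\[
\bigl(\X\times_{\X^{\dR_\Com}}\cdots\times_{\X^{\dR_\Com}}\X\bigr)(R)\simeq\X(R)\times_{\X(R^{\red_\Com})}\cdots\times_{\X(R^{\red_\Com})}\X(R),
\]
the space of \((n+1)\)-tuples in \(\X(R)\) with coherently identified \(\Com\)-reductions. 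Unwinding \Cref{def:formal_completion} for the diagonal \(\Delta_{n+1}\colon\X\to\X^{n+1}\) gives
\[
(\X^{n+1})^{\wedge_{\Com}}_{\Delta_{n+1}}(R)=\X(R^{\red_\Com})\times_{\X(R^{\red_\Com})^{n+1}}\X(R)^{n+1},
\]
namely tuples in \(\X(R)^{n+1}\) together with a common reduction. These limits compute the same space, and the projections \(\X^{n+1}\to\X^{n}\) together with the diagonal insertions preserve the completed diagonals, so the identification respects the simplicial structure maps. This matches \(\check{C}^\bullet(q)\) with \(\X^{\inft_\Com}\).

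The essential point, and the place where formal smoothness is used, is that \(q\) is an effective epimorphism, i.e.\ \(\X(R)\to\X(R^{\red_\Com})\) is surjective on connected components after sheafification. Concretely this is the infinitesimal lifting property: every \(R^{\red_\Com}\)-point of \(\X\) deforms to an \(R\)-point along the thickening \(\Spec R^{\red_\Com}\hookrightarrow\Spec R\), which is exactly what formal smoothness provides. The main obstacle is the non-Noetherian pathology emphasized in \Cref{sec:reductions}: the \(\Com\)-nilradical of \(R\) need not be nilpotent, so \(\Spec R^{\red_\Com}\hookrightarrow\Spec R\) is only pro-nilpotent rather than a single finite-order thickening. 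I would address this by presenting the reduction as a (transfinite) filtered composite of square-zero extensions, applying the lifting property of formally smooth stacks at each stage, and assembling the lifts; this is where the argument demands the most care, since one must arrange the lifts compatibly and check that the resulting surjectivity persists after sheafifying in the open-cover topology.

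Combining the two ingredients finishes the proof: \(q\) is an effective epimorphism with \(\check{C}^\bullet(q)\simeq\X^{\inft_\Com}\), so descent identifies \(\X^{\dR_\Com}\) with the colimit of \(\X^{\inft_\Com}\), that is, with the quotient of the \(\Com\)-infinitesimal groupoid. I emphasize that formal smoothness enters only in the effective-epimorphism step; the matching of the \v{C}ech nerve with the infinitesimal groupoid is valid for every derived \(\CI\)-stack \(\X\).
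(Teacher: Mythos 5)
Your argument is correct and is essentially the paper's own proof: the paper likewise identifies the \v{C}ech nerve of \(q\colon\X\to\X^{\dR_\Com}\) with the \(\Com\)-infinitesimal groupoid (asserting this follows ``by definition'' of the \(\Com\)-completion, which your fibre-product computation merely makes explicit) and invokes formal smoothness to conclude that \(\Map(S,\X)\to\Map(S^{\red_\Com},\X)\) is a \(\pi_0\)-epimorphism, hence an effective epimorphism exhibiting \(\X^{\dR_\Com}\) as the realization of that groupoid. One remark: the paper reads formal smoothness as directly supplying lifts along the full reduction \(R\to R^{\red_\Com}\), so the transfinite square-zero decomposition you flag as the delicate point is not needed on that reading --- which is just as well, since as sketched it would run into the facts that \(R\to\lim_n R/I^n\) need not be an equivalence for a non-nilpotent nil ideal \(I\) and that \(\X\) need not carry such limits of algebras to limits of spaces.
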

	\begin{proof}
		Let \(S\) be a derived manifold. Then by the formal smoothness of \(\X\) we have the following \(\pi_0\)-epimorphism of mapping spaces.
		\[
			\Map(S,\X)\to \Map(S^{\red_{\Com}},\X). 
		\]
		The kernel of this map is by definition represented by the \(\Com\)-infinitesimal groupoid of \(\X\). This follows directly from how we defined the \(\Com\)-completion in \Cref{completions_of_derived_CI_stacks}.
	\end{proof}
	
	The following result allows one to identify functions on de Rham stacks in terms of functions on the infinitesimal groupoid.
	\begin{theorem}\label{thm:functions_on_inf_vs_functions_on_dR}
		Consider an arbitrary derived affine \(\CI\)-scheme \(X\) and assume that it is embedded into a formally smooth scheme \(Y\). Then there is an equivalence of the following form.
		\[
			\CI(X^{\inft_\Com})\simeq \CI(X^{\dR_{\Com}}).
		\]
	\end{theorem}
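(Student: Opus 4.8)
The plan is to recognise $X^{\inft_\Com}$ as the Čech nerve presenting the quotient map $q\colon X\to X^{\dR_\Com}$, and then to show that taking functions converts the realisation of this groupoid into the totalisation computing $\CI(X^{\inft_\Com})$. The embedding $X\hookrightarrow Y$ into a formally smooth $Y$ is needed precisely to compensate for the failure of $q$ itself to be an effective epimorphism when $X$ is singular. First I would unwind \Cref{def:formal_completion} together with the description of the de Rham stack in \Cref{con:de_Rham_stacks}: on a test algebra $A$, the term $(X^n)^{\wedge_\Com}_{\Delta_n}$ classifies $n$-tuples $(x_1,\dots,x_n)\in X(A)^n$ whose $\Com$-reductions coincide, so it is the $n$-fold fibre product $X\times_{X^{\dR_\Com}}\cdots\times_{X^{\dR_\Com}}X$; thus $X^{\inft_\Com}\simeq \check{C}^\bullet(q)$. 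Since global functions are computed as a limit $\CI(-)=\lim_{\Spec A\to(-)}\CI(\Spec A)$ over the affine site, they send colimits of prestacks to limits, giving $\CI(X^{\inft_\Com})\simeq\Tot_\bullet\CI(\check{C}^\bullet q)$. The theorem therefore reduces to the assertion that $\CI$ satisfies descent along $q$.

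Next I would introduce the formal completion $\wh{Y}:=Y^{\wedge_\Com}_X$. Idempotence of $\Com$-reduction shows $\wh{Y}^{\,\dR_\Com}=X^{\dR_\Com}$, since $X$ and $\wh{Y}$ have the same reduction. This is where formal smoothness of $Y$ enters: the projection $\wh{Y}\to X^{\dR_\Com}$ \emph{is} an effective epimorphism of prestacks, because an $A^{\red_\Com}$-point of $X$, pushed into $Y$, lifts to an $A$-point of $Y$ by formal smoothness, hence to an $A$-point of $\wh{Y}$. Consequently $X^{\dR_\Com}$ is the realisation of $\check{C}^\bullet(\wh{Y}\to X^{\dR_\Com})$, whose $n$-th term is the completion $(Y^n)^{\wedge_\Com}_X$ of $Y^n$ along the diagonal copy of $X$, and therefore $\CI(X^{\dR_\Com})\simeq\Tot_\bullet\CI\big((Y^\bullet)^{\wedge_\Com}_X\big)$.

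Finally I would compare the two resolutions. The closed immersion $X\hookrightarrow Y$ induces a map $\check{C}^\bullet(q)\to\check{C}^\bullet(\wh{Y}\to X^{\dR_\Com})$ over $X^{\dR_\Com}$, hence a comparison morphism $\CI(X^{\dR_\Com})\to\CI(X^{\inft_\Com})$. Using \Cref{prop:Adams_vs_Derived} and \Cref{thm:Adams_completion_vs_de_Rham} I would identify the two totalised objects degreewise with the Hodge-filtered derived de Rham cohomologies $\cdR_{\CI(X)/\CI(Y)^{\otimes_\R^\infty\bullet}}$ and $\cdR_{\CI(X)/\CI(X)^{\otimes_\R^\infty\bullet}}$, verify that the comparison respects the Hodge filtrations, and then pass to associated graded. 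There the discrepancy between the two relative cotangent complexes in simplicial degree $n$ is governed by copies of $\LL_{\CI(X)/\CI(Y)}$, and the key point is that the contribution of $Y$ is de Rham acyclic — concretely $\cdR_{\CI(Y)/\R}\simeq\R$ by the Poincaré lemma for the formally smooth $Y$ — so these error terms are annihilated upon totalisation, making the comparison an equivalence.

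I expect the last step to be the main obstacle: establishing that the two cobar-type totalisations agree, equivalently that $\CI$ descends along the non-smooth map $q$. Formal smoothness of the ambient $Y$ is indispensable here, entering both through the effective epimorphism that realises $X^{\dR_\Com}$ with computable functions and through the acyclicity $\cdR_{\CI(Y)/\R}\simeq\R$ that cancels the difference between the $X$- and $Y$-indexed resolutions on associated graded.
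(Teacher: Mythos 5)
Your architecture is essentially the paper's own: the paper likewise replaces \(Y\) by \(Y^{\wcom}_X\), uses formal smoothness to present \(X^{\dR_\Com}=\bigl(Y^{\wcom}_X\bigr)^{\dR_\Com}\) via an effective epimorphism from the completion (so that \(\CI(X^{\dR_\Com})\) becomes the totalization of \(\CI\bigl((Y^{\bullet+1})^{\wcom}_X\bigr)\)), identifies \(X^{\inft_\Com}\) with the \v{C}ech nerve of \(X\to X^{\dR_\Com}\), and feeds everything into \Cref{prop:Adams_vs_Derived}. The only organizational difference is that the paper packages the comparison of the two resolutions as a single bisimplicial diagram of completions, computing the limit of function algebras in the two possible orders, whereas you propose a termwise filtered comparison; that reorganization is legitimate in principle.

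The genuine gap is your justification of the final cancellation. First, the assertion \(\cdR_{\CI(Y)/\R}\simeq\R\) is false for a general formally smooth \(Y\): every smooth manifold is formally smooth, and its Hodge-completed derived de Rham cohomology is \(H^*(Y;\R)\), not \(\R\) --- the Poincar\'{e} lemma is a local statement. Second, and more importantly, even when it holds (say \(Y=\R^n\)) it is not the mechanism that kills the error terms. By transitivity and base change (\Cref{prop:functoriality_properties_of_LL}), the discrepancy in cosimplicial degree \(n\) between \(\LL_{\CI(X)/\CI(Y)^{\otimes(n+1)}}\) and \(\LL_{\CI(X)/\CI(X)^{\otimes(n+1)}}\) is \(\CI(X)\otimes_{\CI(X)^{\otimes(n+1)}}\LL_{\CI(X)^{\otimes(n+1)}/\CI(Y)^{\otimes(n+1)}}\simeq\LL_{\CI(X)/\CI(Y)}^{\oplus(n+1)}\), i.e.\ a coinduced cosimplicial module \([n]\mapsto M^{\oplus(n+1)}\) with \(M=\LL_{\CI(X)/\CI(Y)}\) fixed. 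Its totalization vanishes for purely simplicial-combinatorial reasons --- its conormalization is concentrated in cosimplicial degrees \(0\) and \(1\), where the differential is an isomorphism --- and this (the paper's ``connectivity estimates'') is exactly what powers \Cref{lem:de_Rham_vs_functions_on_inf} and the proof of \Cref{thm:Adams_completion_vs_de_Rham}; the exterior powers arising in the Hodge-graded pieces need the same treatment as in that proof. The global de Rham cohomology of \(Y\) never enters anywhere: the paper's proof uses only \Cref{prop:Adams_vs_Derived}, the hypothesis \(Y=Y^{\wcom}_X\), and formal smoothness. So as written your key step rests on a false premise; the proof becomes correct once that premise is replaced by the cosimplicial contractibility argument just described.
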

	\begin{proof}
		We start by replacing \(Y\) with \(Y^{\wcom}_X\). Since \(Y\) is formally smooth \(Y^{\dR_{\Com}}\) is equivalent to the quotient of \(Y^{\inft_\Com}\) by \Cref{st:de_Rham_stack_is_presented_by_the_infinitesimal_groupoid}. In addition this forces the following equivalence to hold.
		\[
			Y^{\dR_\Com}\simeq X^{\dR_\Com}.
		\]
		Now we consider the following bisimplicial diagram of pullbacks of formal completions in the main diagonals.
\[\begin{tikzcd}[ampersand replacement=\&]
	\&\&\&\& \vdots \&\& \vdots \\
	\\
	\&\& \ddots \&\& {X\times_{Y^{\dR_\Com}}(X\times X)^{\wcom}_{\Delta_2}} \&\& {(X\times X)^{\wcom}_{\Delta_2}} \\
	\\
	\dots \&\& {X\times_YX\times_{Y^{\dR_\Com}}Y} \&\& {X\times_{Y^{\dR_\Com}}Y} \&\& X \\
	\\
	\dots \&\& {X\times_YX} \&\& Y \&\& {X^{\dRcom}}
	\arrow[shift right=4, from=1-5, to=3-5]
	\arrow[shift left=4, from=1-5, to=3-5]
	\arrow[from=1-5, to=3-5]
	\arrow[shift right=4, from=1-7, to=3-7]
	\arrow[shift left=4, from=1-7, to=3-7]
	\arrow[from=1-7, to=3-7]
	\arrow[shift right=2, from=3-3, to=3-5]
	\arrow[shift left=2, from=3-3, to=3-5]
	\arrow[shift right=2, from=3-3, to=5-3]
	\arrow[shift left=2, from=3-3, to=5-3]
	\arrow["{\mathbf{\Large\lrcorner}}"{description, pos=0.1}, draw=none, from=3-3, to=5-5]
	\arrow[two heads, from=3-5, to=3-7]
	\arrow[shift right=3, from=3-5, to=5-5]
	\arrow[shift left=3, from=3-5, to=5-5]
	\arrow["\lrcorner"{anchor=center, pos=0.125}, draw=none, from=3-5, to=5-7]
	\arrow[shift right=3, from=3-7, to=5-7]
	\arrow[shift left=3, from=3-7, to=5-7]
	\arrow[from=5-1, to=5-3]
	\arrow[shift left=3, from=5-1, to=5-3]
	\arrow[shift right=3, from=5-1, to=5-3]
	\arrow[shift right=3, from=5-3, to=5-5]
	\arrow[shift left=3, from=5-3, to=5-5]
	\arrow[two heads, from=5-3, to=7-3]
	\arrow["\lrcorner"{anchor=center, pos=0.125}, draw=none, from=5-3, to=7-5]
	\arrow[two heads, from=5-5, to=5-7]
	\arrow[two heads, from=5-5, to=7-5]
	\arrow["\lrcorner"{anchor=center, pos=0.125}, draw=none, from=5-5, to=7-7]
	\arrow[from=5-7, to=7-5]
	\arrow[from=5-7, to=7-7]
	\arrow[from=7-1, to=7-3]
	\arrow[shift left=3, from=7-1, to=7-3]
	\arrow[shift right=3, from=7-1, to=7-3]
	\arrow[shift right=3, from=7-3, to=7-5]
	\arrow[shift left=3, from=7-3, to=7-5]
	\arrow[two heads, from=7-5, to=7-7]
\end{tikzcd}\]

		Observe that along the columns of the diagram, except for the left-most, there is an equivalence on functions. Indeed, by assumption \(Y=Y^\wcom_X\) hence \(X\times_{Y^{\dR_\Com}}Y=Y.\) Hence the vertical bottom arrows are epimorphisms induce an equivalence on functions. 
		
		Along the bottom row, there is an equivalence on functions by \Cref{prop:Adams_vs_Derived} since the limit on functions is precisely the Adams completion and by assumption \(Y\) is the derived completion of \(X\) in \(Y\). Finally, it remains to note that \(Y\) is smooth and thus \(Y\) maps to \(X^{dR_{\Com}}\) via an effective epimorphism, the map on functions is an equivalence. Hence, the limit of the entire bisimplicial diagram of function algebras is an equivalence.
	\end{proof}
	\begin{remark}
		One similarly proves that the functions on the \(\CI\)-de Rham stack are equivalent to functions on the \(\CI\)-infinitesimal groupoid. However, at the moment, there is no direct way to identify the latter with something more explicit, so we forego a detailed proof of this result. 
	\end{remark}
	\subsection{de Rham cohomology as functions on the de Rham stack}
	\begin{definition}\label{def:CI-de_Rham-coh} For a derived differentiable stack we define the \(\CI\)-de Rham cohomology of \(\X\) as the functions on the de Rham stack of \(\X\) defined in \Cref{con:de_Rham_stacks}. Similarly, we define the \(\Com\)-de Rham cohomology of \(\X\) as the functions on the de Rham stack of \(\X\).
	\end{definition}
	\begin{theorem}\label{thm:de_Rham_vs_functions_on_inf}
		Let \(\X\) be a derived \(\CI\)-stack. Then, the derived de Rham cohomology of \(\X\), as defined in \Cref{con:de_Rham_stacks}, is equivalent to the functions on the de Rham stack of \(\X\) defined in \Cref{con:derived_de_Rham_of_stacks}.
	\end{theorem}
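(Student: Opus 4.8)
The plan is to reduce to the case of a derived manifold and then identify the two invariants by passing through the $\Com$-infinitesimal groupoid, using the Adams-completion description of derived de Rham cohomology established in \Cref{thm:Adams_completion_vs_de_Rham}. Both sides are determined by their values on affines: by \Cref{con:de_Rham_stacks} the functor \((-)^{\dR_\Com}\) is a left Kan extension from \(\dMfld\), hence preserves colimits, so writing \(\X\simeq\colim_{\Spec A\to\X}\Spec A\) as a colimit of its affines gives \(\X^{\dR_\Com}\simeq\colim_{\Spec A\to\X}(\Spec A)^{\dR_\Com}\). Applying global functions \(\CI(-)\) turns this colimit of stacks into a limit of algebras indexed by the same category as the limit defining \(\cdR_\X\) in \Cref{con:derived_de_Rham_of_stacks}. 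It therefore suffices to produce an equivalence \(\cdR_{\Spec A}\simeq\CI\bigl((\Spec A)^{\dR_\Com}\bigr)\) for every derived manifold \(\M=\Spec A\), natural in \(\M\) and (tracking the Hodge filtration) as complete filtered algebras.

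Fix \(\M=\Spec A\) and, via its standard presentation (\Cref{ex:cotangent_complex_of_dMfld}), a closed immersion \(\M\hookrightarrow\R^n\) into the formally smooth \(\R^n\). By \Cref{thm:functions_on_inf_vs_functions_on_dR} the functions on the de Rham stack agree with the functions on the \(\Com\)-infinitesimal groupoid, and the latter is the totalization of the cosimplicial algebra of functions on the levels of the groupoid:
\[
\CI(\M^{\dR_\Com})\simeq\CI(\M^{\inft_\Com})\simeq\Tot_{[n]\in\DD}\CI\!\left((\M^{n+1})^{\wcom}_{\Delta_{n+1}}\right).
\]
Each level is the completion of \(\M^{n+1}\) along its main diagonal \(\Delta_{n+1}\colon\M\hookrightarrow\M^{n+1}\), which on functions is the multiplication map \(\mu_n\colon\CI(\M)^{\otimes^\infty(n+1)}\to\CI(\M)\), a \(\pi_0\)-epimorphism. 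Hence \Cref{prop:Adams_vs_Derived} identifies \(\CI\bigl((\M^{n+1})^{\wcom}_{\Delta_{n+1}}\bigr)\) with the Adams completion of \(\mu_n\), and \Cref{thm:Adams_completion_vs_de_Rham} identifies that, filtered, with \(\cdR_{\CI(\M)/\CI(\M)^{\otimes^\infty(n+1)}}\). Since the cosimplicial algebra \([n]\mapsto\CI(\M)^{\otimes^\infty(n+1)}\) is precisely the \v{C}ech conerve \(\check{C}^\bullet\) of \(\R\to\CI(\M)\) (\Cref{def:Cech_conerve}), it remains to prove
\[
\Tot_{[n]\in\DD}\cdR_{\CI(\M)/\check{C}^n}\simeq\cdR_{\CI(\M)/\R}=\cdR_\M .
\]

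I would establish this remaining equivalence on the associated graded for the Hodge filtration, where by the Cartier isomorphism \eqref{eq:Cartier_isomorphism} it becomes the cotangent-complex co-descent statement \(\Lambda^k\LL_{\CI(\M)/\R}\simeq\Tot_n\Lambda^k\LL_{\CI(\M)/\check{C}^n}[-k]\) shifted into place. The transitivity triangles for \(\R\to\check{C}^n\to\CI(\M)\), together with the extra codegeneracy that the \v{C}ech conerve acquires after base change along \(\R\to\CI(\M)\), force the error term \(\Tot_n\bigl(\LL_{\check{C}^n/\R}\otimes_{\check{C}^n}\CI(\M)\bigr)\) to vanish, yielding the claimed descent. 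As both sides are complete for the Hodge filtration, an equivalence on graded pieces upgrades to a filtered equivalence, which settles the affine case and hence, by the reduction above, the theorem.

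The one genuinely delicate point is this last step: the map \(\R\to\CI(\M)\) is \emph{not} a \(\pi_0\)-epimorphism, so one cannot invoke \Cref{thm:Adams_completion_vs_de_Rham} wholesale. What is needed is exactly the half of its proof that is independent of the \(\pi_0\)-epimorphism hypothesis, namely the comparison of \(\Tot\cdR_{B/\check{C}^\bullet f}\) with \(\cdR_{B/A}\) through descent of the exterior powers of the cotangent complex. The work therefore reduces to verifying that this cotangent-complex descent holds for an arbitrary morphism of derived \(\CI\)-algebras, which it does; the \(\pi_0\)-epimorphism assumption in \Cref{thm:Adams_completion_vs_de_Rham} was only used to identify the totalization with the Adams completion \(\Comp_A(A,f)\), and that identification is supplied here instead by \Cref{thm:functions_on_inf_vs_functions_on_dR} and \Cref{prop:Adams_vs_Derived}.
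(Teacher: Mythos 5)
Your proposal is correct, and its skeleton coincides with the paper's: reduce to affines via the Kan-extension/objectwise-limit argument, pass from the de Rham stack to the \(\Com\)-infinitesimal groupoid via \Cref{thm:functions_on_inf_vs_functions_on_dR}, identify the level-wise function algebras with relative derived de Rham cohomology via \Cref{prop:Adams_vs_Derived} and \Cref{thm:Adams_completion_vs_de_Rham}, and conclude by cotangent-complex descent along the conerve. Where you genuinely diverge from the paper's affine step (\Cref{lem:de_Rham_vs_functions_on_inf}) is in how that last descent is set up and justified: the paper runs the fibre sequences \(\Delta_n^*\LL_{X^{\infcom}_n}\to \LL_X\to \LL_{\Delta_n}\) against the \emph{completed} powers, applying the Adams theorem to the closed immersions \(X\to X^{\infcom}_n\), and kills the error term with a one-line appeal to \enquote{connectivity estimates for tensor products}; you instead run the transitivity triangles against the \emph{uncompleted} \v{C}ech conerve of \(\R\to\CI(\M)\) and kill the error term \(\Tot_n\bigl(\LL_{\check{C}^n/\R}\otimes_{\check{C}^n}\CI(\M)\bigr)\) by the extra-codegeneracy (split conerve) argument. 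Your justification buys something real: the splitting is given by cosimplicial homotopies, hence is preserved by any functor applied levelwise, so it propagates through the exterior-power filtration needed to compare \emph{all} Hodge-graded pieces \(\Lambda^k\LL\), which is exactly the point at which the paper's wording (both here and in the last display of the proof of \Cref{thm:Adams_completion_vs_de_Rham}) is vaguest; you also correctly isolate the fact that \(\R\to\CI(\M)\) is not a \(\pi_0\)-epimorphism and that only the descent half of that theorem's proof is being reused, a point the paper leaves implicit by never applying the Adams theorem to a non-immersion. The paper's arrangement, in exchange, never has to mention the uncompleted conerve at all, since each \(\cdR_{\Delta_n}\) is identified directly with \(\CI(X^{\infcom}_n)\) (the completion of a complete algebra being itself), whereas your route needs the small extra observation that \(\cdR\) relative to \(\check{C}^n\) and relative to its diagonal completion agree, which is supplied by your chain through \Cref{prop:Adams_vs_Derived}.
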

	We use the following lemma for the affine case to prove this result.
	\begin{lemma}\label{lem:de_Rham_vs_functions_on_inf}
		Let \(X=\Spec(A)\) be a derived manifold. Then, derived de Rham cohomology \(\cdR_X\) is equivalent to the ring of functions on the \(\Com\)-infinitesimal groupoid of \(X\).
	\end{lemma}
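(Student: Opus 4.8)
The plan is to exhibit both sides as complete filtered derived commutative algebras and to produce a filtered comparison map that is an equivalence on associated graded; completeness then promotes it to an equivalence. Write $A=\CI(X)$. On the left, $\cdR_X=\cdR_{A/\R}$ is Hodge-complete by \Cref{con:derived_de_Rham}, with $\gr^k\cdR_X\simeq \Lambda^k\LL_{A/\R}[-k]$ by the Cartier isomorphism \eqref{eq:Cartier_isomorphism}. The first task is to give the right-hand side a comparable description.

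For each $n$ the total diagonal $\Delta_{n+1}\colon X\to X^{n+1}$ is a closed immersion, corresponding to the multiplication map $A^{\otimes_\R^\infty(n+1)}\to A$, which is a $\pi_0$-epimorphism. Hence \Cref{prop:Adams_vs_Derived} identifies the functions on the $\Com$-completion of $X^{n+1}$ along the diagonal with the Adams completion of this map, and \Cref{thm:Adams_completion_vs_de_Rham} rewrites the latter as a relative de Rham cohomology:
\[
\CI\big((X^{n+1})^{\wcom}_{\Delta_{n+1}}\big)\simeq \cdR_{A/A^{\otimes_\R^\infty(n+1)}}.
\]
These equivalences are compatible with the cosimplicial and Hodge-filtered structures, so taking functions on the $\Com$-infinitesimal groupoid yields $\CI(X^{\inft_\Com})\simeq \Tot_{[n]}\cdR_{A/A^{\otimes_\R^\infty(n+1)}}$. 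The comparison map is then the canonical one $\cdR_{A/\R}\to \Tot_{[n]}\cdR_{A/A^{\otimes_\R^\infty(n+1)}}$ assembled from the base-change maps $\cdR_{A/\R}\to \cdR_{A/A^{\otimes_\R^\infty(n+1)}}$ induced by $\R\to A^{\otimes_\R^\infty(n+1)}$; it is filtered for the Hodge filtration.

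It remains to check that this map is an equivalence, and since both sides are complete it suffices to check it on associated graded. Using \eqref{eq:Cartier_isomorphism} levelwise, the $\gr^k$ of the comparison becomes the map
\[
\Lambda^k\LL_{A/\R}[-k]\longrightarrow \Tot_{[n]}\Lambda^k\LL_{A/A^{\otimes_\R^\infty(n+1)}}[-k].
\]
By the cotangent cofibre sequences of \Cref{prop:functoriality_properties_of_LL} applied to $\R\to A^{\otimes_\R^\infty(n+1)}\to A$, the cosimplicial object $[n]\mapsto \LL_{A/A^{\otimes_\R^\infty(n+1)}}$ is the exterior (bar) resolution of $\LL_{A/\R}$ up to shift; I would verify that its $k$-th exterior powers totalise to $\Lambda^k\LL_{A/\R}[-k]$, whence the comparison is a graded equivalence and therefore, by completeness, an equivalence. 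Composing with \Cref{thm:functions_on_inf_vs_functions_on_dR}, which reinterprets $\CI(X^{\inft_\Com})$ as functions on the $\Com$-de Rham stack, then gives the statement in the form used later.

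The main obstacle is exactly the associated-graded reassembly in the last step: showing that the relative cotangent complexes $\LL_{A/A^{\otimes_\R^\infty(n+1)}}$, organised along the bar structure, totalise with the correct décalage to recover $\LL_{A/\R}$, and that the resulting identifications respect the multiplicative and filtered structure so that the comparison is genuinely filtered. This is the $\CI$-incarnation of the \HKR/décalage bookkeeping underlying the infinitesimal-to-de-Rham comparison; the finite presentation of $A$ and the closed immersion $X\hookrightarrow\R^N$ furnished by \Cref{def:der_mfld} are what guarantee convergence of the Hodge filtration, and hence the completeness needed to run the argument.
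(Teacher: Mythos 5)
Your proposal is correct and takes essentially the same route as the paper's proof: both identify each level of the \(\Com\)-infinitesimal groupoid with a relative Hodge-completed de Rham algebra via \Cref{prop:Adams_vs_Derived} and \Cref{thm:Adams_completion_vs_de_Rham}, and then compare \(\cdR_{A/\R}\) with the totalization by a filtered map checked on associated graded pieces, using the cotangent-complex cofibre sequences and completeness of the Hodge filtration. The associated-graded reassembly you flag as the main obstacle is precisely the step the paper dispatches with the assertion that \(\lim_n \Delta_n^*\LL_{X^{\infcom}_n}\) vanishes \enquote{by connectivity estimates for tensor products}, so the two arguments agree even in where they leave the convergence bookkeeping informal.
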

	\begin{proof}
		Denote by \(\Delta_\bt\) the morphism \(X\to X^\infcom\). 
		The morphism \(\Delta_\bt\) induces a collection of fibre sequences of cotangent complexes as follows.
		\[
			\Delta_n^*\LL_{X^\infcom_n}\to \LL_{X}\to \LL_{\Delta_n}.
		\]
		Note that the limit of the leftmost term of this sequence with respect to \(n\) vanishes by the connectivity estimates for tensor products. Hence, there is an equivalence of the following form.
		\[
			\cdR_X\to \Tot_n(\cdR_{\Delta_n}).
		\]
		
		Now, observe, that by \Cref{thm:Adams_completion_vs_de_Rham} there is an equivalence between \(\cdR_{\Delta_n}\) and the Adams completion of \(X\to X^\infcom_n\) in each degree. Hence, by passing to the limit there is an equivalence as follows.
		\[
			\Tot(\cdR_{\Delta_n}) \simeq \CI(X^\infcom).
		\]
		Thus, we conclude that there is the desired equivalence of the following form.
		\[
			\cdR_X\simeq \CI(X^\infcom).\qedhere
		\]
	\end{proof}
	\begin{proof}[\ul{Proof of \Cref{thm:de_Rham_vs_functions_on_inf}}]
		By \Cref{lem:de_Rham_vs_functions_on_inf} and \Cref{thm:functions_on_inf_vs_functions_on_dR} derived de Rham cohomology of \Cref{con:derived_de_Rham} is the same as the algebra of functions on the de Rham stack for any derived manifold.

		Moreover, we observe that for a derived prestack \(\X\) the de Rham stack can be obtained as a left Kan extension from de Rham stacks of derived manifolds. That is the following equivalence holds.
		\[
			\X^{\dR_{\Com}}\simeq \colim_{\Spec(A)\to \X}\Spec(A)^{\dR_{\Com}}.
		\]
		This is immediate since limits and colimits in the category of derived \(\CI\)-prestacks \(\CI\PreStk\) are computed objectwise (i.e. the argument is the same as in the algebraic case, see \cite[Lemma 1.1.4]{Gaitsgory_Rozenblyum_Crys}). As a result the \(\CI\)-algebra of functions on \(\X^{\dR_{\Com}}\) is the limit of the \(\CI\)-algebras of functions on the de Rham stacks of derived manifolds. Thus derived de Rham cohomology of a derived \(\CI\)-stack defined in \Cref{con:derived_de_Rham_of_stacks} is equivalent to the algebra of function on the de Rham stack of \(\X\).
	\end{proof}
	\begin{corollary}\label{ala_is_determined_by_classical}
		A derived manifold \(\M\) is \ldRa if and only if its classical part is \ldRa. That is, if derived de Rham cohomology sheaf of \(\pi_0(\CI(\M))\) defined as functions on \(\pi_0(\M)^{\dR_{\Com}}\) is locally acyclic.
	\end{corollary}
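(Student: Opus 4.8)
The plan is to deduce this corollary directly from the $\pi_0$-invariance of derived de Rham cohomology, which is itself a consequence of \Cref{thm:de_Rham_vs_functions_on_inf}. The observation I would start from is that the canonical truncation map $\CI(\M)\to\pi_0\CI(\M)$ is by construction a $\pi_0$-equivalence of derived $\CI$-algebras, so \Cref{prop:de_Rham_depends_on_pi0} immediately supplies an equivalence $\cdR_{\M}\simeq \cdR_{\pi_0(\M)}$. This already contains the essential content; the remaining work is to phrase it in terms of the de Rham stack appearing in the statement and to pass from algebras to stalks of sheaves.

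To make the link with $\pi_0(\M)^{\dR_{\Com}}$ explicit, I would argue through \Cref{thm:de_Rham_vs_functions_on_inf}, which identifies $\cdR_{\M}$ with $\CI(\M^{\dR_{\Com}})$, where by \Cref{con:de_Rham_stacks} the de Rham stack is $\M^{\dR_{\Com}}(A)=\M(A^{\red_{\Com}})=\Map_{\CI\Alg}(\CI(\M),A^{\red_{\Com}})$. The crucial input is that a $\Com$-reduced algebra is discrete, so $A^{\red_{\Com}}$ is $0$-truncated; by the universal property of truncation the mapping space into it factors through $\pi_0$, giving
\[
\Map_{\CI\Alg}(\CI(\M),A^{\red_{\Com}})\simeq \Map_{\CI\Alg}(\pi_0\CI(\M),A^{\red_{\Com}}).
\]
This shows $\M^{\dR_{\Com}}\simeq \pi_0(\M)^{\dR_{\Com}}$ as functors of $A$, and after passing to functions I obtain $\cdR_{\M}\simeq \CI(\pi_0(\M)^{\dR_{\Com}})\simeq \cdR_{\pi_0(\M)}$, which is exactly the identification the statement asks for.

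It then remains to transport this algebra-level equivalence to the stalkwise \ldRa condition of \Cref{def:ldRa}. Since $\R$-points of any derived $\CI$-algebra factor through $\pi_0$, the underlying topological spaces $\M(\R)$ and $\pi_0(\M)(\R)$ coincide, and under this identification the two de Rham cohomology sheaves agree by the previous step; hence the stalk $\cdR_{\M,x}$ is equivalent to $\R$ exactly when $\cdR_{\pi_0(\M),x}$ is, at every point $x$. The only step I expect to require genuine care is the discreteness of the $\Com$-reduction and the resulting factorization of mapping spaces through $\pi_0$ — everything downstream is formal once that is secured, and indeed one could shortcut the whole de Rham-stack computation by invoking \Cref{prop:de_Rham_depends_on_pi0} on the truncation map, at the cost of not matching the ``$\pi_0(\M)^{\dR_{\Com}}$'' phrasing.
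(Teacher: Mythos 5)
Your proposal is correct and takes essentially the same approach as the paper: the paper's own one-line proof combines \Cref{thm:de_Rham_vs_functions_on_inf} with the fact that \(\M\) and \(\pi_0(\M)\) have the same \(\Com\)-reduced part, so their \(\Com\)-de Rham stacks (and hence the function algebras \(\cdR\)) coincide. Your variant — factoring maps into the discrete algebra \(A^{\red_\Com}\) through \(\pi_0\CI(\M)\) via the truncation adjunction — is just another form of the same reflection argument, and your shortcut through \Cref{prop:de_Rham_depends_on_pi0} is likewise the same mechanism, since that proposition is itself deduced from \Cref{thm:de_Rham_vs_functions_on_inf}.
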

	\begin{proof}
		The result follows from the fact that \(\M\) and its classical part have the same reduced part and \Cref{thm:de_Rham_vs_functions_on_inf}.
	\end{proof}
	\begin{remark}
		Note that there is a subtelty here, since the classical part \(\pi_0(\M)\) might not be finitely presented as a derived algebra even if \(\M\) itself is. An example of this is given by a ring of the form \(\CI\{x,y\}/ (x^2,xy,y^2).\) Here, if we understand this ring as a strict quotient it is not finitely presented as a derived algebra. Indeed, it can be viewed just as a commutative ring since it is Artinian it has a canonical \(\CI\)-structure. Thus it falls into the purview of Avramov's theorem \cite[Theorem A]{Avramov_Halperin} and we can see that since it is not lci its cotangent complex has infinite amplitude. However, as a homotopy quotient it is a perfectly good finitely presented algebra.
	\end{remark}
	The following result is immediate from the functorial description of the de Rham cohomology afforded by \Cref{thm:de_Rham_vs_functions_on_inf}.	
	\begin{corollary}
		Let \(A\to B\) be a morphism of finitely presented derived \(\CI\)-algebras such that the induced map on \(\Com\)-reductions is an equivalence. Then the induced map on derived de Rham cohomology algebras is an equivalence. In particular, the derived de Rham cohomology of a derived manifold is equivalent to the derived completion of this manifold in a smooth ambient space.
	\end{corollary}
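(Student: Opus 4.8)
The statement is almost entirely formal once we invoke \Cref{thm:de_Rham_vs_functions_on_inf}, which identifies $\cdR_{(-)}$ with the global functions on the $\Com$-de Rham stack; the plan is to show that this de Rham stack depends on its argument only through the $\Com$-reduction, and then feed in the hypothesis. First I would unwind the de Rham stack: for a derived manifold $\Spec(A)$ and a test algebra $R$, \Cref{con:de_Rham_stacks} gives
\[
\Spec(A)^{\dRcom}(R)=\Spec(A)(R^{\redcom})=\Map_{\CI\Alg}(A,R^{\redcom}).
\]
Because $R^{\redcom}$ is $\Com$-reduced, the universal property of the $\Com$-reduction (the adjunction of \Cref{con:reductions_of_CI_rings}) furnishes a natural equivalence $\Map_{\CI\Alg}(A,R^{\redcom})\simeq\Map_{\CI\Alg}(A^{\redcom},R^{\redcom})$. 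Hence $\Spec(A)^{\dRcom}$ is a functor of $A^{\redcom}$ alone, naturally in $A$.

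Given a map $A\to B$ inducing an equivalence $A^{\redcom}\xrightarrow{\sim}B^{\redcom}$, the previous step yields an equivalence of $\Com$-de Rham stacks $\Spec(B)^{\dRcom}\xrightarrow{\sim}\Spec(A)^{\dRcom}$. Applying global functions $\CI(-)$ and using \Cref{thm:de_Rham_vs_functions_on_inf}, the induced map $\cdR_A\to\cdR_B$ is identified with $\CI$ applied to this equivalence; since functions carry an equivalence of (pre)stacks to an equivalence, the induced map on derived de Rham cohomology is an equivalence. This settles the first assertion.

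For the ``in particular'', I would realize $\M$ as a closed subscheme of a smooth ambient space $Y$ (e.g.\ $Y=\R^{n}$ as in \Cref{ex:smooth_mfld_derived}) and set $\widehat{Y}:=Y^{\wcom}_{\M}$, the derived $\Com$-completion of $Y$ along $\M$ (\Cref{def:formal_completion}). Completing along $\M$ introduces only $\Com$-nilpotent (normal, infinitesimal) directions, so the canonical map $\M\to\widehat{Y}$ induces an equivalence on $\Com$-reductions $\M^{\redcom}\simeq\widehat{Y}^{\redcom}$. Running the argument of the first part in the prestack setting --- where both $(-)^{\dRcom}$ and $\cdR_{(-)}$ are defined by Kan extension from $\dMfld$, cf.\ \Cref{con:derived_de_Rham_of_stacks} and the proof of \Cref{thm:de_Rham_vs_functions_on_inf} --- gives $\M^{\dRcom}\simeq\widehat{Y}^{\dRcom}$ and hence $\cdR_{\M}\simeq\cdR_{\widehat{Y}}$, i.e.\ the derived de Rham cohomology of $\M$ is computed by the derived completion of a smooth ambient space along $\M$; \Cref{prop:de_Rham_of_completion} makes this completion explicit whenever the derived and classical completions agree.

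The only genuine subtlety, and the step I would watch most carefully, is the passage in the last paragraph from finitely presented algebras to the completion $\widehat{Y}$, which is a pro-/ind-object rather than a derived manifold. Here I would note that $\widehat{Y}$ is a filtered colimit (in prestacks) of spectra of the finitely presented Koszul algebras $\K_{n,\Com}$ of \Cref{con:sequence_presenting_completion} (\Cref{prop:sequence_presents_completion}), each of which has the same $\Com$-reduction as $\M$; since $(-)^{\dRcom}$ is computed objectwise along this colimit and functions turn it into the corresponding limit, both the reduction-invariance established above and the identification of \Cref{thm:de_Rham_vs_functions_on_inf} persist after Kan extension. Everything else is the purely formal manipulation of the first two paragraphs.
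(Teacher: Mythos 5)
Your first two paragraphs are exactly the argument the paper intends: the paper gives no proof beyond declaring the corollary \enquote{immediate from the functorial description} of \Cref{thm:de_Rham_vs_functions_on_inf}, and unwinding that is precisely your computation \(\Spec(A)^{\dRcom}(R)=\Map_{\CI\Alg}(A,R^{\redcom})\simeq\Map_{\CI\Alg}(A^{\redcom},R^{\redcom})\), so the \(\Com\)-de Rham stack --- and hence \(\cdR\) --- depends only on the \(\Com\)-reduction. That part is correct and is the same route as the paper.

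For the \enquote{in particular}, one claim in your third paragraph must be struck: that \(\M\to\widehat{Y}\) induces an equivalence on \(\Com\)-reductions because completion \enquote{introduces only \(\Com\)-nilpotent directions}. At the level of the completed algebra this is false: the kernel of \(\CI(Y)^{\wcom}_{\M}\to\CI(\M)\) consists of topologically nilpotent elements, not nilpotent ones. For \(\M=\{0\}\subset Y=\R\) one has \(\CI(Y)^{\wcom}_{\M}\simeq\R[[x]]\) (here derived equals classical completion by \Cref{prop:completion_of_classical_is_classical}), and \(\R[[x]]\) is reduced --- the algebraic localization at \(x\) is \(\R((x))\neq 0\) --- so its \(\Com\)-reduction is \(\R[[x]]\) itself, not \(\R=\CI(\M)^{\redcom}\). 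Thus part one of the corollary can never be applied to the map \(\CI(\widehat{Y})\to\CI(\M)\): the obstruction is not merely the failure of finite presentation that you flag, but the failure of the hypothesis itself in the limit. Fortunately, your last paragraph already contains the correct (and only needed) argument: each Koszul stage \(\K_{n,\Com}\) of \Cref{con:sequence_presenting_completion} is finitely presented, the generators \(f_i\) are genuinely nilpotent in \(\pi_0\K_{n,\Com}\), so \(\K_{n,\Com}^{\redcom}\simeq\CI(\M)^{\redcom}\) and part one applies stage-wise; since \((-)^{\dRcom}\) is a left Kan extension it sends \(\widehat{Y}=\colim_n\Spec\K_{n,\Com}\) to \(\colim_n(\Spec\K_{n,\Com})^{\dRcom}\simeq\M^{\dRcom}\), whence \(\cdR_{\widehat{Y}}=\lim_n\cdR_{\K_{n,\Com}}\simeq\cdR_{\M}\). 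With the third paragraph's justification replaced by this stage-wise one, the proof stands. Note also that this establishes the statement in the reading \(\cdR_{\M}\simeq\cdR_{\widehat{Y}}\), i.e.\ de Rham cohomology is \emph{computed on} the completion (consistent with \Cref{prop:de_Rham_of_completion}); that is the only reading under which the claim is true, since \(\cdR_{\M}\) is generally not equivalent to the completed algebra \(\CI(Y)^{\wcom}_{\M}\) itself (again \(\R\) versus \(\R[[x]]\) for a point in the line).
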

	\subsection{Comparison morphism for de Rham stacks}
	In this section we spell out the comparison morphisms between \(\Com\) and \(\CI\) de Rham stacks and infinitesimal groupoids.
	\begin{proposition}\label{prop:comparison_morphism_for_de_Rham_stacks}
		There is a \enquote{comparison} morphism of derived stacks as follows.
		\begin{equation}\label{eq:de_Rham_fibre_sequence}
			\X^{\dR_{\Com}}\to \X^{\dR_{\CI}}
		\end{equation}
		When \(\X\) is formally smooth, this morphism is induced by the morphism of infinitesimal groupoids as follows.
		\begin{equation}\label{eq:infinitesimal_groupoid_comparison}
			\X^{\inft_{\Com}}\to \X^{\inft_{\CI}}
		\end{equation}
	\end{proposition}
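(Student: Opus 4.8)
The plan is to derive both assertions from one comparison of reduction functors and then transport it through the relevant constructions. First I would record the inclusion of radicals $\sqrt[\Com]{0}\sse\sqrt[\CI]{0}$: by \Cref{def:nilpotent_elements} a $\Com$-nilpotent element is just a nilpotent element, and if $s^n=0$ then inverting $s$ forces $0$ to become invertible, so the $\CI$-localization $A\{s^{-1}\}$ is already trivial and $s$ is $\CI$-nilpotent. Running the same argument with $A/I$ in place of $A$ gives $\sqrt[\Com]{I}\sse\sqrt[\CI]{I}$ for every ideal $I$. Quotienting by the larger ideal and truncating as in \Cref{con:reductions_of_CI_rings} then yields a canonical surjection $A^{\redcom}\to A^{\redcinf}$, natural in $A$; that is, a natural transformation $(-)^{\redcom}\Rightarrow(-)^{\redcinf}$ of endofunctors of $\CI\Alg$.

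With this in hand the morphism \eqref{eq:de_Rham_fibre_sequence} is immediate. By \Cref{con:de_Rham_stacks} one has $\X^{\dR_{\Com}}(A)=\X(A^{\redcom})$ and $\X^{\dR_{\CI}}(A)=\X(A^{\redcinf})$; since $\X$ is covariant in the algebra variable, postcomposing with $A^{\redcom}\to A^{\redcinf}$ produces a natural map $\X^{\dR_{\Com}}\to\X^{\dR_{\CI}}$.

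For the infinitesimal groupoids I would use that their defining completions are themselves written through the reductions. Writing the diagonal as $\X\to\X^{n+1}$, \Cref{def:formal_completion} presents the degree-$n$ term of the $\Com$-groupoid as $\X(R^{\redcom})\times_{\X^{n+1}(R^{\redcom})}\X^{n+1}(R)$, and the $\CI$-groupoid by the same formula with $\redcinf$. The factorization $R\to R^{\redcom}\to R^{\redcinf}$ sends the three corners of the first fibre square compatibly into those of the second, inducing a map of fibre products in each degree; since every face and degeneracy arises from functoriality of $\X$ in the algebra, these maps are simplicial and assemble into $\X^{\infcom}\to\X^{\infcinf}$, which is \eqref{eq:infinitesimal_groupoid_comparison}.

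The final step, and the one I expect to be delicate, is to identify these two descriptions when $\X$ is formally smooth. Here I would invoke \Cref{st:de_Rham_stack_is_presented_by_the_infinitesimal_groupoid}, together with its $\CI$-analogue (which holds by the identical formal-smoothness argument and is noted for functions after \Cref{thm:functions_on_inf_vs_functions_on_dR}), to write each de Rham stack as the quotient of its infinitesimal groupoid, and then pass to geometric realizations in the simplicial map above. The hard part will be checking that this realization genuinely recovers the morphism of the second paragraph rather than some other map between the same stacks. I expect this to reduce to naturality: the equivalence of \Cref{st:de_Rham_stack_is_presented_by_the_infinitesimal_groupoid} is built solely from the formal-smoothness $\pi_0$-epimorphism $\Map(S,\X)\to\Map(S^{\red},\X)$, and that epimorphism is compatible with the transformation $(-)^{\redcom}\Rightarrow(-)^{\redcinf}$, so the main labour is tracking this compatibility through the quotient.
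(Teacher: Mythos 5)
Your construction of \eqref{eq:de_Rham_fibre_sequence} is exactly the paper's proof: the paper likewise invokes \Cref{con:reductions_of_CI_rings} to obtain the chain \(A\to A^{\redcom}\to A^{\redcinf}\) and applies \(\X(-)\) to it; your verification that \(\sqrt[\Com]{0}\sse\sqrt[\CI]{0}\) via triviality of \(\CI\)-localizations is just the detail that the paper leaves implicit in that construction.

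The difference lies in the second assertion, which the paper's proof does not address at all (it ends after producing \eqref{eq:de_Rham_fibre_sequence}); your levelwise construction of \(\X^{\infcom}\to\X^{\infcinf}\) out of \Cref{def:formal_completion} is therefore a genuine addition, and it is correct. One warning about your final step: the claimed ``\(\CI\)-analogue'' of \Cref{st:de_Rham_stack_is_presented_by_the_infinitesimal_groupoid} is \emph{not} obtained by an identical argument. The \(\Com\)-version rests on formal smoothness supplying a \(\pi_0\)-epimorphism \(\Map(S,\X)\to\Map(S^{\redcom},\X)\), i.e.\ lifting along nilpotent-type thickenings; the \(\CI\)-version would require lifting along \(S^{\redcinf}\to S\), whose ideal is the \(\CI\)-radical (containing, e.g., flat functions) and is nowhere near nilpotent, so algebraic formal smoothness does not obviously suffice. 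The remark following \Cref{thm:functions_on_inf_vs_functions_on_dR} asserts only the corresponding statement on functions and explicitly foregoes a proof. You can, however, bypass this issue entirely: each level of \(\X^{\infcinf}\) remembers a point of \(\X(R^{\redcinf})\), giving a canonical augmentation \(\X^{\infcinf}\to\X^{\dR_{\CI}}\), so the \(\Com\)-equivalence of \Cref{st:de_Rham_stack_is_presented_by_the_infinitesimal_groupoid} alone yields the composite \(\X^{\dR_{\Com}}\simeq\left|\X^{\infcom}\right|\to\left|\X^{\infcinf}\right|\to\X^{\dR_{\CI}}\), and unwinding the completion data identifies this with the map of your second paragraph; that is all that ``induced by'' requires, with no \(\CI\)-presentation theorem needed.
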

	\begin{proof}
		By \Cref{con:reductions_of_CI_rings} we have morphisms of derived \(\CI\)-algebras for a derived \(\CI\)-algebra \(A\) as follows.
		\[
			A\to A^{\red_{\Com}}\to A^{\red_{\CI}}.
		\]
		These, of course, induce morphisms of functors as follows.
		\[
			\X(A)\to \X(A^{\red_{\Com}})\to \X(A^{\red_{\CI}}).
		\]
		This gives us the desired morphism \eqref{eq:de_Rham_fibre_sequence} of derived stacks.
	\end{proof}
	\begin{remark}
		In the smooth case, morphism \eqref{eq:infinitesimal_groupoid_comparison} can be thought of as a quotient map for the action of the \emph{germ} of the diagonal on \(\X^{\inft_{\Com}}\).
	\end{remark}
	\section{de Rham theorems for derived differentiable stacks}\label{sec:de_Rham_theorems}
	\subsection{Preliminaries}\label{sec:preliminaries_on_stacks}
	In this section, we review the basic notions of shape theory and the theory of representability for derived stacks.
	\subsubsection{Shapes and constant sheaves}
	The following subsection follows the approach of Lurie to geometric topology \cite[\S 7]{Lurie_HTT}. Below, we review several basic categorical notions needed to formalize this viewpoint. As mentioned in the introduction, the primary motivation for using this formalism is to treat cases where the underlying topological space of a derived stack does not have enough points. Situations like this are ubiquitous in geometry. For instance, the quotient of the point \(*\) with respect to any Lie group \(G\) has only one point in the naive sense, but has nevertheless an interesting category of sheaves. See \cite[\S 2.3]{Behrend_De_Rham}, \cite[\S 5]{Moerdijk_Mrcun} for other interesting examples of this phenomenon.
	\begin{definition}[{\cite[Definition 5.4.2.1]{Lurie_HTT}}]
		The \(\infty\)-category \(\mathcal{C}\) is \(\kappa\)-\emph{accessible} for some regular cardinal \(\kappa\) if it satisfies the following conditions.
		\begin{itemize}
			\item is locally small,
			\item has all \(\kappa\)-filtered colimits, and
			\item there is some essentially small sub-\(\infty\)-category \(\mathcal{C}' \hookrightarrow \mathcal{C}\) of \(\kappa\)-compact objects which generates \(\mathcal{C}\) under \(\kappa\)-filtered \(\infty\)-colimits.
		\end{itemize}
	\end{definition}
	\begin{definition}[{\cite[Definition 5.5.0.1]{Lurie_HTT}}]
		An $\infty$-category $\mathcal{C}$ is presentable if $\mathcal{C}$ is accessible (for some small cardinal \(\kappa\)) and admits small colimits. The category of presentable $\infty$-categories with left adjoint functors for morphisms is denoted by $\Pr^L$.
	\end{definition}
	\begin{definition}[{\cite[Definition 5.4.2.5]{Lurie_HTT}}]
		If \(\mathcal{C}\) is an accessible \(\infty\)-category, then a functor \(F : \mathcal{C} \to \mathcal{C}'\) is accessible if it is \(\kappa\)-continuous for some regular cardinal \(\kappa\) (and therefore for all regular cardinals \(\tau \geq \kappa\)).
	\end{definition}
	\begin{definition}
		Denote by \(\Shape\) the pro-completion of the category \(\Spc\) of spaces (i.e. \(\infty\)-groupoids). That is the category of accessible left exact functors \(f:\Spc\to \Spc\). 
	\end{definition}
	\begin{construction}[{The shape of a topological space}]\label{con:shape_of_a_space}
		Given a topological space \(X\), we associate to it its shape as follows. Consider the \(\infty\)-topos \(\X\) of \(\infty\)-sheaves on \(X\). Denote by \(q_*:\X \to \Spc\) the global sections functor and by \(q^*\) its left adjoint. Then the shape of \(X\) is given by the following composition \(q_*q^*:\Spc\to \Spc\).
	\end{construction}
	\begin{remark}
		Another more classical approach to shape theory views shapes as formal pro-objects in the category of CW-complexes (see \cite{Shape_history} for a historical overview of the subject). These pro-objects are thus defined as towers of CW-complexes indexed by the natural numbers. The relation between this approach and the approach we use is given by viewing pro-systems as the functors  out of the category of CW-complexes they represent. 
	\end{remark}
	\begin{proposition}
		Given a shape \(X\) arising from a topological space \(\ul{X}.\) Then the constant sheaf cohomology on \(X\) can be calculated using the \enquote{singular complex} arising from the Yoneda lemma.
		\[
			H^*(X,\ul{\R}_X)\simeq \colim_{S\to X} C^*(S;\R).
		\]
		Here \(C^*(S;\R)\) is the singular cochain complex of a CW-complex \(S\).
	\end{proposition}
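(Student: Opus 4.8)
The plan is to factor the statement through two facts: first, that the constant–sheaf cohomology of \(\ul{X}\) is an invariant of the shape \(X=q_*q^*\) alone; and second, that singular cochains, regarded as a functor \(C^*(-;\R)\colon \Spc^{\op}\to D(\R)\), extend \emph{continuously} along the pro-Yoneda embedding \(\Spc\hookrightarrow \Shape\), the value of this extension at a pro-space being precisely the colimit \(\colim_{S\to X}C^*(S;\R)\). Combining the two yields the asserted equivalence.

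For the first fact, recall from \Cref{con:shape_of_a_space} that \(q^*\colon \Spc\to \mc{X}\) produces the constant sheaf and \(q_*\) its global sections, so that, almost by definition, \(R\Gamma(\ul{X},\ul{\R}_X)\simeq q_*q^*\R\); here \(\R\) is taken in \(D(\R)\), or equivalently one evaluates the shape functor on the Eilenberg--MacLane objects \(K(\R,n)\) to recover the individual groups \(H^n(\ul X;\R)\). Since \(q_*q^*\) is exactly the shape \(X\in\Shape\), and by Lurie's shape-theoretic analysis \cite[\S 7]{Lurie_HTT} this composite is a genuine invariant computing constant-coefficient cohomology, the left-hand side depends only on \(X\); I will write \(C^*(X;\R)\) for this invariant.

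For the second fact, I use that, by the definition of \(\Shape\) as the pro-completion of \(\Spc\), the shape \(X\) is a cofiltered limit of CW complexes \(S\). Singular cochains send such pro-limits to filtered colimits, so the continuous extension of \(C^*(-;\R)\) to \(\Shape\) is computed, via the universal property of the pro-completion (equivalently, the colimit formula for the left Kan extension of \(C^*(-;\R)\) along the Yoneda embedding), as the colimit of \(C^*(S;\R)\) over the comma category of CW complexes probing \(X\). This is exactly the ``singular complex arising from Yoneda'' and gives \(C^*(X;\R)\simeq\colim_{S\to X}C^*(S;\R)\). Because the indexing category is filtered, the colimit commutes with passage to cohomology, so the right-hand side indeed computes \(H^*(X;\ul\R_X)\).

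The main obstacle is the compatibility of the two descriptions: one must verify that the extension of singular cochains to \(\Shape\) is genuinely continuous — that it carries the defining cofiltered system of \(X\) to a \emph{filtered} colimit agreeing with derived global sections of the constant sheaf — and that this defining pro-system is cofinal in the full comma category \(\Spc_{/X}\), so that restricting to it leaves the colimit unchanged. This cofinality, together with keeping the variance straight (cochains are contravariant while the shape is a formal limit), is the only non-formal point; the remaining identifications are routine manipulations with Kan extensions and \(\infty\)-topos shape theory.
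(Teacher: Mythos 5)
Your proposal is correct and takes essentially the same route as the paper: the paper's (very terse) proof likewise identifies constant-sheaf cohomology with the evaluation of the shape functor \(q_*q^*\) on the constant object \(\ul{\R}_*\), leaving the pro-object/co-Yoneda colimit formula implicit in the phrase ``arising from the Yoneda lemma.'' Your second step (continuous extension of cochains, cofinality of the defining pro-system, and the variance bookkeeping) simply spells out what the paper treats as definitional, so the two arguments coincide in substance.
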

	\begin{proof}
		The result follows from the \Cref{con:shape_of_a_space}. Indeed, consider the constant sheaf \(\ul{\R}_X=q^*(\ul{\R}_*).\) Then its global sections are given by \(q_*\ul{\R}_X.\) Consequently, the cohomology of the constant sheaf on \(X\) is given by evaluating the shape of \(X\) on the constant sheaf \(\ul{\R}_*.\)
	\end{proof}
	\begin{definition}[{\cite[Definition 7.2.3.1]{Lurie_HTT}}]
		A paracompact topological space \(X\) has covering dimension \(\leq n\) if the following condition is satisfied: for any open covering \(\{U_\alpha\}\) of \(X\), there exists an open refinement \(\{V_\alpha\}\) of \(X\) such that each intersection \(V_{\alpha_0} \cap \cdots \cap V_{\alpha_{n+1}} = \emptyset\) provided the \(\alpha_i\) are pairwise distinct.
	\end{definition}
	\begin{theorem}[{\cite[Corollary 7.2.1.12]{Lurie_HTT}, \cite[Theorem 7.2.3.6]{Lurie_HTT}}]
		If a paracompact topological space \(X\) has covering dimension \(\leq n\), then its \(\infty\) sheaf topos is hypercomplete. That is one can check the equivalences between \(\infty\)-sheaves on \(X\) on stalks.
	\end{theorem}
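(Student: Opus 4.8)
The plan is to deduce hypercompleteness of \(\Shv(X)\) from a bound on its \emph{homotopy dimension}, and then to upgrade the conclusion to the statement about stalks. Recall that an \(\infty\)-topos \(\mc{X}\) has homotopy dimension \(\leq n\) if every \(n\)-connective object \(U\) (one whose homotopy sheaves \(\pi_i(U)\) vanish for \(i<n\)) admits a global section \(1\to U\). The first input I would use is the purely topos-theoretic implication that an \(\infty\)-topos which is (locally) of finite homotopy dimension is hypercomplete; granting this, it suffices to bound the homotopy dimension of \(\Shv(X)\). The mechanism behind the implication is that an \(\infty\)-connective object \(U\) is in particular \(n\)-connective, hence admits a global section; running the same argument for the internal mapping objects shows that \(\Map(1,U)\) is not merely inhabited but contractible, so \(U\simeq 1\). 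Since \(\infty\)-connective morphisms are exactly those inverted by hypercompletion, this forces \(\Shv(X)\) to coincide with its hypercompletion.

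The heart of the matter is therefore to show that, when \(X\) is paracompact of covering dimension \(\leq n\), the topos \(\Shv(X)\) has homotopy dimension \(\leq n\). Here I would proceed as follows. Let \(U\) be an \(n\)-connective sheaf. Connectivity is a local condition, so after passing to a suitable open cover \(U\) admits local sections; paracompactness lets me replace this cover by a locally finite one \(\{V_\alpha\}\), and the covering-dimension hypothesis lets me refine further so that every \((n+2)\)-fold intersection of distinct \(V_\alpha\) is empty. The nerve of such a cover then has no nondegenerate simplices above dimension \(n\), and I would build a global section of \(U\) by induction on the depth of intersection: having chosen compatible sections over all \(k\)-fold intersections, the obstruction to extending over the \((k+1)\)-fold intersections is a lifting problem whose relevant fiber is \((n-k)\)-connective, hence solvable in the available range. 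The bound on covering dimension guarantees the induction terminates after \(n\) steps, producing the desired global section.

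The main obstacle is precisely this inductive gluing step. One must arrange the local sections to be mutually compatible, which is a genuinely homotopy-coherent patching problem rather than a strict one, and controlling the resulting tower of obstructions requires care with the interplay between the connectivity of \(U\) and the combinatorial dimension of the cover. This is exactly where both hypotheses are indispensable: paracompactness supplies the locally finite refinements (and partitions of unity) needed to perform the patching, while the covering-dimension bound truncates the obstruction tower so that the process terminates. It is also the step that fails for spaces of infinite covering dimension, which is why hypercompleteness can genuinely break down there.

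Finally, for the stalk formulation I would observe that a paracompact space of finite covering dimension has enough points, namely the stalk functors \(x^*\colon \Shv(X)\to \Spc\). A morphism \(f\) of \(\infty\)-sheaves induces an isomorphism on every homotopy sheaf \(\pi_i\) if and only if it does so on the stalk of each \(\pi_i\); since filtered colimits commute with homotopy groups, this is equivalent to \(x^*f\) inducing isomorphisms on \(\pi_i\) for every point \(x\), i.e.\ to \(x^*f\) being an equivalence in \(\Spc\) for all \(x\). By the hypercompleteness established above, any such \(f\) is already an equivalence, so equivalences of \(\infty\)-sheaves on \(X\) may be detected on stalks, as asserted.
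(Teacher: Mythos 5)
Your proposal is correct and follows precisely the route of the paper's own "proof," which is simply the citation to Lurie: the covering-dimension hypothesis bounds the homotopy dimension of \(\Shv(X)\) (HTT Theorem 7.2.3.6), finite homotopy dimension forces hypercompleteness (HTT Corollary 7.2.1.12), and the stalk formulation follows because \(\infty\)-connective morphisms are exactly the stalkwise equivalences. The paper offers no independent argument, so your reconstruction of Lurie's obstruction-theoretic gluing and the final stalk-detection step is exactly the intended proof.
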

	\begin{lemma}[{\cite[Theorem 2.7]{Dimension_theory}}]
		Topological space \(\R^n\) has covering dimension \(\leq n\).
	\end{lemma}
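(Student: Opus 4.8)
The plan is to unwind the definition. To show that $\R^n$ has covering dimension $\le n$ it suffices to prove that every open cover $\{U_\alpha\}$ of $\R^n$ admits an open refinement $\{V_\beta\}$ of \emph{order} at most $n+1$, i.e.\ one in which no point lies in more than $n+1$ members; equivalently, any $(n+2)$-fold intersection $V_{\beta_0}\cap\cdots\cap V_{\beta_{n+1}}$ of pairwise distinct members is empty, which is exactly the stated condition. Since $\R^n$ is paracompact the passage between these formulations is harmless, and one is free to first replace $\{U_\alpha\}$ by a locally finite refinement.

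The cleanest route I would take is the \emph{open-star} construction from a triangulation. First I would fix the standard locally finite, dimension-$n$ triangulation $K$ of $\R^n$ (for instance the Freudenthal/Kuhn simplicial subdivision of the integer-cube lattice), so that $|K|=\R^n$. The open stars $\{\operatorname{st}(v)\}_{v\in K^{(0)}}$ of the vertices form an open cover, and the decisive combinatorial fact is that $\operatorname{st}(v_0)\cap\cdots\cap\operatorname{st}(v_k)\neq\emptyset$ precisely when $v_0,\dots,v_k$ span a simplex of $K$. Because $K$ has dimension $n$, no $n+2$ distinct vertices span a simplex, so the open-star cover has order $\le n+1$ automatically: a point in the relative interior of a $k$-simplex with $k\le n$ lies in exactly the $k+1\le n+1$ open stars of that simplex's vertices.

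The remaining step, and the main obstacle, is to arrange that the open-star cover \emph{refines} the given $\{U_\alpha\}$, i.e.\ that each $\operatorname{st}(v)$ sits inside some $U_\alpha$. On a compact cube this is immediate from a Lebesgue-number argument after iterating barycentric subdivision, but on the non-compact $\R^n$ one cannot use a single uniform mesh, since the cover may become arbitrarily fine towards infinity. I would handle this by subdividing \emph{adaptively}: exhaust $\R^n$ by compacta $C_1\sse C_2\sse\cdots$ and, using local finiteness of $\{U_\alpha\}$, choose the subdivision mesh on each shell $C_{m+1}\setminus\operatorname{int}C_m$ fine enough that the closed stars there lie inside cover elements, while keeping the global subdivision a genuine locally finite triangulation of dimension $n$. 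Checking that this adaptive subdivision can be performed simplicially without raising the dimension is the technical heart of the argument.

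An alternative I would keep in reserve is Lebesgue's direct \emph{brick} construction: tile $\R^n$ by boxes staggered in a brick-laying pattern so that the $2^n$-fold corner incidences of a naive grid are broken down to at most $(n+1)$-fold incidences, then fatten the bricks slightly to open sets of controlled diameter; this produces an order-$(n+1)$ open cover of arbitrarily small mesh directly, after which the same exhaustion-by-compacta device upgrades \enquote{small mesh} to \enquote{refines $\{U_\alpha\}$}. Either way the numerical bound $n+1$ is forced by the same underlying fact, namely that an $n$-dimensional cell structure has local incidence at most $n+1$, and the only delicate point is the non-compact fineness bookkeeping; the result is of course classical, matching \cite[Theorem 2.7]{Dimension_theory}.
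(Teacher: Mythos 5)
First, a point of comparison: the paper offers no proof of this lemma at all --- it is quoted directly from Pears \cite{Dimension_theory}, so your argument is being weighed against a citation rather than an internal argument. Your outline follows the right classical strategy, and the facts you rest it on are correct: for a locally finite \(n\)-dimensional triangulation \(K\) of \(\R^n\), a point in the relative interior of a \(k\)-simplex lies in exactly the \(k+1\) open stars of that simplex's vertices, so the open-star cover has order at most \(n+1\); and on a \emph{compact} polyhedron, iterated barycentric subdivision together with a Lebesgue number makes the closed stars refine any prescribed open cover. The reduction between the two formulations of ``order \(\le n+1\)'' for paracompact spaces is likewise harmless.

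The genuine gap is precisely the step you flag as the technical heart, which remains an assertion rather than a proof: producing one locally finite, \(n\)-dimensional triangulation of the non-compact space \(\R^n\) whose stars refine an arbitrary locally finite open cover. The shell-by-shell plan does not work as stated, because subdividing the shell \(C_{m+1}\setminus\operatorname{int}C_m\) more finely than its neighbours destroys the simplicial matching along the interface; one needs relative (transition) subdivisions, and showing these exist without raising dimension or losing local finiteness is essentially J.~H.~C.~Whitehead's theorem that every open cover of a polyhedron is refined by the stars of some subdivision --- a substantial piece of PL topology, not a routine check. The same defect afflicts the brick alternative: brick systems of different mesh must overlap near the interfaces of the exhaustion, and there the order can climb above \(n+1\) unless the staggering is globally coordinated. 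The standard way to make your argument airtight with minimal machinery is to prove only the compact case (the cube \([0,1]^n\), or a compact annulus, by exactly your Lebesgue-number argument) and then invoke the countable sum theorem for covering dimension: a normal space that is a countable union of closed subspaces of covering dimension \(\le n\) itself has covering dimension \(\le n\). Applied to the closed shells \(\{m-1\le |x|\le m\}\), this theorem absorbs all of the non-compact fineness bookkeeping, and it is how textbook treatments of this statement are typically structured.
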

	\begin{corollary}\label{st:hypercompleteness_of_dMfld}
		The underlying topological space of any derived manifold has finite covering dimension. Hence, the topos of sheaves on any derived manifold is hypercomplete. 
	\end{corollary}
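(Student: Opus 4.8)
The plan is to show that the underlying topological space $\M(\R)$ of any derived manifold embeds as a (not necessarily closed) subspace of some Euclidean space $\R^N$, and then to combine the monotonicity of covering dimension for subspaces of metrizable spaces with the lemma above that $\R^N$ has covering dimension $\leq N$. Hypercompleteness then follows from the cited theorem of Lurie.

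First I would observe, directly from \Cref{def:der_mfld}, that the underlying topological space of each of the basic derived pullbacks is a closed subset of $\R^n$: it is the fibre over $0$ of a smooth map $(f_1,\ldots,f_m)\colon \R^n\to \R^m$, and this fibre is by definition computed in $\Top$. The category $\dMfld$ is generated from these pullbacks by finite limits and retracts, and by construction $\M(\R)$ is the corresponding finite limit taken in $\Top$. A finite limit in $\Top$ is built from finite products and equalizers, an equalizer is a subspace, and a finite product of Euclidean spaces is again Euclidean; hence a finite limit of subspaces of Euclidean spaces is itself a subspace of some $\R^N$. Finally, since the underlying-space assignment is functorial it carries a retract in $\dMfld$ to a retract in $\Top$, and a retract of a space is in particular a subspace of it (the image of the retraction, carrying the subspace topology). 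Thus $\M(\R)$ is homeomorphic to a subspace of some $\R^N$.

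Next I would invoke the subspace theorem for covering dimension. Any subspace of $\R^N$ is separable metrizable, hence paracompact (so the covering-dimension definition applies), and for metrizable spaces covering dimension is monotone under \emph{arbitrary} subspaces. Combined with the lemma that $\R^N$ has covering dimension $\leq N$, this yields that $\M(\R)$ has covering dimension $\leq N<\infty$. Having established paracompactness and finite covering dimension, the theorem of Lurie cited above immediately gives that the $\infty$-topos of sheaves on $\M(\R)$ is hypercomplete, completing the argument.

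The step requiring the most care is the monotonicity of covering dimension under non-closed subspaces. For general topological spaces this can fail, and it is available here only because every subspace of $\R^N$ is metrizable. In particular, retracts and the subspaces cut out by equalizers need not be closed, so one genuinely needs the \emph{subspace} version of the monotonicity statement rather than merely the closed-subspace version; all the other steps are formal consequences of the definition of $\dMfld$ and the functoriality of the underlying-space construction.
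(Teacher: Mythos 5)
Your proof is correct and follows essentially the same route as the paper: realize \(\M(\R)\) as a subspace of some \(\R^N\), apply the monotonicity (subset) theorem for covering dimension together with \(\dim \R^N\le N\), and then conclude hypercompleteness from the cited theorem of Lurie. One minor remark: the care you devote to non-closed subspaces is not actually needed here, since equalizers of maps into Hausdorff spaces and fixed-point sets of idempotent self-maps of Hausdorff spaces are automatically closed, so every subspace produced by your construction (including the retract step) is closed in \(\R^N\), and the closed-subspace version of monotonicity would already suffice.
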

	\begin{proof}
		This follows from the monotonicity property of covering dimension, see \cite[Theorem 6.4]{Dimension_theory}.
	\end{proof}
	\subsubsection{Shape and sheaves on derived \texorpdfstring{\(\CI\)}{C-infinity}-stacks}
	Following Lurie \cite[\S 7.1]{Lurie_HTT}, we use the category of shapes as a more refined version of homotopy types. This category will target the \emph{underlying shape} function from the category of derived \(\CI\)-stacks.
	\begin{construction}
		 Given a derived manifold \(X\) we define its \emph{shape} \(\Shape(X)\) as the shape of the underlying topological space of \(X\) viewed as a closed subset of \(\R^n\). This homotopy type is a pro-object in the category of spaces via the co-Yoneda embedding. Now we define the shape of a derived \(\CI\)-stack \(\X\) using the following left Kan extension formula.
		\[
			\Shape(\X):=\colim_{\Spec A\to \X}\Shape(\Spec A).
		\]
		Similarly, one defines the category of \emph{Betti sheaves}  \(\Shv_{\Betti}(\X)\) on \(\X\) as the colimit along the same diagram of the presentable categories of sheaves on the underlying topological spaces of derived manifolds. 
	\end{construction}
	\begin{remark}
		Observe that the shape of a derived differentiable stack has the same relation with the category of Betti sheaves on a stack \(\X\) as the shape of a topological space with its category of sheaves in \Cref{con:shape_of_a_space}. That is the shape \(\Shape(\X)\) is equivalent to the composition \(q_*q^*:\Spc\to \Spc\) where \(q^*\) is the left adjoint to the global sections functor \(q_*:\Shv_{\Betti}(\X)\to \Spc\).
	\end{remark}
	\subsubsection{Geometric stacks}\label{sec:geometric_stacks} The following material is standard; the original idea goes back to Simpson \cite{Simpson_n_stacks}. See \cite[\S 1.3]{HAGII}, \cite[\S I.2.4]{Gaitsgory_Rozenblyum_book_1} for a general review in the setting of derived geometry and \cite[\S 4]{Pridham_DG}, \cite[\S 4]{Pridham_Rep}. Also, see \cite[\S 4.2]{Steffens_thesis} for a review specifically in the setting of derived differential geometry.
	In principle, geometric stacks can be defined using the following one-line definition. 
	\begin{informal_definition}\label{def:geometric_stack}
		An \(n\)-geometric stack is a stack \(\X\) presentable by an internal \(n\)-groupoid on the site of derived manifolds. 
	\end{informal_definition}
	More concretely, we give the following standard inductive definition (see, for example, \cite[\S I.2.4]{Gaitsgory_Rozenblyum_book_1}).
	\begin{definition}\label{def:n-geometric-stacks}
		A \(0\)-geometric stack is a coproduct of derived manifolds. A morphism of \(0\)-geometric stacks is \(0\)-smooth if it is an ordinary smooth morphism of derived manifolds. An \(n\)-geometric stack is a derived \(\CI\)-stack \(\Xc\) possessing a morphism from derived \((n-1)\)-geometric stack \(\Uc\to \Xc\) which is \((n-1)\)-smooth, surjective and representable by an \((n-1)\)-geometric stack.
		A morphism of \(n\)-geometric stacks is \(n\)-smooth if it is representable by \((n-1)\)-smooth morphism of \((n-1)\)-geometric stacks.
	\end{definition}
	\begin{example}
		The basic examples of \(n\)-geometric stacks are given by derived Deligne--Mumford stacks. These are defined as \(n\)-geometric stacks such that the morphisms in the definition of the \(n\)-geometric stack are \'{e}tale. These, for instance, include quotient stacks of derived manifolds with respect to discrete Lie group actions.
	\end{example}
	\begin{definition}\label{def:ala_for_geometric_stacks}
		A \(0\)-geometric stack is said to be \ldRa if it is a coproduct of \ldRa derived manifolds. An \(n\)-geometric stack is said to be \ldRa if it has a presentation by \((n-1)\)-geometric stack which is \ldRa.
	\end{definition}
	\begin{proposition}\label{prop:ala_is_presentation_independent}
		Let \(\X\) be an \ldRa \(n\)-geometric derived differentiable stack. Assume that \(U\to \X\) is a presentation witnessing the \ldRa condition. Let \(U'\to \X\), then it is also \ldRa. That is, the \ldRa condition is independent of presentation.
	\end{proposition}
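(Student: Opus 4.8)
The plan is to isolate the geometric content of the statement in a single smooth-invariance property and then propagate it through the inductive definition of geometric stacks (\Cref{def:n-geometric-stacks,def:ala_for_geometric_stacks}). Concretely, I would establish by induction on $n$ the conjunction of two assertions: $(A_n)$, that the \ldRa condition of an $n$-geometric stack does not depend on the chosen presentation; and $(B_n)$, that a smooth surjective morphism $f\colon V\to W$ of $n$-geometric stacks satisfies ``$V$ is \ldRa $\iff$ $W$ is \ldRa''. The proposition itself is $(A_n)$, but $(B_n)$ is the statement that actually carries the induction. The dependencies are arranged linearly, $(B_0)\Rightarrow(A_1)\Rightarrow(B_1)\Rightarrow(A_2)\Rightarrow\cdots$, where $(A_n)$ is deduced from $(B_{n-1})$ and $(B_n)$ from $(A_n)$ together with $(B_{n-1})$.

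For the step $(A_n)$ I would take a presentation $U\to\X$ witnessing the \ldRa condition (so $U$ is an \ldRa $(n-1)$-geometric stack) and an arbitrary second presentation $U'\to\X$, and form the fibre product $U''=U\times_{\X}U'$. Since the atlas morphisms are representable by $(n-1)$-geometric stacks, the projection $U''\to U'$ is $(n-1)$-representable over the $(n-1)$-geometric base $U'$, so $U''$ is again $(n-1)$-geometric; moreover both projections $U''\to U$ and $U''\to U'$ are base changes of the smooth surjective atlas maps and hence smooth and surjective. Applying $(B_{n-1})$ to $U''\to U$ propagates the \ldRa property from $U$ to $U''$, and applying it to $U''\to U'$ propagates it from $U''$ to $U'$; thus $U'$ is \ldRa, i.e.\ it too witnesses that $\X$ is \ldRa. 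The step $(B_n)$ is the same bookkeeping: choosing an atlas $P\to W$ and setting $Q=V\times_W P$, one checks that $Q$ is $(n-1)$-geometric, that $Q\to P$ and $Q\to V$ are smooth surjective, and that $Q\to V$ is a presentation; then the chain ``$W$ \ldRa $\iff$ $P$ \ldRa'' (by $(A_n)$), ``$P$ \ldRa $\iff$ $Q$ \ldRa'' (by $(B_{n-1})$), ``$Q$ \ldRa $\iff$ $V$ \ldRa'' (by $(A_n)$) closes the argument.

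The real content is the base case $(B_0)$, where $V\to W$ is a smooth surjective morphism of (coproducts of) derived manifolds. Because \ldRa is by \Cref{def:ldRa} a stalkwise condition on the de Rham sheaf and, by \Cref{ala_is_determined_by_classical}, insensitive to the derived structure, I would work locally on the source. The implicit function theorem \Cref{implicit_function_theorem} presents $f$, near any $v\in V$, as a projection $W'\times\R^k\to W'$ onto an open neighbourhood $W'\ni f(v)$. It then suffices to show that crossing with $\R^k$ leaves the stalk of $\cdR$ unchanged: combining the homotopy invariance of derived de Rham cohomology with the product/Künneth description $\cdR_{W'\times\R^k}\simeq\cdR_{W'}\otimes^\infty\cdR_{\R^k}$ and the Poincar\'e lemma $\cdR_{\R^k}\simeq\R$, the stalk of $\cdR_V$ at $v$ is identified with the stalk of $\cdR_W$ at $f(v)$. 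Surjectivity of $f$ then yields both implications at once: if every stalk of $\cdR_W$ is $\R$ then so is every stalk of $\cdR_V$, and conversely every point of $W$ is the image of a point of $V$ at which the stalk is $\R$.

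I expect the main obstacle to be exactly this base case, and within it the compatibility of the Künneth identification with the Hodge completion and with passage to stalks: one must be sure that $\cdR_{W'\times\R^k}\simeq\cdR_{W'}$ holds at the level of the completed filtered sheaf and not merely for the uncompleted global complexes, which is where the finiteness of the filtration on $\cdR_{\R^k}$ (as used in the proof of homotopy invariance) enters. Once $(B_0)$ is in hand, the remainder is formal manipulation of fibre products and base change along \Cref{def:n-geometric-stacks}; the only minor care needed is that $0$-geometric stacks are coproducts, so the local stalk argument is applied componentwise, which is harmless since everything is local on the underlying spaces.
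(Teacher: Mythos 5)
Your core argument coincides with the paper's: the paper's proof (written out only for \(n=1\)) forms the fibre product \(U\times_\X U'\), observes it is smooth over \(U'\), and uses \Cref{implicit_function_theorem} together with the triviality of \(\cdR\) along \(\R^k\)-fibres to conclude that the stalks of \(\cdR_{U'}\) are trivial — this is exactly your \((B_0)\) transported through the fibre product, i.e.\ your deduction of \((A_1)\); the paper then dismisses \(n>1\) with \enquote{follows by induction}, which your \((A_n)/(B_n)\) scaffolding is an attempt to make explicit. Your base case \((B_0)\) is sound and uses precisely the paper's toolkit (local product structure from \Cref{implicit_function_theorem}, the K\"unneth/homotopy-invariance identification \(\cdR_{W'\times\R^k}\simeq\cdR_{W'}\), compare also \Cref{st:smooth_descent_for_de_Rham} and \Cref{section_means_trivial_de_Rham}).

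There is, however, one concrete slip in your \((B_n)\) step: \(Q=V\times_W P\) is in general only \(n\)-geometric, not \((n-1)\)-geometric. Indeed \(V\) is \(n\)-geometric, and representability of the atlas \(P\to W\) only yields that \(Q\times_V M\) is \((n-1)\)-geometric for an atlas \(M\to V\); the diagonal argument likewise only bounds \(Q\) by the geometricity of \(V\). Consequently neither \((B_{n-1})\) (applied to \(Q\to P\)) nor \((A_n)\) (applied to \(Q\to V\), which is then not a presentation in the sense of \Cref{def:n-geometric-stacks}) applies to \(Q\) as you claim, and that link of the chain breaks. The repair is routine and stays within your framework: pull back once more along an atlas \(M\to V\) and set \(R=Q\times_V M\simeq M\times_W P\), which \emph{is} \((n-1)\)-geometric (a fibre product of \((n-1)\)-geometric stacks over the \(n\)-geometric \(W\)) and maps smooth-surjectively to both \(M\) and \(P\); then \((B_{n-1})\) applied to \(R\to P\) and to \(R\to M\), together with \Cref{def:ala_for_geometric_stacks} and \((A_n)\), gives the chain \(W\) \ldRa \(\iff P\) \ldRa \(\iff R\) \ldRa \(\iff M\) \ldRa \(\iff V\) \ldRa. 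With this correction your induction closes, and is in fact a more complete argument than the paper's own one-line treatment of the case \(n>1\).
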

	\begin{proof}
		Consider the case of \(n=1\). Let \(*\xrightarrow{p} U'\) be a point of \(U'\). Consider a fibre product \(U\times_\X U'\), then it is smooth over \(U'\). Consequently, by \Cref{implicit_function_theorem} the fibre over \(p\) is equivalent to \(\R^n\) for some \(n\). Consequently, the stalk of derived de Rham cohomology at the point \(p\) is trivial. The case of general \(n\) follows by induction. 
	\end{proof}
	\subsection{de Rham theorem for \texorpdfstring{\(\CI\)}{C-infinity}-de Rham cohomology}
	\begin{construction}\label{con:CI-derived-de-Rham}
		Given a derived (affine) manifold \(X\), we define \(\CI\)-derived de Rham cohomology sheaf on the topological space \(X(\R)\) as follows. Given an open subset \(U\sse X(\R)\) we define the \(\CI\)-structure on \(U\) by restricting the structure sheaf on \(X\) to \(U\) as in \Cref{ex:open_subsets_of_derived_manifolds}. That is, we invert a characteristic function of \(U\) (any characteristic function will do). Then we define the sheaf of derived \(\CI\)-algebras as follows.
		\[
			U\mapsto \CI(U^{\dRcinf}).
		\]
		We call this sheaf the \(\CI\)-derived de Rham cohomology sheaf of \(X\).
		If \(\X\) is a derived \(\CI\)-stack, we define the \(\CI\)-derived de Rham cohomology of \(\X\) via the right Kan extension from the category of derived manifolds.
	\end{construction}
	\begin{proposition}\label{st:open_means_open_on_deRham}
		Let \(U\to V\) an open embedding the induced morphism of \(\Com\) and \(\CI\)-de Rham stacks is an open embedding.
	\end{proposition}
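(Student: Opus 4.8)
The plan is to reduce the assertion to a local model and then to the single fact that inverting a function is insensitive to reduction. Throughout let $r\in\{\CI,\Com\}$ denote either flavour, so that by \Cref{con:de_Rham_stacks} the de Rham stack is computed by precomposing a representable functor with the reduction endofunctor:
\[
	U^{\dR_r}(A)=\Map_{\CI\Alg}\!\big(\CI(U),A^{\red_r}\big).
\]
An open embedding of stacks is a monomorphism which is representable by open embeddings of derived manifolds. Since $U\to V$ is a monomorphism, the map $U^{\dR_r}\to V^{\dR_r}$ is again one, being the restriction of $U\to V$ along reduction; so it suffices to show that for every derived manifold $T$ and every map $T\to V^{\dR_r}$ the fibre product $U^{\dR_r}\times_{V^{\dR_r}}T$ is an open subscheme of $T$.

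First I would fix the local model. By \Cref{ex:open_subsets_of_derived_manifolds} the open embedding is presented by a $\CI$-localization $\CI(U)\simeq\CI(V)\{g^{-1}\}$, where $g$ is a characteristic function of $U$ inside $V$. A map $T\to V^{\dR_r}$ is, by Yoneda, a morphism $\phi\colon\CI(V)\to\CI(T)^{\red_r}$; as $\CI(T)\to\CI(T)^{\red_r}$ is a $\pi_0$-surjection, I choose a lift $\tilde h\in\pi_0\CI(T)$ of $\phi(g)$. The claim is
\[
	U^{\dR_r}\times_{V^{\dR_r}}T\;\simeq\;T\{\tilde h^{-1}\},
\]
the open subset of $T$ on which $\tilde h$ is invertible, which is an open embedding into $T$ by \Cref{ex:open_subsets_of_derived_manifolds}. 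To verify this I evaluate both sides on a test algebra $A$. An $A$-point of the fibre product is a pair $(\psi,\chi)$ with $\psi\colon\CI(V)\{g^{-1}\}\to A^{\red_r}$ and $\chi\colon\CI(T)\to A$, such that $\psi|_{\CI(V)}=\chi^{\red_r}\circ\phi$. By the universal property of $\CI$-localization such a $\psi$ exists, and is then unique, exactly when $\chi^{\red_r}(\phi(g))=\overline{\chi(\tilde h)}$ is invertible in $\pi_0(A^{\red_r})$, whereas $T\{\tilde h^{-1}\}(A)$ is the locus where $\chi(\tilde h)$ is invertible in $\pi_0(A)$. Because all functors involved are sheaves on $\dMfld$, it is enough to compare their values on finitely generated $A=\CI(S)$.

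Everything thus reduces to the crux: for finitely generated $A$ and $a\in\pi_0(A)$, the element $a$ is invertible if and only if its image $\bar a\in\pi_0(A^{\red_r})=\pi_0(A)/\sqrt[r]{0}$ is invertible; equivalently, $1+n$ is a unit for every $n\in\sqrt[r]{0}$. The independence of the chosen lift $\tilde h$ follows from the same statement, since two lifts differ by an element of $\sqrt[r]{0}$. For $r=\Com$ this is the standard detection of units modulo the nilradical, via the geometric series for $1+n$ with $n$ nilpotent. For $r=\CI$ I would invoke \Cref{cor:CI-red=RJ-red}: on a finitely generated algebra the $\CI$-radical agrees with the real Jacobson radical $\sqrt[\RJ]{0}=\bigcap_{\varphi\colon A\to\R}\ker\varphi$, so an element $n\in\sqrt[\CI]{0}$ vanishes at every $\R$-point of $A$; hence $1+n$ is nowhere vanishing on $\Spec A$ and therefore a unit.

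The main obstacle is exactly this last passage — turning ``nowhere vanishing'' into ``invertible'' for the possibly non-fair ring $\pi_0(A)$ — and it is precisely here that the finite generation built into the site $\dMfld$ and the identification of the $\CI$-radical with the real Jacobson radical (\Cref{cor:CI-red=RJ-red}) are essential; without the latter the $\CI$-statement genuinely fails, as recorded in the remark following \Cref{cor:CI-red=RJ-red}. Once invertibility is known to be insensitive to $r$-reduction, the displayed equivalence $U^{\dR_r}\times_{V^{\dR_r}}T\simeq T\{\tilde h^{-1}\}$ holds, exhibiting every such pullback as an open embedding and completing the argument uniformly for both $\Com$ and $\CI$.
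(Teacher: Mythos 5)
Your proof follows the same route as the paper's: present the open embedding as a \(\CI\)-localization at a single element, pull back along a test derived manifold mapping to \(V^{\dR_r}\) by lifting that element through the reduction map, and identify the fibre product with the corresponding open subset of the test manifold, checking independence of the lift. The paper dismisses the universal-property verification as clear, whereas you spell it out and isolate the key algebraic input — that units in \(\pi_0(A)\) are detected modulo \(\sqrt[r]{0}\), via the geometric series for \(\Com\) and via \Cref{cor:CI-red=RJ-red} for \(\CI\) (your one asserted step, that a nowhere-vanishing element of a finitely presented \(\CI\)-algebra is a unit, does hold by a standard bump-function division argument) — so your write-up is, if anything, more complete than the paper's.
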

	
	\begin{proof}
		First, we treat the case of the \(\CI\)-de Rham stack. 
		Consider the following diagram. 
		\[\begin{tikzcd}
			T \\
			& X && S \\
			\\
			& {U^{\dRcinf}} && {V^{\dRcinf}}
			\arrow[dashed, from=1-1, to=2-2]
			\arrow[curve={height=-12pt}, from=1-1, to=2-4]
			\arrow[curve={height=12pt}, from=1-1, to=4-2]
			\arrow[from=2-2, to=2-4]
			\arrow[from=2-2, to=4-2]
			\arrow["\lrcorner"{anchor=center, pos=0.125}, draw=none, from=2-2, to=4-4]
			\arrow[from=2-4, to=4-4]
			\arrow[from=4-2, to=4-4]
		\end{tikzcd}\]
		Let \(R\) be the algebra of functions on \(S\). Denote by \(A\) the algebra of functions on \(V\). Then the algebra of functions on \(U\) is given by a \(\CI\)-localization in a single element \(\chi\). Denote by \(\vp:A\to R^{\redcinf}\). Then one can take \(X\) to be the spectrum of the algebra \(R\) localized in the element \(\vp(\chi)\) lifted to \(R\) along the projection \(R\to R^{\redcinf}.\) It is clear that it verifies the requisite universal property and is independent of the choice of the lift.

		The case of the \(\Com\)-de Rham stack is treated similarly.
	\end{proof}
	\begin{theorem}\label{CI-infinity-de-Rham=const-sheaf}
		Let \(X\) be a derived manifold. Then, the \(\CI\)-derived de Rham cohomology of \(X\) (as a sheaf on \(X(\R)\)) is equivalent to the cohomology of the constant sheaf on the underlying topological space \(X(\R)\).
	\end{theorem}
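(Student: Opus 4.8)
The plan is to exhibit a comparison map $\ul{\R} \to \mathcal{H}$, where $\mathcal{H}$ denotes the $\CI$-derived de Rham cohomology sheaf $U \mapsto \CI(U^{\dRcinf})$ of \Cref{con:CI-derived-de-Rham}, and to prove it is an equivalence of sheaves on $X(\R)$. Both sides are sheaves of derived algebras: for $\mathcal{H}$ this uses \Cref{st:open_means_open_on_deRham}, which guarantees that restriction along an open embedding $U \hookrightarrow V$ is modelled by a $\CI$-localization of de Rham stacks and hence is local and functorial. The comparison map is the unit sending a locally constant function to the corresponding degree-zero de Rham class. Since $X(\R)$ is a closed subset of some $\R^n$ it has finite covering dimension, so by \Cref{st:hypercompleteness_of_dMfld} its sheaf topos is hypercomplete; therefore it suffices to check that the comparison map is an equivalence on stalks at every point $x \in X(\R)$.

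The stalk of $\ul{\R}$ at $x$ is $\R$, so the whole theorem reduces to a local statement — a $\CI$-Poincar\'{e} lemma — namely that the stalk $\mathcal{H}_x = \colim_{U \ni x} \CI(U^{\dRcinf})$ is equivalent to $\R$. First I would reduce the computation of this stalk to the germ of the structure at $x$: using the locality afforded by \Cref{st:open_means_open_on_deRham}, the colimit over shrinking neighborhoods computes the $\CI$-de Rham cohomology of the germ $(X,x)$. The decisive input is \Cref{cor:CI-red=RJ-red}: for a finitely presented $\CI$-algebra the $\CI$-reduction agrees with the real Jacobson reduction, so $R^{\redcinf}$ only remembers the set of $\R$-points of $\Spec R$. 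Consequently the functor $R \mapsto X(R^{\redcinf})$ defining $X^{\dRcinf}$ is insensitive to every function vanishing on the point set $X(\R)$ — in particular to all functions that are flat along $X(\R)$.

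This is exactly the feature that repairs the failure exhibited in \Cref{ex:de_Rham_counterexample}: there the class of $\tau\,dx$ with $\tau = e^{-1/\vp}$ was non-trivial for the $\Com$-theory precisely because $\tau$ is flat but non-zero, whereas here $\tau \in \sqrt[\CI]{I} = \m_{X(\R)}$ by \Cref{st:CI-inf_radical=RJ-radical}, so it is absorbed into the $\CI$-infinitesimal directions and contributes nothing. Concretely I would argue that, germ-locally at $x$, the $\CI$-infinitesimal thickenings exhaust the germ: a degree-zero $\CI$-de Rham class (via the identification of \Cref{thm:de_Rham_vs_functions_on_inf} together with the remark following \Cref{thm:functions_on_inf_vs_functions_on_dR}) is a function constant along the $\CI$-infinitesimal groupoid, which forces it to be locally constant on the germ of $X(\R)$, giving $H^0 = \R$; the higher forms should then admit $\CI$-infinitesimal contracting homotopies built from these flat directions, killing $H^{>0}$.

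I expect the main obstacle to be precisely this last point: establishing local acyclicity of the $\CI$-de Rham stack in full generality, i.e. without any regularity hypothesis on $X(\R)$, in contrast to the subanalytic case where one may instead invoke Brasselet--Pflaum acyclicity as in \Cref{thm:de_Rham_for_analytic}. The subtlety is that the germ of $X(\R)$ may be topologically wild, so the contracting homotopy cannot come from an honest smooth contraction of the space; it must be produced at the level of the $\CI$-infinitesimal groupoid, exploiting that passing to the real Jacobson radical makes these infinitesimal neighborhoods large enough to contain every flat direction. Once the stalk is shown to be $\R$, hypercompleteness upgrades the stalkwise equivalence to an equivalence of sheaves, and passing to derived global sections identifies the $\CI$-de Rham cohomology with $H^*(X(\R);\ul{\R})$.
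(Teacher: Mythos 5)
Your reduction is exactly the paper's: finite covering dimension gives hypercompleteness (\Cref{st:hypercompleteness_of_dMfld}), so the statement may be checked on stalks; \Cref{st:open_means_open_on_deRham} identifies the stalk with the de Rham data of the germ; and \Cref{cor:CI-red=RJ-red} is indeed the decisive input. But the step you flag as ``the main obstacle'' --- killing higher cohomology by contracting homotopies on the \(\CI\)-infinitesimal groupoid --- is a genuine gap in your proposal, and it is a gap the paper's proof never has to face. What you miss is that the stalk of \(X^{\dRcinf}\) is not merely ``insensitive to flat functions'': it is \emph{representable by} \(\Spec\R\). Writing the stalk of the de Rham stack as the functor
\[
R \;\longmapsto\; \lim_{U\ni x} \Hom_{\CI\Alg}\bigl(\CI(U),R^{\redcinf}\bigr)\;\simeq\;\Hom_{\CI\Alg}\Bigl(\colim_{U\ni x}\CI(U),\,R^{\redcinf}\Bigr),
\]
the source is the germ ring \(\CI(X)_x\), a local \(\CI\)-ring with residue field \(\R\), whose \(\RJ\)-reduction is just \(\R\). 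Since \(R^{\redcinf}=R^{\red_{\RJ}}\) by \Cref{cor:CI-red=RJ-red}, every homomorphism out of \(\CI(X)_x\) into \(R^{\redcinf}\) factors through \(\R\) by the universal property of the \(\RJ\)-reduction, so the stalk stack is \((\Spec\R)^{\dRcinf}=\Spec\R\) and its derived algebra of functions is \(\R\) in all degrees at once. No Poincar\'{e}-lemma-type acyclicity, no contraction, and no analysis of forms is needed: the higher cohomology vanishes because the stalk is an honest affine point, not because some complex admits a homotopy.

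Your proposed route through forms on the infinitesimal groupoid is moreover unsupported by the paper's toolkit. \Cref{thm:de_Rham_vs_functions_on_inf} identifies functions on the infinitesimal groupoid with derived de Rham cohomology only for the \(\Com\)-theory, and the remark following \Cref{thm:functions_on_inf_vs_functions_on_dR} explicitly cautions that there is no analogous explicit description on the \(\CI\)-side; the \(\CI\)-de Rham cohomology of \Cref{def:CI-de_Rham-coh} is \emph{defined} as functions on the \(\CI\)-de Rham stack, not as a completed complex of differential forms, so there is no ``\(H^0\) of functions constant along the groupoid'' and no tower of higher forms to contract. If you replace that final step by the representability observation above, the rest of your argument (stalkwise equivalence plus hypercompleteness) closes the proof exactly as in the paper.
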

	\begin{proof}
		The statement is local in nature since the underlying topological space of \(X\) has finite covering dimension by \Cref{st:hypercompleteness_of_dMfld}.
		Thus, it is enough to verify the following statement on stalks. 
		\[
			\CI(X^{\dRcinf})_x:=\colim_{x\in U,\\ U\underset{\clap{\scriptsize open}}{\sse} X} \CI(U^{\dRcinf})\simeq \R.
		\]
		To see this, we first observe that the requisite equivalence holds on the level of the de Rham stacks. Namely, we assert the following equivalence of derived \(\CI\)-stacks.
		\[
			\lim_{x\in U,\\ U\underset{\clap{\scriptsize open}}{\sse} X} U^{\dRcinf}\simeq (\Spec \R)^{\dRcinf}
		\]
		Indeed, consider the following limit on functors of points.
		\begin{align*}
			\left(\lim_{\substack{x\in U\\ U \subseteq X \text{ open}}} U^{\dRcinf}\right)(R) 
			&= \lim_{\substack{x\in U\\ U \subseteq X \text{ open}}} \bigl(U^{\dRcinf}(R)\bigr) \\
			&= \lim_{\substack{x\in U\\ U \subseteq X \text{ open}}} \bigl(\Hom_{\CI\Alg}(\CI(U),R^{\redcinf})\bigr) \\
			&= \Hom_{\CI\Alg}\left(\colim_{\substack{x\in U\\ U \subseteq X \text{ open}}} \CI(U), R^{\redcinf}\right)
		\end{align*}
		By \Cref{st:open_means_open_on_deRham}, the stalk of the \(\CI\)-de Rham stack at a point is equivalent to the \(\CI\)-de Rham stack of the local ring since limits of affine morphisms are affine. This, however, is equivalent to the \(\CI\)-de Rham stack of \(\Spec\R\), since the \(\RJ\)-reduction (see \Cref{def:RJ}) of any \(\CI\)-local ring with the residue field \(\R\) is just \(\R\). Moreover, by \Cref{cor:CI-red=RJ-red} \(R^{\red_{\CI}}=R^{\red_{\RJ}}\) and the result follows by the universal propery of the \(\RJ\)-reduction.
	\end{proof}
	The following result is an immediate consequence of \Cref{CI-infinity-de-Rham=const-sheaf} by applying the right Kan extension.
	\begin{corollary}\label{st:general_de_Rham_iso}
		Let \(\X\) be a derived \(\CI\)-stack. Then, the derived \(\CI\)-de Rham cohomology of the stack \(\X\) from \Cref{con:CI-derived-de-Rham} is equivalent to the \(\R\) cohomology of the shape of \(\X\). Or equivalently, to the constant sheaf cohomology of \(\R\) in the category of Betti sheaves on \(\X\). 
	\end{corollary}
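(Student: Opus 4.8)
The plan is to reduce the global statement to the local case \Cref{CI-infinity-de-Rham=const-sheaf} by comparing two limits indexed by the same diagram. On one side, \Cref{con:CI-derived-de-Rham} defines the \(\CI\)-derived de Rham cohomology of a derived \(\CI\)-stack \(\X\) as a right Kan extension from derived manifolds, hence as the limit
\[
	\CI(\X^{\dRcinf})\simeq\lim_{\Spec A\to\X}\CI(\Spec A^{\dRcinf})
\]
over the slice category of derived manifolds mapping to \(\X\). On the other side, the category \(\Shv_{\Betti}(\X)\) is by construction the colimit in \(\Pr^L\) of the sheaf categories \(\Shv(\Spec A(\R))\); passing to right adjoints converts this into a limit in \(\Pr^R\), so the global-sections functor, and hence the constant-sheaf cohomology, is computed as
\[
	R\Gamma(\X,\ul{\R})\simeq\lim_{\Spec A\to\X}R\Gamma(\Spec A(\R),\ul{\R}).
\]
Since the shape and constant-sheaf cohomology of \(\X\) are related through the adjunction \(q_*q^*\) exactly as in \Cref{con:shape_of_a_space}, the right-hand side also computes the \(\R\)-cohomology of \(\Shape(\X)\), matching the second formulation in the statement.

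First I would promote the pointwise equivalence of \Cref{CI-infinity-de-Rham=const-sheaf} to a natural equivalence of the two functors on the slice \(\dMfld_{/\X}\). For a single derived manifold \(X=\Spec A\), taking global sections of the sheaf equivalence in \Cref{CI-infinity-de-Rham=const-sheaf} gives \(\CI(X^{\dRcinf})\simeq R\Gamma(X(\R),\ul{\R})\). I would then check that this identification is functorial in \(X\): a morphism \(X'\to X\) induces compatible pullbacks on both \(\CI\)-de Rham cohomology and constant-sheaf cohomology, and the equivalence intertwines them. This naturality is built into the construction, since the \(\CI\)-de Rham stack is functorial in its input and the equivalence of \Cref{CI-infinity-de-Rham=const-sheaf} is produced from the universal property of the \(\RJ\)-reduction (via \Cref{cor:CI-red=RJ-red}), which is itself natural.

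Having upgraded to an equivalence of diagrams over \(\dMfld_{/\X}\), I would pass to the limit: the two limits displayed above then agree, yielding \(\CI(\X^{\dRcinf})\simeq R\Gamma(\X,\ul{\R})\), which is both the \(\R\)-cohomology of \(\Shape(\X)\) and the constant-sheaf cohomology in \(\Shv_{\Betti}(\X)\). The main obstacle I anticipate is precisely the coherence bookkeeping in the second step: \Cref{CI-infinity-de-Rham=const-sheaf} is phrased pointwise, whereas taking limits requires an equivalence of \(\infty\)-functors together with all its higher coherences. The cleanest way around this is to realize both sides as values of honest functors \(\dMfld\to\CI\Alg\) and to exhibit the comparison as a single natural transformation that happens to be a pointwise equivalence; it is then automatically an equivalence of functors and survives the passage to the limit. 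The only remaining point to confirm is that the direction of the two limits genuinely coincides, which is ensured by the \(\Pr^L\)/\(\Pr^R\) duality converting the defining colimit of \(\Shv_{\Betti}(\X)\) into the limit computing global sections.
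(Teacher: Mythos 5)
Your proposal is correct and follows essentially the same route as the paper, whose entire proof is the one-line observation that the result is immediate from \Cref{CI-infinity-de-Rham=const-sheaf} by applying the right Kan extension of \Cref{con:CI-derived-de-Rham}. Your elaboration --- expressing both sides as limits over \(\dMfld_{/\X}\) via the \(\Pr^L\)/\(\Pr^R\) duality for \(\Shv_{\Betti}(\X)\) and upgrading the pointwise equivalence to a natural one --- is precisely the bookkeeping the paper leaves implicit.
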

	\subsection{de Rham theorem for derived \texorpdfstring{\(\Com\)}{Com}-cohomology}
	\begin{theorem}\label{prop:Com_de_Rham_of_stacks}
		Let \(\X\) be a derived \(\CI\)-stack. Then, the derived \(\Com\)-de Rham cohomology of the stack \(\X\) from \Cref{con:derived_de_Rham_of_stacks} is a sheaf of complete filtered commutative algebras on the shape of \(\X\). There is a canonical morphism of sheaves as follows.
		\begin{equation}\label{eq:const_to_com_de_Rham}
			\ul{\R}\to \cdR_{\X}.
		\end{equation}
		Moreover, morphism \eqref{eq:de_Rham_fibre_sequence} of de Rham stacks induces morphism \eqref{eq:const_to_com_de_Rham} of sheaves by applying the functor~\(\CI(-)\). 
	\end{theorem}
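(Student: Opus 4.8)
The plan is to reduce everything to the affine description of de Rham cohomology as functions on the de Rham stack, established in \Cref{thm:de_Rham_vs_functions_on_inf}, and then to transport the already-established properties of the $\CI$-de Rham theory (\Cref{st:general_de_Rham_iso}, \Cref{CI-infinity-de-Rham=const-sheaf}) along the comparison morphism of de Rham stacks. First I would use \Cref{thm:de_Rham_vs_functions_on_inf} to identify $\cdR_\X$ with the algebra of functions $\CI(\X^{\dRcom})$ on the $\Com$-de Rham stack. On a derived manifold $X$ with open subset $U\sse X(\R)$, I would set $\cdR_U:=\CI(U^{\dRcom})$, exactly parallel to the $\CI$-case of \Cref{con:CI-derived-de-Rham}; this produces a presheaf of complete filtered commutative algebras, the filtration and completeness being inherited objectwise from the Hodge filtration of \Cref{con:derived_de_Rham} and preserved under the limit defining the Kan extension, since by \Cref{con:derived_de_Rham_of_stacks} this limit is formed in the category of complete filtered commutative algebras.

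Next I would verify the sheaf condition. Every open cover of the underlying space of a derived manifold refines to a cover by basic opens, and such a cover is a smooth surjection, so the smooth descent result \Cref{st:smooth_descent_for_de_Rham} shows that $U\mapsto \cdR_U$ satisfies \v{C}ech descent. Combined with the finite covering dimension, and hence hypercompleteness, of the underlying spaces of derived manifolds (\Cref{st:hypercompleteness_of_dMfld}), this upgrades the presheaf to a genuine sheaf that can be checked on stalks. Globalizing by the right Kan extension over the diagram $\Spec A\to \X$ yields a sheaf of complete filtered commutative algebras in the category of Betti sheaves $\Shv_{\Betti}(\X)$, i.e. on the shape of $\X$. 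The point making the shape the correct receptacle, rather than a naive topological space, is that the whole construction is computed along the same diagram used to define $\Shape(\X)$ and is insensitive to the passage to reductions.

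Finally, for the comparison morphism I would apply the contravariant functor $\CI(-)$ to the comparison morphism of de Rham stacks \eqref{eq:de_Rham_fibre_sequence} furnished by \Cref{prop:comparison_morphism_for_de_Rham_stacks},
\[
    \X^{\dRcom}\to \X^{\dRcinf},
\]
obtaining a morphism $\CI(\X^{\dRcinf})\to \CI(\X^{\dRcom})$. By \Cref{st:general_de_Rham_iso}, which rests on \Cref{CI-infinity-de-Rham=const-sheaf}, the source is canonically the constant sheaf $\ul{\R}$, while by \Cref{thm:de_Rham_vs_functions_on_inf} the target is $\cdR_\X$; this is precisely morphism \eqref{eq:const_to_com_de_Rham}. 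Since $\CI(-)$ and all the intervening identifications are natural in $\X$ and local on the shape, the result is a morphism of sheaves.

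The main obstacle I anticipate is the second step: carefully establishing descent for the $\Com$-de Rham presheaf and pinning down the correct receptacle. One must ensure that the \v{C}ech descent supplied by \Cref{st:smooth_descent_for_de_Rham} is compatible with the Hodge filtration and with completion, so that descent genuinely holds in complete filtered algebras, where limits can interact subtly with the completion functor; and one must check that the resulting object lives on the shape $\Shape(\X)$ rather than only on the underlying topological space. This is where hypercompleteness (\Cref{st:hypercompleteness_of_dMfld}) together with the stalkwise comparison against the $\CI$-theory does the real work.
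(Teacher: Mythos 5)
Your proposal is correct and follows essentially the same route as the paper: identify \(\cdR_\X\) with functions on the \(\Com\)-de Rham stack via \Cref{thm:de_Rham_vs_functions_on_inf}, identify \(\ul{\R}\) with functions on the \(\CI\)-de Rham stack via \Cref{CI-infinity-de-Rham=const-sheaf}, and obtain \eqref{eq:const_to_com_de_Rham} by applying \(\CI(-)\) to the comparison morphism \eqref{eq:de_Rham_fibre_sequence}. The paper's own proof is just these two identifications stated in two sentences; your additional descent and hypercompleteness discussion (\Cref{st:smooth_descent_for_de_Rham}, \Cref{st:hypercompleteness_of_dMfld}) elaborates the sheaf property that the paper leaves implicit, rather than constituting a different argument.
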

	\begin{proof}
		The result follows from \Cref{thm:de_Rham_vs_functions_on_inf} and \Cref{CI-infinity-de-Rham=const-sheaf} because the left hand side is identified with coherent cohomology on the \(\CI\)-de Rham stack and the right-hand side is identified with coherent cohomology on the \(\Com\)-de Rham stack. 
	\end{proof}
	\begin{corollary}\label{st:Hodge_filtration_on_the_constant_sheaf}
		There is a canonical  (Hodge) filtration on the cohomology of the constant sheaf on the shape of a derived \(\CI\)-stack. This filtration is induced by the morphism \eqref{eq:de_Rham_fibre_sequence}.
	\end{corollary}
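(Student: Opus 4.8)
The plan is to produce the filtration on constant sheaf cohomology by transporting the Hodge filtration already carried by the $\Com$-de Rham cohomology along the comparison morphism, using the two identifications of de Rham-stack functions proved above.

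First I would assemble the inputs. By \Cref{prop:Com_de_Rham_of_stacks}, the sheaf $\cdR_\X$ on the shape of $\X$ is a sheaf of complete filtered commutative algebras: its Hodge filtration $\Fil_H^\bullet$ is the one from \Cref{con:derived_de_Rham}, and by the Cartier-type isomorphism \eqref{eq:Cartier_isomorphism} its graded pieces are $\wedge^k\LL_\X[-k]$. The same proposition provides the canonical morphism of sheaves $\ul{\R}\to\cdR_\X$ of \eqref{eq:const_to_com_de_Rham} and records that it is obtained by applying $\CI(-)$ to the comparison morphism \eqref{eq:de_Rham_fibre_sequence}, that is to $\X^{\dR_\Com}\to\X^{\dR_\CI}$. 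On the other side, \Cref{CI-infinity-de-Rham=const-sheaf}, globalised in \Cref{st:general_de_Rham_iso}, identifies the source $\CI(\X^{\dR_\CI})$ with the constant sheaf $\ul{\R}$, so that \eqref{eq:const_to_com_de_Rham} is exactly the map $\CI(\X^{\dR_\CI})\to\CI(\X^{\dR_\Com})$ on functions.

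The construction is then immediate: I would equip $\ul{\R}$ with the filtration pulled back from $\cdR_\X$ along \eqref{eq:const_to_com_de_Rham}, so that $\Fil^k\ul{\R}$ is the fibre product $\ul{\R}\times_{\cdR_\X}\Fil_H^k\cdR_\X$. Since the morphism is induced by a morphism of de Rham stacks it is canonical and functorial in $\X$, and completeness of the filtration on $\ul{\R}$ is inherited from completeness of $\Fil_H^\bullet\cdR_\X$; passing to global sections over the shape yields the asserted filtration on constant sheaf cohomology. Equivalently, one may read this as the intrinsic filtration on $\CI(\X^{\dR_\CI})$ coming from the $\CI$-infinitesimal groupoid, which the comparison morphism shows to be compatible with the $\Com$-Hodge filtration.

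The delicate point --- rather than a genuine obstacle --- is to justify the name \enquote{Hodge}. When $\X$ is a smooth manifold the Poincar\'{e} lemma makes \eqref{eq:const_to_com_de_Rham} an equivalence, so the pulled-back filtration coincides with $\Fil_H^\bullet$ and recovers the classical Hodge filtration with $\gr^k=\Omega^k[-k]$; the same holds whenever $\X$ is \ldRa, where the graded pieces are the powers $\wedge^k\LL_\X[-k]$ of the cotangent complex. For a general derived stack \eqref{eq:const_to_com_de_Rham} need not be an equivalence, as \Cref{ex:de_Rham_counterexample} shows, so I would be careful to assert only that the filtration is canonical and induced by \eqref{eq:de_Rham_fibre_sequence}, with associated graded controlled by --- and in the \ldRa case equal to --- the exterior powers of $\LL_\X$. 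A final routine check is that the filtration is compatible with smooth descent, so that it may be computed from any smooth presentation via \Cref{st:smooth_descent_for_de_Rham}.
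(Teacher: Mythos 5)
Your proposal is correct and matches the paper's argument exactly: the paper's proof is the one-line statement that the filtration is obtained by pulling back the Hodge filtration on \(\Com\)-de Rham cohomology along the morphism \eqref{eq:const_to_com_de_Rham}, which is precisely your fibre-product construction \(\ul{\R}\times_{\cdR_\X}\Fil_H^k\cdR_\X\) built on \Cref{prop:Com_de_Rham_of_stacks} and \Cref{st:general_de_Rham_iso}. Your additional remarks (recovery of the classical Hodge filtration in the smooth case, the caveat that \eqref{eq:const_to_com_de_Rham} is not an equivalence in general) correspond to the paper's own remark following the corollary.
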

	\begin{proof}
		The pullback of the Hodge filtration on \(\Com\)-de Rham cohomology of a derived \(\CI\)-stack gives the filtration.
	\end{proof}
	\begin{remark}
		In the case when the derived \(\CI\)-stack in \Cref{st:Hodge_filtration_on_the_constant_sheaf} is a smooth manifold, the filtration in question is the standard Hodge filtration (i.e. the filtration by degree of differential forms) on the singular cohomology of the manifold.
	\end{remark}
	\begin{proposition}
		If \(X\) is a derived manifold such that it is \ldRa (\Cref{def:ldRa}), then the derived \(\Com\)-de Rham cohomology of \(X\) is equivalent to the cohomology of the constant sheaf on the underlying topological space of \(X\).
	\end{proposition}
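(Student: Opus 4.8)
The plan is to reduce the claim to the stalkwise acyclicity packaged in the \ldRa condition, exactly as in \Cref{st:local_to_global_de_Rham}, but phrased as a statement about sheaves on \(X(\R)\). First I would identify the object under study: by \Cref{thm:de_Rham_vs_functions_on_inf} the derived \(\Com\)-de Rham cohomology of \(X\), defined as functions on the \(\Com\)-de Rham stack, is equivalent to the Hodge-completed complex \(\cdR_X\), and this identification is compatible with restriction to open subschemes, so it upgrades to an equivalence of sheaves \(U\mapsto \CI(U^{\dRcom})\simeq \cdR_U\) on \(X(\R)\). The target of the comparison is the constant sheaf \(\ul{\R}\), and \Cref{prop:Com_de_Rham_of_stacks} supplies a canonical morphism of sheaves \(\ul{\R}\to\cdR_X\), namely morphism \eqref{eq:const_to_com_de_Rham}, obtained by applying \(\CI(-)\) to the comparison map \eqref{eq:de_Rham_fibre_sequence} of de Rham stacks together with the identification \(\CI(X^{\dRcinf})\simeq\ul{\R}\) of \Cref{CI-infinity-de-Rham=const-sheaf}.

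The strategy is then to show that this morphism is an equivalence by checking it on stalks. Since the underlying topological space of a derived manifold has finite covering dimension, \Cref{st:hypercompleteness_of_dMfld} guarantees that the \(\infty\)-topos of sheaves on \(X(\R)\) is hypercomplete, so a morphism of sheaves is an equivalence as soon as it is an equivalence on every stalk. I would compute the stalk of \(\cdR_X\) at a point \(x\) as the filtered colimit \(\colim_{x\in U}\cdR_U\) over open neighbourhoods; this is precisely the complex \(\cdR_{X,x}\) of \Cref{def:ldRa}, and by the \ldRa hypothesis it is equivalent to \(\R\) concentrated in degree \(0\) (note that every point of \(X(\R)\) is closed, \(X(\R)\) being a subspace of a Euclidean space, so the hypothesis applies at all stalks). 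The stalk of \(\ul{\R}\) is likewise \(\R\) in degree \(0\), and on \(H^0\) the comparison map is a unital homomorphism \(\R\to\R\), hence the identity; since both stalks are discrete and concentrated in degree \(0\), this forces a stalkwise equivalence.

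Combining these, \(\ul{\R}\to\cdR_X\) is a stalkwise equivalence of sheaves on a hypercomplete topos, hence an equivalence of sheaves. Passing to global sections yields \(H^*(X(\R);\ul{\R})\simeq \Gamma(X(\R),\cdR_X)\), and the right-hand side is by construction the derived \(\Com\)-de Rham cohomology of \(X\), which gives the claim.

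I expect the only genuine subtlety to lie in the first step, namely in justifying that forming \(\cdR\) commutes with the filtered colimit defining a stalk, so that the stalk of the \(\Com\)-de Rham sheaf really is the complex \(\cdR_{X,x}\) featuring in the \ldRa hypothesis. This rests on the fact that the restriction maps along open embeddings \(U'\subseteq U\) are \(\CI\)-localizations, which do not alter the relative de Rham cohomology by the vanishing of \Cref{prop:de_Rham_of_localization}; the remaining verifications (finite covering dimension and the behaviour of the unit map on \(H^0\)) are formal.
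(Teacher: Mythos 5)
Your proposal is correct and takes essentially the same approach as the paper: the paper's proof simply cites \Cref{st:local_to_global_de_Rham}, whose own justification is exactly the local-to-global argument you spell out (finite covering dimension of \(X(\R)\) gives hypercompleteness by \Cref{st:hypercompleteness_of_dMfld}, so the comparison map \(\ul{\R}\to\cdR_X\) can be checked on stalks, where the \ldRa hypothesis says the stalk is \(\R\)). Your extra care in identifying the sheaf-theoretic stalk with the complex \(\cdR_{X,x}\) of \Cref{def:ldRa} via \Cref{prop:de_Rham_of_localization}, and in constructing the comparison morphism from \Cref{prop:Com_de_Rham_of_stacks}, merely fills in details the paper leaves implicit.
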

	\begin{proof}
		This follows from \Cref{st:local_to_global_de_Rham}.
	\end{proof}
	\begin{theorem}\label{thm:de_Rham_for_geometric}
		Let \(\X\) be an \ldRa \(n\)-geometric derived differentiable stack. Then, the derived \(\Com\)-de Rham cohomology of \(\X\) is equivalent to the cohomology of the constant sheaf on the shape of \(\X\).
	\end{theorem}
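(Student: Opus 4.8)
The plan is to argue by induction on the geometric level $n$, showing that the canonical comparison morphism $\ul{\R}\to \cdR_{\X}$ of \Cref{prop:Com_de_Rham_of_stacks} (morphism \eqref{eq:const_to_com_de_Rham}) becomes an equivalence after passing to (derived) global sections over the shape, by reducing along a smooth \ldRa atlas to the already-established case of derived manifolds. For the base case $n=0$ the stack $\X$ is a coproduct of \ldRa derived manifolds by \Cref{def:ala_for_geometric_stacks}; here one combines \Cref{st:local_to_global_de_Rham} with the elementary fact that both $\cdR_{(-)}$ and constant-sheaf cohomology convert coproducts of spaces into products, so the comparison map is an equivalence on each summand and hence on the coproduct.

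For the inductive step I would fix a presentation $U\to \X$ witnessing the \ldRa condition, with $U$ an $(n-1)$-geometric \ldRa stack; that the conclusion does not depend on the chosen presentation is guaranteed by \Cref{prop:ala_is_presentation_independent}. Form the \v{C}ech nerve $U_\bullet=\check{C}^\bullet(U/\X)$, whose terms $U_p=U\times_{\X}\cdots\times_{\X}U$ are $(n-1)$-geometric and whose projections $U_p\to U$ are smooth, being base changes of the smooth surjection $U\to\X$. I claim each $U_p$ is again \ldRa: this is exactly the fibrewise argument used in \Cref{prop:ala_is_presentation_independent}, where \Cref{implicit_function_theorem} identifies the fibres of a smooth map over a point of an \ldRa stack with copies of $\R^k$, whose derived de Rham cohomology is trivial by homotopy invariance. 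Thus the inductive hypothesis applies to every $U_p$, giving a degreewise equivalence $\cdR_{U_p}\simeq R\Gamma\bigl(\Shape(U_p);\ul{\R}\bigr)$.

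It remains to descend along $U_\bullet$ on both sides. Since $U\to\X$ is a smooth effective epimorphism we have $\X\simeq \bigl|U_\bullet\bigr|$ as a derived $\CI$-stack. On the algebraic side, $\cdR_{(-)}$ is a right Kan extension from the affine site (\Cref{con:derived_de_Rham_of_stacks}), so it sends this colimit of stacks to a totalization, yielding $\cdR_{\X}\simeq \Tot_p\,\cdR_{U_p}$; this is the promotion of smooth descent \Cref{st:smooth_descent_for_de_Rham} from derived manifolds to geometric stacks. On the topological side, the shape functor is a left adjoint and hence commutes with colimits, so $\Shape(\X)\simeq \bigl|\Shape(U_\bullet)\bigr|$, and constant-coefficient cohomology turns this colimit of shapes into the totalization $\Tot_p\,R\Gamma\bigl(\Shape(U_p);\ul{\R}\bigr)$. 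Because the comparison map $\ul{\R}\to\cdR$ is natural in the stack, it is compatible with the cosimplicial structure maps of $U_\bullet$, so interleaving the two totalizations of the degreewise equivalences produces the desired equivalence over $\X$.

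The main obstacle I anticipate is the two-fold descent compatibility: one must verify that the \v{C}ech-nerve terms remain \ldRa so that the induction can run, and that the algebraic invariant $\cdR$ and the topological constant-sheaf cohomology each satisfy descent along the \emph{same} simplicial diagram $U_\bullet$, with the comparison morphism intertwining the two totalization (spectral-sequence) filtrations. Once the smooth-descent statement \Cref{st:smooth_descent_for_de_Rham} is bootstrapped to geometric stacks through the Kan-extension definition, the remaining steps are bookkeeping with the Čech resolution.
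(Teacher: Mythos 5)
Your proposal is correct in its essential ingredients and matches the paper's strategy: both argue by induction on \(n\), both dispose of the base case with \Cref{st:local_to_global_de_Rham}, and both rest on \Cref{implicit_function_theorem} to see that pullbacks of a smooth atlas are locally projections \(\R^k\times M\to M\), so that the two cohomology theories can be compared along the atlas; for \(n=1\) the paper's argument is in substance exactly your descent argument. The difference lies in the inductive step for \(n>1\). You totalize both sides over the \v{C}ech nerve \(U_\bullet\) of an \ldRa atlas, which obliges you to show (a) that the nerve terms \(U_p\) are again \ldRa (your appeal to the fibrewise argument of \Cref{prop:ala_is_presentation_independent} is the right one) and (b) that both \(\cdR\) and constant-sheaf cohomology satisfy descent along smooth surjections of geometric stacks, i.e.\ the promotion of \Cref{st:smooth_descent_for_de_Rham} beyond derived manifolds --- you correctly flag this as the main obstacle. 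The paper instead verifies the comparison stalkwise: it base-changes the atlas \(\Uc\to\X\) along a point \(x\colon *\to\X\), notes that \(\Uc\times_{\X}*\) is an \((n-1)\)-geometric stack by representability, and invokes the induction hypothesis on that fibre, thereby avoiding both the \ldRa-ness of the nerve terms and any explicit stack-level totalization (at the cost of implicitly assuming that equivalences of these invariants can be detected on such fibres, a hypercompleteness-type reduction the paper does not spell out). Your route is more uniform --- one argument handles every \(n\ge 1\) --- but concentrates all the work in the stack-level descent statement; the paper's is shorter but leaves the local-to-global reduction implicit. Neither route has a gap that the other avoids in any essential way.
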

	\begin{proof}
		We prove this result by induction on \(n\). The case of \(n=0\) follows from the assumption by \Cref{st:local_to_global_de_Rham}. The case of \(n=1\) follows from the following easy observation. Consider a \(1\)-geometric stack \(\X\). Then for any morphism \(M\to \X\) the pullback of the \ldRa derived manifold \(\Uc\) covering \(\Xc\) gives a smooth surjective morphism \(\Uc\times_\Xc M\to M\). By \Cref{implicit_function_theorem}, this map is locally equivalent to the canonical projection \(\R^n\times M\to M\). This shows both that the \(\Com\)-de Rham cohomology and constant sheaf cohomology satisfy descent along the map \(\Uc\to \Xc\) and thus are equivalent.
		
		Now, assume that the result holds for \(n-1\). Then, by the definition of an \(n\)-geometric stack we have a smooth surjective morphism \(\Uc\to \X\) from an \((n-1)\)-geometric stack. By the induction hypothesis, the derived \(\Com\)-de Rham cohomology of \(\Uc\) is equivalent to the cohomology of the constant sheaf on the shape of \(\Uc\). We want to verify this result for stalks of \(\cdR_\X\). 

		To see this, consider a base change of the morphism \(\pi:\Uc\to \X\) to a point \(x:*\to \X\). Then we have a morphism \(\Uc\times_\Xc *\to *\) by definition \(\Uc\times_\Xc *\) is an \((n-1)\)-geometric stack, thus one can use the inductive assumption. 
	\end{proof}
	\bookmarksetup{startatroot}
	\appendix
	\section{A Fermatic generalization}\label{sec:Fermatic}
	\setcounter{subsection}{-1}
	\subsection{Introduction to the appendix}
	\subsubsection{Why this section exists?}
	The main goal of this section is to show that most of the results in the previous sections easily generalize to a more general set-up of Fermat theories. That is, one can perform the same constructions we did in derived differential geometry in the context of derived holomorphic, algebraic or real analytic geometry.

	We also attempt to conceptualize the reason why the results of the previous sections hold in the first place. To that end we use the language of ring stacks due to Drinfeld \cite{Drinfeld_Prism} and show that the constructions of \(\Com\) and \(\CI\) de Rham stacks can be interpreted in terms of certain operations on the ring stack \(\AA^1\) over various Fermat theories. In particular, for the theory of holomorphic functions \(\O\) we obtain a version of the de Rham stack similar to the one given in \cite{Camargo} and similarly a version of the analytic Riemann--Hilbert correspondence compatible with the \(\CI\)-version we developed in the main text.
	\subsubsection{What is done in this section?}
	Sections \ref{sec:Fermat_theories_def}, \ref{sec:TAlg_localizations}, \ref{sec:Completions_of_derived_stacks}, and \ref{sec:TAlg_Cotangent_Complex} contain virtually no new mathematics and are just a review of the basic notions of geometry and derived geometry over Fermat theories. In \S \ref{sec:TAlg_De_Rham_And_Inf} we discuss how one presents de Rham stacks of stacks over Fermat theories via infinitesimal groupoids. In \S \ref{sec:TAlg_de_rham_thms} we formulate appropriate analogues of the results of \S \ref{sec:de_Rham_theorems} in the setting of Fermat theories. Finally, \S \ref{sec:TAlg_functoriality_of_de_Rham} discusses the general functoriality properties of de Rham stack and some concluding remarks on possible further generalizations. 
	\subsection{Geometry over Fermat theories}\label{sec:Fermat_theories_def}
	In this section, we review the ideas of \enquote{neutral geometry} based on the concept of a Fermat theory. The idea of using this formalism originates in the work of Dubuc--Kock \cite{dubuc19841}. Since then many expositions followed the general philosophy of this approach. See \cite{Porta_Yu}, \cite{Ben_Bassat_Kremnizer} for instances of this; this list, of course, is by no means exhaustive. Instead, we would mention only the most recent work done purely on the subject~\cite{carchedi2012theories,Carchedi_Roytenberg} and \cite{Pridham_DG}.

	The idea is to take as input a specific kind of functions as a foundation and then develops the geometry based on this notion of a function. In particular, using this approach, one obtains derived differential geometry, derived complex geometry, and derived algebraic geometry.
	\subsubsection{\texorpdfstring{\(\Tc\)}{T}-algebras}\label{sec:TAlg}
	\begin{definition}\label{def:RCom}
		For a commutative ring \(R\) define a Lawvere theory \(\Com_R\) by specifying
		\[
		\Com_R(R^m;R^n)=\{ \text{the set of } R\text{-polynomial maps } \AA^m_R\to \AA^n_R\}.
		\]
		Clearly, this is just the theory of commutative \(R\)-algebras.
	\end{definition}
	\begin{definition}[{\cite[\S 1]{dubuc19841}}]\label{def:fermat}
		Consider a Lawvere theory \(\Tc\) together with a morphism of Lawvere theories
		\[
		\iota:\Com_\Z\to \Tc.
		\]
		It is a {\it Fermat theory} if additionally for arbitrary objects \(S,T\in \Tc\) and any morphism 
		\[
		f:\iota(\AA^1_\Z)\times S\to T
		\]
		there exists \(g:\iota(\AA^1_\Z)\times \iota(\AA^1_\Z)\times S\to T\) such that 
		\[
		f(l_1,s)-f(l_2,s)=(l_1-l_2)\cdot g(l_1,l_2,s).
		\]
		Here \(l_1,l_2:\iota\AA^2_\Z\to\iota\AA^1_\Z\) are the images of two projections and the ring operations are the images of corresponding operations in \(\Com_\Z.\) The latter property is often called \emph{Hadamard's lemma} in reference to the result establishing this property for smooth functions on $\R^n.$
	\end{definition}
	\begin{definition}\label{def:Fermat_over_R}
		We say that \(\Tc\) is a Fermat theory over a ring \(R\) if the morphism \(\iota\) from Definition \ref{def:fermat} factors as 
		\begin{equation}\label{eq:can_morphism_from_com}
			\begin{tikzcd}
			{\Com_\Z} && \Tc \\
			& {\Com_R}
			\arrow["\iota", from=1-1, to=1-3]
			\arrow["{(\Z\to R)_*}"', from=1-1, to=2-2]
			\arrow["{\ol{\iota}}"', from=2-2, to=1-3]
		\end{tikzcd}\end{equation}
		Here, the left morphism is induced by the canonical ring homomorphism \[
		\Z\to R,\quad 1_\Z\mapsto 1_R.
		\]
	\end{definition}
	\begin{definition}
		Let \(\Tc\) be a Fermat theory.
		A \(\Tc\)-algebra is a finite product preserving functor $\Tc\to\Spc.$ 
	\end{definition}
	\begin{definition}
		A homomorphism of \(\Tc\)-algebras (or simply \(\Tc\)-homomorphism) is a natural transformation of such functors.
	\end{definition}
	
	\begin{construction}[{see \cite[\S 2.2.1]{carchedi2012theories}}]\label{con:completion}
		A morphism of Fermat theories \(\Tc\to \Tc'\) induces an adjoint pair of functors:
		\[\begin{tikzcd}
			{\Tc\Alg} && {\Tc'\Alg}
			\arrow[""{name=0, anchor=center, inner sep=0}, "{\free^{\Tc'}_{\Tc}}", shift left=3, from=1-1, to=1-3]
			\arrow[""{name=1, anchor=center, inner sep=0}, "{\oblv_{\Tc}^{\Tc'}}", shift left=3, from=1-3, to=1-1]
			\arrow["\dashv"{anchor=center, rotate=-90}, draw=none, from=0, to=1]
		\end{tikzcd}\]
		The left adjoint is called {\it \(\Tc'\)-base change} and the right one is the forgetful functor.
	\end{construction}
	\begin{remark}
		The following example justifies the name base change in \Cref{con:completion}. Consider a morphism of Fermat theories \(\Com_R\to \Com_S\) induced by a morphism of rings \(R\to S\), then the corresponding base change functor is just the extension of scalars functor \(S\otimes_R-.\)
	\end{remark}
	In particular any commutative algebra over \(K\) admits a base change to an algebra over a Fermat theory \(\Tc\) over \(K.\)
	\begin{notation}
		One can recover a \(\Tc\)-algebra \(A\) from the set \(A(1)\) and the action of \(\Tc(n)\) on this set. Thus, in the sequel we will denote the set \(A(1)\) simply by \(A.\) 
	\end{notation}
	There is a forgetful functor from \(\Tc\)-algebras to ordinary algebras. Thus, any \(\Tc\)-algebra is an ordinary algebra.
	\begin{proposition}[{\cite[Proposition 1.2]{dubuc19841}}]
		For any ring-theoretic ideal \(I\) in a discrete \(\Tc\)-algebra~\(A\) there is a natural structure of a \(\Tc\)-algebra on \(A/I\) that makes \(A\to A/I\) a \(\Tc\)-homomorphism.
	\end{proposition}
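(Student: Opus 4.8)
The plan is to define the $\Tc$-algebra structure on $A/I$ by descending every operation through the quotient map, and to verify that the Fermat (Hadamard) property is precisely what makes this descent well-defined. Write $\pi\colon A\to A/I$ for the quotient of underlying abelian groups, which makes sense since $A$ is discrete. Because every object of the Lawvere theory is a power $\AA^n$ and hence $\Tc(\AA^n,\AA^m)\cong\Tc(\AA^n,\AA^1)^m$, and because $A$ preserves finite products so that $A(\AA^n)\cong A^n$, it suffices to produce for each operation $f\in\Tc(\AA^n,\AA^1)$ an induced map $\bar f\colon (A/I)^n\to A/I$ and then to check compatibility with composition and projections. I would set $\bar f(\pi a_1,\dots,\pi a_n):=\pi\bigl(f(a_1,\dots,a_n)\bigr)$ on chosen lifts $a_i$.

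The heart of the argument --- and the only place the Fermat hypothesis enters --- is well-definedness of $\bar f$. Suppose $a_i\equiv a_i'\pmod I$ for all $i$. I would expand the difference telescopically,
\[
f(a_1,\dots,a_n)-f(a_1',\dots,a_n')=\sum_{i=1}^n\Bigl(f(a_1',\dots,a_{i-1}',a_i,\dots,a_n)-f(a_1',\dots,a_{i-1}',a_i',a_{i+1},\dots,a_n)\Bigr),
\]
and apply Hadamard's lemma (\Cref{def:fermat}) in the $i$-th variable, absorbing the remaining variables into the parameter object $S$. This produces operations $g_i\in\Tc$ with
\[
f(\dots,a_i,\dots)-f(\dots,a_i',\dots)=(a_i-a_i')\cdot g_i(a_i,a_i',\text{others}).
\]
Since $a_i-a_i'\in I$ and $I$ is a ring-theoretic ideal, each summand lies in $I$; hence the whole difference lies in $I$ and $\bar f$ descends.

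Once descent is established, I would check that $f\mapsto\bar f$ is functorial and finite-product-preserving: product-preservation forces $(A/I)(\AA^n)=(A/I)^n$, and composition of operations is respected because it is respected in $A$ and $\pi(f(a))=\bar f(\pi a)$ by construction. In particular the ring operations coming from $\iota\colon\Com_\Z\to\Tc$ descend to the usual ring structure on $A/I$, so the new $\Tc$-structure refines the classical quotient ring, and $\pi$ is a $\Tc$-homomorphism essentially by definition.

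I expect the main obstacle to be bookkeeping rather than conceptual: making the iterated application of Hadamard's lemma precise, i.e.\ correctly packaging the remaining variables into the parameter object $S$ at each telescoping step so that the resulting $g_i$ are genuine morphisms of $\Tc$, and tracking that the sum assembles into a single well-defined operation. The algebraic content --- that differences of operation-values are absorbed by $I$ --- is immediate from Hadamard's lemma together with the ideal property, so no further hypothesis (such as flatness or finiteness) is required.
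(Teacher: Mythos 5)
Your proposal is correct and follows essentially the same route as the argument of Dubuc--Kock that the paper cites in place of a proof: define operations on $A/I$ via lifts, telescope the difference one variable at a time, and use Hadamard's lemma (the defining Fermat property) together with the ideal property to absorb each summand $(a_i-a_i')\cdot g_i(\dots)$ into $I$. The remaining verifications (functoriality, product preservation, and that $\pi$ is a $\Tc$-homomorphism) are routine exactly as you describe, so there is nothing to add.
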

	\begin{definition}\label{def:fg_algebra}
		A \(\Tc\)-algebra is {\it finitely generated} if it is a quotient of the algebra \(y\Tc(n)\) for some \(n\ge 0.\) 
	\end{definition}
	\subsubsection{Examples}\label{sec:Fermat_examples} We list only the three most well-known instances of this approach; other examples can be found in Carchedi--Roytenberg \cite[\S 2.2.3]{carchedi2012theories}. The key reason why Fermat theories are so useful is that they allow a uniform construction of complicated geometric objects (e.g., higher sheaves) from ``affine models'', for more detail see Carchedi--Roytenberg \cite[Introduction]{carchedi2012theories}. 
	\begin{example}\label{ex:ag}
		The simplest example of a Fermat theory that we already encountered is that of \(\Com_R\) (see Definition \ref{def:RCom}). This is the theory of commutative \(R\)-algebras for a commutative ring \(R \). In particular, the theory \(\Com_\Z\) plays the role of the initial Fermat theory. Geometry over \(\Com_\Z\) reproduces the standard scheme-theoretic algebraic geometry.
	\end{example}
	\begin{example}\label{ex:cinf}
		Another example we discussed in the paper proper is provided by the Fermat theory \(\CI\) defined via the following formula.
		\[
		\CI(m,n)=\CI(\R^m,\R^n).
		\]
		The geometry over this theory reproduces derived differential geometry.
	\end{example}
	\begin{example}\label{ex:analytic}
		The theory of analytic functions \(\mc{C}^\omega\) is defined via the following formula. 
		\[
		\Cc^\omega(m,n)=\Cc^\omega(\R^m,\R^n).
		\]
		That is, the morphisms are real analytic functions between \(\R^m\) and \(\R^n\). The geometry over this theory reproduces derived real analytic geometry.
	\end{example}
	\begin{example}\label{ex:hol}
		The theory of holomorphic functions \(\O\) is defined via the following formula.
		\[
		\O(m,n)=\O(\C^m,\C^n).
		\]
		The geometry over this theory reproduces derived complex geometry.
	\end{example}
	Now, we will give two ways to define a coverage on a small category of algebras over a Fermat theory.
	\begin{construction}
		Define the set \(\Specm A\) for a \(\Tc\)-algebra \(A\) as
		\[
		\Specm A=\Nat(A,\AA^1_\Tc).
		\]
		The set \(\Specm A\) (called the {\it maximal spectrum} of \(A\)) can be naturally topologized using the {\it Zariski topology}, i.e., we declare the generating collections of open sets for this topology to be
		\[
		U_a=\{p:\pi_0(A)\to \AA^1_\Tc\mid p(a)\neq 0\},\quad \text{for } a\in \pi_0(A(1))
		\]
	\end{construction}
	\begin{remark}
		One could say that the term maximal spectrum is somewhat misleading since by construction we only consider ideals with the same residue ring. I.e. there are no field extensions in the spectrum, and the ideals might not even be maximal.
	\end{remark}
	\begin{construction}\label{con:specm_topology}
		We now define a site structure on \(\Tc\Alg^{\op}.\)
		A set of morphisms \(\{f_i:B_i\to A\vert i \in I\}\) is a {\it covering family} for a \(\Tc\)-algebra \(A\) if it is dual to an open cover in Zariski topology:
		\[
		\{\Specm(B_i)\to \Specm(A)\mid i \in I\}.
		\]
		We assume in addition that the corresponding map of algebras are \emph{strong} in the sense of Lurie \cite[Definition 2.2.2.1]{HAGII}.
	\end{construction}
	\begin{remark}
		Construction \ref{con:specm_topology} is relevant for Examples \ref{ex:cinf}, \ref{ex:hol}, and less so for Example~\ref{ex:ag}. 
	\end{remark}
	Another construction is more convenient for algebro-geometric purposes.
	\begin{definition}
		A ring-theoretic ideal \(I\triangleleft \pi_0 A\) in an algebra \(A\) over a Fermat theory \(\Tc\) is called {\it radical} if \(I\) coincides with its radical defined as
		\[
		\sqrt{I}:=\{a\in A\mid \exists n\in \Z_{\ge 0},\; a^n\in I\}.
		\]
	\end{definition}
	There is a Galois-type correspondence between radical ideals in the algebra \(A\) and {\it reduced} quotients of \(A\) (i.e., quotient \(\Tc\)-algebras of \(A\) without nilpotent elements).
	\begin{construction}
		Denote by \(\Zar(A)\) the poset of radical ideals in the algebra \(A.\)
	\end{construction}
	\begin{lemma}[{Tierney \cite[Proposition 3.1]{tierney1976spectrum}}]
		A poset \(\Zar(A)\) of radical ideals for an algebra \(A\) over a Fermat theory \(\Tc\) is a frame in the sense of MacLane--Moerdijk \mbox{\cite[\S IX.1]{maclane2012sheaves}}. The construction \(\Zar(A)\) can be promoted to a functor
		\[
		\Spec:=\Zar^{\op}:\Tc\Alg^{\op}\to \Loc.
		\] 
	\end{lemma}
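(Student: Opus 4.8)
The plan is to observe that the frame structure on \(\Zar(A)\) is purely a matter of commutative algebra. Since the radical in play is the ordinary one, \(\sqrt{I}=\{a\mid a^n\in I\text{ for some }n\}\) of the underlying ring \(\pi_0 A\), and since by \cite[Proposition 1.2]{dubuc19841} every ring-theoretic quotient of a \(\Tc\)-algebra is again a \(\Tc\)-algebra, the poset of radical ideals of \(A\) coincides with the poset of radical ideals of the commutative ring \(\pi_0 A\). Thus it suffices to recall the classical fact that the radical ideals of a commutative ring form a frame. First I would equip \(\Zar(A)\) (ordered by inclusion) with meets given by intersection and joins given by \(\bigvee_i I_i=\sqrt{\sum_i I_i}\); one checks at once that an arbitrary intersection of radical ideals is radical and that \(\pi_0 A\) is the top element, so \(\Zar(A)\) is a complete lattice, with bottom \(\sqrt{0}\).

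The one genuinely load-bearing step is the infinite distributive law
\[
 I\wedge\textstyle\bigvee_j J_j = \bigvee_j\bigl(I\wedge J_j\bigr),\qquad\text{i.e.}\qquad I\cap\sqrt{\textstyle\sum_j J_j}=\sqrt{\textstyle\sum_j (I\cap J_j)}.
\]
The inclusion \(\supseteq\) is formal. For \(\subseteq\) I would chase an element: if \(a\in I\) and \(a^n\in\sum_j J_j\), write \(a^n=\sum_{j} b_j\) as a finite sum with \(b_j\in J_j\); then \(a^{n+1}=\sum_j a\,b_j\), and each \(a\,b_j\) lies in \(I\cap J_j\) because \(a\in I\) and \(b_j\in J_j\). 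Hence \(a^{n+1}\in\sum_j(I\cap J_j)\), so \(a\) belongs to the radical on the right. This is the only computation in the argument that is not bookkeeping.

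For the functoriality statement I would send a \(\Tc\)-homomorphism \(\phi\colon A\to B\) to the frame homomorphism \(\phi^*\colon\Zar(A)\to\Zar(B)\) underlying the locale morphism \(\Spec(\phi)\colon\Spec B\to\Spec A\), defined by extension-and-radical, \(I\mapsto\sqrt{\phi(I)B}\). The two elementary identities
\[
 \sqrt{\phi(\sqrt{K})B}=\sqrt{\phi(K)B}\qquad\text{and}\qquad \sqrt{\phi(I)B}\cap\sqrt{\phi(J)B}=\sqrt{\phi(IJ)B}
\]
then supply everything: the first, combined with the fact that ideal extension commutes with sums, shows that \(\phi^*\) preserves arbitrary joins, and the second, together with \(\sqrt{IJ}=\sqrt{I\cap J}\), shows it preserves finite meets; preservation of the top is clear since \(\phi(1)=1\). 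Both identities reduce to the single observation that \(a^n\in K\) forces \(\phi(a)^n=\phi(a^n)\in\phi(K)B\). That same observation makes the assignment compatible with composition and identities, so \(\Spec:=\Zar^{\op}\) is a well-defined functor \(\Tc\Alg^{\op}\to\Loc\), where \(\Loc\) is the opposite of the category of frames.

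The main obstacle is more a point of care than a genuine difficulty: one must resist defining \(\phi^*\) by preimage \(\phi^{-1}(-)\), which preserves meets but fails to preserve the radical-of-sum joins, and instead use ideal extension, whose compatibility with joins is exactly what the two displayed identities encode. Beyond this, the Fermat-theoretic hypothesis contributes nothing to the lattice theory; it serves only to guarantee, via Dubuc's proposition, that all quotients appearing remain \(\Tc\)-algebras, so that \(\Zar(A)\) genuinely records reduced quotients of \(A\) rather than merely of \(\pi_0 A\) as a bare ring.
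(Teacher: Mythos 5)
Your proof is correct. Note that the paper contains no internal argument for this lemma at all: it is imported by citation from Tierney \cite[Proposition 3.1]{tierney1976spectrum}, so the only meaningful comparison is with the classical argument your proposal reconstructs. Your reconstruction is sound on every load-bearing point: since the paper's definition of a radical ideal uses the ordinary radical in \(\pi_0 A\), the lattice theory is indeed pure commutative algebra, and the Fermat structure enters only through Dubuc's \cite[Proposition 1.2]{dubuc19841} to guarantee that the corresponding reduced quotients are again \(\Tc\)-algebras (this is exactly what makes \(\Zar(A)\) the right object for the Galois correspondence the paper alludes to). The verification of the distributive law via \(a^{n+1}=\sum_j a\,b_j\) is the standard one and is complete; the functoriality via extension-and-radical \(I\mapsto\sqrt{\phi(I)B}\) is the correct choice, and your warning against using \(\phi^{-1}\) is well taken, since preimage fails to preserve the join \(\sqrt{\sum_j J_j}\). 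Two small points of bookkeeping that you compress: meet-preservation also silently uses \(\sqrt{E}\cap\sqrt{F}=\sqrt{EF}\) (i.e.\ \(x^n\in E\), \(x^m\in F\) forces \(x^{n+m}\in EF\)) together with \(\phi(I)B\cdot\phi(J)B=\phi(IJ)B\), and compatibility with composition needs the identity \(\psi\bigl(\phi(I)B\bigr)C=\psi(\phi(I))C\); both are one-line checks, so this is terseness rather than a gap.
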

	\begin{construction}\label{con:spec_topology}
		We now define another site structure on \(\Tc\Alg^{\op}.\)
		A set of morphisms \(\{f_i:B_i\to A\vert i \in I\}\) is a {\it covering family} for a \(\Tc\)-algebra \(A\) if it is dual to an open cover of locales (in the sense of MacLane--Moerdijk \cite[\S IX]{maclane2012sheaves}):
		\[
		\{\Spec(B_i)\to \Spec(A)\mid i \in I\}
		\]
		and in addition the corresponding morphisms of derived algebras are strong.
	\end{construction}
	\begin{remark}
		Construction \ref{con:spec_topology} is well-adapted for algebraic geometry (Example \ref{ex:ag}). It is, however, not convenient for Examples \ref{ex:cinf}, \ref{ex:hol} since locales produced by the functor \(\Spec\) are always compact and thus do not model smooth or holomorphic affine spaces adequately. 
	\end{remark}
	\subsubsection{Derived stacks over Fermat theories}
	\begin{construction}
		Given a (small) site of \(\Tc\)-algebras over some Fermat theory \(\Tc\) with Grothendieck topology which is induced from the homotopy category of \(\Tc\)-algebras we can define the category of derived stacks over \(\Tc\) in the same way as we did for \(\CI\)-algebras. Namely, we define them as sheaves on this site of formal duals to finitely presented \(\Tc\)-algebras.
	\end{construction}
	\subsection{Localizations and reductions of \texorpdfstring{\(\Tc\)}{T}-algebras}\label{sec:TAlg_localizations}
	In this section, we essentially repeat the constructions of \Cref{sec:reductions} for general \(\Tc\)-algebras instead of \(\CI\)-algebras. Since not much changes compared to the case we already discussed, we only sketch the main ideas.
	\begin{definition}[{\(\Tc\)-localization}]\label{def:T-localization}
		Let \(A\) be a derived \(\Tc\)-algebra. Consider an element \(s\in\pi_0A\). Then the \(\Tc\)-localization of \(A\) in \(s\) is the derived \(\Tc\) satisfying the classical universal property of localization \emph{in the category of derived \(\Tc\)-algebras}. 
	\end{definition}
	\begin{example}
		If \(A\) is a finitely generated \(\O\)-algebra, its localization can be calculated similarly to the case of \(\CI\)-algebras. That is one pick a lift of the element \(s\) to the free algebra and then consider holomorphic functions on the complement of the zero locus of this lift modulo the localization of the ideal defining \(I\). The result of this operation is obviously different from the usual ring-theoretic localization.
	\end{example}
	\begin{remark}
		In the case when \(\Tc=\Com\) the \(\Tc\)-localization of a \(\Tc\)-algebra \(A\) in an element \(s\in A\) is the usual algebraic localization of \(A\) in \(s\).
	\end{remark}
	We now define \(\Tc\)-nilpotent elements in derived \(\Tc\)-algebras. The definition is essentially the same as the classical one, with the only difference being that we consider the elements in \(\pi_0 A\) for some derived \(\Tc\)-algebra \(A\).
	\begin{definition}[{Nilpotent elements in \(\Tc\)-algebras}]\label{def:T-nilpotent_elements} Let \(A\) be a derived \(\Tc\)-algebra. We have two natural notions of nilpotent elements in \(A\).
		\begin{itemize}
			\item[\(\Tc\)] An element \(s\in \pi_0A\) is called \emph{\(\Tc\)-basic nilpotent} if the \(\Tc\)-localization of \(A\) in \(s\) is trivial.
			\item[\(\Com\)] An element \(s\in \pi_0A\) is called \emph{\(\Com\)-nilpotent} if the algebraic localization of \(A\) in \(s\) is trivial. This is, of course, the usual notion of nilpotent elements.
		\end{itemize}
	\end{definition}
	\begin{definition}[Support of a function]\label{T-definition_of_support}
		Let \(f\in \Tc(\AA_\Tc^n)\) be a \(\Tc\)-function. We define the \emph{carrier set} of \(f\) as the open subset in the space \(\Specm\Tc(\AA_\Tc^n)\) where \(f\) is non-zero. We denote it by \(\Carr(f)\). Accordingly, the \emph{support set} of \(f\) is the closure of \(\Carr(f)\) in \(\Specm\Tc(\AA_\Tc^n)\) and is denoted by \(\Supp(f)\).
	\end{definition}
	\begin{remark}
		In terms of \Cref{T-definition_of_support} the \(\Tc\)-basic nilpotent elements in a derived \(\Tc\)-algebra \(A\) are precisely the elements \(s\in \pi_0A\) such that the support of their lifts to \(\Tc(\R^n)\) doesn't intersect the classical locus of \(A\). 
	\end{remark}
	One can give a more general version of \Cref{def:nilpotent_elements} for an arbitrary ideal \(I\) in \(A\) and not just the zero ideal. There are, in fact, many other natural notions of radicals of ideals in \(\Tc\)-algebras. These are listed and explored in \cite[\S 2]{Borisov_Kremnizer} for \(\Tc=\CI.\)
	\begin{definition}[{Radicals of ideals in \(\Tc\)-algebras}]\label{def:T-radicals_of_ideals}
		Let \(A\) be a derived \(\Tc\)-algebra. Let \(I\) be an ideal in \(\pi_0 A\). We have two natural notions of the radical of \(I\).
		\begin{itemize}
			\item[\(\Tc_0\JT\)] An element \(s\in \pi_0A\) belongs to \(\sqrt[\Tc_0\JT]{I}\) if it lies in the kernel of all homomorphisms of \(\Tc\)-algebras \(A\to \Tc_0\).
			\item[\(\Tc\)] The ideal \(\sqrt[\Tc]{A}\) is generated by all \(\Tc\)-basic nilpotent elements. 
			\item[\(\Com\)] An element \(s\in \pi_0A\) belongs to \(\sqrt[\Com]{I}\) if the algebraic localization of \(A/I\) in \(s\) is trivial. This is, of course, the usual radical of an ideal.
		\end{itemize}
	\end{definition}
	\begin{remark}
		Note that the \(\Tc\)-radical apriori differs from the set of elements which \(\Tc\)-localize the given algebra to \(0.\)
	\end{remark}
	\begin{definition}
		Let \(A\) be a derived \(\Tc\)-algebra. We have two natural notions of it being reduced.
		\begin{itemize}
			\item[\(\Tc_0\JT\)] A derived \(\Tc\)-algebra \(A\) is called \emph{\(\Tc_0\JT\)-reduced} if the \(\Tc_0\JT\)-radical of the zero ideal in \(A\) is trivial.
			\item[\(\Tc\)] A derived \(\Tc\)-algebra \(A\) is called \emph{\(\Tc\)-reduced} if the \(\Tc\)-localization of \(A\) in any non-zero element is non-trivial.
			\item[\(\Com\)] A derived \(\Tc\)-algebra \(A\) is called \emph{\(\Com\)-reduced} if the algebraic localization of \(A\) in any non-zero element is non-trivial. This is, of course, the usual notion of reducedness.
		\end{itemize}
	\end{definition}
	We can now define three reductions of a derived \(\Tc\)-algebra.
	\begin{construction}[{Reductions of \(\Tc\)-algebras}]\label{con:reductions_of_T_algebras}
		Let \(A\) be a derived \(\Tc\)-algebra. We have two natural reductions of \(A\).
		\begin{itemize}
			\item[\(\Tc_0\JT\)] \(A^{\red_{\Tc_0\JT}}\) is the derived \(\Tc\)-algebra given by the homotopy quotient and truncation \(\tau^{\ge 0}A/\sqrt[\Tc_0\JT]{0}\).
			\item[\(\Tc\)] \(A^{\red_{\Tc}}\) is the derived \(\Tc\)-algebra given by the homotopy quotient and truncation \(\tau^{\ge 0}A/\sqrt[\Tc]{0}\).
			\item[\(\Com\)] \(A^{\red_{\Com}}\) is the derived \(\Tc\)-algebra given by the homotopy quotient and truncation \(\tau^{\ge 0}A/\sqrt[\Com]{0}\).
		\end{itemize}
	\end{construction}
	\begin{proposition}
		Let \(A\) be a derived \(\Tc\)-algebra. Then, the \(\Tc\)-reduction of \(A\) is the universal \(\Tc\)-reduced derived \(\Tc\)-algebra equipped with a map from \(A\).
	\end{proposition}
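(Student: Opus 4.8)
The plan is to adapt, essentially verbatim, the argument used earlier for $\CI$-algebras, separating the claim into two independent assertions: that the reduction $A^{\red_\Tc}=\tau^{\geq 0}\bigl(A/\sqrt[\Tc]{0}\bigr)$ is itself $\Tc$-reduced, and that it is initial among $\Tc$-reduced algebras receiving a map from $A$. As in the $\CI$ case I read ``$\Tc$-reduced'' as including $0$-truncatedness, so that all reductions are discrete $\Tc$-algebras; this is what lets mapping-in computations reduce to $\pi_0$. The single structural input I would isolate is that $\Tc$-localization is stable under base change: since by \Cref{def:T-localization} the localization $A\{s^{-1}\}$ is the initial $\Tc$-algebra under $A$ in which $s$ becomes invertible, for any $\phi\colon A\to B$ the algebra $B\otimes_A A\{s^{-1}\}$ is initial among $B$-algebras inverting $\phi(s)$, i.e. $B\{\phi(s)^{-1}\}\simeq B\otimes_A A\{s^{-1}\}$.

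First I would verify the universal property, as it is the cleaner half. Let $B$ be $\Tc$-reduced and $\phi\colon A\to B$ a morphism. If $s\in\pi_0 A$ is $\Tc$-basic nilpotent, so $A\{s^{-1}\}=0$, then the base-change identity gives $B\{\phi(s)^{-1}\}\simeq B\otimes_A 0 = 0$ (the last equality because the zero algebra is terminal, so any algebra admitting a map from it must vanish). Hence $\phi(s)$ is $\Tc$-basic nilpotent in $B$ and therefore zero by reducedness. Since $\sqrt[\Tc]{0}$ is by definition the ideal generated by the $\Tc$-basic nilpotents and $\pi_0\phi$ is a ring homomorphism, $\pi_0\phi$ annihilates all of $\sqrt[\Tc]{0}$. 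As $B$ is discrete, $\phi$ is the datum of a $\Tc$-homomorphism $\pi_0 A\to B$, which now factors uniquely through $\pi_0(A)/\sqrt[\Tc]{0}=\pi_0\bigl(A^{\red_\Tc}\bigr)$ and hence through $A^{\red_\Tc}$; uniqueness is automatic because $A\to A^{\red_\Tc}$ is surjective on $\pi_0$. This is exactly the reflection (adjoint functor) description recorded in \cite[Definition 3]{Borisov_Kremnizer}.

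Second I would check that $A^{\red_\Tc}$ is genuinely $\Tc$-reduced, which amounts to idempotence of the $\Tc$-radical, the analogue of \cite[Lemma 4]{Borisov_Kremnizer}. Working in $R=\pi_0(A)/\sqrt[\Tc]{0}$ (a discrete $\Tc$-algebra by \cite[Proposition 1.2]{dubuc19841}), I must show $R$ has no nonzero $\Tc$-basic nilpotent. Given $\bar s\in R$ with $R\{\bar s^{-1}\}=0$, commuting the quotient and localization pushouts presents $R\{\bar s^{-1}\}$ as $(\pi_0 A)\{s^{-1}\}$ with the image of $\sqrt[\Tc]{0}$ killed; its vanishing forces that image to generate the unit ideal in $(\pi_0 A)\{s^{-1}\}$, and the content of idempotence is precisely to deduce from this that $s$ itself already lies in $\sqrt[\Tc]{0}$, i.e. $\bar s=0$. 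This is the step I expect to be the main obstacle: unlike the $\CI$ case, the $\Tc$-radical need not coincide with the set of elements $\Tc$-localizing $A$ to zero (see the remark after \Cref{def:T-radicals_of_ideals}), so one cannot simply invoke the clean zero-locus characterization of \Cref{T-definition_of_support}. I would therefore argue idempotence directly from the definition of $\sqrt[\Tc]{0}$ as the ideal generated by basic nilpotents together with the base-change formula, rather than transporting the $\CI$-specific geometric description; this is the one place where genuine work beyond formal manipulation is required.
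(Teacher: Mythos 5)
Your universal-property argument is complete and correct: base change of \(\Tc\)-localizations shows that any \(\Tc\)-basic nilpotent of \(A\) maps to a \(\Tc\)-basic nilpotent of a \(\Tc\)-reduced target \(B\), hence to zero, so \(\pi_0\phi\) kills the ideal \(\sqrt[\Tc]{0}\) and \(\phi\) factors uniquely through \(\pi_0(A)/\sqrt[\Tc]{0}\). This is exactly the half that the paper's proof of the \(\CI\)-analogue delegates to \cite[Definition 3]{Borisov_Kremnizer}, and your version is more self-contained. Your reading of \enquote{\(\Tc\)-reduced} as including discreteness (which the appendix definitions omit but the main-text \(\CI\)-definition includes) is also the right convention; without it the factorization step would fail.

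The gap is the other half, which you flag but never close: you do not show that \(A^{\red_{\Tc}}\) is itself \(\Tc\)-reduced, i.e.\ that the \(\Tc\)-radical is idempotent. Announcing that you \enquote{would argue idempotence directly from the definition together with the base-change formula} is not a proof, and this step is not a formal manipulation. The base-change formula only converts \(R\{\bar{s}^{-1}\}=0\) (for \(R=\pi_0(A)/\sqrt[\Tc]{0}\)) into the statement that the image of \(\sqrt[\Tc]{0}\) generates the unit ideal of \((\pi_0 A)\{s^{-1}\}\); there is no theory-independent passage from this to \(s\in\sqrt[\Tc]{0}\). Because \(\sqrt[\Tc]{0}\) is only the ideal \emph{generated} by the basic nilpotents (\Cref{def:T-radicals_of_ideals}), the quotient can a priori acquire new basic nilpotents, and ruling this out is precisely the mathematical content of the proposition. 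Every instance where this is known rests on an elementwise, theory-specific characterization of the radical: for \(\Com\), that \(s^n\) nilpotent implies \(s\) nilpotent; for \(\CI\), the carrier/zero-set description underlying \cite[Lemma 4]{Borisov_Kremnizer}, which is the citation the paper's \(\CI\)-proof uses for exactly this step, and which the paper's own remarks in the appendix say does not transport to general Fermat theories. So your proposal establishes half the statement and correctly isolates where the real work lies, but as it stands it is incomplete; note that the paper itself states the general-\(\Tc\) proposition without proof, so the missing step is genuinely open ground rather than something recoverable by mimicking the paper's two citations.
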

	\begin{remark}
		A version of \Cref{st:CI-inf_radical=RJ-radical} does not hold in general Fermat theories since its proof used the notion of positivity for \(\R\) in an essential way. However, if one has that, e.g. if \(\Tc_0\) is an ordered field, one can prove a version of this result similarly.
	\end{remark}
	\subsection{Completions of derived stacks over Fermat theories}\label{sec:Completions_of_derived_stacks}
	As we have seen in \Cref{con:reductions_of_T_algebras}, there are three natural reductions of a derived \(\Tc\)-algebra. Now we define the completions of derived \(\Tc\)-stacks based on these reductions. The conceptual framework of this section again goes back to \cite{Simpson_Teleman} and \cite{Gaitsgory_Rozenblyum_Ind}.
	\begin{definition}\label{def:T_formal_completion}
		Consider a reflexive subcategory with reflector \(\Rc\) in the category of derived finitely presented \(\Tc\)-algebras. We define the \emph{derived \(\Tc\)-completion} of a morphism of derived stacks \(\Xc\to \Yc\) as follows.
		\[
			\Xc^{\wedge_\Rc}_{\Yc}(A):=\Yc(\Rc(A))\times_{\Xc(\Rc(A))} \Xc(A).
		\] 
		In particular, for \(\Rc=(-)^{\red_{\Tc_0\JT, \Tc, \Com}}\) we get \(\T_0\JT\), \(\Tc\), and \(\Com\) -completions respectively.
	\end{definition}
	\subsection{Cotangent complex and derived de Rham cohomology}\label{sec:TAlg_Cotangent_Complex}
	\subsubsection{Cotangent complex and its properties}
	\begin{definition}
		For \(\Tc\)-algebra, the tangent category is the following presentable fibration 
		\[
		p\colon \Mod \to \Tc\Alg.
		\]
		For a derived \(\Tc\)-algebra \(A\), the \emph{cotangent complex} 
		\(
		\LL_A\in \Mod_A \simeq T\bigl(\Tc\Alg\bigr)\times_{\Tc\Alg}\{A\}
		\)
		is defined as the value of the cotangent complex functor at \(A\). 

		For a morphism \(f\colon A\to B\) of derived \(\Tc\)-algebras, the relative cotangent complex 
		\(\LL_f\in \Mod_B\) often denoted \(\LL_{B/A}\)	is defined as the value of the relative cotangent complex functor at \(f\).
	\end{definition}
	\begin{remark}
		Once again as in \Cref{def:CI-derivations}, by picking a particular model for derived \(\Tc\)-algebras one can interpret the cotangent complex in terms of the functor of \(\Tc\)-derivations. These are defined similarly as \(\CI\)-derivations by considering the Leibniz rule for some extension of polynomial operations. 
	\end{remark}
	The following example shows how the cotangent complex can be computed for a finitely presented \(\Tc\)-algebra.
	\begin{example}[{\cite[Example 2.2.19]{Nuiten}}]
		Consider the \(\Tc\)-algebra \(\Tc(X)\) of the derived fibre of the map \((f_1,\ldots,f_m):\AA^n_\Tc\to \AA^m_{\Tc}\). Then the ring \(\Tc(X)\) can be presented by the following dg \(\Tc\)-algebra 
		\[
		\Tc(X)=\Tc(\AA^n_\Tc)[\xi_1,\ldots,\xi_m]
		\] 
		Here \(\xi_i\) are odd variables of degree \(-1\). The differential \(\pd\) is given by \(\pd(\xi_i)=f_i\) for \(i\) from \(1\) to \(m\). The cotangent complex of \(X\) is then given by a free \(\Tc(X)\)-module 
		\[
			\Omega^1(\AA^n_\Tc)[\xi_1,\ldots,\xi_m]\oplus\Tc(X)\la d_{\dR}\xi_i\ra_{i=1}^m,\quad \pd d_{\dR}\xi_i=d_{\dR}f_i\in \Omega^1(\AA^n_\Tc).
		\]
	\end{example}
	The following result shows that the cotangent complex of an open embedding is trivial.
	\begin{proposition}[{\cite[Corollary 5.1.0.14]{Steffens_thesis}}]\label{T_cotangent_complex_of_localization_vanishes}
		Let \(A\to A\{S^{-1}\}\) be a \(\Tc\)-localization of a derived \(\Tc\)-algebra. Then the cotangent complex of the map \(A\to A\{S^{-1}\}\) is trivial.
	\end{proposition}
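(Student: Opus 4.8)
The plan is to mirror the proof of the \(\CI\)-case, \Cref{cotangent_complex_of_localization_vanishes}, since the only inputs used there were the universal property of localization, the base-change and transitivity properties of the cotangent complex, and the explicit form of the cotangent complex of an affine space — all of which have verbatim analogues over an arbitrary Fermat theory \(\Tc\). First I would reduce to the universal single-element localization. By \Cref{def:T-localization}, for a single \(s\in\pi_0 A\) the localization \(A\to A\{s^{-1}\}\) is the pushout of \(\Tc\)-algebras
\[
A\{s^{-1}\}\simeq A\otimes^{\Tc}_{\Tc(\AA^1_\Tc)}\Tc(\AA^1_\Tc)\{x^{-1}\},
\]
where \(\Tc(\AA^1_\Tc)\to A\) classifies \(s\) (sending the coordinate \(x\mapsto s\)) and \(\Tc(\AA^1_\Tc)\to\Tc(\AA^1_\Tc)\{x^{-1}\}\) inverts the coordinate. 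The base-change property of the cotangent complex (the \(\Tc\)-analogue of part (ii) of \Cref{prop:functoriality_properties_of_LL}) then gives
\[
\LL_{A\{s^{-1}\}/A}\simeq A\{s^{-1}\}\otimes_{\Tc(\AA^1_\Tc)\{x^{-1}\}}\LL_{\Tc(\AA^1_\Tc)\{x^{-1}\}/\Tc(\AA^1_\Tc)},
\]
so it suffices to treat the single map \(g\colon\Tc(\AA^1_\Tc)\to\Tc(\AA^1_\Tc)\{x^{-1}\}\); a finite set \(S\) is then handled by inverting its elements one at a time and composing the resulting transitivity triangles.

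For the universal case I would run the transitivity triangle (the \(\Tc\)-analogue of part (i) of \Cref{prop:functoriality_properties_of_LL}) for the composite \(\Tc_0\to\Tc(\AA^1_\Tc)\xrightarrow{g}\Tc(\AA^1_\Tc)\{x^{-1}\}\), where \(\Tc_0\) is the base \(\Tc\)-algebra (functions on the point):
\[
g_*\LL_{\Tc(\AA^1_\Tc)/\Tc_0}\to\LL_{\Tc(\AA^1_\Tc)\{x^{-1}\}/\Tc_0}\to\LL_{\Tc(\AA^1_\Tc)\{x^{-1}\}/\Tc(\AA^1_\Tc)}.
\]
Since \(\Tc(\AA^1_\Tc)\) is the free \(\Tc\)-algebra on one generator, \(\LL_{\Tc(\AA^1_\Tc)/\Tc_0}\) is free of rank one on \(d_{\dR}x\), and hence \(g_*\LL_{\Tc(\AA^1_\Tc)/\Tc_0}\) is the free rank-one module over \(\Tc(\AA^1_\Tc)\{x^{-1}\}\) on \(d_{\dR}x\). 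I would then show that \(\LL_{\Tc(\AA^1_\Tc)\{x^{-1}\}/\Tc_0}\) is also free of rank one on \(d_{\dR}x\), with the leftmost map of the triangle the identity on this generator; the triangle then forces the relative cotangent complex on the right to vanish.

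The hard part is this last computation: identifying \(\LL_{\Tc(\AA^1_\Tc)\{x^{-1}\}/\Tc_0}\) as free of rank one on \(d_{\dR}x\), i.e. that inverting the coordinate introduces no new cotangent directions. This is where the defining property of a Fermat theory enters: by Hadamard's lemma (\Cref{def:fermat}) every \(\Tc\)-operation admits first-order difference quotients, so a \(\Tc\)-derivation out of the localized algebra is determined by its value on \(x\), while the relation \(d_{\dR}(x^{-1})=-x^{-2}\,d_{\dR}x\) shows the inverted coordinate contributes nothing beyond \(d_{\dR}x\). This is precisely the content of the result of Steffens (\cite[Corollary 5.1.0.14]{Steffens_thesis}) invoked in the \(\CI\)-case, and the argument transports to any Fermat theory because Hadamard's lemma holds for all of them by definition; the remaining steps are purely formal consequences of the standard cotangent-complex calculus of \cite[\S 7.3.3]{Lurie_HA}.
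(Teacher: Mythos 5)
Your proposal is correct and follows essentially the paper's own route: for this proposition the paper gives no separate argument beyond citing Steffens, the intended proof being exactly the one it gives for the \(\CI\)-analogue (\Cref{cotangent_complex_of_localization_vanishes}), which you transport — reduction to the universal one-element localization by base change along the pushout square, then the transitivity triangle over \(\Tc_0\) with \(\LL_{\Tc(\AA^1_\Tc)/\Tc_0}\) free of rank one on \(d_{\dR}x\). Your handling of the one step that genuinely needs adaptation — identifying \(\LL_{\Tc(\AA^1_\Tc)\{x^{-1}\}/\Tc_0}\) without the geometric model \(\CI(\R\setminus\{0\})\) available in the smooth case — is also sound: your appeal to Hadamard's lemma and the relation \(d_{\dR}(x^{-1})=-x^{-2}\,d_{\dR}x\) is precisely what the Rabinowitsch-type presentation \(\Tc(\AA^2_\Tc)/(xy-1)\) delivers, where the Koszul generator maps to \(y\,d_{\dR}x+x\,d_{\dR}y\), split injective because \(x\) is invertible, so the relative cotangent complex is indeed free of rank one on \(d_{\dR}x\) and the triangle forces the vanishing.
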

	\begin{corollary}
		The cotangent complex \(\LL_{X/Y}\) forms a sheaf on the site of \(\Tc\)-schemes over \(Y\) with the topology of open embeddings given by \Cref{con:specm_topology}.
	\end{corollary}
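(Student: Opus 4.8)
The plan is to reduce the assertion to two inputs: the compatibility of the relative cotangent complex with $\Tc$-localization, and ordinary quasi-coherent descent. Since $\LL_{X/Y}$ is by construction a module over the structure algebra, the only genuine point is to check that its formation commutes with passage to opens; the sheaf condition is then formal.

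First I would record the localization compatibility. Let $j\colon U\to X$ be an open embedding of $\Tc$-schemes over $Y$, corresponding to a $\Tc$-localization $\Tc(X)\to \Tc(U)$. Applying the transitivity cofibre sequence — the $\Tc$-analogue of \Cref{prop:functoriality_properties_of_LL}(i) — to the composite $U\to X\to Y$ yields
\[
	j^{*}\LL_{X/Y}\to \LL_{U/Y}\to \LL_{U/X}.
\]
By \Cref{T_cotangent_complex_of_localization_vanishes} the right-hand term vanishes, since $U\to X$ is a localization; hence the first map is an equivalence, $j^{*}\LL_{X/Y}\simeq \LL_{U/Y}$. Equivalently, $X\mapsto \LL_{X/Y}$ is a cartesian section of the fibration $\Mod\to\Tc\Alg^{\op}$ over the site, i.e. the cotangent complex restricts correctly along every open embedding.

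Next I would upgrade this to the full sheaf condition. For a covering family $\{U_i\to X\}$ in the topology of \Cref{con:specm_topology}, the maps $\Tc(X)\to\Tc(U_i)$ are strong and jointly surjective on $\Specm$. Because $\Tc$-localizations are flat and a jointly surjective family of them is faithfully flat, such a family is of effective descent for modules, so any $\Tc(X)$-module — in particular $M=\LL_{X/Y}$ — is recovered as the totalization of the \v{C}ech diagram of its restrictions $M\otimes_{\Tc(X)}\Tc(U_{i_0})\otimes_{\Tc(X)}\cdots\otimes_{\Tc(X)}\Tc(U_{i_k})$. Identifying each such restriction, via the first step, with the relative cotangent complex of the corresponding intersection $U_{i_0}\times_X\cdots\times_X U_{i_k}$, we conclude that $\LL_{X/Y}$ satisfies the sheaf condition.

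The main obstacle I anticipate is this descent step, which rests on knowing that the jointly surjective $\Tc$-open embeddings of \Cref{con:specm_topology} are of effective descent for quasi-coherent sheaves. In the $\CI$ case this is guaranteed by the flatness of $\CI$-localizations together with the fact that jointly surjective localizations are faithfully flat; for a general Fermat theory the strongness hypothesis built into \Cref{con:specm_topology} is precisely what ensures the localizations are flat and compatible with $\pi_0$, so the same argument applies verbatim. The remaining ingredients — functoriality of the restriction maps and the compatibility of tensor products with flat base change — are formal.
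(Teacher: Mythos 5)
Your first step is exactly the paper's own (implicit) proof: the corollary is stated immediately after \Cref{T_cotangent_complex_of_localization_vanishes} precisely because the transitivity cofibre sequence for \(U\to X\to Y\), combined with the vanishing of \(\LL_{U/X}\) for a \(\Tc\)-localization, identifies \(j^{*}\LL_{X/Y}\simeq \LL_{U/Y}\). That part is correct and is the key content.

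The descent step, however, has a genuine gap. First, \enquote{strong} in \Cref{con:specm_topology} is the condition \(\pi_*(A)\otimes_{\pi_0(A)}\pi_0(B)\cong\pi_*(B)\); it guarantees compatibility of the derived structure with \(\pi_0\), but it says nothing about flatness, so it is not what \enquote{ensures the localizations are flat}. Second, and more seriously, joint surjectivity in this topology means surjectivity on \(\Specm\), i.e.\ on \(\Tc_0\)-points, and this does not imply surjectivity on prime spectra, which is what faithful flatness requires. Concretely, take \(A=\CI(\R^n)\) covered by bounded opens \(U_i\), and let \(\m\) be a maximal ideal containing the proper ideal of compactly supported functions (cf.\ \Cref{ex:non_fair}); for each \(i\) there is a compactly supported \(\chi\) with \(\chi|_{U_i}=1\), so \(\m\cdot\CI(U_i)=\CI(U_i)\) and hence \((A/\m)\otimes_A\CI(U_i)=0\) for every \(i\), although \(A/\m\neq 0\). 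Thus the base-change family is not conservative, and your claim that \emph{any} \(\Tc(X)\)-module is recovered as the totalization of its \v{C}ech diagram is false for this cover and this module (the partition-of-unity homotopy that rescues the \(\CI\)-case only makes sense for finite covers when applied to abstract tensor products, since it involves infinite sums). What saves the corollary is that \(\LL_{X/Y}\) is not an arbitrary module: in the finitely presented setting it is a perfect, hence dualizable, complex (cf.\ \Cref{ex:cotangent_complex_of_dMfld}), so \(\LL_{X/Y}\otimes_{\Tc(X)}(-)\) commutes with the products and totalizations involved, and the sheaf condition for \(\LL_{X/Y}\) reduces, via your first step, to the sheaf condition for the structure sheaf itself, which is what the choice of topology provides. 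Replacing \enquote{faithfully flat descent for all modules} by \enquote{dualizability of \(\LL\) plus descent for \(\O\)} closes the gap.
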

	\subsubsection{Derived de Rham cohomology}
	\begin{construction}\label{con:T_derived_de_Rham}
		Let \(A\to B\) be a morphism of derived \(\Tc\)-algebras. Let \(F\to B\) be a free \(\Tc\)-resolution of \(B\) over \(A\). We define the \emph{Hodge completed derived de Rham cohomology} of \(B\) over \(A\) as the completion of \(\left|\Omega^\bt_{F/A}\right|\) for its Hodge filtration. Concretely, the complete filtered derived \(\Tc\) algebra \(\cdR_{B/A}\) is the limit of the following directed system.
		\[
			\cdR_{B/A}/\Fil_H^k=\Tot(\sigma^{\leq k}\Omega^\bt_{F/A}).
		\]
		Here \(\sigma^{\leq k}\) denotes the valuable truncation in cohomological degrees \(\leq k\). 
	\end{construction}
	Just as is the case for \(\CI\)-algebras for arbitrary \(\Tc\)-algebras, we can present the derived de Rham cohomology using the Adams completion. We note here that this result uses in an essential way that geometries over Fermat theories admit a good theory of closed immersions in the sense of \cite{Lurie_DAG9}. One way to verify that is to use the proof of \cite[Lemma 4.6]{Carchedi_Steffens} and observe that one could prove the same unramifiedness statement with an arbitrary Fermat theory in place of \(\CI.\)
	\begin{theorem}\label{thm:T_Adams_completion_vs_de_Rham}
		Let \(f:A\to B\) be a \(\pi_0\)-epimorphic morphism of derived \(\Tc\)-algebras. Then, we have a filtered equivalence of complete filtered derived \(\Tc\)-algebras.
		\[
			\left\{\cdR_{B/A},\Fil_H^\bt\right\}\simeq \Big\{\Comp_A(A,f),\Fil_H^\bt\Big\}
		\]
		Consequently, the Adams completion of \(f\) is equivalent to the derived de Rham cohomology of \(B\) over \(A\).
		\[
			\cdR_{B/A}\simeq \Comp_A(A,f).
		\]
	\end{theorem}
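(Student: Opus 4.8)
The plan is to run the proof of \Cref{thm:Adams_completion_vs_de_Rham} \emph{verbatim}, after verifying that each of its inputs survives the passage from $\CI$ to an arbitrary Fermat theory $\Tc$. The only genuinely new ingredient is the unramifiedness of closed immersions, i.e.\ the analogue of \Cref{unramifiedness} and \Cref{cor_of_unframifiedness}: for a $\pi_0$-epimorphism $f\colon A\to B$ of derived $\Tc$-algebras, the relative $\Tc$-coproduct $B\otimes_A^\Tc B$ must agree with the underlying commutative pushout, so that the $\Tc$-\v{C}ech conerve $\check{C}^\bt(f)$ coincides with its commutative counterpart. This is the main obstacle, and I would establish it exactly as indicated before the statement: by rerunning the argument of \cite[Lemma 4.6]{Carchedi_Steffens}, whose only $\CI$-specific feature is the use of Hadamard's lemma, which holds by definition in every Fermat theory (\Cref{def:fermat}). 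Granting this, each term $\check{C}^m(f)$ is an iterated $\pi_0$-epimorphic coproduct, so the computation $\pi_0(\check{C}^m(f))\cong\pi_0(B)$ goes through unchanged.

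With unramifiedness in hand, I would first record the Fermat analogue of \Cref{cor:de_Rham_equiv_for_quotients}: for a $\pi_0$-isomorphism the Hodge-filtered derived de Rham algebra is filtered-equivalent to its source. This is formal given the graded identification \eqref{eq:Cartier_isomorphism}, $\gr^k(\cdR_{B/A})\cong\Lambda^k\LL_{B/A}[-k]$, together with the vanishing of the cotangent complex on $\pi_0$-isomorphisms; both are available over any Fermat theory, since the construction of $\cdR$ (\Cref{con:T_derived_de_Rham}) and the functoriality of the cotangent complex (the transitivity cofibre sequence and the base-change equivalence) are set up there. Applying this statement levelwise to the cosimplicial object $\check{C}^\bt(f)$ yields, exactly as in the $\CI$-case, a cosimplicial Hodge-filtered equivalence $\{\check{C}^\bt(f),\Fil_H^\bt\}\simeq\{\cdR_{B/\check{C}^\bt(f)},\Fil_H^\bt\}$.

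Finally I would pass to totalizations. Taking $\Tot$ of the previous equivalence identifies $\{\Comp_A(A,f),\Fil_H^\bt\}$ with $\{\Tot\,\cdR_{B/\check{C}^\bt f},\Fil_H^\bt\}$, and the remaining task is to compare the right-hand side with $\cdR_{B/A}$. As in Bhatt's argument this reduces to a computation on associated graded pieces, using descent for the cotangent complex along the $\pi_0$-epimorphism $f$ to obtain
\[
\Lambda^k\LL_{B/A}[-k]\cong\gr^k(\cdR_{B/A})\cong\gr^k\bigl(\Tot\,\cdR_{B/\check{C}^\bt f}\bigr)\cong\Tot\,\Lambda^k\LL_{B/\check{C}^\bt f}[-k],
\]
where the middle step uses that totalization of a Hodge-truncated cosimplicial object commutes with forming graded quotients. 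Since a filtered map between complete filtered objects that induces an equivalence on every graded piece is itself an equivalence, the filtered statement follows, and forgetting the filtration gives $\cdR_{B/A}\simeq\Comp_A(A,f)$.
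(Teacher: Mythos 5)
Your proposal follows the same route as the paper: the intended proof of \Cref{thm:T_Adams_completion_vs_de_Rham} is exactly ``rerun the proof of \Cref{thm:Adams_completion_vs_de_Rham}'', with the single genuinely new input being unramifiedness of \(\pi_0\)-epimorphisms over \(\Tc\) (the analogues of \Cref{unframifiedness} and \Cref{cor_of_unframifiedness}), obtained by rerunning \cite[Lemma 4.6]{Carchedi_Steffens}; your observation that Hadamard's lemma --- available in every Fermat theory by \Cref{def:fermat} --- is the only \(\CI\)-specific ingredient there is precisely the content of the remark preceding the theorem. The levelwise reduction over the \v{C}ech conerve, the passage to totalizations, and the comparison on associated graded pieces are then identical to the \(\CI\)-case.

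However, your justification of the intermediate step --- the Fermat analogue of \Cref{cor:de_Rham_equiv_for_quotients} --- contains a genuine error. The cotangent complex of a \(\pi_0\)-isomorphism does \emph{not} vanish: for the derived self-intersection \(A=\Tc_0\otimes_{\Tc(\AA^1_\Tc)}\Tc_0\) (a Koszul algebra on one generator of degree \(-1\)), the truncation \(A\to\pi_0(A)=\Tc_0\) is a \(\pi_0\)-isomorphism whose relative cotangent complex is a shift of \(\Tc_0\), hence non-zero. Moreover, if \(\LL_{B/A}\) did vanish, then \eqref{eq:Cartier_isomorphism} and completeness of the Hodge filtration would give \(\cdR_{B/A}\simeq\gr^0=B\), an equivalence with the \emph{target}; but the corollary, and your levelwise application of it to the codegeneracies \(\check{C}^m(f)\to B\), requires an equivalence with the \emph{source} \(\check{C}^m(f)\) --- that is what makes \(\Tot\check{C}^\bt(f)=\Comp_A(A,f)\) appear at all. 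With your version, \(\Tot\) of the constant cosimplicial object \(B\) would ``prove'' \(\Comp_A(A,f)\simeq B\), which already fails for the origin in \(\AA^1_\Tc\): its Adams completion is the completion of \(\Tc(\AA^1_\Tc)\) at the origin, not \(\Tc_0\). (Your false lemma is also inconsistent with your own final display, since it would force \(\Tot\Lambda^k\LL_{B/\check{C}^\bt f}=0\) and hence \(\LL_{B/A}=0\) for every closed immersion.) The statement this step actually rests on is Quillen's convergence theorem --- the unnamed proposition preceding \Cref{cor:de_Rham_equiv_for_quotients}, proved verbatim following \cite[Proposition 4.11]{Bhatt_Derived}: for a \(\pi_0\)-isomorphism \(\LL_{B/A}\) is \(2\)-connective, so the Hodge-graded pieces \(\Lambda^k\LL_{B/A}[-k]\) have connectivity growing with \(k\), and completeness of the Hodge filtration then yields \(A\xrightarrow{\ \sim\ }\cdR_{B/A}\). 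That connectivity argument is theory-independent (modules over derived \(\Tc\)-algebras and the connectivity estimates for exterior powers are the same as in the commutative case), so the theorem itself is unaffected --- but it is this convergence input, not a vanishing statement, that your proof must invoke.
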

	\subsection{De Rham stacks and infinitesimal groupoids}\label{sec:TAlg_De_Rham_And_Inf}
	Now, similarly to how we defined de Rham stacks for the case of \(\Tc=\CI\), we can now define three versions of the de Rham stack for an arbitrary Fermat theory \(\Tc.\)
	\begin{construction}
		Let \(\X\) be a derived \(\Tc\)-stack. Then we have two natural \enquote{de Rham} stacks associated to it.
		\[
			\X^{\dR_{\Tc}}(A):=\X(A^{\red_\Tc}),\quad \X^{\dR_\Com}(A):=\X(A^{\red_\Com}).
		\]
	\end{construction}

	\begin{definition}[{Infinitesimal groupoids}]
		Let \(\X\) be a derived \(\Tc\)-stack. We define its \emph{\(\Tc_0\JT\)-infinitesimal groupoid} \(\X^{\inft_{\Tc_0\JT}}\) as follows.
\[\begin{tikzcd}
	\vdots \\
	{(\X^3)^{\wedge_{\Tc_0\JT}}_{\Delta_3}} \\
	{(\X^2)^{\wedge_{\Tc_0\JT}}_{\Delta_2}} \\
	\X
	\arrow[shift right=5, from=1-1, to=2-1]
	\arrow[shift left=4, from=1-1, to=2-1]
	\arrow[shift right=2, from=1-1, to=2-1]
	\arrow[shift left, from=1-1, to=2-1]
	\arrow[shift right=4, from=2-1, to=3-1]
	\arrow[shift left=4, from=2-1, to=3-1]
	\arrow[from=2-1, to=3-1]
	\arrow[shift right=3, from=3-1, to=4-1]
	\arrow[shift left=3, from=3-1, to=4-1]
\end{tikzcd}\]
		We define its \emph{\(\Tc\)-infinitesimal groupoid} \(\X^{\inft_{\Tc}}\) as follows.
\[\begin{tikzcd}
	\vdots \\
	{(\X^3)^{\wedge_{\Tc}}_{\Delta_3}} \\
	{(\X^2)^{\wedge_{\Tc}}_{\Delta_2}} \\
	\X
	\arrow[shift right=3, from=3-1, to=4-1]
	\arrow[shift left=3, from=3-1, to=4-1]
	\arrow[shift right=4, from=2-1, to=3-1]
	\arrow[shift left=4, from=2-1, to=3-1]
	\arrow[from=2-1, to=3-1]
	\arrow[shift right=5, from=1-1, to=2-1]
	\arrow[shift left=4, from=1-1, to=2-1]
	\arrow[shift right=2, from=1-1, to=2-1]
	\arrow[shift left, from=1-1, to=2-1]
\end{tikzcd}\]

		We define its \(\Com\)-\emph{infinitesimal groupoid} \(\X^{\inft_{\Com}}\) as follows.
\[\begin{tikzcd}
	\vdots \\
	{(\X^3)^{\wedge_{\Com}}_{\Delta_3}} \\
	{(\X^2)^{\wedge_{\Com}}_{\Delta_2}} \\
	\X
	\arrow[shift right=3, from=3-1, to=4-1]
	\arrow[shift left=3, from=3-1, to=4-1]
	\arrow[shift right=4, from=2-1, to=3-1]
	\arrow[shift left=4, from=2-1, to=3-1]
	\arrow[from=2-1, to=3-1]
	\arrow[shift right=5, from=1-1, to=2-1]
	\arrow[shift left=4, from=1-1, to=2-1]
	\arrow[shift right=2, from=1-1, to=2-1]
	\arrow[shift left, from=1-1, to=2-1]
\end{tikzcd}\]
	\end{definition}
	\begin{proposition}
		Under the assumption that \(\X\) is a formally smooth derived \(\Tc\)-stack, the derived \(\Com\)-de Rham stack of \(\X\) is equivalent to the quotient of the \(\Com\)-infinitesimal groupoid of \(\X\).
	\end{proposition}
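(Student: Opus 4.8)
The plan is to carry over the proof of \Cref{st:de_Rham_stack_is_presented_by_the_infinitesimal_groupoid} essentially unchanged, since its only inputs were formal smoothness together with the formal properties of $\Com$-reduction and $\Com$-completion, and the latter two depend only on the underlying commutative ring and are defined identically for every Fermat theory (\Cref{con:reductions_of_T_algebras}, \Cref{def:T_formal_completion}). First I would evaluate the asserted equivalence on an arbitrary affine test object $S=\Spec A$, with $A$ a finitely presented derived $\Tc$-algebra. By construction $\X^{\dR_\Com}(A)=\X(A^{\red_\Com})$, and the tautological map $\X\to\X^{\dR_\Com}$ is the one induced by the reduction homomorphism $A\to A^{\red_\Com}$.

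The first step is to see that this map is an effective epimorphism. The homomorphism $A\to A^{\red_\Com}$ is a (pro-)nilpotent extension, being the composite of a truncation with the quotient by the commutative nilradical, so the hypothesis that $\X$ is formally smooth gives that
\[
	\Map(S,\X)=\X(A)\longrightarrow\X(A^{\red_\Com})=\Map(S^{\red_\Com},\X)
\]
is a $\pi_0$-epimorphism. Consequently $\X\to\X^{\dR_\Com}$ is an effective epimorphism of derived $\Tc$-stacks, and $\X^{\dR_\Com}$ is recovered as the geometric realization of its \v{C}ech nerve.

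It then remains to identify this \v{C}ech nerve with the $\Com$-infinitesimal groupoid $\X^{\inft_\Com}$. The term in simplicial degree $n$ is the $(n+1)$-fold fibre product $\X\times_{\X^{\dR_\Com}}\cdots\times_{\X^{\dR_\Com}}\X$, whose $A$-points are tuples in $\X(A)^{n+1}$ that become equal after reduction to $A^{\red_\Com}$. Comparing with the pullback formula of \Cref{def:T_formal_completion}, this is exactly $(\X^{n+1})^{\wedge_{\Com}}_{\Delta_{n+1}}(A)$, the $\Com$-completion of $\X^{n+1}$ along the diagonal. Hence the \v{C}ech simplicial object coincides with $\X^{\inft_\Com}$ and its quotient is $\X^{\dR_\Com}$, as claimed. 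I expect the last identification to be the main obstacle: one must check that the higher fibre products are genuinely the formal neighbourhoods of the diagonals rather than something larger, which reduces to the statement that two $A$-points of $\X$ agreeing over $A^{\red_\Com}$ are the same datum as a single $A$-point of $(\X^2)^{\wedge_{\Com}}_{\Delta_2}$. This is immediate from \Cref{def:T_formal_completion} once one knows that formal smoothness makes the defining Cartesian square of the completion behave well, and the effective-epimorphism input of the first step is precisely what guarantees that forming the quotient of the groupoid loses no information.
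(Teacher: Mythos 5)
Your proof is correct and follows essentially the same route as the paper: the paper's argument (given for the \(\CI\)-case in \Cref{st:de_Rham_stack_is_presented_by_the_infinitesimal_groupoid} and invoked verbatim here) likewise uses formal smoothness to get a \(\pi_0\)-epimorphism \(\Map(S,\X)\to\Map(S^{\red_{\Com}},\X)\), hence an effective epimorphism \(\X\to\X^{\dR_{\Com}}\), and then identifies its \v{C}ech nerve with the \(\Com\)-infinitesimal groupoid directly from the defining pullback formula of the completion. Your write-up simply spells out the \v{C}ech-nerve versus completion-of-the-diagonal comparison that the paper dismisses as \enquote{by definition}.
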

	The proof of this result is identical to the proof of \Cref{st:de_Rham_stack_is_presented_by_the_infinitesimal_groupoid} and is thus omitted.
	\begin{lemma}
		Let \(X=\Spec(A)\) be an affine derived \(\Tc\)-scheme. Then, derived de Rham cohomology \(\cdR_X\) is equivalent to the ring of functions on the \(\Com\)-infinitesimal groupoid of \(X\).
	\end{lemma}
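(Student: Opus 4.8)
The plan is to transport the proof of \Cref{lem:de_Rham_vs_functions_on_inf} essentially verbatim, replacing the theory \(\CI\) by the general Fermat theory \(\Tc\) throughout and invoking \Cref{thm:T_Adams_completion_vs_de_Rham} in place of its \(\CI\)-counterpart \Cref{thm:Adams_completion_vs_de_Rham}.

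First I would let \(\Delta_\bt\) denote the canonical map \(X\to X^\infcom\) into the \(\Com\)-infinitesimal groupoid, writing \(\Delta_n\colon X\to X^\infcom_n\) for the map at simplicial level \(n\). The \(\Tc\)-analogue of the transitivity fibre sequence for the cotangent complex (part (i) of \Cref{prop:functoriality_properties_of_LL}, whose proof is formal and theory-independent) gives, for each \(n\), a fibre sequence
\[
\Delta_n^*\LL_{X^\infcom_n}\to \LL_X\to \LL_{\Delta_n}.
\]
The decisive input is that the leftmost terms, taken over \(n\), have vanishing limit; this follows from the same connectivity estimates for the tensor products defining the \(\Com\)-infinitesimal groupoid that were used in the \(\CI\)-case. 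Since the associated graded of \(\cdR\) is built from exterior powers of the cotangent complex, this vanishing forces the comparison map to be an equivalence, i.e. \(\cdR_X\simeq \Tot_n(\cdR_{\Delta_n})\).

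Next I would apply \Cref{thm:T_Adams_completion_vs_de_Rham} termwise. Each \(\Delta_n\) is a \(\pi_0\)-epimorphism, so this theorem identifies \(\cdR_{\Delta_n}\) with the Adams completion of \(X\to X^\infcom_n\) as a complete filtered \(\Tc\)-algebra. Since the Adams completion is itself a totalization of \v{C}ech conerves, totalizing over \(n\) yields \(\Tot_n(\cdR_{\Delta_n})\simeq \Tc(X^\infcom)\); combining this with the descent equivalence above produces the desired identification \(\cdR_X\simeq \Tc(X^\infcom)\).

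The only genuine obstacle is verifying that the formal ingredients imported from the \(\CI\)-proof survive over an arbitrary Fermat theory. Concretely, one needs the cotangent-complex functoriality and the connectivity estimate to hold in \(\Tc\Alg\), and --- most importantly --- the Adams/derived-completion comparison underlying \Cref{thm:T_Adams_completion_vs_de_Rham}. The latter rests on the unramifiedness of \(\pi_0\)-epimorphisms of \(\Tc\)-algebras, which, as remarked before that theorem, is obtained by running the proof of \cite[Lemma 4.6]{Carchedi_Steffens} with \(\Tc\) substituted for \(\CI\). Granting these, the argument is formally identical to the proof of \Cref{lem:de_Rham_vs_functions_on_inf}, which is why the present lemma can be stated without a separate detailed proof.
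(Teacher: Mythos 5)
Your proposal is correct and takes essentially the same approach as the paper: the paper's own proof of this lemma consists of the single remark that it is identical to the proof of \Cref{lem:de_Rham_vs_functions_on_inf}, which is precisely the transport you carry out, including invoking \Cref{thm:T_Adams_completion_vs_de_Rham} in place of \Cref{thm:Adams_completion_vs_de_Rham}. Your closing caveat about the unramifiedness of \(\pi_0\)-epimorphisms of \(\Tc\)-algebras matches the remark the paper itself makes just before \Cref{thm:T_Adams_completion_vs_de_Rham}.
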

	The proof of this result is identical to the proof of \Cref{lem:de_Rham_vs_functions_on_inf}.
	\subsection{De Rham theorems}\label{sec:TAlg_de_rham_thms}
	\subsubsection{Shape of a derived stack over a Fermat theory}
	Similar to how we define the notion of shape in the context of \(\Tc=\CI\). We freely use the notation of \Cref{sec:preliminaries_on_stacks}. Namely, for a derived \(\Tc\)-stack \(\X\), we define its shape as follows.
	\begin{definition}
		Let \(X\) be an affine derived \(\Tc\)-scheme. Then its shape is defined as the composite \(q_*q^*\) for \(q_*\) the global sections functor for the \(\infty\)-topos of \(\infty\)-sheaves on the maximal spectrum topological space associated to \(X\). Then, for an arbitrary derived \(\Tc\)-stack, we define its shape using the following left Kan extension formula.
		\begin{align*}
			\Shape(\Xc)=\colim_{\Spec A\to \Xc}\Shape(\Spec A).
		\end{align*}
		Here the indexing category is the category of all \(\Tc\)-stack morphism from affine derived \(\Tc\)-schemes into \(\Xc\). 
	\end{definition}
	We make the following crucial assumption on the Fermat theory \(\Tc\).
	\begin{definition}
		A Fermat theory \(\Tc\) is called \emph{hypercomplete} if the \(\infty\)-sheaf topos on the maximum spectrum topological space of \(\AA^1_\Tc\) is hypercomplete.
	\end{definition}
	This condition in particular implies hypercompleteness of the sheaf topoi on all derived \(\Tc\)-schemes since the underlying maximum spectra embed as closed subspaces into that of some \(\AA^n_\Tc\). Thus, by \cite[Remark 4.11]{Volpe_Six}.
	From now on, we assume that \(\Tc\) is hypercomplete.
	\subsubsection{Local acyclicity}
	\begin{definition}
		We call an affine derived \(\Tc\)-scheme \emph{locally de Rham acyclic} (or \ldRa) if the stalk of the \(\Com\)-de Rham cohomology at each point of the associated maximal spectrum topological space is trivial.
	\end{definition}
	\begin{proposition}
		If the derived \(\Tc\)-scheme \(X\) is \ldRa then the \(\Com\)-de Rham cohomology of \(X\) is equivalent to \(\Tc_0J\)-de Rham cohomology of \(X\).
	\end{proposition}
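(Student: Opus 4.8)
The plan is to reproduce the argument of \Cref{CI-infinity-de-Rham=const-sheaf} on the $\Tc_0\JT$-side and combine it with the \ldRa hypothesis on the $\Com$-side, checking everything on stalks over the maximal spectrum. First I would record the comparison morphism. The factorization of reductions $A\to A^{\red_\Com}\to A^{\red_{\Tc_0\JT}}$ supplied by \Cref{con:reductions_of_T_algebras} (the inclusion $\sqrt[\Com]{0}\subseteq\sqrt[\Tc_0\JT]{0}$ holds because nilpotents lie in the kernel of every homomorphism to the reduced ring $\Tc_0$) induces, exactly as in \Cref{prop:comparison_morphism_for_de_Rham_stacks}, a morphism of de Rham stacks $X^{\dR_\Com}\to X^{\dR_{\Tc_0\JT}}$. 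Passing to global functions --- which, in the sense of \Cref{def:CI-de_Rham-coh} with $\Tc$ in place of $\CI$, are by definition the $\Com$- and $\Tc_0\JT$-de Rham cohomology and assemble into sheaves $U\mapsto \Tc(U^{\dR_\Com})$ and $U\mapsto \Tc(U^{\dR_{\Tc_0\JT}})$ on $\Specm(X)$ as in \Cref{con:CI-derived-de-Rham} --- yields the comparison morphism of sheaves
\[
    \Tc(X^{\dR_{\Tc_0\JT}})\to \Tc(X^{\dR_\Com})
\]
that I must show is an equivalence.

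Because $\Tc$ is assumed hypercomplete, the $\infty$-topos of sheaves on $\Specm(X)$ is hypercomplete, so it is enough to check the equivalence on stalks. For the source I would run the stalk computation from the proof of \Cref{CI-infinity-de-Rham=const-sheaf} verbatim: by the Fermatic analogue of \Cref{st:open_means_open_on_deRham} the filtered limit of the $\Tc_0\JT$-de Rham stacks of the open neighbourhoods of a point $x$ is again affine and equals the $\Tc_0\JT$-de Rham stack of the local $\Tc$-ring $R$ at $x$. Since the $\Tc_0\JT$-radical is, by \Cref{def:T-radicals_of_ideals}, the intersection of the kernels of all $\Tc$-homomorphisms $A\to\Tc_0$, the $\Tc_0\JT$-reduction of a local $\Tc$-ring with residue $\Tc_0$ is just $\Tc_0$; hence the stalk of $\Tc(X^{\dR_{\Tc_0\JT}})$ at $x$ is $\Tc_0$. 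I emphasize that this uses only the $\Tc_0\JT$-reduction and never the identification of the $\Tc$- and $\Tc_0\JT$-radicals, which can fail outside the real case (the analogues of \Cref{st:CI-inf_radical=RJ-radical} and \Cref{cor:CI-red=RJ-red}).

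On the target the \ldRa hypothesis says precisely that the stalk of $\Tc(X^{\dR_\Com})$ at each $x$ is trivial, i.e. concentrated in degree zero and equal to $\Tc_0$. Thus at every point the comparison morphism becomes a unital homomorphism of $\Tc_0$-algebras $\Tc_0\to\Tc_0$, which is necessarily the identity; the morphism is therefore an equivalence on stalks, and hypercompleteness promotes this to the desired equivalence of sheaves.

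The main obstacle is the input I singled out as the Fermatic analogue of \Cref{CI-infinity-de-Rham=const-sheaf}, namely the local acyclicity of the $\Tc_0\JT$-theory (stalk $\Tc_0$). Everything else --- the existence and $\Tc_0$-linearity of the comparison map, the reduction to stalks, and the identification of the two copies of $\Tc_0$ --- is formal. Verifying that local acyclicity amounts to checking that \Cref{st:open_means_open_on_deRham} and the affineness of filtered limits of de Rham stacks survive the passage to a general hypercomplete Fermat theory, together with the elementary computation that the $\Tc_0\JT$-reduction of a local ring collapses it onto its residue $\Tc_0$; once this is in hand the proposition follows immediately.
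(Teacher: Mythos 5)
Your proposal is correct and takes essentially the same route as the paper, whose entire proof is the remark that the argument is identical to that of \Cref{st:local_to_global_de_Rham}: hypercompleteness of the sheaf topos reduces the comparison to stalks, where the \ldRa hypothesis trivializes the \(\Com\)-side and the collapse of a local \(\Tc\)-ring onto its residue ring \(\Tc_0\) (the argument of \Cref{CI-infinity-de-Rham=const-sheaf}, which as you correctly note never needs the identification of the \(\Tc\)- and \(\Tc_0\JT\)-radicals) trivializes the \(\Tc_0\JT\)-side. Your write-up merely makes explicit the comparison morphism and the two stalk computations that the paper leaves implicit.
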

	The proof of this result is identical to that of \Cref{st:local_to_global_de_Rham}.
	\begin{lemma}
		Consider an affine \(\Tc\)-scheme \(X\). Then if it is locally \(\AA^1_\Tc\)-contractible it is \ldRa. Consequently, the \(\Com\)-de Rham cohomology of \(X\) is equivalent to the \(\Tc_0\JT\)-de Rham cohomology of \(X\).
	\end{lemma}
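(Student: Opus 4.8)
The plan is to transport the homotopy invariance of \(\cdR\) proved for \(\CI\)-algebras in the main text to an arbitrary (hypercomplete) Fermat theory \(\Tc\), and then apply it to the contracting homotopy. Since being \ldRa is a condition on the stalks of the \(\Com\)-de Rham cohomology sheaf over the maximal spectrum, and the hypothesis is local, I would first reduce to the following global assertion: if an affine derived \(\Tc\)-scheme \(X\) admits a contracting \(\AA^1_\Tc\)-homotopy, then \(\cdR_X\simeq\Tc_0\). Granting this, at a point \(x\in\Specm A\) one computes the stalk of \(\Com\)-de Rham cohomology as the filtered colimit \(\colim_{x\in U}\cdR_U\) over the \(\AA^1_\Tc\)-contractible neighbourhoods \(U\) supplied by the hypothesis; each term is \(\Tc_0\), so the stalk is \(\Tc_0\), which is precisely local acyclicity at \(x\).

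The two inputs needed for the global assertion are a Poincaré lemma for the affine line and a Künneth formula. For the first, observe that \(\Tc(\AA^1_\Tc)\) is free, so by \Cref{con:T_derived_de_Rham} the object \(\cdR_{\AA^1_\Tc}\) is computed by the ordinary two-term complex \(\Tc(\AA^1_\Tc)\xrightarrow{d}\Omega^1_{\Tc(\AA^1_\Tc)}\), concentrated in cohomological degrees \(0\) and \(1\); in particular its Hodge filtration has only the steps \(\Fil^0\) and \(\Fil^1\). Its cohomology is \(\Tc_0\) in degree \(0\) (the constants) and vanishes in degree \(1\), the latter being the statement that every \(1\)-form \(f\,dt\) admits a primitive. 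Because this Hodge filtration is finite, the graded pieces split off and yield a completed Künneth equivalence
\[
\cdR_{\AA^1_\Tc\times X}\simeq \cdR_{\AA^1_\Tc}\otimes_{\Tc_0}\cdR_X,
\]
so that the projection \(\pi\colon\AA^1_\Tc\times X\to X\) induces an equivalence \(\pi^*\colon\cdR_X\xrightarrow{\sim}\cdR_{\AA^1_\Tc\times X}\).

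Homotopy invariance is now formal. The two endpoint inclusions \(a,b\colon X\to\AA^1_\Tc\times X\) are sections of \(\pi\), hence \(a^*\) and \(b^*\) are both left inverses of the equivalence \(\pi^*\); since a left inverse of an equivalence is unique, \(a^*=b^*\) as maps \(\cdR_{\AA^1_\Tc\times X}\to\cdR_X\). Thus \(\AA^1_\Tc\)-homotopic morphisms agree on \(\cdR\). Feeding in the contraction \(H\colon\AA^1_\Tc\times X\to X\), whose endpoints are \(\id_X\) and a constant map \(c\) factoring through \(\Spec\Tc_0\), we obtain \(\id_{\cdR_X}=a^*H^*=b^*H^*=c^*\); but \(c^*\) factors through \(\cdR_{\Spec\Tc_0}\simeq\Tc_0\), so the unit \(\Tc_0\to\cdR_X\) is an equivalence and \(\cdR_X\simeq\Tc_0\). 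Combined with the reduction above this shows \(X\) is \ldRa, and the final clause then follows at once from the preceding proposition (equivalence of \(\Com\)- and \(\Tc_0\JT\)-de Rham cohomology for \ldRa schemes, proved exactly as \Cref{st:local_to_global_de_Rham}).

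The main obstacle is the Poincaré lemma for \(\AA^1_\Tc\) used in the second paragraph: surjectivity of \(d\) on \(\Tc(\AA^1_\Tc)\) amounts to the existence of primitives, which for the standard theories \(\CI\), \(\O\), \(\Cc^\omega\) and \(\Com_{\Tc_0}\) in characteristic zero is classical integration, but for an abstract Fermat theory is a genuine regularity input that must be imposed (or checked against Hadamard's lemma) rather than derived formally.
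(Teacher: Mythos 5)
Your proposal is correct and follows essentially the same route as the paper: the lemma is stated there without its own proof, with the understanding that one transports the \(\CI\)-homotopy-invariance argument (freeness of the line, the explicit two-term complex computing \(\cdR_{\AA^1_\Tc}\), the K\"unneth decomposition via the finite Hodge filtration, then localization to stalks) exactly as you do, your only cosmetic deviation being that you deduce homotopy invariance from uniqueness of left inverses of the equivalence \(\pi^*\) rather than from the paper's explicit chain homotopy. Your closing caveat is apt and worth keeping: the paper's \(\CI\)-argument invokes integration (existence of primitives on the line) at precisely the same point, and this is a genuine hypothesis on \(\Tc\) rather than a formal consequence of Hadamard's lemma --- for polynomials over a field of positive characteristic the affine line is \(\AA^1_\Tc\)-contractible yet has non-trivial de Rham cohomology, so the lemma as stated silently requires a characteristic-zero/integration assumption that the paper leaves implicit.
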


	\begin{theorem}\label{thm:de_Rham_TJ_vs_locale_coh}
		Let \(\X\) be a derived \(\Tc\)-stack. Then if it is \(n\)-geometric with a presentation by \ldRa affine derived \(\Tc\)-schemes there is an equivalence between the constant sheaf cohomology of the associated shape of \(\Xc\) and the \(\Com\)-de Rham cohomology of \(\Xc\).
	\end{theorem}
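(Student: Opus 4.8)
The plan is to mirror the proof of \Cref{thm:de_Rham_for_geometric} by induction on the geometric level \(n\), with the affine statement playing the role that \Cref{CI-infinity-de-Rham=const-sheaf} together with \Cref{st:local_to_global_de_Rham} played in the \(\CI\)-setting. The route passes through the intrinsic \(\Tc_0\JT\)-theory: the two facts I would establish first are (a) for an \ldRa affine derived \(\Tc\)-scheme the \(\Com\)-de Rham cohomology agrees with the \(\Tc_0\JT\)-de Rham cohomology (this is exactly the local-acyclicity Proposition in \S\ref{sec:TAlg_de_rham_thms}, whose proof, as noted there, is identical to that of \Cref{st:local_to_global_de_Rham}); and (b) the \(\Tc_0\JT\)-de Rham cohomology of an affine derived \(\Tc\)-scheme computes the constant sheaf cohomology of the associated maximal spectrum. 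Granting (a) and (b), the base case \(n=0\) is immediate, since an \ldRa \(0\)-geometric stack is by definition a coproduct of \ldRa affine derived \(\Tc\)-schemes.

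For the affine statement (b) I would repeat the argument of \Cref{CI-infinity-de-Rham=const-sheaf} essentially verbatim. Hypercompleteness of \(\Tc\) (our standing assumption) reduces the claim to a stalkwise computation, and an open-embedding lemma analogous to \Cref{st:open_means_open_on_deRham} guarantees that the filtered limit of \(\Tc_0\JT\)-de Rham stacks over the open neighbourhoods of a point \(x\) is again affine, so the stalk is computed as the \(\Tc_0\JT\)-de Rham stack of the local \(\Tc\)-algebra at \(x\). The crucial point is then that the \(\Tc_0\JT\)-reduction of this local ring is \(\Tc_0\) itself: by the universal property of the \(\Tc_0\JT\)-reduction as the quotient by the intersection of kernels of all homomorphisms into \(\Tc_0\) (see \Cref{def:T-radicals_of_ideals}, \Cref{con:reductions_of_T_algebras}), and since the residue object of a point of \(\Specm\) is \(\Tc_0\) by construction, the stalk of the \(\Tc_0\JT\)-de Rham stack is \((\Spec\Tc_0)^{\dR_{\Tc_0\JT}}\). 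This is the step where the \(\CI\)-proof invoked \Cref{cor:CI-red=RJ-red} to identify the \(\CI\)- and \(\RJ\)-reductions; here I work directly with the \(\Tc_0\JT\)-reduction, which is precisely why the theorem is phrased in terms of \(\Tc_0\JT\)-cohomology rather than \(\Tc\)-cohomology.

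For the inductive step I would follow the \(n=1\) argument of \Cref{thm:de_Rham_for_geometric}: given a smooth surjective presentation \(\Uc\to\Xc\) from an \((n-1)\)-geometric \ldRa stack, base-change along any point \(x\colon\Spec\Tc_0\to\Xc\) to get a smooth surjection \(\Uc\times_\Xc\Spec\Tc_0\to\Spec\Tc_0\) whose source is again \((n-1)\)-geometric and \ldRa, so the inductive hypothesis applies. Invoking the Fermat analog of the implicit function theorem \Cref{implicit_function_theorem}, this map is locally a projection \(\AA^n_\Tc\times M\to M\) and hence admits local sections; by the analog of \Cref{section_means_trivial_de_Rham} the relative de Rham cohomology vanishes, so both \(\Com\)-de Rham cohomology and constant sheaf cohomology of the shape satisfy descent along \(\Uc\to\Xc\). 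The inductive hypothesis identifies the two theories on \(\Uc\) and on each \v{C}ech fibre power \(\Uc\times_\Xc\cdots\times_\Xc\Uc\), and passing to the totalization yields the equivalence on \(\Xc\).

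The main obstacle is the validity of the geometric inputs in an arbitrary hypercomplete Fermat theory, namely the implicit function theorem and, beneath it, the theory of closed immersions and smooth morphisms used in descent. In the \(\CI\)-case these rest on \Cref{implicit_function_theorem} and the unramifiedness of \Cref{unframifiedness}; the Remark preceding \Cref{thm:T_Adams_completion_vs_de_Rham} indicates that the unramifiedness argument of \cite[Lemma 4.6]{Carchedi_Steffens} transfers to any Fermat theory, which is what makes the Adams/\(\Com\)-completion comparison (hence \Cref{thm:T_Adams_completion_vs_de_Rham}) available in the general setting. The genuinely delicate point, however, is ensuring that the affine base case (b) does not covertly use positivity of \(\Tc_0\): the Remark after \Cref{con:reductions_of_T_algebras} warns that the \(\CI\)-specific coincidence of radicals fails in general, and the entire reason for isolating the \(\Tc_0\JT\)-reduction is that (b) then holds by the universal property alone, independently of any order structure on \(\Tc_0\).
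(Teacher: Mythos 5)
Your proposal is correct and takes essentially the same approach as the paper, whose entire proof is a deferral to the \(\CI\)-case \Cref{thm:de_Rham_for_geometric}: induction on \(n\), the \ldRa affine base case, smooth descent via the analogue of \Cref{implicit_function_theorem} at \(n=1\), and base change to points for the inductive step, which is exactly the skeleton you reconstruct. Your assembly of the base case from the two results stated in the Fermat section (the proposition that \ldRa implies the \(\Com\)-de Rham cohomology agrees with the \(\Tc_0\JT\)-de Rham cohomology, and the \(\Tc_0\JT\)-analogue of \Cref{CI-infinity-de-Rham=const-sheaf} proved via the universal property of the \(\Tc_0\JT\)-reduction in place of \Cref{cor:CI-red=RJ-red}) matches the paper's own organization, and the caveats you flag, namely the implicit function theorem and the theory of closed immersions for a general hypercomplete \(\Tc\), are likewise left implicit by the paper.
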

	This result is proved identically to the case of \(\Tc=\CI\).
	\begin{theorem}
		Let \(\X\) be a derived \(\Tc\)-stack. Then there is an equivalence between the constant sheaf cohomology of the associated shape and the \(\Tc_0\JT\)-de Rham cohomology of \(\X\). 
	\end{theorem}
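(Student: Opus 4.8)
The plan is to transcribe the proof of \Cref{CI-infinity-de-Rham=const-sheaf} into the Fermatic setting, systematically replacing the residue field $\R$ by $\Tc_0$ and the real Jacobson reduction by the $\Tc_0\JT$-reduction of \Cref{con:reductions_of_T_algebras}. Recall that the $\Tc_0\JT$-de Rham cohomology of $\X$ is by definition the algebra of functions $\Tc(\X^{\dR_{\Tc_0\JT}})$ on the de Rham stack $\X^{\dR_{\Tc_0\JT}}(A)=\X(A^{\red_{\Tc_0\JT}})$, assembled into a sheaf on the maximal spectrum. By a right Kan extension along the affine diagram, exactly as in \Cref{st:general_de_Rham_iso}, it suffices to treat an affine derived $\Tc$-scheme $X=\Spec A$. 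Since we have assumed $\Tc$ to be hypercomplete, the $\infty$-topos of sheaves on $\Specm A$ is hypercomplete, so the comparison may be checked on stalks; concretely, I would show that the stalk of $\Tc(X^{\dR_{\Tc_0\JT}})$ at every $\Tc_0$-point equals $\Tc_0$, thereby identifying the sheaf with the constant sheaf $\ul{\Tc_0}$ and hence its cohomology with that of $\Shape(X)$.

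Next I would carry out the stalk computation as in the $\CI$ case. The stalk at a point $x$ is the colimit $\colim_{x\in U}\Tc(U^{\dR_{\Tc_0\JT}})$ over open neighbourhoods $U$; passing functions through the colimit turns this into the algebra of functions on the limit $\lim_{x\in U}U^{\dR_{\Tc_0\JT}}$ of the corresponding de Rham stacks. Using the $\Tc_0\JT$-analogue of \Cref{st:open_means_open_on_deRham} — that an open embedding of $\Tc$-schemes induces an open embedding of $\Tc_0\JT$-de Rham stacks, proved by the same single-element localization argument — together with the fact that a filtered limit of affine morphisms is affine, this limit is the $\Tc_0\JT$-de Rham stack of the local ring $\O_{X,x}$.

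The heart of the argument is then the local computation. By \Cref{def:T-radicals_of_ideals}, the $\Tc_0\JT$-radical of $\O_{X,x}$ is the intersection of the kernels of all $\Tc$-homomorphisms $\O_{X,x}\to\Tc_0$. When $\O_{X,x}$ is a local $\Tc$-algebra whose residue is the field $\Tc_0$ — which is the relevant situation at a $\Tc_0$-point, and which covers all the main examples $\CI$, $\O$, $\Cc^\omega$ — every such homomorphism has maximal kernel, hence factors through the quotient by the unique maximal ideal $\m$, so this intersection is precisely $\m$ and $\O_{X,x}^{\red_{\Tc_0\JT}}=\Tc_0$. By the universal property of the $\Tc_0\JT$-reduction the $\Tc_0\JT$-de Rham stack of $\O_{X,x}$ therefore agrees with that of $\Spec\Tc_0$, which is a point; its algebra of functions is $\Tc_0$, giving the desired stalk.

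The hard part will be the local computation of the previous paragraph. Unlike the $\CI$ case, I would deliberately not route the argument through the $\Tc$-reduction: the coincidence of $\Tc$- and $\Tc_0\JT$-radicals (the analogue of \Cref{cor:CI-red=RJ-red}) fails for a general Fermat theory, precisely because its proof used the positivity of $\R$, and this is exactly why the unconditional statement is phrased in terms of the $\Tc_0\JT$-de Rham stack rather than the $\Tc$-one. Working directly with $\Tc_0\JT$ streamlines the formal structure, but one still pays the price of verifying that a local $\Tc$-algebra with residue $\Tc_0$ admits essentially a unique $\Tc$-homomorphism to $\Tc_0$, i.e. that the kernel of any such map is the maximal ideal. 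This is where the specific nature of $\Tc$ (and the field hypothesis on $\Tc_0$) enters, and I expect the cleanest route to be through the locale $\Spec$ of \Cref{con:spec_topology} and the Galois correspondence between radical ideals and reduced quotients. Once this is in place, globalization over the affine diagram as in \Cref{st:general_de_Rham_iso} finishes the proof.
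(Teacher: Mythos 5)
Your proposal matches the paper's proof, which is dispatched in one line --- ``proved identically to the case of \(\Tc=\CI\), except that in the case of general \(\Tc\), we can not identify the \(\Tc_0\JT\) and \(\Tc\) de Rham stacks'' --- and your transcription (reduction to affines by Kan extension, hypercompleteness to check on stalks, the \(\Tc_0\JT\)-analogue of \Cref{st:open_means_open_on_deRham}, and the observation that the \(\Tc_0\JT\)-reduction of a local \(\Tc\)-algebra with residue \(\Tc_0\) is \(\Tc_0\)) is exactly that transcription, made explicit. Your deliberate choice to work directly with the \(\Tc_0\JT\)-reduction instead of routing through the \(\Tc\)-reduction is precisely the caveat the paper records, so the approaches coincide.
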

	This result is proved identically to the case of \(\Tc=\CI\), except that in the case of general \(\Tc\), we can not identify the \(\Tc_0\JT\) and \(\Tc\) de Rham stacks.

	\subsection{Functoriality of de Rham stacks}\label{sec:TAlg_functoriality_of_de_Rham}
	In this section, we explore the functorial behaviour of de Rham stacks with respect to the change of Fermat theory. In particular, we show how the two different kinds of the de Rham stack explored in the rest of the paper naturally arise as an instance of the change of Fermat theory construction.
	\subsubsection{Ring stacks and de Rham stacks}
	The idea of the following construction is due to Drienfeld, see e.g. \cite{Drinfeld_Prism}.
	\begin{definition}
		A prestack \(\X\) on a category \(\Cc\) is a \emph{ring prestack} if it takes values in the category of derived commutative rings. Similarly, for an arbitrary Fermat theory \(\Tc\), one says that \(\X\) is \(\Tc\)-algebra prestack if it takes values in derived \(\Tc\)-algebras.
	\end{definition}
	\begin{construction}[de Rham stacks from ring stacks]\label{con:de_Rham_from_ring_stacks}
		Consider the category of derived \(\Tc\)-prestacks. Then one views the \(\Tc_0\JT, \Tc, \Com\)-de Rham stacks of \(\AA^1_\Tc\) as a ring stack in the following natural way.
		\begin{align*}
			\left(\AA^1_\Tc\right)^{\dR_{\Tc_0\JT}}:\Tc\Alg\to \Tc\Alg,\quad R\mapsto R^{\red_{\Tc_0\JT}};\\
			\left(\AA^1_\Tc\right)^{\dR_{\Tc}}:\Tc\Alg\to \Tc\Alg,\quad R\mapsto R^{\red_\Tc};\\
			\left(\AA^1_\Tc\right)^{\dR_{\Com}}:\Tc\Alg\to \Tc\Alg,\quad R\mapsto R^{\red_\Com}.
		\end{align*}
		Now for an arbitrary prestack \(\X\) one defines its \(\Tc_0\JT,\Tc, \Com\)-de Rham stacks as the composite functor of the following form.
		\[
			\X^{\dR_{\Tc}}:=\X\circ \left(\AA^1_\Tc\right)^{\dR_{\Tc_0\JT, \Tc, \Com}}.
		\]
		More generally given a morphism of Fermat theories \(\Tc\to \Tc'\) one defines the \(\Tc\)-de Rham stack of a \(\Tc'\)-stack \(\Xc\) as follows.
		\[
			\X^{\dR_{\Tc}}:=\X\circ \left(\oblv^{\Tc'}_\Tc\AA^1_{\Tc'}\right)^{\dR_{\Tc}}.
		\]
		Here \(\left(\oblv^{\Tc'}_\Tc\AA^1_{\Tc'}\right)^{\dR_{\Tc}}\) remains a \(\Tc'\)-stack since for a \(\Tc'\)-algebra \(R\) the reduction \((\oblv^{\Tc'}_\Tc R)^{\red_{\Tc}}\) is a \(\Tc'\)-algebra.
	\end{construction}
	\begin{remark}
		We observe that if the ring \(\Tc_0\) is ordered one shows identically to the proof of \Cref{st:CI-inf_radical=RJ-radical} that the \(\Tc_0\JT\)-de Rham prestack of a derived \(\Tc\)-prestack is equivalent to the \(\Tc\)-de Rham prestack.
	\end{remark}
	The following slightly high-brow name for a basic principle observed both in \(\CI\) and analytic contexts is suggested by the work of Scholze on the geometrization of the real local Langlands correspondence \cite[Chapter II]{Scholze_Langlands}.
	\begin{principle}[{Riemann--Hilbert correspondence}]
		Assume that for finitely presented derived \(\Tc\)-algebras there is an equivalence of \(\Tc_0\JT\) and \(\Tc\)-radicals. Then there is an equivalence of de Rham stacks as follows.
		\[
			\Xc^{\dR_\Tc}\simeq \Xc^{\dR_{\Tc_0\JT}}
		\]
	\end{principle}
	\subsubsection{Fermatic base change}
	\begin{theorem}\label{st:fermatic_base_change}
		Let \(\Tc\to \Tc'\) be a morphism of Fermat theories. Then there is the following equivalence for any derived \(\Tc\)-stack \(\X\):
		\[
			\free^{\Tc'}_\Tc\left(\X^{\dR_{\Tc}}\right)\simeq\left(\free^{\Tc'}_\Tc \X\right)^{\dR_{\Tc}}.
		\]
		Moreover, these equivalent stacks are in general not equivalent to \[\left(\free^{\Tc'}_\Tc \X\right)^{\dR_{\Tc'}}.\]
	\end{theorem}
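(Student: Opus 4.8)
The plan is to reduce every operation in the statement to its functor-of-points description, and then to observe that both sides of the first equivalence are literally the \emph{same} composite of functors on algebras, so that the equivalence is essentially formal; the genuine content of the theorem lies in the ``moreover'' clause.

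First I would record the three operations as restrictions along functors on algebras. By the adjunction of \Cref{con:completion}, base change of stacks is precomposition with the forgetful functor: for a $\Tc$-stack $\X$ one has $\free^{\Tc'}_\Tc\X\simeq\X\circ\oblv^{\Tc'}_\Tc$. This is forced on representables, where $\free^{\Tc'}_\Tc\Spec A\simeq\Spec\free^{\Tc'}_\Tc A$ by the adjunction, and it extends to all stacks because precomposition preserves colimits and every stack is a colimit of representables. Likewise, by \Cref{con:de_Rham_from_ring_stacks}, the $\Tc$-de Rham stack is precomposition with a reduction endofunctor: writing $r_\Tc\colon A\mapsto A^{\red_\Tc}$ for the reduction endofunctor on $\Tc\Alg$, and $r'_\Tc$ for the endofunctor of $\Tc'\Alg$ sending $B$ to $(\oblv^{\Tc'}_\Tc B)^{\red_\Tc}$ equipped with its induced $\Tc'$-structure, one has $\X^{\dR_\Tc}\simeq\X\circ r_\Tc$ for a $\Tc$-stack and $\Yc^{\dR_\Tc}\simeq\Yc\circ r'_\Tc$ for a $\Tc'$-stack.

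With these identifications the first equivalence unwinds to the commutation $r_\Tc\circ\oblv^{\Tc'}_\Tc\simeq\oblv^{\Tc'}_\Tc\circ r'_\Tc$ of functors $\Tc'\Alg\to\Tc\Alg$, since $\free^{\Tc'}_\Tc(\X^{\dR_\Tc})\simeq\X\circ r_\Tc\circ\oblv^{\Tc'}_\Tc$ while $(\free^{\Tc'}_\Tc\X)^{\dR_\Tc}\simeq\X\circ\oblv^{\Tc'}_\Tc\circ r'_\Tc$. But this commutation is exactly how $r'_\Tc$ was defined: the $\Tc$-radical $\sqrt[\Tc]{0}$ of $\oblv^{\Tc'}_\Tc B$ is computed from $\Tc$-localizations, which depend only on the underlying $\Tc$-algebra, so $\oblv^{\Tc'}_\Tc(r'_\Tc B)$ is by construction $(\oblv^{\Tc'}_\Tc B)^{\red_\Tc}=r_\Tc(\oblv^{\Tc'}_\Tc B)$. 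The only point needing care at the derived level is that $\oblv^{\Tc'}_\Tc$ commutes with the truncation $\tau^{\ge0}$ and the homotopy quotient by $\sqrt[\Tc]{0}$ appearing in \Cref{con:reductions_of_T_algebras}; this holds because $\oblv^{\Tc'}_\Tc$ is conservative and preserves the relevant limits and sifted colimits. I expect this bookkeeping to be the only (mild) technical point of the first half.

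For the ``moreover'', the same formalism gives $(\free^{\Tc'}_\Tc\X)^{\dR_{\Tc'}}\simeq\X\circ\oblv^{\Tc'}_\Tc\circ r_{\Tc'}$ with $r_{\Tc'}\colon B\mapsto B^{\red_{\Tc'}}$, so the comparison is now between $r_\Tc\circ\oblv^{\Tc'}_\Tc$ and $\oblv^{\Tc'}_\Tc\circ r_{\Tc'}$, i.e. between the $\Tc$-radical and the $\Tc'$-radical of the underlying algebra. The non-formal part is to exhibit the failure of this comparison concretely, and here I would reuse the paper's running phenomenon. Take $\Tc=\Com_\R$, $\Tc'=\CI$ and $\X=\AA^1_\Com$, so that $\free^{\CI}_\Com\AA^1_\Com\simeq\AA^1_\CI$, and evaluate the three stacks on the test algebra $B=\CI(\R)/(e^{-1/x^2})$. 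One always has $\sqrt[\Com]{0}\subseteq\sqrt[\CI]{0}$ (a vanishing algebraic localization forces the corresponding $\CI$-localization to vanish), giving a comparison surjection $(\oblv^{\CI}_\Com B)^{\red_\Com}\twoheadrightarrow\oblv^{\CI}_\Com(B^{\red_\CI})$. By \Cref{cor:CI-red=RJ-red} the $\CI$-reduction is the $\RJ$-reduction, so $B^{\red_\CI}\simeq\R$ and the third stack returns the underlying space of $\R$; on the other hand $(\oblv^{\CI}_\Com B)^{\red_\Com}$ is the ordinary reduction, which strictly contains $\R$ since the class of $x$ is not nilpotent (no power of $x$ is divisible by $e^{-1/x^2}$). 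Hence the two stacks take non-equivalent values on $B$, which proves the ``moreover'' and shows that the obstacle to extending the equivalence to $\dR_{\Tc'}$ is precisely the gap between the $\Com$- and $\CI$-radicals studied in the main text.
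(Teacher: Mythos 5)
Your proof is correct and follows essentially the same route as the paper: both reduce the statement to the reduction endofunctors on algebras (the de Rham ring stack of \(\AA^1\) from \Cref{con:de_Rham_from_ring_stacks}), observe that the first equivalence is then definitional given the adjunction \(\free^{\Tc'}_\Tc \dashv \oblv^{\Tc'}_\Tc\) and the definition of the \(\Tc\)-de Rham stack of a \(\Tc'\)-stack, and locate the failure in the ``moreover'' clause in the discrepancy between the \(\Tc\)- and \(\Tc'\)-radicals. Your only departure is to make the counterexample fully explicit (evaluating on the test algebra \(\CI(\R)/(e^{-1/x^2})\), whose \(\CI\)-reduction is \(\R\) by \Cref{cor:CI-red=RJ-red} while its \(\Com\)-reduction is strictly larger), whereas the paper simply points back to the \(\Com\) versus \(\CI\) phenomenon from the main text; that extra detail is correct and consistent with the paper.
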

	\begin{proof}
		By \Cref{con:de_Rham_from_ring_stacks}, it is enough to verify the statement for de Rham stacks of \(\AA^1\). We have the following chain of equivalences for the first case.
		\begin{gather*}
			\free^{\Tc'}_\Tc\left(\left(\AA^1_\Tc\right)^{\dR_\Tc}\right)(R)\simeq \left(\AA^1_\Tc\right)^{\dR_{\Tc}}(\oblv^{\Tc'}_\Tc R)\simeq \left(\oblv^{\Tc'}_\Tc R\right)^{\red_\Tc}.
		\end{gather*}
		For the second case, we have a similar chain of equivalences.
		\begin{gather*}
			\left(\left(\free^{\Tc'}_{\Tc}\AA^1_\Tc\right)\right)^{\dR_{\Tc'}}(R)\simeq \left(\free^{\Tc'}_{\Tc}\AA^1_\Tc\right)(R^{\red_{\Tc'}})\simeq \oblv^{\Tc'}_\Tc\left(R^{\red_{\Tc'}}\right).
		\end{gather*}
		In general \(\oblv^{\Tc'}_\Tc\left(R^{\red_{\Tc'}}\right)\) is quite different from \(\left(\oblv^{\Tc'}_\Tc R\right)^{\red_\Tc}\) as we saw, for example, with \(\Tc=\Com\) and \(\Tc'=\CI\).
	\end{proof}
	\begin{remark}
		The conceptual reason why \Cref{st:fermatic_base_change} holds is that the de Rham stack of a derived \(\Tc\)-stack is defined via the ring stack \(\AA^1\). That, in turn, is \emph{defined over} \(\Com\) and hence over any other Fermat theory.
	\end{remark}
	\begin{definition}\label{def:point_preserving_morphism}
		A morphism of Fermat theories \(\Tc\to \Tc'\) is called \emph{point preserving} if the notion of \(\Tc\JT\)-reducedeness and \(\Tc'\JT\)-reducedeness coincide. The following identity holds. 
		\[
			\left(\oblv^{\Tc'}_\Tc R\right)^{\red_{\Tc_0\JT}}=\oblv^{\Tc'}_\Tc \left(R^{\red_{\Tc_0'\JT}}\right)
		\]
	\end{definition}
	\begin{remark}
		Equivalently, the condition of \Cref{def:point_preserving_morphism} can be stated as follows. In the underlying derived commutative algebra of a derived \(\Tc'\)-algebra the \(\Tc\JT\)-radical and \(\Tc'\JT\)-radical coincide. The following identity holds.
		\[
			\sqrt[\Tc_0\JT]{0}=\sqrt[\Tc_0'\JT]{0}
		\]
	\end{remark}
	\begin{example}
		There are several immediate examples of such morphisms. 
		\begin{enumerate}
			\item The inclusion of polynomials over \(\R\) into \(\CI\)-functions.
			\[
				\Com_\R\to \CI
			\]
			\item The inclusion of real analytic functions into \(\CI\)-functions.
			\[
				\mc{C}^\omega\to \CI
			\]
			\item The inclusion of polynomials over \(\C\) into holomorphic functions.
			\[
				\Com_\C\to \O
			\]
		\end{enumerate}
		
	\end{example}
	\begin{theorem}\label{st:TJ-de_Rham}
		Given a point preserving morphism of Fermat theories \(\Tc\to \Tc'\), there is an equivalence of de Rham stacks as follows.
		\[
			\free^{\Tc'}_\Tc \left(\Xc^{\dR_{\Tc_0\JT}}\right)=\left(\free^{\Tc'}_\Tc \Xc\right)^{\dR_{\Tc_0'\JT}}
		\]
	\end{theorem}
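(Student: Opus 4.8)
The plan is to imitate the proof of \Cref{st:fermatic_base_change}, reducing everything to an identity of functors of points and then letting the point-preserving hypothesis do the work. By \Cref{con:de_Rham_from_ring_stacks} both de Rham-stack constructions are obtained by precomposing the given (pre)stack with a reduction functor on the ring stack $\AA^1$, while the base change $\free^{\Tc'}_\Tc$ of a $\Tc$-stack is computed, exactly as in the proof of \Cref{st:fermatic_base_change}, by precomposition with $\oblv^{\Tc'}_\Tc$ on test algebras. Since all three operations are described at the level of test algebras, it suffices to exhibit a natural equivalence of the two functors of points.

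First I would evaluate the left-hand side on a derived $\Tc'$-algebra $R$:
\[
\free^{\Tc'}_\Tc\!\left(\Xc^{\dR_{\Tc_0\JT}}\right)(R)\simeq \Xc^{\dR_{\Tc_0\JT}}\!\left(\oblv^{\Tc'}_\Tc R\right)\simeq \Xc\!\left(\left(\oblv^{\Tc'}_\Tc R\right)^{\red_{\Tc_0\JT}}\right).
\]
Then I would evaluate the right-hand side on the same $R$:
\[
\left(\free^{\Tc'}_\Tc \Xc\right)^{\dR_{\Tc_0'\JT}}\!(R)\simeq \left(\free^{\Tc'}_\Tc \Xc\right)\!\left(R^{\red_{\Tc_0'\JT}}\right)\simeq \Xc\!\left(\oblv^{\Tc'}_\Tc\!\left(R^{\red_{\Tc_0'\JT}}\right)\right).
\]
The two expressions now differ only in the order in which one applies $\oblv^{\Tc'}_\Tc$ and the relevant Jacobson reduction.

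At this point the hypothesis does all the work: the point-preserving condition of \Cref{def:point_preserving_morphism} is precisely the natural identity
\[
\left(\oblv^{\Tc'}_\Tc R\right)^{\red_{\Tc_0\JT}}=\oblv^{\Tc'}_\Tc\!\left(R^{\red_{\Tc_0'\JT}}\right),
\]
equivalently the coincidence $\sqrt[\Tc_0\JT]{0}=\sqrt[\Tc_0'\JT]{0}$ of the two Jacobson radicals inside the underlying ring of a $\Tc'$-algebra. Substituting this into the two computations above yields the desired equivalence, naturally in $R$, and hence an equivalence of prestacks.

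The only genuine point to check --- the main obstacle --- is the legitimacy of the reduction to the ring-stack level: one must verify that $\free^{\Tc'}_\Tc$, as a functor on stacks, is indeed computed by precomposition with $\oblv^{\Tc'}_\Tc$ on test algebras, and that this precomposition commutes with the left Kan extension defining a general (non-affine) stack from its affine test objects. For the de Rham constructions this is built into \Cref{con:de_Rham_from_ring_stacks}, and for $\free^{\Tc'}_\Tc$ it is exactly the input already used in the proof of \Cref{st:fermatic_base_change}. Once these compatibilities are in hand, the argument is the purely formal manipulation above, and --- in contrast with the comparison between the $\Tc$- and $\Tc_0\JT$-de Rham stacks themselves --- no analytic or geometric regularity input such as a resolution of singularities is required.
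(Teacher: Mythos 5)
Your proposal is correct and follows essentially the same route as the paper: both proofs evaluate the two sides on a test \(\Tc'\)-algebra \(R\), use that \(\free^{\Tc'}_\Tc\) is computed by precomposition with \(\oblv^{\Tc'}_\Tc\) together with the definition of the de Rham stacks as precomposition with a reduction, and then invoke the point-preserving identity \(\left(\oblv^{\Tc'}_\Tc R\right)^{\red_{\Tc_0\JT}}=\oblv^{\Tc'}_\Tc\left(R^{\red_{\Tc_0'\JT}}\right)\) to conclude. The only cosmetic difference is that the paper phrases the computation for the ring stack \(\AA^1_\Tc\) (to which the general case reduces by \Cref{con:de_Rham_from_ring_stacks}), whereas you carry \(\Xc\) through the same chain of equivalences directly.
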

	The proof is similar to the proof of \Cref{st:fermatic_base_change}.
	\begin{proof}
		Once again, we only need to prove this statement for the de Rham stack of \(\AA^1\). We have the following chain of equivalences.
		\begin{gather*}
			\free^{\Tc'}_\Tc\left(\left(\AA^1_\Tc\right)^{\dR_{\Tc_0\JT}}\right)(R)\simeq \left(\AA^1_\Tc\right)^{\dR_{\Tc_0\JT}}(\oblv^{\Tc'}_\Tc R)\simeq \left(\oblv^{\Tc'}_\Tc R\right)^{\red_{\Tc_0\JT}}.
		\end{gather*}
		Similarly, we have a chain of equivalences.
	\begin{gather*}
			\left(\free^{\Tc'}_\Tc\left(\AA^1_\Tc\right)\right)^{\dR_{\Tc_0'\JT}}(R)\simeq \left(\free^{\Tc'}_\Tc\left(\AA^1_\Tc\right)\right)(R^{\red_{\Tc_0'\JT}})\simeq \oblv^{\Tc'}_\Tc \left(R^{\red_{\Tc_0'\JT}}\right).
		\end{gather*}
		It remains to use the point preserving property of the morphism \(\Tc\to \Tc'\) to show that the two expressions are equivalent.
	\end{proof}
	An immediate corollary of this result is the following.
	\begin{corollary}
		Given a point preserving morphism of Fermat theories \(\Tc\to \Tc'\). The cohomology of the \(\Tc'\) shape can be calculated using the \(\Tc\JT\)-de Rham cohomology. That is, for any derived stack \(\Xc\) over \(\Tc\), there is the following equivalence.
		\[
			\Map\left(\Xc^{\dR_{\Tc_0\JT}},\oblv^{\Tc'}_{\Tc}\AA^1_{\Tc'}\right)=\Map\left(\Xc^{\dR_{\Tc_0'\JT}}, \AA^1_{\Tc'}\right).
		\]	
	\end{corollary}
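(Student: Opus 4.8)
The plan is to derive this statement purely formally from \Cref{st:TJ-de_Rham} together with the free--forgetful adjunction $\free^{\Tc'}_\Tc \dashv \oblv^{\Tc'}_\Tc$ of \Cref{con:completion}. The first move is to upgrade that algebra-level adjunction to one between derived $\Tc$-(pre)stacks and derived $\Tc'$-(pre)stacks, obtained by left Kan extension along the respective Yoneda embeddings. The upshot we need is that for every derived $\Tc$-prestack $\Yc$ there is a natural equivalence
\[
	\Map\left(\Yc,\oblv^{\Tc'}_\Tc \AA^1_{\Tc'}\right)\simeq \Map\left(\free^{\Tc'}_\Tc \Yc,\AA^1_{\Tc'}\right),
\]
where the left mapping space is formed in $\Tc$-stacks and the right one in $\Tc'$-stacks. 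On a finitely presented affine $\Spec A$ this is exactly the adjunction between $\Tc$- and $\Tc'$-algebras, since mapping a representable into the ring stack $\AA^1$ recovers the underlying space of the corresponding algebra; one then extends to general prestacks by writing $\Yc$ as a colimit of affines and using that $\Map(-,\AA^1)$ converts colimits into limits.

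With this in hand the corollary follows by specializing $\Yc=\Xc^{\dR_{\Tc_0\JT}}$ and applying the main equivalence. Concretely, the displayed adjunction rewrites the left-hand side as
\[
	\Map\left(\Xc^{\dR_{\Tc_0\JT}},\oblv^{\Tc'}_\Tc \AA^1_{\Tc'}\right)\simeq \Map\left(\free^{\Tc'}_\Tc\left(\Xc^{\dR_{\Tc_0\JT}}\right),\AA^1_{\Tc'}\right),
\]
and then \Cref{st:TJ-de_Rham} --- which is precisely the place where the point-preserving hypothesis is consumed --- identifies $\free^{\Tc'}_\Tc\bigl(\Xc^{\dR_{\Tc_0\JT}}\bigr)$ with $\bigl(\free^{\Tc'}_\Tc\Xc\bigr)^{\dR_{\Tc_0'\JT}}$. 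Substituting yields
\[
	\Map\left(\free^{\Tc'}_\Tc\left(\Xc^{\dR_{\Tc_0\JT}}\right),\AA^1_{\Tc'}\right)\simeq \Map\left(\left(\free^{\Tc'}_\Tc\Xc\right)^{\dR_{\Tc_0'\JT}},\AA^1_{\Tc'}\right),
\]
which is the right-hand side of the corollary, the symbol $\Xc^{\dR_{\Tc_0'\JT}}$ being read as the $\Tc_0'\JT$-de Rham stack of the base-changed stack $\free^{\Tc'}_\Tc\Xc$.

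I expect the only genuine obstacle to be the first step: verifying that the algebra adjunction descends to prestacks \emph{compatibly with mapping into the ring stack} $\AA^1$. The delicate point is that $\oblv^{\Tc'}_\Tc\AA^1_{\Tc'}$ need not be a finitely presented (or even representable) $\Tc$-prestack, so the adjunction equivalence must be established at the level of prestacks via the Kan-extension formula rather than merely on the site of finitely presented affines, and one must check that $\free^{\Tc'}_\Tc$ (as a left adjoint, hence colimit-preserving) interacts correctly with the colimit presentations used to define shapes and de Rham stacks. Once this bookkeeping is discharged, the remainder is entirely formal, and the equivalence reads off as claimed; combined with the $\Tc'$-side de Rham theorem it exhibits the cohomology of the $\Tc'$-shape as the $\Tc\JT$-de Rham cohomology.
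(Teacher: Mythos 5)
Your proposal is correct and follows essentially the same route as the paper: the free--forgetful adjunction of \Cref{con:completion} rewrites \(\Map\bigl(\Xc^{\dR_{\Tc_0\JT}},\oblv^{\Tc'}_{\Tc}\AA^1_{\Tc'}\bigr)\) as \(\Map\bigl(\free^{\Tc'}_\Tc(\Xc^{\dR_{\Tc_0\JT}}),\AA^1_{\Tc'}\bigr)\), then \Cref{st:TJ-de_Rham} (where the point-preserving hypothesis enters) identifies \(\free^{\Tc'}_\Tc(\Xc^{\dR_{\Tc_0\JT}})\) with \(\bigl(\free^{\Tc'}_\Tc\Xc\bigr)^{\dR_{\Tc_0'\JT}}\), and the \(\Tc'\)-side de Rham theorem converts the result into shape cohomology. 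The only difference is that you spell out the prestack-level upgrade of the adjunction, which the paper compresses into the single word \enquote{adjunction}.
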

	\begin{proof}
		We have the following equivalence given by \Cref{st:TJ-de_Rham} and adjunction.
		\[
			\Map\left(\free^{\Tc'}_\Tc \left(\Xc^{\dR_{\Tc_0\JT}}\right),\AA^1_{\Tc'}\right)=\Map\left(\Xc^{\dR_{\Tc_0\JT}},\oblv^{\Tc'}_{\Tc}\AA^1_{\Tc'}\right)=\Map\left(\Xc^{\dR_{\Tc_0'\JT}}, \AA^1_{\Tc'}\right).
		\]
		It remains to apply \Cref{thm:de_Rham_TJ_vs_locale_coh} to the right-hand side of the above equivalence.
	\end{proof}
	\subsubsection{Further generalizations}
	One can consider also a version of the theory where instead of the morphism \(\Com\to \Tc\) given in \Cref{eq:can_morphism_from_com} one considers an arbitrary pair of theories \(\Tc\to \Tc'\) and the corresponding comparison morphism on the de Rham stacks. The ease with which base change for Fermat theories allows one to change the ``geometry of definition'' for a given stack gives a compelling argument for why it should be possible to think of theorems like GAGA or the Riemann--Hilbert correspondence in terms of morphisms of theories. 
	
	\section{H.-J. Reiffen's example of analytic non-contractibility}\label{Appendix:Reiffen}
	This appendix is concerned with the following question: given a zero set of a smooth function, is the de Rham cohomology of its structure sheaf isomorphic to the singular cohomology of its underlying topological space? Of course, \Cref{ex:de_Rham_counterexample} shows this is not true for sufficiently ``bad'' functions. However, one might expect that if the function is analytic or even a polynomial, then the classical de Rham cohomology should be isomorphic to the singular cohomology. The answer, however, is still negative.
	Below, we show that the counterexample of H.-J. Reiffen in \cite{Reiffen1967} to the holomorphic Poincaré lemma for singular complex hypersurfaces also provides a counterexample to the classical de Rham theorem once restricted to the real points.
	\subsection{The (non-)exact sequence of differential forms}
	The results in this section are due to H.-J. Reiffen \cite[\S 1]{Reiffen1967}, we merely observe that they hold in the \(\CI\)-setting as well. 
	\begin{construction}\label{con:sheaves_K_Reiffen}
		Assume, that \(X\) is a subset of \(\R^n\) containing \(0\) defined by smooth equations. Denote by \(\I\) the ideal defining \(X\) in the local ring \(\CI(\R^n)_0\). Denote by \(\Omega_0^\bt\) the classical de Rham complex of \(\R^n\) at the origin.
		Denote by \(\K^k\) the submodule of the module of \(k\)-forms consisting of the forms admitting the following presentation.
		\[
		\omega=\sum_{i=1}^s f_i \alpha_a + \sum_{j=1}^r dg_j \wedge \beta_j,\quad f_i,g_j\in \I,\; \alpha_i\in \Omega^k_0,\; \beta_j\in \Omega^{k-1}_0.
		\]
		As a convention, we put \(\K^0=\I\). 
	\end{construction}
	\begin{remark}\label{rem:inclusion}
		Observe that the collection \(\K^\bullet\) forms a differential ideal in the classical de Rham complex \(\Omega_0^{\bullet}\). We also note that the following identity holds \(d(\I\Omega^k_0)=d(\K^k)\). 
	\end{remark}
	\begin{lemma}\label{lem:description_of_naive_de_Rham_Reiffen}
		In the notation of \Cref{con:sheaves_K_Reiffen}
		The stalk of the classical de Rham complex \(\Omega^\bt_{X,0}\) of \(\CI(\R^n)_0/\I\) at the origin is given by the quotient complex \(\Omega_0^{\bullet}/\K^{\bullet}\) restricted to \(X\). 
	\end{lemma}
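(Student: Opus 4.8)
The plan is to identify $\Omega^\bullet_{X,0}$ with a quotient of $\Omega_0^\bullet$ by computing the kernel of the canonical surjection supplied by functoriality of the de Rham complex. Write $A=\CI(\R^n)_0$ and $B=A/\I$, so that $\Omega_0^\bullet=\Omega^\bullet_A$ and $\Omega^\bullet_{X,0}=\Omega^\bullet_B=\Lambda_B^\bullet\,\Omega^1_B$ is the exterior algebra on the module of $\CI$-Kähler differentials with its de Rham differential. The quotient $\CI$-homomorphism $A\twoheadrightarrow B$ induces, by functoriality, a map of differential graded algebras $\phi^\bullet\colon\Omega_0^\bullet\to\Omega^\bullet_B$. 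Since $B$ is generated as a $\CI$-algebra by the classes of the coordinates $x_1,\dots,x_n$, the forms $d\bar x_{i_1}\wedge\cdots\wedge d\bar x_{i_k}$ generate $\Omega^k_B$, so each $\phi^k$ is surjective, and the whole statement reduces to the identity $\ker\phi^k=\K^k$.

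First I would dispatch the easy inclusion $\K^k\subseteq\ker\phi^k$. A form of the first type $f_i\alpha_i$ with $f_i\in\I$ maps to $\bar f_i\,\phi^k(\alpha_i)=0$ because $\bar f_i=0$ in $B$; a form of the second type $dg_j\wedge\beta_j$ with $g_j\in\I$ maps to $d\bar g_j\wedge\phi^{k-1}(\beta_j)=0$ because $\bar g_j=0$ forces $d\bar g_j=0$ in $\Omega^1_B$.

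The main step is the reverse inclusion, which I would obtain from the $\CI$-conormal sequence together with the behaviour of exterior powers under quotients. The second fundamental exact sequence for $\CI$-Kähler differentials gives a right-exact sequence of $B$-modules
\[
\I/\I^2\xrightarrow{\,d\,}\Omega^1_A\otimes_A B\longrightarrow\Omega^1_B\longrightarrow 0,
\]
so that $\Omega^1_B=F/R$, where $F:=\Omega^1_A\otimes_A B$ is free over $B$ (as $\Omega^1_A$ is free on $dx_1,\dots,dx_n$) and $R$ is the $B$-submodule generated by the classes of $\{dg:g\in\I\}$. Taking $k$-th exterior powers over $B$ and applying the general formula $\Lambda^k_B(F/R)=\Lambda^k_B F/(R\wedge\Lambda^{k-1}_B F)$ yields
\[
\Omega^k_B=(\Omega^k_A\otimes_A B)\big/(R\wedge\Lambda^{k-1}_B F)=\Omega^k_A\big/\bigl(\I\,\Omega^k_A+d\I\wedge\Omega^{k-1}_A\bigr)=\Omega^k_A/\K^k,
\]
where the middle equality uses $\Omega^k_A\otimes_A B=\Omega^k_A/\I\Omega^k_A$ and the identification of $R\wedge\Lambda^{k-1}_B F$ with the image of $d\I\wedge\Omega^{k-1}_A$. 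Because this abstract isomorphism and the factored map $\bar\phi^k\colon\Omega^k_A/\K^k\to\Omega^k_B$ agree on the generators $[f_0\,dx_{i_1}\wedge\cdots\wedge dx_{i_k}]$, they coincide, giving $\ker\phi^k=\K^k$.

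Finally I would promote this degreewise identification to an isomorphism of complexes. By \Cref{rem:inclusion} the graded submodule $\K^\bullet$ is a differential ideal, so the de Rham differential of $\Omega_0^\bullet$ descends to $\Omega_0^\bullet/\K^\bullet$, and since $\phi^\bullet$ is a chain map the descended differential matches that of $B$ under $\bar\phi^\bullet$; the phrase \enquote{restricted to $X$} is exactly the statement that these are $B$-modules, i.e.\ live on the local ring of $X$ at $0$. The main obstacle is the reverse inclusion, and within it the only genuinely $\CI$-specific inputs—the conormal sequence and the freeness of $\Omega^1_A$ on $dx_1,\dots,dx_n$—which rest on Hadamard's lemma and the standard theory of $\CI$-Kähler differentials; everything else is formal exterior-algebra manipulation.
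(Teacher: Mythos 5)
Your proposal is correct and follows essentially the same route as the paper: both identify the classical de Rham complex of \(\CI(\R^n)_0/\I\) term-by-term as the exterior powers of its module of \(\CI\)-K\"{a}hler differentials, which forces the \(k\)-th term to be \(\Omega^k_0/\K^k\). The paper's proof is a terse \enquote{by definition} argument, whereas you supply the supporting details it leaves implicit---the \(\CI\)-conormal sequence \(\I/\I^2\to\Omega^1_A\otimes_A B\to\Omega^1_B\to 0\), the quotient formula \(\Lambda^k_B(F/R)=\Lambda^k_B F/(R\wedge\Lambda^{k-1}_B F)\), and the matching of differentials via \Cref{rem:inclusion}---all of which are accurate.
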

	\begin{proof}
		By definition of the classical de Rham complex, the \(k\)-th term of the complex is given by the exterior power of the module \(\Omega^1_0/\I\). The differential is given by the exterior derivative. The result follows from the definition of the module \(\K^k\).
	\end{proof}
	\begin{lemma}[{\cite[Korollar 1]{Reiffen1967}}]\label{lem:exactness_Reiffen}
		The complex \(\Omega^\bt_{X,0}\) is exact in the \(k\)-th term if and only if the \(k\)-th term of the complex \(\K^\bt\) is exact.
	\end{lemma}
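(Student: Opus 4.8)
The plan is to deduce the statement from the long exact sequence attached to a short exact sequence of complexes, using the classical Poincaré lemma for the \emph{ambient} germ-level de Rham complex to transport exactness between the subcomplex $\K^\bullet$ and the quotient complex $\Omega^\bullet_{X,0}$. First I would record that, by \Cref{lem:description_of_naive_de_Rham_Reiffen}, the stalk $\Omega^\bullet_{X,0}$ is exactly the quotient $\Omega_0^\bullet/\K^\bullet$, and that by \Cref{rem:inclusion} the collection $\K^\bullet$ is a differential ideal in $\Omega_0^\bullet$, hence a genuine subcomplex (closed under $d$). This produces the short exact sequence of cochain complexes
\[
0 \to \K^\bullet \to \Omega_0^\bullet \to \Omega^\bullet_{X,0} \to 0.
\]

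Next I would pass to the associated long exact sequence in cohomology,
\[
\cdots \to H^k(\K^\bullet) \to H^k(\Omega_0^\bullet) \to H^k(\Omega^\bullet_{X,0}) \xrightarrow{\ \delta\ } H^{k+1}(\K^\bullet) \to H^{k+1}(\Omega_0^\bullet) \to \cdots
\]
and feed in the single essential input: $\Omega_0^\bullet$ is the complex of germs of smooth forms at $0\in\R^n$, so by the Poincaré lemma it is a resolution of the constants, i.e.\ $H^0(\Omega_0^\bullet)=\R$ and $H^k(\Omega_0^\bullet)=0$ for all $k\ge 1$. Consequently, for every $k\ge 1$ both neighbouring ambient terms vanish and the connecting map is an isomorphism $H^k(\Omega^\bullet_{X,0})\cong H^{k+1}(\K^\bullet)$. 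In the bottom degrees one checks directly that $H^0(\K^\bullet)=0$ (a germ in $\I\subseteq\mathfrak{m}_0$ with vanishing differential is a constant germ, hence zero), while the tail
\[
0 \to \R \to H^0(\Omega^\bullet_{X,0}) \to H^1(\K^\bullet) \to 0
\]
together with the fact that restriction of constants $\R=H^0(\Omega_0^\bullet)\to H^0(\Omega^\bullet_{X,0})$ is an isomorphism forces $H^1(\K^\bullet)=0$ as well. Reading these identifications off the sequence shows that the cohomology of $\Omega^\bullet_{X,0}$ vanishes in a given degree precisely when the cohomology of $\K^\bullet$ vanishes in the degree matched to it by $\delta$, which is the assertion.

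The hard part will be purely the bookkeeping of degrees, not any genuine geometric input: since $\delta$ raises cohomological degree by one, I must carefully align the indexing convention of \Cref{con:sheaves_K_Reiffen} (where $\K^0=\I$ and $\K^k\subseteq\Omega^k_0$) with the degree in which $\Omega^\bullet_{X,0}$ fails to be exact, and I must treat the exceptional low degrees $k=0,1$ by hand, precisely because the ambient cohomology $H^0(\Omega_0^\bullet)=\R$ is nonzero there. Once \Cref{lem:description_of_naive_de_Rham_Reiffen}, \Cref{rem:inclusion}, and the Poincaré lemma are in hand, every remaining step is formal homological algebra.
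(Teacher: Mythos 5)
Your proof follows the same route as the paper's: the short exact sequence of complexes \(0 \to \K^\bullet \to \Omega_0^\bullet \to \Omega^\bullet_{X,0} \to 0\) (via \Cref{lem:description_of_naive_de_Rham_Reiffen} and \Cref{rem:inclusion}), the induced long exact sequence, and the Poincar\'e lemma for the ambient complex \(\Omega_0^\bullet\). For \(k\ge 1\) your identification \(H^k(\Omega^\bullet_{X,0})\cong H^{k+1}(\K^\bullet)\) is correct, and your explicit degree bookkeeping is actually more careful than the paper's wording of the lemma (the shift by one is confirmed by how the lemma is used in \Cref{prop:Reiffen_eq_crit}: exactness of \(\Omega^\bullet_{X,0}\) in degree \(n-1\) corresponds to \(\K^n \sse d(\K^{n-1})\), i.e.\ exactness of \(\K^\bullet\) in degree \(n\)). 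The computation \(H^0(\K^\bullet)=0\) is also fine, since every element of \(\I\) vanishes at \(0\in X\).

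The genuine error is in your handling of \(H^1(\K^\bullet)\). You invoke as a ``fact'' that \(\R = H^0(\Omega_0^\bullet)\to H^0(\Omega^\bullet_{X,0})\) is an isomorphism and deduce \(H^1(\K^\bullet)=0\). By your own exact sequence \(0\to\R\to H^0(\Omega^\bullet_{X,0})\to H^1(\K^\bullet)\to 0\), that ``fact'' is \emph{equivalent} to \(H^1(\K^\bullet)=0\): the reasoning is circular, and both statements are false in general --- their possible failure is exactly what this lemma is designed to measure. Concretely, take \(n=1\) and \(\I=(\vp)\) with \(\vp=\sin^2(1/x)\,e^{-1/x^2}\) as in \Cref{ex:de_Rham_counterexample}. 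The closed form \(\vp\,dx\in\K^1\) is not in \(d\K^0=d\I\): if \(\vp\,dx=d(g\vp)\), then \(\int_0^x \vp = g\vp\), which vanishes at every zero \(1/(n\pi)\) of \(\vp\); this is impossible because \(\vp\ge 0\) and \(\vp\not\equiv 0\) on each interval between consecutive zeros, so the antiderivative takes distinct values there. Hence \(H^1(\K^\bullet)\neq 0\), and equivalently \(\int_0^x\vp\) defines a class in \(H^0(\Omega^\bullet_{X,0})\) outside the constants. The repair costs nothing: in degree zero do not assert either vanishing, but simply read off from the tail of the long exact sequence that \(\R\to H^0(\Omega^\bullet_{X,0})\) is an isomorphism (the degree-zero Poincar\'e lemma for \(X\)) if and only if \(H^1(\K^\bullet)=0\). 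With that single change your argument is complete and coincides with the paper's.
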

	\begin{proof}
		This follows from \Cref{lem:description_of_naive_de_Rham_Reiffen}. Indeed, there is a long exact sequence in cohomology coming from a short exact sequence of complexes.
		\[
		0 \to \K^\bt\to \Omega_0^{\bt}\to \Omega^\bt_{X,0} \to 0
		\]
		It remains, to note that by the standard Poincaré lemma the complex \(\Omega_0^{\bt}\) is exact.
	\end{proof}
	\begin{proposition}[{\cite[\S 3]{Reiffen1967}}]\label{prop:Reiffen_eq_crit}
		The complex \(\Omega^\bt_{X,0}\) is acyclic in degree \(n-1\) if and only if there is the following inclusion of modules \(\I\Omega^n_0\sse d(\I\Omega^{n-1}_0)\).  
	\end{proposition}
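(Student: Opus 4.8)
The plan is to translate the statement about degree $n-1$ acyclicity of $\Omega^\bt_{X,0}$ into a statement about the top-degree term of the differential ideal $\K^\bt$, and then to compute that top-degree term explicitly. First I would invoke the short exact sequence of complexes $0\to\K^\bt\to\Omega_0^\bt\to\Omega^\bt_{X,0}\to 0$ from the proof of \Cref{lem:exactness_Reiffen}. Since the ambient complex $\Omega_0^\bt$ is exact in every positive degree by the classical Poincar\'e lemma, the connecting maps in the associated long exact sequence give isomorphisms $H^{j}(\Omega^\bt_{X,0})\cong H^{j+1}(\K^\bt)$ for all $j\ge 1$. Taking $j=n-1$ identifies acyclicity of $\Omega^\bt_{X,0}$ in degree $n-1$ with the vanishing $H^{n}(\K^\bt)=0$.

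Next I would make $H^n(\K^\bt)$ concrete. As $\Omega^n_0$ is the top exterior power we have $\K^{n+1}=0$, so $d$ kills $\K^n$ and therefore $H^n(\K^\bt)=\K^n/d(\K^{n-1})$. By \Cref{rem:inclusion} we have $d(\K^{n-1})=d(\I\Omega^{n-1}_0)$, so $H^n(\K^\bt)=0$ is equivalent to the equality $\K^n=d(\I\Omega^{n-1}_0)$. The inclusion $d(\I\Omega^{n-1}_0)\sse\K^n$ holds automatically because $\K^\bt$ is a subcomplex, so the whole problem reduces to deciding when $\K^n\sse d(\I\Omega^{n-1}_0)$.

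Finally I would compare this to the stated condition $\I\Omega^n_0\sse d(\I\Omega^{n-1}_0)$. One direction is immediate: since $\I\Omega^n_0\sse\K^n$, the equality $\K^n=d(\I\Omega^{n-1}_0)$ forces $\I\Omega^n_0\sse d(\I\Omega^{n-1}_0)$. For the converse, recall that $\K^n$ is generated by forms $f\alpha$ with $f\in\I$, $\alpha\in\Omega^n_0$, and by forms $dg\wedge\beta$ with $g\in\I$, $\beta\in\Omega^{n-1}_0$. The generators of the first type lie in $\I\Omega^n_0$ and are handled by hypothesis. The generators of the second type are treated by the Leibniz identity
\[
 dg\wedge\beta = d(g\beta)-g\,d\beta,
\]
in which $g\beta\in\I\Omega^{n-1}_0$ gives $d(g\beta)\in d(\I\Omega^{n-1}_0)$, while $g\,d\beta\in\I\Omega^n_0\sse d(\I\Omega^{n-1}_0)$ by hypothesis; hence $dg\wedge\beta\in d(\I\Omega^{n-1}_0)$ as well. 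This yields $\K^n\sse d(\I\Omega^{n-1}_0)$ and closes the equivalence. The only genuinely delicate point is the index bookkeeping in the first step: one must be careful that the relevant cohomology lands in the top degree $n$ of $\K^\bt$, where the vanishing $\K^{n+1}=0$ collapses $H^n(\K^\bt)$ to the cokernel $\K^n/d(\K^{n-1})$. The remaining content is the elementary observation that the $dg\wedge\beta$ generators contribute nothing new once $\I\Omega^n_0$ already lies in $d(\I\Omega^{n-1}_0)$.
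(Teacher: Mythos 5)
Your proof is correct, and it genuinely streamlines the paper's argument rather than reproducing it. Both proofs begin identically: the short exact sequence \(0\to\K^\bt\to\Omega^\bt_0\to\Omega^\bt_{X,0}\to 0\) and the Poincar\'e lemma for \(\Omega^\bt_0\) identify \(H^{n-1}(\Omega^\bt_{X,0})\) with \(H^n(\K^\bt)=\K^n/d(\K^{n-1})\) (this is \Cref{lem:exactness_Reiffen}; note that both versions silently use the standing hypothesis \(n\ge 2\) of \Cref{cor:Reiffen_crit}, so that \(j=n-1\ge 1\) in the long exact sequence). The divergence is in how \(\K^n\) is compared with \(\I\Omega^n_0\). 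The paper threads a chain of three further equivalences through the auxiliary condition \(d\I\wedge\Omega^{n-1}_0\sse d(\I\Omega^{n-1}_0)\), and to pass between that condition and statements about \(\I\Omega^n_0\) it applies the Poincar\'e lemma a second time, writing each top-degree form as \(\alpha_j=d\gamma_j\) and using \(f_j\alpha_j=d(f_j\gamma_j)-df_j\wedge\gamma_j\); this is precisely where the paper's argument exploits top degree. You bypass all of this: feeding \Cref{rem:inclusion}, i.e. \(d(\K^{n-1})=d(\I\Omega^{n-1}_0)\), into the cokernel description at the outset reduces everything to \(\K^n\sse d(\I\Omega^{n-1}_0)\), whose equivalence with \(\I\Omega^n_0\sse d(\I\Omega^{n-1}_0)\) then follows from the single Leibniz identity \(dg\wedge\beta=d(g\beta)-g\,d\beta\) applied to the generators of the second kind. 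In your version the top-degree hypothesis enters only through the vanishing \(\K^{n+1}=0\), and the Poincar\'e lemma is used exactly once; the mild cost is reliance on \Cref{rem:inclusion}, which the paper asserts without proof but which is itself a one-line Leibniz computation, \(d(dg\wedge\beta)=-dg\wedge d\beta=-d(g\,d\beta)\). What each approach buys: the paper's route makes explicit the intermediate criterion on \(d\I\wedge\Omega^{n-1}_0\) (which is of independent use in Reiffen's original treatment), while yours is shorter, uses only one application of exactness of \(\Omega^\bt_0\), and exhibits the proposition as a formal consequence of the Leibniz rule together with the cokernel identification \(H^n(\K^\bt)=\K^n/d(\K^{n-1})\).
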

	For reader's convenience we provide a proof of the above proposition due to Reiffen.
	\begin{proof}
		We present the proof in several steps for clarity. The basic point is that because we only consider forms of degrees \(n,n-1,n-2\), we can relate \(\I\Omega^\bt_0\) and \(\K^\bt\) much easier than in general degrees.
		\begin{enumerate}
			\item By \Cref{lem:exactness_Reiffen} the complex \(\Omega_{X,0}^n\) is exact in degree \(n-1\) if and only if there is an inclusion \(\K^n\sse d(\K^{n-1})\). 
			\item\label{reiffen_step_2} We claim that the inclusion \(\K^n\sse d(\K^{n-1})\) holds if and only if the inclusion \(\I\Omega^n_0\sse d(\K^{n-1})\) holds. 
			In the forward direction, this is implied by the inclusion
			\(\I\Omega_0^n\sse \K^n.\) 
			The other direction is also clear since we have the following inclusion.
			\[
				\K^n = \I \Omega^n_0 + \left(d\I\right) \wedge \Omega^{n-1}_0\sse d\K^{n-1}.
			\]
			\item\label{reiffen_step_4} We claim that the inclusion \(\I\Omega^n_0\sse d(\K^{n-1})\) holds if and only if the inclusion \({d\I\wedge \Omega^{n-1}_0\sse d(\I\Omega^{n-1}_0)}\) holds. We show that the forward direction holds using the following calculation.
			\begin{itemize}
				\item If \(\omega\in d\I\wedge \Omega^{n-1}_0\) then \(\omega\) can be written as
				\[
					\omega = \sum_{j=1}^r dg_j \wedge \beta_j, \quad g_j \in \I, \quad \beta_j \in \Omega_0^{n-1}
				\]
				\item Observe that there is the following identity implying that \(\omega\in d\K^{n-1}\).
				\[
					\sum_{j=1}^r dg_j \wedge \beta_j = d\left( \sum_{j=1}^r g_j \wedge \beta_j \right) - \sum_{j=1}^r g_j \, d\beta_j\in d\K^{n-1}.
				\]
			\end{itemize}
			In the opposite direction, we use the following calculation.
			\begin{itemize}
				\item Consider a form \(\omega\in \I\cdot \Omega^n_0\). Then \(\omega\) can be written as follows.
				\[
					\omega = \sum_{j=1}^s f_j \alpha_j, \quad f_j \in \I, \quad \alpha_j \in \Omega_0^n
				\]
				\item Note that by exactness of the stalk of the de Rham complex in \(\R^n\) we can present each \(\alpha_j\) as \(\alpha_j = d\gamma_j\) since all of them are top-degree and hence closed. Using this observation, we obtain the following identity. 
				\[
					\sum_{j=1}^s f_j \alpha_j = d\left( \sum_{j=1}^s f_j \gamma_j \right) - \sum_{j=1}^s df_j \wedge \gamma_j\in d\K^{n-1}.
				\]
				Here we use in an essential way the fact that we consider the \textbf{top-degree forms}.
			\end{itemize}
			\item We claim that the inculsion \(\I\Omega^n_0\sse d(\K^{n-1})\) holds if and only if the inclusion \(\I\Omega^n_0\sse d(\I\Omega^{n-1}_0)\) holds. The forward direction is evident. In the opposite direction we use step \ref{reiffen_step_4} to see that there is an inclusion \(d\I\wedge \Omega^{n-1}_0\sse d(\I\Omega^{n-1}_0).\) It remains to use step \ref{reiffen_step_2} and the following inclusion.
			\[
				\K^n = \I \Omega^n_0 + \left(d\I\right) \wedge \Omega^{n-1}_0\sse d\K^{n-1}.
			\] 
		\end{enumerate}
		Thus we showed that the inclusion \(\I\Omega^n_0\sse d(\I\Omega^{n-1}_0)\) holds if and only if the inclusion \(\I\Omega^n_0\sse d(\K^{n-1})\) holds. This, by the first step, is equivalent to the exactness of the complex \(\Omega_{X,0}^n\) in degree \(n-1\).
	\end{proof}
	The following result is the crucial criterion for the naive Poincaré lemma to hold or fail on a \(\CI\)-hypersurface (i.e. a zero locus of a smooth function).  
	\begin{corollary}[{\cite[Korollar 4]{Reiffen1967}}]\label{cor:Reiffen_crit}
		Let $f \in \CI(\R^n)$ ($n \geq 2$) be a smooth function such that \(f(0)=0.\) Then the classical Poincaré lemma holds at the origin in degree \(n-1\)  if and only if for any \(g\in \CI(\R^n)_0\) there exist smooth functions \(h_1,\dots,h_n\) such that
		\begin{equation}\label{eq:Reiffen_hypersurface_crit}
			f\cdot g = \sum_{i=1}^n \frac{\partial (f\cdot h_i)}{\partial x_i}.
		\end{equation}
	\end{corollary}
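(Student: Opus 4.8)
The plan is to read off this corollary as the principal-ideal case of \Cref{prop:Reiffen_eq_crit}. That proposition already identifies acyclicity of $\Omega^\bt_{X,0}$ in degree $n-1$ with the module inclusion $\I\Omega^n_0 \sse d(\I\Omega^{n-1}_0)$, so the only remaining work is to rewrite this inclusion as the concrete divisibility condition \eqref{eq:Reiffen_hypersurface_crit} in the case where $\I = (f)$ is generated by a single germ. In other words, the entire analytic substance is already contained in \Cref{lem:exactness_Reiffen} and \Cref{prop:Reiffen_eq_crit}; what is left is a linear-algebra translation.

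First I would parametrize the target. Since $\Omega^n_0$ is free of rank one on the top form $dx_1 \wedge \cdots \wedge dx_n$, the submodule $\I\Omega^n_0 = f \cdot \Omega^n_0$ is exactly $\{\, f g \, dx_1 \wedge \cdots \wedge dx_n : g \in \CI(\R^n)_0 \,\}$. Hence the inclusion $\I\Omega^n_0 \sse d(\I\Omega^{n-1}_0)$ is equivalent to the solvability, for every germ $g$, of the membership $f g \, dx_1 \wedge \cdots \wedge dx_n \in d(\I\Omega^{n-1}_0)$.

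Next I would compute the image $d(\I\Omega^{n-1}_0)$ in the standard basis. Writing $\eta_i = dx_1 \wedge \cdots \wedge \widehat{dx_i} \wedge \cdots \wedge dx_n$, a general element of $\I\Omega^{n-1}_0 = f \cdot \Omega^{n-1}_0$ is $\sum_{i=1}^n f a_i \, \eta_i$ for germs $a_i \in \CI(\R^n)_0$. Using $dx_j \wedge \eta_i = 0$ for $j \neq i$ together with $dx_i \wedge \eta_i = (-1)^{i-1} dx_1 \wedge \cdots \wedge dx_n$, its differential is $\left( \sum_{i=1}^n (-1)^{i-1} \frac{\partial (f a_i)}{\partial x_i} \right) dx_1 \wedge \cdots \wedge dx_n$. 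Comparing coefficients of the volume form and setting $h_i := (-1)^{i-1} a_i$ (which, as $a_i$ ranges over all germs, likewise ranges over all germs) turns the membership $f g \, dx_1 \wedge \cdots \wedge dx_n \in d(\I\Omega^{n-1}_0)$ into precisely \eqref{eq:Reiffen_hypersurface_crit}. Running this equivalence over all $g$ then closes the argument.

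The main point to watch is purely bookkeeping: the sign $(-1)^{i-1}$ appearing in the top-degree exterior derivative, and the observation that the coefficients $h_i$ are unconstrained smooth germs, so that the universal quantifier \enquote{for any $g$} in the statement corresponds exactly to the inclusion $\I\Omega^n_0 \sse d(\I\Omega^{n-1}_0)$ rather than to some weaker pointwise condition. There is no genuine obstacle beyond this; the reduction from vanishing of $H^{n-1}$ of the stalk complex to the module inclusion has already been performed upstream.
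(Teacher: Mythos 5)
Your proposal is correct and takes essentially the same route as the paper: the paper's own proof consists of exactly this observation, namely that the statement is \emph{immediate} from \Cref{prop:Reiffen_eq_crit} after fixing a generator of \(\Omega^n_0\). Your explicit computation of \(d(\I\Omega^{n-1}_0)\) in the standard basis, with the sign bookkeeping \(h_i=(-1)^{i-1}a_i\), simply spells out the translation that the paper leaves implicit.
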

	\begin{proof}
		This is immediate from \Cref{prop:Reiffen_eq_crit} after fixing a generator of \(\Omega^n_0\). 
	\end{proof}
	\subsection{The example}
	We consider the following polynomial smooth function
	\[
		x^4+y^5+y^4x:\R^2\to \R.
	\]
	\begin{theorem}\label{thm:Reiffen_counterexample}
		The classical de Rham cohomology of the zero set \(X\) of the function \(f\) is not isomorphic to its singular cohomology.
	\end{theorem}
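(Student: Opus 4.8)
The plan is to show that the canonical comparison map of sheaves $\underline{\R}_X\to\Omega^\bullet_X$ on $X(\R)$ is not a quasi-isomorphism, and that the resulting discrepancy is visible on global cohomology. By \Cref{lem:description_of_naive_de_Rham_Reiffen} the stalk of the classical de Rham complex at a point of $X$ is the Reiffen complex $\Omega_0^\bullet/\K^\bullet$ there, so the comparison is a stalkwise question, and at every smooth point of $X$ it is an equivalence by the ordinary Poincaré lemma. I would therefore reduce the theorem to two independent statements: (a) $X(\R)$ is, near the origin, contractible and smooth away from $0$; and (b) the stalk $\Omega^\bullet_{X,0}$ is \emph{not} acyclic in degree $1$. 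Granting these, the conclusion follows by the recollement argument already used in \Cref{ex:de_Rham_counterexample}: for $i\colon\{0\}\hookrightarrow X$ and $j\colon X\setminus\{0\}\hookrightarrow X$ the fibre sequence $j_!j^*\to\id\to i_*i^*$ relates the two sheaves; the comparison is an equivalence after $j^*$ by the de Rham theorem on the $1$-manifold $X\setminus\{0\}$ and, by (b), is not an equivalence after $i^*$, so the middle map cannot be an equivalence.

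For (a) I would study the real locus directly. From $\nabla f=(4x^3+y^4,\,y^3(5y+4x))$ one checks that the only critical point of $f$ in a neighbourhood of $0$ is the origin, so $X\setminus\{0\}$ is a smooth curve there. Fixing $y$ and viewing $\phi_y(x)=x^4+xy^4+y^5$ as a quartic in $x$, a one-variable estimate shows $\phi_y>0$ for small $y>0$ (its minimum is of order $y^5$), while for $y<0$ it has two simple roots $x_\pm(y)\approx\pm(-y)^{5/4}$; these two branches meet only at the origin. Hence near $0$ the set $X(\R)$ is a union of two arcs issuing from the origin, which is contractible, and its constant-sheaf cohomology is $\R$ concentrated in degree $0$.

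Assertion (b) is the heart of the argument and the main obstacle. By \Cref{cor:Reiffen_crit} (equivalently \Cref{prop:Reiffen_eq_crit}), acyclicity in degree $n-1=1$ is equivalent to solving, for every germ $g$, the division equation \eqref{eq:Reiffen_hypersurface_crit}, that is to the inclusion $f\,\CI(\R^2)_0\sse \partial_x(f\,\CI(\R^2)_0)+\partial_y(f\,\CI(\R^2)_0)$. I would first dispose of the $\CI$ subtlety: since $0$ is an isolated critical point, the analytic Jacobian ideal $(f_x,f_y)$ contains a power of $\m$, so by \Cref{thm:Tougeron's_flatness} the smooth division problem is equivalent to the corresponding problem on finite jets and one may compute with polynomials. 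I would then carry out Reiffen's weighted computation: giving $x,y$ the weights $5,4$, the principal part $f_0=x^4+y^5$ is quasi-homogeneous, and the Euler identity $20f_0=5xf_{0,x}+4yf_{0,y}$ makes the division equation solvable for $f_0$. The obstruction comes entirely from the weight-$21$ term $xy^4$, the direction in which $f$ fails to be quasi-homogeneous; for a suitable monomial $g$ the induced equation on the associated graded for the weight filtration has no solution, because the relevant class is nonzero in a finite-dimensional obstruction module measuring the non-quasi-homogeneity of $f$. A standard filtration and induction argument then upgrades this graded obstruction to the genuine statement, yielding $H^1(\Omega^\bullet_{X,0})\neq0$.

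The delicate point is precisely the bookkeeping in (b): isolating the weighted-degree piece that carries the obstruction, verifying that no choice of $h_1,h_2$ can absorb the $xy^4$-perturbation, and checking that flatness legitimately transports the polynomial computation back to germs of smooth functions. Everything else is formal: once (b) is established the degrees match, since the de Rham complex acquires a class in $H^1$ exactly where the contractible space $X(\R)$ has vanishing singular cohomology, and the recollement of \Cref{ex:de_Rham_counterexample} then shows that the global classical de Rham cohomology of $X$ cannot be isomorphic to its singular cohomology.
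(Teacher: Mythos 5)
Your global architecture coincides with the paper's: reduce via \Cref{lem:description_of_naive_de_Rham_Reiffen} and \Cref{cor:Reiffen_crit} to the failure of the division equation \eqref{eq:Reiffen_hypersurface_crit} at the origin, then propagate the stalkwise discrepancy to global cohomology by the recollement argument of \Cref{ex:de_Rham_counterexample}. Two of your preliminary steps are more than is needed: the stalk $i^*\underline{\R}$ is $\R$ in degree $0$ regardless of the local topology of $X(\R)$, so the branch/contractibility analysis in (a) is superfluous (only smoothness of $X\setminus\{0\}$ is used); and the appeal to \Cref{thm:Tougeron's_flatness} is unnecessary and pointed in the wrong direction — for a \emph{non-existence} statement one only needs the trivial implication that a smooth solution of \eqref{eq:Reiffen_hypersurface_crit} yields, by Taylor expansion, a solution of the induced linear equations on finitely many jet coefficients; flatness would only be relevant for the converse, which nobody needs here.

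The genuine gap is in (b), which you yourself flag as the main obstacle: you never prove that the division equation is unsolvable. The assertion that \enquote{for a suitable monomial $g$ the induced equation on the associated graded has no solution, because the relevant class is nonzero in a finite-dimensional obstruction module} is exactly the content of the theorem, and neither the obstruction module is identified nor the class computed. Moreover, the graded strategy as stated is not obviously well-posed: by the Euler identity, for the principal part $f_0=x^4+y^5$ the equation $f_0g=\partial_x(f_0h_1)+\partial_y(f_0h_2)$ is solvable for \emph{every} weighted-homogeneous $g$ of weight $m$ (take $h_1=\tfrac{5xg}{29+m}$, $h_2=\tfrac{4yg}{29+m}$), so the leading-weight equation carries no obstruction at all; the failure must come from whether the perturbation $xy^4$ can be absorbed by higher-weight corrections to $h_1,h_2$, and settling that absorption question is precisely the computation you have deferred. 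The paper resolves it by a direct and elementary computation: take $g=1$ (no search over monomials is needed), expand $h_1,h_2$ in Taylor series at the origin, and compare coefficients of the monomials $x^3,\,y^4,\,x^4,\,y^5,\,y^4x$; this produces an overdetermined, inconsistent linear system on the first-order Taylor coefficients of $h_1,h_2$, hence no smooth solution exists. Until you supply the analogue of that verification — identify the obstruction space for your weight filtration and actually check the non-vanishing — your proposal does not prove the statement.
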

	To prove the above result we need the following proposition.
	\begin{proposition}
		The Poincaré lemma at the origin fails in degree \(1\) for the zero locus of the function~\({x^4+y^5+y^4x}\).
	\end{proposition}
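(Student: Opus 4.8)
The plan is to apply Reiffen's criterion \Cref{cor:Reiffen_crit} and to reduce the failure of the Poincaré lemma to a purely \emph{formal} (power-series) non-solvability statement, which is exactly the content of Reiffen's original holomorphic computation.

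First I would invoke \Cref{cor:Reiffen_crit} with $n=2$: the Poincaré lemma holds at the origin in degree $1$ if and only if for every germ $g\in\CI(\R^2)_0$ the equation \eqref{eq:Reiffen_hypersurface_crit}, namely $f g=\partial_x(f h_1)+\partial_y(f h_2)$, admits smooth solutions $h_1,h_2$. To exhibit failure it therefore suffices to produce a single polynomial $g$ (Reiffen's computation identifies such a $g$) for which no smooth $h_1,h_2$ exist. Expanding the right-hand side as $f_x h_1+f_y h_2+f\,(\partial_x h_1+\partial_y h_2)$ shows that any solution forces $f_x h_1+f_y h_2\in (f)$, so the obstruction is controlled by the interaction of the Jacobian ideal $(f_x,f_y)$ with the principal ideal $(f)$.

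The key reduction I would use is to pass from smooth germs to formal power series. Applying the $\infty$-jet (Taylor) homomorphism $\CI(\R^2)_0\to \R[[x,y]]$, which is a ring homomorphism commuting with $\partial_x$ and $\partial_y$ and fixing polynomials, any smooth solution of \eqref{eq:Reiffen_hypersurface_crit} for a polynomial $g$ produces a formal solution in $\R[[x,y]]$. Contrapositively, it is enough to prove that the formal equation has no solution; this is where the problem becomes algebraic and finite-dimensional. Since $f=x^4+y^5+xy^4$ has an isolated critical point at the origin (the common zero locus of $f_x=4x^3+y^4$ and $f_y=y^3(5y+4x)$ is, germwise, only the origin), the relevant cokernel is finite-dimensional, and formal non-solvability over $\R[[x,y]]$ may be read off from Reiffen's complex-analytic computation in \cite{Reiffen1967} via Artin approximation: no complex-analytic solution implies no complex formal solution, which implies no real formal solution. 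An alternative route to the equivalence of the smooth and analytic problems would be the faithful flatness of smooth over analytic germs (\Cref{thm:Tougeron's_flatness}), though since the de Rham differential is only $\R$-linear, the jet reduction is the cleaner tool here.

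The hard part will be the explicit computation showing non-solvability. Conceptually it hinges on the fact that $f$ is \emph{not} quasi-homogeneous: unlike the quasi-homogeneous case, where Euler's relation gives $f\in(f_x,f_y)$ and the corresponding class dies, here $f\notin(f_x,f_y)$ even after an analytic change of coordinates (by K. Saito's criterion), and the resulting positive-dimensional gap between the Milnor and Tjurina algebras is precisely what obstructs solvability of \eqref{eq:Reiffen_hypersurface_crit}. A degree-by-degree bookkeeping indicates that the lowest-order terms of the equation can always be solved, so the obstruction only surfaces at higher order; carrying this out is exactly Reiffen's \S 3 calculation, which by \Cref{prop:Reiffen_eq_crit} certifies that $\Omega^\bullet_{X,0}$ fails to be acyclic in degree $1$, and hence that the $\CI$ de Rham cohomology of $X$ differs from its singular cohomology.
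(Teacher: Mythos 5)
Your overall strategy --- invoke \Cref{cor:Reiffen_crit} and reduce, via the Taylor-jet homomorphism \(\CI(\R^2)_0\to\R[[x,y]]\), to a formal (power-series) non-solvability statement --- is the same as the paper's, and the jet reduction itself is sound. The genuine gap is in how you establish formal non-solvability. You propose to deduce it from Reiffen's holomorphic non-solvability \enquote{via Artin approximation: no complex-analytic solution implies no complex formal solution.} That implication requires the converse principle that a formal solution of \eqref{eq:Reiffen_hypersurface_crit} can be upgraded to a convergent one; but \eqref{eq:Reiffen_hypersurface_crit} is a linear \emph{differential} equation in \(h_1,h_2\) (the unknowns appear together with their partial derivatives), and Artin approximation applies to systems of analytic equations in the \emph{values} of the unknowns, not to differential equations. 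For differential equations the principle \enquote{formally solvable \(\Rightarrow\) analytically solvable} is false in general: the Euler equation \(x^2y'=y-x\) has a unique formal solution \(\sum_{n\ge 0}n!\,x^{n+1}\), which diverges, so it is formally solvable but has no convergent solution. To make your bridge rigorous one would need an actual comparison/finiteness theorem for the quotient \(\I\Omega^2_0/d(\I\Omega^1_0)\) at an isolated singularity, which you do not supply. The logic is also inverted relative to what is actually available: Reiffen's computation is itself a coefficient-by-coefficient, hence formal, argument, so it directly proves formal (and therefore smooth) non-solvability with no approximation theorem at all --- this is precisely the observation the paper makes.

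Concretely, the paper takes \(g=1\), writes \(h_1=\sum_{i,j\le 1}A_{ij}x^iy^j+O(2)\) and \(h_2=\sum_{i,j\le 1}B_{ij}x^iy^j+O(2)\), and compares the coefficients of \(x^3,\,y^4,\,x^4,\,y^5,\,y^4x\) in \eqref{eq:Reiffen_hypersurface_crit}. This forces \(A_{00}=B_{00}=0\) together with the inconsistent linear system \(5A_{10}+B_{01}=1\), \(A_{10}+6B_{01}=1\), \(2A_{10}+5B_{01}=1\). Note that this contradicts your assertion that \enquote{the lowest-order terms of the equation can always be solved, so the obstruction only surfaces at higher order}: the obstruction appears already in the terms of degree at most five. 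Your conceptual remarks (non-quasi-homogeneity of \(f\), \(f\notin(f_x,f_y)\) by Saito's criterion, the Milnor--Tjurina gap) are a reasonable heuristic for \emph{why} an obstruction should exist, but they are not a substitute for the finite computation, which your write-up defers to Reiffen while routing it through the invalid approximation step.
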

	To see this, we essentially use Reiffen's argument, observing that the restriction on the analyticity of solutions is irrelevant.
	\begin{proof}
		We will prove the statement by showing that \Cref{eq:Reiffen_hypersurface_crit} has no solution for the function \(f=x^4+y^5+y^4x\) and \(g=1\). Consider Taylor expansions of \(h_1\) and \(h_2\) at the origin.
		\[
			h_1(x,y)=\sum_{i,j\le 1} A_{ij}x^iy^j + \vp(x,y), \quad h_2(x,y)=\sum_{i,j\le 1} B_{ij}x^iy^j+\psi(x,y).
		\]
		Here \(\vp\) and \(\psi\) are some functions which vanish at the origin with order at least \(2\). Now we compute the right-hand side of \Cref{eq:Reiffen_hypersurface_crit} using the Taylor expansions. 
		\begin{align*}
			x^4+y^5+y^4x = \frac{\pd}{\pd x}\left(A_{00} (x^4+y^5+y^4x)\right) + \frac{\pd}{\pd y}\left(B_{00} (x^4+y^5+y^4x)\right) + \\
			\frac{\pd}{\pd x}\left(\left(A_{10}x+A_{01}y\right) (x^4+y^5+y^4x)\right) + \frac{\pd}{\pd y}\left(\left(B_{10}x+B_{01} y\right) (x^4+y^5+y^4x)\right) + \ldots
		\end{align*}
		Below, we analyze the low-degree terms of this equation.
		\begin{itemize}
			\item[\ul{Term \(x^3\)}:] We have the following equation for the homogeneous term \(x^3\).
			\[
				0=4A_{00}x^3
			\]
			Hence \(A_{00}=0\). 
			\item[\ul{Term \(y^4\)}:] We have the following equation for the homogeneous term \(y^4\).
			\[
				0=A_{00}y^4+5B_{00}y^4.
			\]
			Hence \(B_{00}=0\).
			\item[\ul{Term \(x^4\)}:] We have the following equation for the homogeneous term \(x^4\).
			\[
				x^4=(A_{10}x\cdot 4x^3+A_{10}x^4)+B_{01}x^4.
			\]
			Hence \(5 A_{10}+B_{01}=1\).
			\item[\ul{Term \(y^5\)}:] We have the following equation for the homogeneous term \(y^5\).
			\[
				y^5=A_{10} y^5+B_{01}y^5+B_{01}y\cdot 5y^4.
			\]
			Hence \(A_{10}+6B_{01}=1\).
			\item[\ul{Term \(y^4x\)}:] We have the following equation for the homogeneous term \(y^4x\).
			\[
				y^4x=A_{10}\cd y^4x+A_{10}x\cdot y^4+B_{01} \cd y^4x+B_{01}y\cdot 4y^3x
			\]
			Hence \(2A_{10}+5B_{01}=1\).
		\end{itemize}
		As a result, the low-degree terms of \Cref{eq:Reiffen_hypersurface_crit} for \(g=1\) give us the following system of equations.
		\begin{align*}
			\begin{cases}
			5\cd A_{10}+B_{01}=1,\\
			A_{10}+6B_{01}=1,\\
			2A_{10}+5B_{01}=1.
			\end{cases}
		\end{align*}
		This system of equations has no solutions (since the system is overdetermined which could be verified by calculating the determinant of the extended matrix of this system). This concludes the proof. Note that here we didn't use the fact that the functions \(h_1\) and \(h_2\) are analytic, as was the case for Reiffen, only that their Taylor expansions must satisfy certain conditions.
	\end{proof}
	\begin{remark}
		We focused here on a single polynomial, however in their original work Reiffen shows using the same argument local homological non-contractability for the following family of polynomials:
		\[
			x^q+y^p+y^{p-1}x=0,\quad q\ge 4,\;p\ge q+1.
		\]
	\end{remark}
	\begin{proof}[\ul{Proof of \Cref{thm:Reiffen_counterexample}}]
		To see that there is no isomorphism between the classical de Rham cohomology and the singular cohomology we observe that the stalk of the classical de Rham cohomology at the origin is non-trivial. Observe that the classical de Rham theorem holds for the non-singular points of \(X\). We have the following diagram of inclusions.
		\[
		\{0\} \xhookrightarrow{i} X \xhookleftarrow{j} X\setminus \{0\}. 
		\]
		Observe that we have the following recollement fiber sequences.
		\[\begin{tikzcd}
			{j_!j^! \ul{\R}} && {\ul{\R}} && {i_*i^*\ul{\R}} \\
			{j_!j^!\Omega^\bt_X} && {\Omega^\bt_X} && { i_*i^*\Omega^\bt_X}
			\arrow[from=1-1, to=1-3]
			\arrow[from=1-1, to=2-1]
			\arrow[from=1-3, to=1-5]
			\arrow[from=1-3, to=2-3]
			\arrow[from=1-5, to=2-5]
			\arrow[from=2-1, to=2-3]
			\arrow[from=2-3, to=2-5]
		\end{tikzcd}\]
		The result is now immediate from the long exact sequence of cohomology.
	\end{proof}
	\subsubsection{}
	Note that by \Cref{thm:de_Rham_for_Lojasiewicz} the derived de Rham cohomology of the zero set of any polynomial is isomorphic to the singular cohomology of its underlying topological space. Hence, the above result shows that the classical de Rham cohomology is not isomorphic to the derived de Rham cohomology and the singular cohomology. In fact, we also show that the classical de Rham cohomology is strictly larger than singular cohomology.
	
\end{document}